\begin{document}

\frontmatter

\begin{titlepage}

\begin{center}

~\vspace{2in}

\textsc{Joint Laver diamonds and grounded forcing axioms} \\[0.5in]
{by} \\[0.5in]
\textsc{Miha E.\ Habič} 

\vspace{\fill}
A dissertation submitted to the Graduate Faculty in Mathematics in partial fulfillment of the requirements for the degree of Doctor of Philosophy, The City University of New York \\[0.25in]
2017

\end{center}

\end{titlepage}

\setcounter{page}{2}

\phantom{}\vspace{\fill}
\begin{center}
\copyright~2017\\
\textsc{Miha E.\ Habič}\\
All Rights Reserved\\
\end{center}
\begin{center}
Joint Laver diamonds and grounded forcing axioms

by

Miha E.\ Habič

\begin{singlespace}
This manuscript has been read and accepted for the Graduate Faculty in Mathematics in satisfaction of the dissertation requirement for the degree of Doctor of Philosophy.
\end{singlespace}
\end{center}

\vspace{0.75in}

\begin{tabular}{p{1.75in}p{0.5in}p{3.5in}}
~                                   & & \\
\hrulefill                          & &\hrulefill \\
Date                                  & & Joel David Hamkins\\
~                                & & Chair of Examining Committee\\
~                                   & & \\
~                                   & & \\
\hrulefill                          & &\hrulefill \\
Date                                   & & Ara Basmajian\\
~                                & & Executive Officer\\
\end{tabular}

\vspace{0.75in}

\begin{center}
\begin{tabular}{c}
Supervisory Committee: \\
Joel David Hamkins \\
Arthur W.\ Apter \\
Gunter Fuchs \\
\end{tabular}

\vspace{\fill}

\textsc{The City University of New York}
\end{center}
\begin{center}
Abstract \\
\textsc{Joint Laver diamonds and grounded forcing axioms} \\
by \\
\textsc{Miha E.\ Habič} \\[0.25in]
\end{center}

\vspace{0.25in}

\noindent Advisor: Joel David Hamkins

\vspace{0.25in}

In chapter~\ref{chap:joint} a notion of independence for 
diamonds and Laver diamonds is investigated.
A sequence of Laver diamonds for \(\kappa\) is \emph{joint} if for any sequence of targets
there is a single elementary embedding \(j\) with critical point \(\kappa\) such that
each Laver diamond guesses its respective target via \(j\).
In the case of measurable cardinals (with similar results holding for (partially) 
supercompact cardinals) I show that a single Laver diamond for \(\kappa\) 
yields a joint sequence of length \(\kappa\), and I give strict separation results
for all larger lengths of joint sequences. 
Even though the principles get strictly stronger in terms of direct implication,
I show that they are 
all equiconsistent.
This is contrasted with the case of \(\theta\)-strong cardinals where, for certain \(\theta\),
the existence of even the shortest joint Laver sequences carries nontrivial consistency
strength.
I also formulate a notion of jointness for ordinary \(\diamondsuit_\kappa\)-sequences on
any regular cardinal \(\kappa\). The main result concerning these shows that there is no separation according to length and a single \(\diamond_\kappa\)-sequence yields joint
families of all possible lengths.

In chapter~\ref{chap:grma} the notion of a \emph{grounded} forcing axiom is introduced
and explored in the case of Martin's axiom. This grounded Martin's axiom, a
weakening of the usual axiom, states that the universe is a ccc forcing extension
of some inner model and the restriction of Martin's axiom to the posets coming from
that ground model holds. I place the new axiom in the hierarchy of fragments of Martin's
axiom and examine its effects on the cardinal characteristics of the continuum. I also show that the grounded version is quite a bit more robust under mild forcing
than Martin's axiom itself.
\chapter*{Acknowledgments}

I wish to thank my advisor, Joel David Hamkins, for his constant support and guidance through
my studies and the writing of this dissertation. It is safe to say that this text would
not exist without his keen insight, openness to unusual ideas, and patience. It has been
a pleasure and a privilege to work with him and I could not have asked for a better
role model of mathematical ingenuity, rigour, and generosity.

I would also like to thank Arthur Apter and Gunter Fuchs, the other members of my
dissertation committee. Thank you both for your contributions, the many conversations, and
for taking the time to read through this text.
Thank you as well to Kameryn, Vika, Corey, the whole New York logic community, and the many other
friends at the Graduate Center. I will remember fondly the hours spent talking about
mathematics, playing games, and doing whatever other things graduate students do.

Thank you, Kaethe. You have been (and hopefully remain) my best friend, sharing my
happy moments and supporting me in the less happy ones. Thank you for
letting me explain every part of this dissertation to you multiple times,
and thank you for teaching me about mathematics that I would never have
known or understood without you. I am incredibly fortunate to have met you
and I can only hope to return your kindness and empathy in the future.

Lastly, thank you to my parents and other family members who have stood by me
and supported me through my long journey. I hope I have made
you proud. Rad vas imam.

\tableofcontents

\listoffigures

\mainmatter

\chapter*{Introduction}
\markboth{}{INTRODUCTION}
\label{chap:intro}
\addcontentsline{toc}{chapter}{Introduction}

\noindent
This dissertation consists of two largely independent chapters in which I explore some
questions in set theory related to guessing principles, and variants of forcing axioms.

Set-theoretic guessing principles are combinatorial gems, exemplifying the
compact nature of certain infinities. They find use in many
inductive or diagonalization arguments, where they allow us to anticipate a wide variety of
objects. For example, we might attempt to construct a branchless tree by using a guessing
principle to anticipate what a potential branch might look like \emph{while still building
the tree} and avoiding it. A version of this idea was essentially the
primordial application of the guessing principle \(\diamond\) by Jensen, and
since then a number of other principles have emerged: some allow us to anticipate
more complicated sets, others guarantee that the provided guesses are correct
quite often, etc. An important variant are the Laver diamond principles which arise
alongside many members of the large-cardinal hierarchy.

Chapter~\ref{chap:joint} is concerned with the question of how many guessing objects
of a certain kind can a cardinal carry. For example, how many different 
\(\diamond_\kappa\)-sequences or Laver functions can there be for a given \(\kappa\)?
To avoid trivial perturbations of a single sequence, the question
should rather ask for the number of independent such sequences, for some
notion of independence. We suggest and investigate \emph{jointness} as a suitable candidate;
a family of guessing sequences is joint if all of its members can guess their targets
simultaneously and independently of one another. We investigate jointness for
Laver diamonds on (partially) supercompact, and strong cardinals. In the case of
partially supercompact cardinals we show that, while having joint Laver families of various
different lengths requires no additional consistency strength, there are no nontrivial
implications between these principles: a cardinal may carry \(\lambda\) many joint
Laver functions but not \(\lambda^+\) many, for example. The situations is
markedly different in the case of partially strong cardinals. There, in certain cases,
having any (infinite) number of joint Laver diamonds requires consistency strength beyond 
just the large cardinal itself.
The chapter is concluded with a discussion of jointness for \(\diamond_\kappa\)-sequences.
The definition needs to be modified to make sense in the small-cardinal context, but
a natural coherence condition suggests itself, in terms of the existence of normal
uniform filters or, equivalently, generic elementary embeddings.

Chapter~\ref{chap:grma} investigates \emph{grounded} versions of forcing axioms, with
a particular focus on Martin's axiom. Forcing axioms are combinatorial statements,
generalizing the Baire category theorem, which state that certain ``generic'' objects
exist (such as a generic point in a typical application of the mentioned theorem).
They also serve to bridge the gap between set theory and other fields (particularly
topology, analysis, and algebra) by presenting a ready-to-use account of the powerful
technique of forcing. The strength of these forcing axioms can be calibrated by changing the 
meaning of the word ``generic'' in interesting ways. One might restrict or expand
the field of structures (posets) that we find these generics in and arrive at Martin's
axiom, or the proper forcing axiom, or Martin's maximum or many others.

Grounding a forcing axiom is exactly this kind of restriction. Specifically, we connect
the axiom even more closely to its forcing nature and require that the posets
under consideration come from a ground model of the universe of a suitable kind:
a ccc ground model for Martin's axiom, a proper ground model for the proper forcing axiom
etc. Our investigation centres on the grounded Martin's axiom (\grma) and its relationship to
the more well-known fragments of Martin's axiom (\ma). We show that \grma is largely
independent of the other fragments, obtaining the following diagram of implications.

\begin{figure}[bht]
\[
\xymatrix{
&\ma\ar@{-}[dl]\ar@{-}[dr]&\\
\mak\ar@{-}[d]&&\grma\ar@{-}[d]\\
\textup{MA(\(\sigma\)-linked)}\ar@{-}[d]&&\text{local \grma}\ar@{-}[ddl]\\
\mas\ar@{-}[dr]\\
&\mathrm{MA(Cohen)}\ar@{-}[d]\\
&\mathrm{MA(countable)}}
\]
\end{figure}

\noindent
The grounded axiom is also more permissive regarding
the combinatorics of the real line. For example, it is consistent with the cardinality of
the continuum being singular and that the real line can be covered by \(\aleph_1\) many
Lebesgue-null sets. We also explore the behaviour of \grma under forcing adding a single
real. We show, using a variant of term forcing, that, in contrast with \ma, the grounded
axiom is preserved when adding a single Cohen or random real.

\chapter{Joint diamonds and Laver diamonds}
\label{chap:joint}

The concept of a Laver function, introduced for supercompact cardinals 
in~\cite{Laver1978:MakingSupercompactnessIndestructible},
is a powerful strengthening of the usual \(\diamond\)-principle to the large cardinal setting.
It is based on the observation that a large variety of large cardinal properties give rise to
different notions of ``large'' set, intermediate between stationary and club, and these
are then used to provide different guessing principles, where we require that the
sequence guesses correctly on ever larger sets. This is usually recast in terms of
elementary embeddings or extensions (if the large cardinal in question admits such a
characterization), using various ultrapower constructions. For example, in the case of a 
supercompact cardinal \(\kappa\), the usual definition states that a \emph{Laver function}
for \(\kappa\) is a function \(\ell\colon \kappa\to V_\kappa\) such that for any \(\theta\)
and any \(x\in H_{\theta^+}\) there is a \(\theta\)-supercompactness embedding
\(j\colon V\to M\) with critical point \(\kappa\) such that \(j(\ell)(\kappa)=x\) (this
ostensibly second order definition can be rendered in first order language by replacing
the quantification over arbitrary embeddings with quantification over
ultrapowers by measures on \(\mathcal{P}_\kappa(\theta)\), as in Laver's original account).

Laver functions for other large cardinals were later defined by Gitik and Shelah
(see~\cite{GitikShelah1989:IndestructibilityOfStrong}), Corazza
(see~\cite{Corazza1989:LaverSequencesExtendible}), Hamkins 
(see~\cite{Hamkins2002:StrongDiamondPrinciples}) and others. The term \emph{Laver diamonds} 
has been suggested to more strongly underline the connection between
the large and small cardinal versions.

In this chapter we examine the notion of \emph{jointness} for both ordinary and Laver
diamonds. We shall give a simple example in section~\ref{sec:defs}; for now let us just
say that a family of Laver diamonds is joint if they can guess their targets simultaneously
and independently of one another.
Section~\ref{sec:defs} also introduces some terminology that will ease later discussion.
Sections~\ref{sec:JLDSc} and~\ref{sec:JLDStrong} deal with the outright existence
or at least the consistency of joint Laver sequences for supercompact and strong
cardinals, respectively. Our results will show that in almost all cases the existence of
a joint Laver sequence of maximal possible length is simply equiconsistent
with the particular large cardinal. The exception are the \(\theta\)-strong cardinals
where \(\theta\) is a limit of small cofinality, for which we prove that additional
strength is required for even the shortest joint sequences to exist. 
We also show that there are no
nontrivial implications between the existence of joint Laver sequences of different lengths.
Section~\ref{sec:JD} considers joint \(\diamond_\kappa\)-sequences and their relation
to other known principles. Our main result there shows that \(\diamond_\kappa\) is simply
equivalent to the existence of a joint \(\diamond_\kappa\)-sequence of any possible length.

\section{Jointness: a motivating example}
\label{sec:defs}
All of the large cardinals we will be dealing with in this chapter are characterized by
the existence of elementary embeddings of the universe into certain transitive inner models 
which have that cardinal
as their critical point. We can thus speak of embeddings associated to a measurable, 
a \(\theta\)-strong, or a 17-huge cardinal \(\kappa\) etc.
Since the definitions of (joint) Laver diamonds for these various large cardinals
are quite similar, we give the following general definition as a framework to
aid future exposition.

\begin{definition}
\label{def:guessingFunctions}
Let \(j\) be an elementary embedding of the universe witnessing the largeness of
its critical point \(\kappa\) (e.g. a measurable, or a hugeness embedding)
and let \(\ell\) be a function defined on \(\kappa\).
We say that a set \(a\), the target, is \emph{guessed by \(\ell\) via \(j\)} 
if \(j(\ell)(\kappa)=a\).

If \(A\) is a set or a definable class, say that \(\ell\) is an \emph{\(A\)-guessing Laver 
function} if for any \(a\in A\) there is an embedding \(j\), witnessing the largeness of 
\(\kappa\), such 
that \(\ell\) guesses \(a\) via \(j\). If there is an \(A\)-guessing Laver function
for \(\kappa\), we shall say that \(\Ld_\kappa(A)\) holds.\footnote{Different notation has been used by different authors to denote the existence of a Laver function. We chose here to follow Hamkins~\cite{Hamkins2002:StrongDiamondPrinciples}.}
\end{definition}

To simplify the notation even more, we shall, in the coming sections, 
associate to each type of large cardinal 
considered, a default set of targets \(A\) (for example, when talking about measurable
cardinals \(\kappa\) we will mostly be interested in targets from \(A=H_{\kappa^+}\)). 
In view of this, whenever we neglect
the mention of a particular class of targets, this default \(A\) will be intended.

We will often specify the type of large cardinal embeddings we
have in mind explicitly, by writing \(\Ldmeas_\kappa\), or \(\Ldthetasc_\kappa\) etc.
This is to avoid ambiguity; for example, we could conceivably start with a supercompact
cardinal \(\kappa\) but only be interested in its measurable Laver functions.
Even so, to keep the notation as unburdened as possible, we may sometimes omit
the specific large cardinal property under consideration when it is clear from
context.
%

As a further complication, the stated definition of an \(A\)-guessing Laver function is 
second-order, since we are quantifying over all possible embeddings \(j\). This seems 
unavoidable for arbitrary \(A\). However, the default sets of targets we shall be working
with are chosen in such a way that standard factoring arguments allow us to restrict
our attention to ultrapower or extender embeddings. The most relevant definitions of
Laver functions can therefore be recast in first-order language.

Given the concept of a Laver diamond for a large cardinal \(\kappa\), one might ask
how many different Laver diamonds can there be on a given cardinal \(\kappa\).
Immediately after posing the question we realize that it is not that interesting.
Since the behaviour of Laver diamonds is determined by their restrictions to large
(in the sense of measure) subsets of \(\kappa\), we can simply take a fixed Laver function
and perturb it on an unbounded but small (nonstationary, say) subset of its domain
and preserve its guessing property. There are \(2^\kappa\) many such perturbations and
thus trivially \(2^\kappa\) many distinct Laver functions.

This answer is very much unsatisfactory. We are not interested in insignificantly modified
versions of a single Laver function. We definitely do not want to count the functions above
as distinct: they cannot even guess distinct targets! So perhaps we are interested
in collections of Laver functions whose targets needn't all be equal, given a fixed embedding 
\(j\). But even that seems
insufficient. For example, given a Laver function \(\ell\) we can define a new one
\(\ell'\) by letting \(\ell'(\xi)\) be the unique element of \(\ell(\xi)\), if one exists.
Both of these are Laver functions, their targets under a fixed \(j\) are never equal,
and yet the targets cannot be independently chosen at all. Again, one can always 
find large families of Laver functions like this, providing another unsatisfactory answer
to our original question.

Finally we come to the right idea of when to Laver functions should be deemed to be
distinct, or independent. The key property should be that their targets, under a single
embedding \(j\), can be chosen completely independently. Let us illustrate this
situation with a simple example.

Suppose \(\ell\colon\kappa\to V_\kappa\) is a supercompactness Laver function. We can
then define two functions \(\ell_0,\ell_1\) by letting \(\ell_0(\xi)\) and
\(\ell_1(\xi)\) be the first and second components, respectively, of \(\ell(\xi)\), if
this happens to be an ordered pair. These two are then easily seen to be Laver functions
themselves, but have the additional property that, given any pair of targets \(a_0,a_1\),
there is a \emph{single} supercompactness embedding \(j\) such that
\(j(\ell_0)(\kappa)=a_0\) and \(j(\ell_1)(\kappa)=a_1\). This additional trait, where
two Laver functions are, in a sense, enmeshed, we call jointness.

\begin{definition}
\label{def:jointLaverDiamonds}
Let \(A\) be a set or a definable class and let \(\kappa\) be a cardinal with a notion of 
\(A\)-guessing Laver function. A sequence \(\vec{\ell}=\langle \ell_\alpha;\alpha<\lambda\rangle\)
of \(A\)-guessing Laver functions is an \emph{\(A\)-guessing joint Laver sequence} if for any 
sequence \(\vec{a}=\langle a_\alpha;\alpha<\lambda\rangle\) of targets from \(A\) there is a single 
embedding \(j\), witnessing the largeness of \(\kappa\), such that each 
\(\ell_\alpha\) guesses \(a_\alpha\) via \(j\). If there is
an \(A\)-guessing joint Laver sequence of length \(\lambda\) for \(\kappa\), we shall say 
that \(\jLd_{\kappa,\lambda}(A)\) holds.
\end{definition}

In other words, a sequence of Laver diamonds is joint if, given any sequence of targets,
these targets can be guessed simultaneously by their respective Laver diamonds.
The guessing property of joint Laver sequences is illustrated in the following
diagram:

\begin{figure}[h]
\centering
\begin{tikzpicture}
\draw (0,0) rectangle (4,3);
\draw [help lines, black, ystep=3, xstep=0.5] (0,0) grid (3,3);
\node at (3.5,1.5) {\(\dots\)};
\node [below,right] at (4,0) {\(\lambda\)};
\node [left] at (0,3) {\(\kappa\)};
\node [below] at (2,0) {\(\vec{\ell}\)};
\node [rotate=90] at (0.75,1.5) {\(\ell_\alpha\)};
\node [rotate=90] at (2.25,1.5) {\(\ell_\beta\)};
\draw (0.75,3.35) circle [radius=0.3] node {\(a_\alpha\)};
\draw (2.25,3.35) circle [radius=0.3] node {\(a_\beta\)};
\draw [-{>[scale=1.5]}] (4.5,2) to [out=25, in=155] node[midway,above]{\(j\)}(8.5,2);
\draw (9,0) rectangle (14,4);
\draw [help lines, black, ystep=4, xstep=0.5] (9,0) grid (13,4);
\node at (13.5, 2) {\(\dots\)};
\node [below,right] at (14,0) {\(j(\lambda)\)};
\node [left] at (9,4) {\(j(\kappa)\)};
\node [left] at (9,3) {\(\kappa\)};
\draw [dashed] (9,3)--(14,3);
\node [below] at (11.5,0) {\(j(\vec{\ell})\)};
\node [rotate=90] at (9.75,1.5) {\(j(\ell_\alpha)\)};
\node at (9.8,3.2) {\(a_\alpha\)};
\node [rotate=90] at (12.25,1.5) {\(j(\ell_\beta)\)};
\node at (12.3,3.2) {\(a_\beta\)};
\end{tikzpicture}
\caption{The guessing property of joint Laver sequences}
\end{figure}
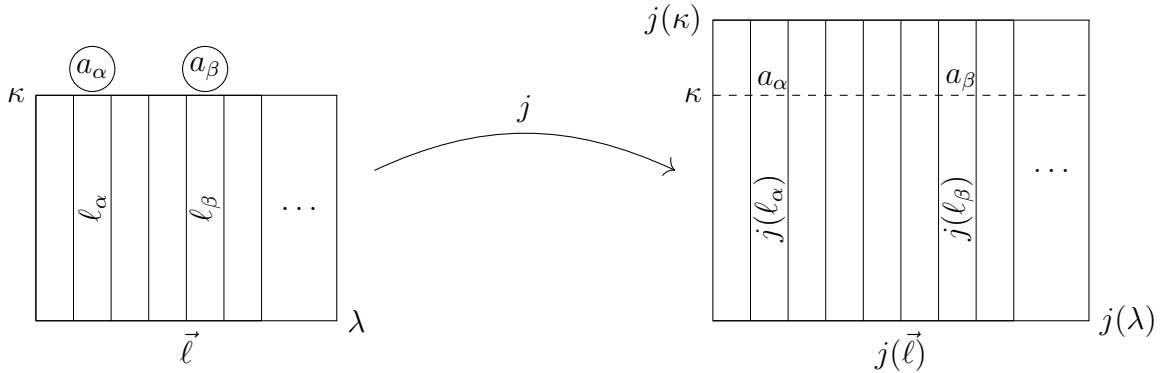

We must be careful to distinguish between and entire sequence being jointly Laver
or its members being pairwise jointly Laver. It is not difficult to find examples of
three (or four or even infinitely many) 
Laver functions that are pairwise joint but not fully so. For example,
given two joint Laver functions \(\ell_0\) and \(\ell_1\) we might define
\(\ell_2(\xi)\) to be the symmetric difference of \(\ell_0\) and \(\ell_1\). It is easy
to check that any two of these three functions can have their targets freely chosen, but
the third one is uniquely determined by the other two.

Jointness also makes sense for ordinary diamond sequences, but needs to be formulated
differently, since elementary embeddings do not (obviously) appear in that setting.
Rather, we distil jointness for Laver diamonds into a property of certain
ultrafilters and then apply this to more general filters and diamond sequences.
We explore this further in section~\ref{sec:JD}.

\section{Joint Laver diamonds for supercompact cardinals}
\label{sec:JLDSc}

\begin{definition}
A function \(\ell\colon\kappa\to V_\kappa\)
is a \(\theta\)-supercompactness Laver function for \(\kappa\) if it guesses elements of 
\(H_{\theta^+}\) via \(\theta\)-supercompactness embeddings with critical point \(\kappa\).
This also includes the case of \(\kappa\) being measurable (as this is equivalent to
it being \(\kappa\)-supercompact).

If \(\kappa\) is fully supercompact then a function \(\ell\colon\kappa\to V_\kappa\) is
a Laver function for \(\kappa\) if it is a \(\theta\)-supercompactness Laver function
for \(\kappa\) for all \(\theta\).
%
%
%
\end{definition}


Observe that there are at most \(2^\kappa\) many \(\theta\)-supercompactness
Laver functions for \(\kappa\), since there are only \(2^\kappa\) many functions
\(\kappa\to V_\kappa\). 
Since a joint Laver sequence cannot have the same function appear on two different
coordinates (as they could never guess two different targets),
his implies
that \(\lambda=2^\kappa\) is the largest cardinal for which 
there could possibly be a joint Laver sequence of length \(\lambda\). 
Bounding from the other side,
a single \(\theta\)-supercompactness Laver function already yields a joint Laver 
sequence of length \(\theta\).

\begin{proposition}
\label{prop:SCHasSomeJLDiamond}
If\/ \(\Ldthetasc_\kappa\) holds 
then there is a \(\theta\)-supercompactness joint Laver sequence for \(\kappa\) of length 
\(\lambda=\min\{\theta,2^\kappa\}\).
\end{proposition}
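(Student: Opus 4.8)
The plan is to mimic the motivating example that splits one Laver function into two, but to replace ``read off the \(\alpha\)-th coordinate'' by ``read off the coordinate addressed by a fixed subset of \(\kappa\),'' since the former breaks as soon as the index \(\alpha\) reaches \(\kappa\). Note first that the two inequalities packaged into \(\lambda=\min\{\theta,2^\kappa\}\) play distinct roles: \(\lambda\le 2^\kappa\) guarantees that there are enough subsets of \(\kappa\) to serve as distinct addresses, while \(\lambda\le\theta\) guarantees that a whole \(\lambda\)-sequence of targets from \(H_{\theta^+}\) can be coded by a single object that again lies in \(H_{\theta^+}\), so that the given Laver function can guess it. Accordingly, I would fix a single \(\theta\)-supercompactness Laver function \(\ell\) and a family \(\langle A_\alpha : \alpha<\lambda\rangle\) of pairwise distinct subsets of \(\kappa\).

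Next I would define the candidate sequence by reading \(\ell\) at the address \(A_\alpha\): set \(\ell_\alpha(\xi)=\ell(\xi)(A_\alpha\cap\xi)\) whenever \(\ell(\xi)\) is a function having \(A_\alpha\cap\xi\) in its domain, and \(\ell_\alpha(\xi)=\emptyset\) otherwise. Each \(\ell_\alpha\) is a function \(\kappa\to V_\kappa\), since the values of \(\ell(\xi)\in V_\kappa\) again lie in \(V_\kappa\). To check jointness, let \(\langle a_\alpha:\alpha<\lambda\rangle\) be any sequence of targets from \(H_{\theta^+}\) and let \(t\) be the partial function on \(\mathcal{P}(\kappa)\) with \(t(A_\alpha)=a_\alpha\); since it is coded by \(\lambda\le\theta\) pairs drawn from \(\mathcal{P}(\kappa)\times H_{\theta^+}\), its transitive closure has size at most \(\theta\), so \(t\in H_{\theta^+}\) and there is a \(\theta\)-supercompactness embedding \(j\) with critical point \(\kappa\) such that \(j(\ell)(\kappa)=t\). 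Because \(\ell_\alpha\) arises from \(\ell\) by a definable operation with parameter \(A_\alpha\), elementarity gives that \(j(\ell_\alpha)\) arises from \(j(\ell)\) with parameter \(j(A_\alpha)\), and evaluating at \(\kappa\) yields
\[
j(\ell_\alpha)(\kappa)=\bigl(j(\ell)(\kappa)\bigr)\bigl(j(A_\alpha)\cap\kappa\bigr)=t(A_\alpha)=a_\alpha,
\]
where the middle equality uses \(j(A_\alpha)\cap\kappa=A_\alpha\), valid because \(A_\alpha\subseteq\kappa\) and \(\kappa\) is the critical point. This is exactly the required guessing, and feeding a single nontrivial target into one coordinate (and \(\emptyset\) elsewhere) shows each \(\ell_\alpha\) is itself a Laver function, so \(\langle\ell_\alpha:\alpha<\lambda\rangle\) is a joint Laver sequence.

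The step I expect to be the crux is the choice of addressing scheme. The obvious generalization of the two-function example---letting \(\ell_\alpha(\xi)\) be the \(\alpha\)-th coordinate of \(\ell(\xi)\)---collapses once \(\alpha\ge\kappa\): then \(j\) moves the index to \(j(\alpha)\ge j(\kappa)>\theta\), and a target in \(H_{\theta^+}\) simply stores no information that far out, so the guess cannot be recovered. Indexing the coordinates by subsets of \(\kappa\) is what repairs this, precisely because \(j\) fixes such a set below \(\kappa\), i.e.\ \(j(A_\alpha)\cap\kappa=A_\alpha\); this is the one place where the critical-point behaviour of the embedding is essential, and it is also what pins the optimal length at \(\min\{\theta,2^\kappa\}\) rather than merely \(\kappa\). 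The remaining verifications---that \(t\in H_{\theta^+}\) and that the definable operation commutes with \(j\)---are routine absoluteness and elementarity checks.
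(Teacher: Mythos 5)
Your proof is correct and is essentially the same as the paper's: the paper also fixes a family \(I\subseteq\mathcal{P}(\kappa)\) of size \(\lambda\) (via an enumeration \(f\colon\lambda\to I\)), defines \(\ell_\alpha(\xi)=\ell(\xi)(f(\alpha)\cap\xi)\), and has \(\ell\) guess the coded target function \(\vec{a}\circ f^{-1}\in H_{\theta^+}\), using \(j(f(\alpha))\cap\kappa=f(\alpha)\) exactly as you do. Your addressing-by-subsets-of-\(\kappa\) scheme and the role you assign to each half of \(\min\{\theta,2^\kappa\}\) match the paper's argument precisely.
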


\begin{proof}
Let \(\ell\) be a Laver function for \(\kappa\). Fix a subset \(I\) of 
\(\mathcal{P}(\kappa)\)
of size \(\lambda\) and an enumeration \(f\colon \lambda\to I\). 
For \(\alpha<\lambda\) define \(\ell_\alpha\colon \kappa\to V_\kappa\) by
\(\ell_\alpha(\xi)=\ell(\xi)(f(\alpha)\cap\xi)\) if this makes sense and
\(\ell_\alpha(\xi)=\emptyset\) otherwise. We claim that
\(\langle \ell_\alpha;\alpha<\lambda\rangle\) is a joint Laver sequence for \(\kappa\).

To verify this let \(\vec{a}=\langle a_\alpha;\alpha<\lambda\rangle\) be a sequence of 
elements of \(H_{\theta^+}\). Then \(\vec{a}\circ f^{-1}\in H_{\theta^+}\), so by assumption 
there is a \(\theta\)-supercompactness embedding \(j\colon V\to M\) such that 
\(j(\ell)(\kappa)=\vec{a}\circ f^{-1}\).
But now observe that, by elementarity, for any \(\alpha<\lambda\)
\[
j(\ell_\alpha)(\kappa)=j(\ell)(\kappa)(j(f(\alpha))\cap\kappa)=
j(\ell)(\kappa)(f(\alpha))=a_\alpha \qedhere
\]
%
\end{proof}

Of course, if a given Laver function works for many degrees of supercompactness
then the joint Laver sequence derived above will work for those same degrees.
In particular, if \(\kappa\) is fully supercompact then this observation, combined with
Laver's original construction, gives us a supercompactness joint Laver sequence
of length \(2^\kappa\).

\begin{corollary}
\label{cor:SCHasLongJLDiamond}
If \(\kappa\) is supercompact then \(\jLdsc_{\kappa,2^\kappa}\) holds.
\end{corollary}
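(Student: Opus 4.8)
The plan is to obtain this directly from Proposition~\ref{prop:SCHasSomeJLDiamond} together with Laver's existence theorem, the key observation being that the construction in that proposition is uniform in \(\theta\). Since \(\kappa\) is fully supercompact, Laver's original result provides a single function \(\ell\colon\kappa\to V_\kappa\) that is simultaneously a \(\theta\)-supercompactness Laver function for every \(\theta\). I would feed this \(\ell\) into the recipe of the proposition, taking the index set to be all of \(\mathcal{P}(\kappa)\) (so that \(\lambda=2^\kappa\)) rather than a proper subset.

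Concretely, first fix an enumeration \(f\colon 2^\kappa\to\mathcal{P}(\kappa)\) and define \(\ell_\alpha(\xi)=\ell(\xi)(f(\alpha)\cap\xi)\), with value \(\emptyset\) when this is undefined, exactly as before. The point is that these functions depend only on \(\ell\) and \(f\), not on any particular degree of supercompactness, so the same sequence \(\langle\ell_\alpha;\alpha<2^\kappa\rangle\) is a candidate witness for all \(\theta\) at once. To check the joint guessing property at a given \(\theta\), I would take a target sequence \(\vec{a}=\langle a_\alpha;\alpha<2^\kappa\rangle\) from \(H_{\theta^+}\), form the single target \(\vec{a}\circ f^{-1}\), use that \(\ell\) is a \(\theta\)-supercompactness Laver function to obtain an embedding \(j\) with \(j(\ell)(\kappa)=\vec{a}\circ f^{-1}\), and then run the identical elementarity computation
\[
j(\ell_\alpha)(\kappa)=j(\ell)(\kappa)(j(f(\alpha))\cap\kappa)=j(\ell)(\kappa)(f(\alpha))=a_\alpha,
\]
which uses only that \(f(\alpha)\subseteq\kappa\) is fixed by \(j\).

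The one place requiring care is the cardinality bookkeeping, and this is where the length \(2^\kappa\) interacts with the choice of \(\theta\). For the encoded target \(\vec{a}\circ f^{-1}\) to itself lie in \(H_{\theta^+}\), I need a length-\(2^\kappa\) sequence (indexed by \(\mathcal{P}(\kappa)\)) of elements of \(H_{\theta^+}\) to have transitive closure of size at most \(\theta\); this forces attention to degrees \(\theta\geq 2^\kappa\), which is precisely the range in which \(2^\kappa\) many targets can be packaged into a single guessable object. For such \(\theta\) the proposition's bound \(\min\{\theta,2^\kappa\}\) equals \(2^\kappa\), so the argument goes through unchanged, and since one fixed sequence serves every admissible \(\theta\) simultaneously we conclude that \(\jLdsc_{\kappa,2^\kappa}\) holds. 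I expect this cardinality check, together with the observation that a single sequence works for all such \(\theta\) at once, to be the only genuine content beyond the proposition itself.
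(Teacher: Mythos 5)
Your proposal is correct and takes essentially the same route as the paper: the paper also obtains the corollary by feeding Laver's original (simultaneously-for-all-\(\theta\)) Laver function into the construction of Proposition~\ref{prop:SCHasSomeJLDiamond}, observing that the resulting length-\(2^\kappa\) sequence then witnesses joint guessing for every degree \(\theta\geq 2^\kappa\), where \(\min\{\theta,2^\kappa\}=2^\kappa\). Your explicit cardinality bookkeeping (that \(\vec{a}\circ f^{-1}\in H_{\theta^+}\) precisely when \(\theta\geq 2^\kappa\) and the targets fit) is the same point the paper leaves implicit in its one-line remark preceding the corollary.
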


Proposition~\ref{prop:SCHasSomeJLDiamond} essentially shows that joint Laver sequences
of maximal length
exist automatically for cardinals with a high degree of supercompactness. Since we will
be interested in comparing the strengths of the principles \(\jLd_{\kappa,\lambda}\)
for various \(\lambda\), we will in the remainder of this section be mostly concerned
with cardinals \(\kappa\) which are not \(2^\kappa\)-supercompact (but are at least
measurable).

\subsection{Creating long joint Laver diamonds}

We now show that the existence of \(\theta\)-supercompactness joint Laver sequences
of maximal length does not require strength beyond \(\theta\)-supercompactness
itself. 
The following lemma is well-known.

\begin{lemma}
\label{lemma:ThetaSCMenasFunction}
If \(\kappa\) is \(\theta\)-supercompact then there is a function \(f\colon \kappa\to\kappa\) 
with the Menas property, i.e.\ we have \(j(f)(\kappa)>\theta\)
for some \(\theta\)-supercompactness embedding 
\(j\colon V\to M\) with critical point \(\kappa\).
\end{lemma}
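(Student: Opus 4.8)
The plan is to write down an explicit $f$ that encodes the local degree of supercompactness and then read off the desired inequality by elementarity. Concretely, I would define $f\colon\kappa\to\kappa$ by letting $f(\alpha)$ be the least $\beta$ such that $\alpha$ is not $\beta$-supercompact, whenever $\alpha$ is measurable and this least $\beta$ happens to lie below $\kappa$, and setting $f(\alpha)=0$ otherwise. The merit of this choice is that $f(\alpha)$ records exactly ``how supercompact $\alpha$ is'', and this is a first-order quantity that transforms correctly under elementary embeddings.

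I would then fix a $\theta$-supercompactness embedding $j\colon V\to M$ with critical point $\kappa$ and compute $j(f)(\kappa)$. By elementarity, $j(f)(\kappa)$ is the least $\beta$ for which $M\models$``$\kappa$ is not $\beta$-supercompact''. In this way the entire lemma is reduced to a single reflection assertion: one can arrange $j$ so that $M$ still regards $\kappa$ as $\theta$-supercompact, since then $j(f)(\kappa)>\theta$, which is exactly the Menas property. The reduction itself is routine Łoś-style bookkeeping; the substance is concentrated in the reflection.

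Securing that reflection is the step I expect to be the main obstacle. The difficulty is that a normal fine measure on $\mathcal{P}_\kappa(\theta)$ need not belong to its own ultrapower, so a priori $M$ need not witness the $\theta$-supercompactness of $\kappa$ at all, and the naive computation could collapse to $j(f)(\kappa)\le\theta$. The standard remedy, which I would follow, is to choose the embedding so that its target is closed enough to reconstruct a witnessing measure for the $\theta$-supercompactness of $\kappa$ internally; any such embedding automatically has critical point $\kappa$ and closure at least $\theta$, hence is in particular a $\theta$-supercompactness embedding, so the computation above applies and yields $j(f)(\kappa)>\theta$. An equivalent packaging is by contradiction: were no $f$ to have the Menas property, every $f\colon\kappa\to\kappa$ would satisfy $j(f)(\kappa)\le\theta$ for all $\theta$-supercompactness embeddings $j$, bounding the ordinal $\sup\{j(f)(\kappa):f\colon\kappa\to\kappa\}$ strictly below $j(\kappa)$ by $\theta$; one then reflects this bound to manufacture a function whose image exceeds it. Either way, the delicate point is the same reflection of the supercompactness of $\kappa$ into the target model, and this is precisely where the well-known argument of Menas does its work.
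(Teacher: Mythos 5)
The paper offers no proof of this lemma (it is cited as well-known), so your proposal has to be measured against the classical argument — and it diverges from it at exactly the point you identify as delicate, in a way that cannot be repaired. Your plan needs a \(\theta\)-supercompactness embedding \(j\colon V\to M\) such that \(M\models\)``\(\kappa\) is \(\theta\)-supercompact'' (and, moreover, such that \(M\) sees some failure of supercompactness of \(\kappa\) below \(j(\kappa)\), since otherwise your \(f\) defaults to \(0\)). Such an embedding need not exist under the stated hypothesis. Take \(\theta=\kappa\) (a case the paper explicitly includes, since \(\kappa\)-supercompactness is measurability) with \(\kappa\) the \emph{least} measurable cardinal: for \emph{any} embedding \(j\colon V\to M\) with critical point \(\kappa\), elementarity gives that the least measurable cardinal of \(M\) is \(j(\kappa)>\kappa\), so \(M\models\)``\(\kappa\) is not measurable''. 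Hence every admissible \(j\) computes \(j(f)(\kappa)=0\) for your \(f\), and no choice of embedding rescues it — yet the lemma is true in this situation, witnessed for instance by \(f(\alpha)=\alpha^+\), since \(j(f)(\kappa)=(\kappa^+)^M>\kappa=\theta\). Your proposed remedy, choosing the target ``closed enough to reconstruct a witnessing measure internally,'' silently requires closure under \(2^{\theta^{<\kappa}}\)-sequences, i.e.\ that \(\kappa\) be \(2^{\theta^{<\kappa}}\)-supercompact, which is strictly stronger than the hypothesis; the contradiction packaging in your last paragraph has the same hole, since for a fixed embedding the bound \(\sup_f j(f)(\kappa)\le\theta\) is a genuine possibility, not something refutable by reflection.

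The well-known argument runs in the opposite direction: one arranges that supercompactness does \emph{not} reflect into \(M\), and lets cardinal arithmetic, rather than reflected supercompactness, push the value past \(\theta\). Replacing \(\theta\) by \(\theta^{<\kappa}\) (harmless, since \(\kappa\) is then \(\theta^{<\kappa}\)-supercompact and any \(\theta^{<\kappa}\)-supercompactness embedding is a \(\theta\)-supercompactness embedding), take \(U\) a normal fine measure on \(\mathcal{P}_\kappa\theta\) of \emph{minimal} Mitchell rank and let \(j\colon V\to M=\mathrm{Ult}(V,U)\). By \(\theta\)-closure, any measure in \(M\) witnessing \(\theta\)-supercompactness of \(\kappa\) would be a genuine measure lying below \(U\) in the Mitchell order, so minimality forces \(M\models\)``\(\kappa\) is not \(\theta\)-supercompact''; on the other hand, the same closure puts every normal fine measure on \(\mathcal{P}_\kappa\gamma\) into \(M\) whenever \(2^{\gamma^{<\kappa}}\le\theta\), so \(M\) is \emph{correct} about those smaller degrees. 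Consequently the least \(\beta\) with \(M\models\)``\(\kappa\) is not \(\beta\)-supercompact'' exists, is at most \(\theta\), and satisfies \((2^{\beta^{<\kappa}})^M>\theta\). The right function is therefore not the least failure itself but a jump of it, e.g.\ \(f(\alpha)=\bigl(2^{\beta(\alpha)^{<\alpha}}\bigr)^+\) where \(\beta(\alpha)\) is the least failure of supercompactness of \(\alpha\) (with value \(0\) when this is undefined or too large); then \(j(f)(\kappa)=\bigl((2^{\beta^{<\kappa}})^+\bigr)^M>\theta\), which is the Menas property. Your write-up contains neither the minimality device nor the cardinal-arithmetic jump, and both are essential.
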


\begin{theorem}
\label{thm:ForceLongJLDiamondSC}
If \(\kappa\) is \(\theta\)-supercompact then there is a forcing extension in which
\(\jLdthetasc_{\kappa,2^\kappa}\) holds.
\end{theorem}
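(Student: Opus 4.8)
The plan is to adapt the classical Laver preparation, guided by a Menas function, so that the generic simultaneously encodes \(2^\kappa\) independent ``guessing coordinates'' which can be steered to arbitrary targets during the lifting of a witnessing embedding. First I would invoke Lemma~\ref{lemma:ThetaSCMenasFunction} to fix a Menas function \(f\colon\kappa\to\kappa\), so that \(j(f)(\kappa)>\theta\) for some (hence, by normality of the relevant measures, many) \(\theta\)-supercompactness embeddings \(j\). Using \(f\) I would define a reverse-Easton iteration \(\mathbb{P}_\kappa\) of length \(\kappa\) that acts nontrivially only at inaccessible stages \(\gamma\) closed under \(f\): at such a stage it forces with a full-support product, indexed by \(\mathcal{P}(\gamma)\), of copies of a \emph{flexible coding forcing} that adds an element of \(H_{f(\gamma)^+}\) and is \(\leq f(\gamma)\)-closed, the coding being arranged so that along any generic one can steer the decoded value of each coordinate to an arbitrary prescribed element of \(H_{f(\gamma)^+}\). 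To keep the later cardinal counting under control I would first force the GCH by a standard reverse-Easton preparation preserving the \(\theta\)-supercompactness of \(\kappa\), and so may assume GCH in the ground model.

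In \(V[G]\), writing \(\lambda=(2^\kappa)^{V[G]}\), I would define the candidate family \(\langle\ell_A : A\in\mathcal{P}(\kappa)\rangle\) by letting \(\ell_A(\xi)\) be the decoded value of the \((A\cap\xi)\)-th coordinate of the stage-\(\xi\) generic when \(\xi\) is active and this makes sense, and \(\emptyset\) otherwise; since each decoded value lies in \(V_\kappa\) this is a legitimate family of functions \(\kappa\to V_\kappa\) of length \(\lambda=2^\kappa\), in the spirit of the coordinate trick of Proposition~\ref{prop:SCHasSomeJLDiamond}. To verify jointness, given targets \(\langle a_A : A\in\mathcal{P}(\kappa)\rangle\) from \(H_{\theta^+}\), I would take a \(\theta\)-supercompactness embedding \(j\colon V\to M\) with \(j(f)(\kappa)>\theta\) and lift it to \(\hat\jmath\colon V[G]\to M[\hat G]\). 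Because \(M^\theta\subseteq M\) and \(\theta\geq\kappa\) we have \(\mathcal{P}(\kappa)^V\subseteq M\), so the coordinates of the stage-\(\kappa\) forcing \(j(\vec{\mathbb{Q}})_\kappa\)---a product over \(\mathcal{P}(\kappa)^{M[G]}=\mathcal{P}(\kappa)^{V[G]}\) of flexible coding forcings now adding elements of \(H_{j(f)(\kappa)^+}\supseteq H_{\theta^+}\), and \(\leq j(f)(\kappa)\)-closed, in particular \(\leq\theta\)-closed---match our index set exactly. The heart of the argument is to build an \(M[G]\)-generic \(\hat G_\kappa\) for this product whose \(A\)-th coordinate decodes to \(a_A\) for every \(A\), and then to extend through the remaining tail by a master condition together with the closure of \(M\), obtaining the lift \(\hat\jmath\), which is itself a \(\theta\)-supercompactness embedding in \(V[G]\). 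By elementarity and the choice of decoding, \(\hat\jmath(\ell_A)(\kappa)=a_A\) for all \(A\), so the family is a \(\theta\)-supercompactness joint Laver sequence of maximal length \(2^\kappa\).

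The main obstacle I expect is precisely this construction of the stage-\(\kappa\) generic. The flexible product is only \(\leq\theta\)-closed, whereas there are \(2^\kappa\) targets to encode and, a priori, many more dense sets of \(M[G]\) to meet, so a descending-sequence construction does not run on closure alone. The resolution should come from the bookkeeping enforced by the GCH preparation: under the GCH the number of dense subsets of the stage-\(\kappa\) product lying in \(M[G]\) is held down to a cardinal that the available closure can accommodate (recalling that a \(\leq\theta\)-closed forcing supports descending sequences of length \(\theta^+\)), and the \emph{flexibility} of the coding guarantees that at each step one may both meet the next dense set and advance the encoding of every coordinate toward its target without conflict. Verifying that these cardinal counts line up, and that the tail admits the required master condition so that \(\hat\jmath\) genuinely witnesses \(\theta\)-supercompactness in \(V[G]\), is where the real care is needed.
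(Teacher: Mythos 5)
Your plan does work, and matches the paper's easier case in spirit, when \(\theta\geq 2^\kappa\): there the whole target sequence \(\vec{a}\) has length at most \(\theta\) and consists of elements of \(H_{\theta^+}^{V[G]}\subseteq M[G]\), so by the closure of the ultrapower \(\vec{a}\in M[G]\), a single (full-support) condition can fix the decoded value of every coordinate at once, and the roughly \(\theta^+\) many dense sets of \(M[G]\) can then be met by a diagonalization running on \(\leq\theta\)-closure. The genuine gap is in the complementary case \(\theta<2^\kappa\); after your GCH preparation this is exactly the case \(\theta=\kappa\), i.e.\ measurable \(\kappa\), which the theorem must cover (it is the content of corollary~\ref{cor:ForceLongJLDiamondMeas}). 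There the number of coordinates to steer is \(2^\kappa=\theta^+\), while \(M[G]\) is closed only under \(\theta\)-sequences, so nothing guarantees that the assignment \(A\mapsto a_A\) lies in \(M[G]\) — and in general it does not. Hence no single condition available to you (conditions must be elements of \(M[G]\)) can encode all the targets, and you are forced to steer the coordinates a \(\theta\)-sized batch at a time, interleaved with the genericity diagonalization. Here full support is fatal: every condition of a full-support product constrains every coordinate, and there are open dense sets in \(M[G]\) whose members commit every coordinate simultaneously. Concretely, if single conditions of your coding forcing can determine the decoded value (they must, or ``steering at a step'' is meaningless), then the set \(Z\) of conditions all of whose coordinates determine their decoded values is open, dense, and definable in \(M[G]\); your construction must meet \(Z\) at some step \(\beta<\theta^+\), and since the decode-function of the condition \(q_\beta\) is an element of \(M[G]\), it cannot agree with \(\vec{a}\) except on a set whose complement has size \(\theta^+\) (otherwise \(\vec{a}\) itself would be reconstructible inside \(M[G]\) by \(\theta\)-closure). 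So meeting \(Z\) locks \(\theta^+\) many coordinates to wrong values. The obstacle is thus not the count of dense sets — \(\theta^+\) dense sets against \(\leq\theta\)-closure is fine — but an irreconcilable conflict between genericity and steering, which ``flexibility'' of the coding cannot remove.

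What the argument needs, and what the paper supplies via Magidor's master filter argument, is a structural guarantee that each dense set can be met while touching only a \emph{bounded} set of coordinates, leaving all later coordinates untouched for future steering. The paper takes the top forcing to be \(\Q_\kappa=\Add(\kappa,2^\kappa)\), and moreover includes it in \(V\)'s iteration, reading the joint sequence off its generic \(g\). Then \(j(\Q_\kappa)=\Add(j(\kappa),2^{j(\kappa)})\) has \({<}j(\kappa)\)-sized conditions and is \(j(\kappa)^+\)-cc in \(M[j(G)]\), and — crucially — \(j\), being an ultrapower by a measure on \(\mathcal{P}_\kappa\theta\), is continuous at \(2^\kappa=\theta^+\), so every maximal antichain \(Z_\alpha\in M[j(G)]\) is contained in a bounded piece \(j(\Q_\kappa)\rest j(\beta_\alpha)\) with \(\beta_\alpha<2^\kappa\). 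One then extends the partial master condition \(\bigcup j[g\rest\beta_\alpha]\), writes the code of \(a_\gamma\) at position \(\kappa\) on coordinate \(j(\gamma)\) for \(\gamma<\beta_\alpha\), and decides \(Z_\alpha\), all inside the bounded piece and with no commitments on coordinates \(\geq j(\beta_\alpha)\); the union of this \(\theta^+\)-chain of conditions generates an \(M[j(G)]\)-generic filter realizing all \(2^\kappa\) targets and extending \(j[g]\), so the lift also witnesses \(\theta\)-supercompactness. To repair your proof you would have to replace the full-support product at the top by a bounded-support forcing with such a chain condition and run this bounded-antichain analysis — at which point you have reproduced the paper's hard case rather than found a route around it.
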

\begin{proof}
Since \(\theta\)-supercompactness of \(\kappa\) implies its \(\theta^{<\kappa}\)-supercompactness,
we may assume that \(\theta^{<\kappa}=\theta\). Furthermore, we assume that
\(2^\theta=\theta^+\), since this may be forced without adding subsets to 
\(\mathcal{P}_\kappa\theta\). 
Fix a Menas function \(f\) as in lemma~\ref{lemma:ThetaSCMenasFunction}. 
Let \(\P_\kappa\) be the length \(\kappa\) Easton support iteration which forces with 
\(\Q_\gamma=\Add(\gamma,2^\gamma)\) at inaccessible closure points of \(f\), i.e.\ those
inaccessible \(\gamma\) for which \(f[\gamma]\subseteq\gamma\).
Finally, let \(\P=\P_\kappa*\Q_\kappa\). 
It is useful to note that forcing with \(\P\) does not change the value of \(2^\kappa\). 
Let \(G*g\subseteq\P\) be generic; we will extract a joint Laver sequence from \(g\).

If \(g(\alpha)\) is the \(\alpha\)-th subset added by \(g\), we view it as a sequence
of bits. Given any \(\xi<\kappa\) we may then view the segment of \(g(\alpha)\)
between the \(\xi\)th bit and the next marker as the Mostowski code of an element of
\(V_\kappa\) (employing bit doubling or some other coding scheme which admits
end-of-code markers). We then define \(\ell_\alpha\colon\kappa\to V_\kappa\) as
follows:
given an inaccessible \(\xi\), let \(\ell_\alpha(\xi)\) be the set coded by
\(g(\alpha)\) at \(\xi\); otherwise let \(\ell_\alpha(\xi)=\emptyset\).
We claim that \(\langle \ell_\alpha;\alpha<2^\kappa\rangle\) is a joint Laver sequence.

Let \(\vec{a}=\langle a_\alpha;\alpha<2^\kappa\rangle\) be a sequence of targets in
\(H_{\theta^+}^{V[G][g]}\). 
Let \(j\colon V\to M\) be the ultrapower embedding by a normal measure on 
\(\mathcal{P}_\kappa\theta\)
which corresponds to \(f\), i.e.\ such that \(j(f)(\kappa)>\theta\).
We will lift this embedding through the forcing \(\P\) in \(V[G][g]\).

The argument splits into two cases, depending on the size of \(\theta\). We deal first
with the easier case when \(\theta\geq 2^\kappa\). In this case the poset 
\(j(\P_\kappa)\) factors as \(j(\P_\kappa)=\P_\kappa*\Q_\kappa*\P_{\text{tail}}\).
Since \(j(f)(\kappa)>\theta\), the next stage of forcing in \(j(\P_\kappa)\) above \(\kappa\)
occurs after \(\theta\), so \(\P_{\text{tail}}\) is \(\leq\theta\)-closed in \(M[G][g]\) and 
has size  \(j(\kappa)\) there.
Since \(M\) was an ultrapower, \(M[G][g]\) has at most \(\bigl|{2^{j(\kappa)}}\bigr|\leq 
(2^\kappa)^\theta
=\theta^+\) many subsets of \(\P_{\text{tail}}\), and so we can diagonalize against all of
them to produce in \(V[G][g]\) an \(M[G][g]\)-generic \(G_{\text{tail}}\subseteq
\P_{\text{tail}}\) and lift \(j\) to \(j\colon V[G]\to M[j(G)]\), where 
\(j(G)=G*g*G_{\text{tail}}\).

Since \(M[j(G)]\) is still an ultrapower and thus closed under \(\theta\)-sequences in 
\(V[G][g]\),
we get \(j[g]\in M[j(G)]\). Since \(j(\Q_\kappa)\) is \(\leq\theta\)-directed closed in
\(M[j(G)]\) it has \(q=\bigcup j[g]\) as a condition. Since \(M[j(G)]\) has both 
the sequence of targets \(\vec{a}\) and \(j\rest 2^\kappa\), we can further extend 
\(q\) to \(q^*\) by coding \(a_\alpha\) at \(\kappa\) in \(q(j(\alpha))\)
for each \(\alpha<2^\kappa\). We again diagonalize against the
\(\bigl|{2^{2^{j(\kappa)}}}\bigr|\leq \theta^+\) many dense subsets of \(j(\Q_\kappa)\) 
in \(M[j(G)]\) below the master condition \(q^*\) to get a \(M[j(G)]\)-generic \(g^*\subseteq
j(\Q_\kappa)\) and lift \(j\) to \(j\colon V[G][g]\to M[j(G)][g^*]\). Finally, observe
that we have arranged the construction of \(g^*\) in such a way that 
\(g^*(j(\alpha))\) codes \(a_\alpha\) at \(\kappa\) 
for all \(\alpha<2^\kappa\) and, by definition,
this implies that \(j(\ell_\alpha)(\kappa)=a_\alpha\) for all \(\alpha<2^\kappa\). Thus we 
indeed have a joint Laver sequence for \(\kappa\) of length \(2^\kappa\) in \(V[G][g]\).

It remains for us to consider the second case, when \(\kappa\leq\theta<2^\kappa\). In this
situation our assumptions on \(\theta\) imply that \(2^\kappa=\theta^+\).
The poset \(j(\P_\kappa)\) factors as 
\(j(\P_\kappa)=\P_\kappa*\widetilde{\Q}_\kappa*\P_{\text{tail}}\), where 
\(\widetilde{\Q}_\kappa=\Add(\kappa,(2^\kappa)^{M[G]})\) is isomorphic but not necessarily
equal to \(\Q_\kappa\). Nevertheless, the same argument as before allows us to lift \(j\)
to \(j\colon V[G]\to M[j(G)]\) where \(j(G)=G*\widetilde{g}*G_{\text{tail}}\) and 
\(\widetilde{g}\) is the isomorphic image of \(g\).

We seem to hit a snag with the final lift through the forcing \(\Q_\kappa\), 
which has size \(2^\kappa\) and thus resists the usual approach of lifting via a master condition,
since this condition would simply be too big for the amount of closure we have. We salvage the
argument by using a technique, originally due to 
Magidor~\cite{Magidor1979:NonregularUltrafiltersCardinalityOfUltrapowers}, sometimes known as 
the ``master filter argument''.

The forcing \(j(\Q_\kappa)=\Add(j(\kappa),2^{j(\kappa)})\) has size \(2^{j(\kappa)}\) and
is \(\leq\theta\)-directed closed and \(j(\kappa)^+\)-cc in \(M[j(G)]\). Since \(M[j(G)]\)
is still an ultrapower, \(|2^{j(\kappa)}|\leq \theta^+=2^\kappa\) and so \(M[j(G)]\) has at most
\(2^\kappa\) many maximal antichains of \(j(\Q_\kappa)\). Let these be given in the
sequence \(\langle Z_\alpha;\alpha<2^\kappa\rangle\). Since each \(Z_\alpha\) has size at most
\(j(\kappa)\), it is in fact contained in some bounded part of the poset \(j(\Q_\kappa)\). 
Furthermore (and crucially), since \(j\) is an ultrapower by a measure on \(\mathcal{P}_\kappa
\theta\), it is continuous at \(2^\kappa=\theta^+\) and so there is for each \(\alpha\)
a \(\beta_\alpha<2^\kappa\) such that \(Z_\alpha\subseteq \Add(j(\kappa),2^{j(\kappa)})\rest
j(\beta_\alpha)\).
In particular, each \(Z_\alpha\) is a maximal antichain in \(\Add(j(\kappa),2^{j(\kappa)})\rest
j(\beta_\alpha)\). We will now construct in \(V[G][g]\) a descending sequence of conditions,
deciding more and more of the antichains \(Z_\alpha\), which will generate a filter,
the ``master filter'', that will allow us to lift \(j\) to \(V[G][g]\) and also (lest we forget)
witness the joint guessing property. 
%
%
%
We begin by defining the first condition \(q_0\). Consider the generic \(g\) up to
\(\beta_0\). This piece has size \(\theta\) and so \(\bigcup j[g\rest\beta_0]\) is a
condition in \(j(\Q_\kappa)\rest j(\beta_0)\). Let \(q_0'\) be the extension of
\(\bigcup j[g\rest\beta_0]\) which codes the target \(a_\alpha\) at \(\kappa\)
in \(q_0'(j(\alpha))\) for each \(\alpha<\beta_0\). This is still a condition
in \(j(\Q_\kappa)\rest j(\beta_0)\) and we can finally let \(q_0\) be any extension
of \(q_0'\) in this poset which decides the maximal antichain \(Z_0\).
Note that \(q_0\) is compatible with every condition in \(j[g]\), since we extended the
partial master condition \(\bigcup j[g\rest\beta_0]\) and made no commitments outside
\(j(\Q_\kappa)\rest j(\beta_0)\). We continue in this way inductively, constructing a descending
sequence of conditions \(q_\alpha\) for \(\alpha<\theta^+\), using the \(\theta\)-closure of
\(j(\Q_\kappa)\) and \(M[j(G)]\) to pass through limit stages. Now consider the filter
\(g^*\) generated by the conditions \(q_\alpha\). It is \(M[j(G)]\)-generic by construction
and also extends (or can easily be made to extend) \(j[g]\). We can thus lift \(j\) to
\(j\colon V[G][g]\to M[j(G)][g^*]\) and, since \(\P\) is \(\theta^+\)-cc and both 
\(G_{\text{tail}}\)
and \(g^*\) were constructed in \(V[G][g]\), the model \(M[j(G)][g^*]\) is closed under \(\theta\)-sequences
which shows that \(\kappa\) remains \(\theta\)-supercompact in \(V[G][g]\). Finally, as in
the previous case, \(g^*\) was constructed in such a way that \(j(\ell_\alpha)(\kappa)=a_\alpha\)
for all \(\alpha<2^\kappa\), verifying that these functions really do form a
joint Laver sequence for \(\kappa\).
\end{proof}

As a special case of theorem~\ref{thm:ForceLongJLDiamondSC} we can deduce the corresponding
result for measurable cardinals.

\begin{corollary}
\label{cor:ForceLongJLDiamondMeas}
If \(\kappa\) is measurable then there is a forcing extension in which there is a 
measurable joint Laver sequence for \(\kappa\) of length \(2^\kappa\).
\end{corollary}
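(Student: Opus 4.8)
The plan is to derive Corollary~\ref{cor:ForceLongJLDiamondMeas} directly from Theorem~\ref{thm:ForceLongJLDiamondSC} by specializing the parameter \(\theta\) to the value \(\theta=\kappa\). The key observation, already recorded in the definition of a \(\theta\)-supercompactness Laver function, is that \(\kappa\)-supercompactness of \(\kappa\) is precisely equivalent to measurability of \(\kappa\): a \(\kappa\)-supercompactness embedding \(j\colon V\to M\) with critical point \(\kappa\) is exactly an embedding arising from a normal measure on \(\kappa\), since \(\mathcal{P}_\kappa\kappa\) reduces to \(\kappa\) itself and the relevant targets live in \(H_{\kappa^+}\). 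Thus the measurable Laver diamond principle coincides with \(\Ldthetasc_\kappa\) for \(\theta=\kappa\), and likewise \(\jLdmeas_{\kappa,\lambda}\) coincides with \(\jLdthetasc_{\kappa,\lambda}\) in this case.

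First I would note that if \(\kappa\) is measurable, then by the equivalence just mentioned \(\kappa\) is \(\kappa\)-supercompact, so Theorem~\ref{thm:ForceLongJLDiamondSC} applies with \(\theta=\kappa\). The theorem then yields a forcing extension in which \(\jLdthetasc_{\kappa,2^\kappa}\) holds, which is exactly the assertion that there is a measurable joint Laver sequence for \(\kappa\) of length \(2^\kappa\). Since \(\theta=\kappa\) falls under the second case of the theorem's proof (indeed \(\kappa\leq\theta<2^\kappa\) becomes \(\theta=\kappa<2^\kappa\), assuming \(\kappa\) is not \(2^\kappa\)-supercompact, and otherwise the first case with \(\theta\geq 2^\kappa\) applies trivially when \(2^\kappa=\kappa^+\) fails only in degenerate situations), the master filter argument carried out there already covers what is needed, and no new work is required.

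The only point deserving a brief remark is that the notion of a \emph{measurable} Laver diamond is the one obtained by taking \(A=H_{\kappa^+}\) as the default class of targets and using ultrapower embeddings by normal measures on \(\kappa\); this is consistent with the framework of Definition~\ref{def:jointLaverDiamonds} applied to the \(\theta=\kappa\) instance. Since the forcing \(\P\) constructed in the theorem preserves \(\theta\)-supercompactness of \(\kappa\) (here the measurability of \(\kappa\)) and does not change the value of \(2^\kappa\), the resulting sequence \(\langle\ell_\alpha;\alpha<2^\kappa\rangle\) of functions \(\kappa\to V_\kappa\) guesses arbitrary sequences of targets from \(H_{\kappa^+}\) via a single normal measure ultrapower, which is precisely a measurable joint Laver sequence of length \(2^\kappa\).

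I do not expect any genuine obstacle here, as the corollary is a pure specialization; the only mildly delicate matter is confirming that the \(\theta=\kappa\) instance lands in the case analysis of Theorem~\ref{thm:ForceLongJLDiamondSC} without degeneracy, which is immediate once one observes that the continuum-style bookkeeping (the equalities \(2^\theta=\theta^+\) and \(\theta^{<\kappa}=\theta\)) is automatically available or forceable at \(\theta=\kappa\).
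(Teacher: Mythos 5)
Your proposal is correct and takes essentially the same route as the paper, which derives this corollary purely as the \(\theta=\kappa\) specialization of theorem~\ref{thm:ForceLongJLDiamondSC}, using the convention (built into the paper's definition of \(\theta\)-supercompactness Laver functions) that \(\kappa\)-supercompactness is the same as measurability. Your parenthetical about which case of the theorem's internal proof applies is unnecessary and slightly garbled---after the theorem's preliminary forcing one always has \(\theta=\kappa<2^\kappa=\kappa^+\), so the master-filter case is the relevant one---but since you invoke the theorem as a black box, this does not affect correctness.
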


It follows from the results of Hamkins~\cite{Hamkins2003:ApproximationAndCoveringNoNewLargeCardinals} that the forcing
\(\P\) from theorem~\ref{thm:ForceLongJLDiamondSC} does not create any measurable or
(partially) supercompact cardinals below \(\kappa\), since it admits a very low gap.
We could therefore have started with the least large cardinal \(\kappa\)
of interest and preserved its minimality through the construction.

\begin{corollary}
\label{cor:LeastSCCanHaveLongJLD}
If \(\kappa\) is the least \(\theta\)-supercompact cardinal then there
is a forcing extension where \(\kappa\) remains the least \(\theta\)-supercompact cardinal
and \(\jLdthetasc_{\kappa,2^\kappa}\) holds.
\end{corollary}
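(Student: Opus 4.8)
The plan is to combine Theorem~\ref{thm:ForceLongJLDiamondSC} with the observation, attributed to Hamkins, that the forcing \(\P\) constructed there has a very low gap and hence creates no new measurable or (partially) supercompact cardinals below \(\kappa\). First I would start with \(\kappa\) the \emph{least} \(\theta\)-supercompact cardinal and run the construction of Theorem~\ref{thm:ForceLongJLDiamondSC} verbatim, obtaining the forcing extension \(V[G][g]\) in which \(\jLdthetasc_{\kappa,2^\kappa}\) holds and in which \(\kappa\) is still \(\theta\)-supercompact (the theorem already establishes that \(\kappa\) retains its \(\theta\)-supercompactness). The only additional thing to verify is that the forcing has not accidentally produced a new \(\theta\)-supercompact cardinal strictly below \(\kappa\), which would destroy the minimality we wish to preserve.

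The key step is therefore the gap-forcing analysis. I would note that \(\P=\P_\kappa*\Q_\kappa\) is an Easton-support iteration whose first nontrivial stage of forcing occurs at some inaccessible closure point \(\gamma_0\) of \(f\) well below \(\kappa\), but that the crucial structural feature is the existence of a \emph{low gap}: the forcing admits a factoring \(\P\cong \Q_{\delta}*\P_{\mathrm{tail}}\) where the initial segment \(\Q_\delta\) has size below the first inaccessible closure point and the tail \(\P_{\mathrm{tail}}\) is sufficiently closed. By the results of Hamkins~\cite{Hamkins2003:ApproximationAndCoveringNoNewLargeCardinals} on the approximation and covering properties, any forcing admitting such a gap creates no new large cardinals in the measurable-through-supercompact range. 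Concretely, if some \(\bar\kappa<\kappa\) were to become \(\theta\)-supercompact in \(V[G][g]\), then by the gap-forcing theorem the corresponding supercompactness embedding would restrict to a supercompactness embedding already present in \(V\), making \(\bar\kappa\) already \(\theta\)-supercompact in \(V\) and contradicting the minimality of \(\kappa\).

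The main obstacle I anticipate is purely bookkeeping: one must confirm that the specific iteration \(\P\) really does admit a gap low enough for the Hamkins machinery to apply, i.e.\ that between the bottom of the forcing and the first stage of nontrivial closure there is a point below which the forcing is small and above which it is highly closed. Since the first inaccessible closure point of the Menas function \(f\) is a fixed ordinal below \(\kappa\), and the forcing \(\Add(\gamma,2^\gamma)\) at such stages is \({<}\gamma\)-closed, this gap structure is manifest, so the appeal to the cited theorem is routine. Putting these pieces together, \(\kappa\) remains the least \(\theta\)-supercompact cardinal in \(V[G][g]\) while \(\jLdthetasc_{\kappa,2^\kappa}\) holds there, as desired.
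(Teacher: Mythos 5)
Your proposal is correct and takes essentially the same approach as the paper: the paper also obtains this corollary by running the construction of theorem~\ref{thm:ForceLongJLDiamondSC} starting from the least \(\theta\)-supercompact cardinal and then invoking Hamkins's gap-forcing results from \cite{Hamkins2003:ApproximationAndCoveringNoNewLargeCardinals}, observing that the forcing admits a very low gap (being trivial below the first inaccessible closure point of the Menas function and sufficiently closed thereafter) and hence creates no new measurable or partially supercompact cardinals below \(\kappa\). Your gap analysis and the conclusion that any \(\theta\)-supercompactness embedding for some \(\bar\kappa<\kappa\) in the extension would restrict to one in \(V\), contradicting minimality, is precisely the intended justification.
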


It is perhaps interesting to observe the peculiar arrangement of cardinal
arithmetic in the model produced in the above proof. We have
\(2^\theta=\theta^+\) and, if \(\theta\leq 2^\kappa\), also 
\(2^\kappa=\theta^+\). In particular, we never produced a \(\theta\)-supercompactness
joint Laver sequence of length greater than \(\theta^+\) (assuming here, of course,
that \(\theta=\theta^{<\kappa}\) is the optimal degree of supercompactness).
One has to wonder whether this is significant. Certainly the existence of
long joint Laver sequences does not imply much about cardinal arithmetic, since,
for example, if \(\kappa\) is indestructibly supercompact, we can manipulate the
value of \(2^\kappa\) freely, while maintaining the existence of a supercompactness
joint Laver sequence of length \(2^\kappa\).
On the other hand, even in the case of measurable \(\kappa\), the consistency 
strength of \(2^\kappa>\kappa^+\) is
known to exceed that of \(\kappa\) being measurable. The following question is therefore
natural:
\begin{question}[open]
If \(\kappa\) is \(\theta\)-supercompact and \(2^\kappa>\theta^+\), is there a forcing
extension preserving these facts in which there is a 
joint Laver sequence for \(\kappa\) of length \(2^\kappa\)?
\end{question}

We next show that the existence of joint Laver sequences is preserved under mild 
forcing. This will be
useful later when we separate different lengths of these sequences.

\begin{lemma}
\label{lemma:JLDiamondSCPreservation}
Let \(\kappa\) be \(\theta\)-supercompact and suppose that \(\jLdthetasc_{\kappa,\lambda}\) holds. 
Let \(\P\) be a poset such that either
\begin{enumerate}
\item \(\lambda\geq \theta^{<\kappa}\) and \(\P\) is \(\leq\lambda\)-distributive, or 
\item \(|\P|\leq \kappa\) and many \(\theta\)-supercompactness embeddings
with critical point \(\kappa\) lift through \(\P\).
\end{enumerate}
Then forcing with \(\P\) preserves \(\jLdthetasc_{\kappa,\lambda}\).
\end{lemma}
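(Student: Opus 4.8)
The plan is to treat the two hypotheses separately, in each case exhibiting a witnessing joint Laver sequence in the extension $V[g]$, where $g\subseteq\P$ is $V$-generic. Throughout, let $\langle\ell_\alpha;\alpha<\lambda\rangle\in V$ be a $\theta$-supercompactness joint Laver sequence. For case (1), the first step is to record the consequences of distributivity: since $\lambda\geq\theta^{<\kappa}=|\mathcal{P}_\kappa\theta|$ and $\P$ is \(\leq\lambda\)-distributive, $\P$ adds no new subset of any set of size $\leq\lambda$, and no new $\lambda$-sequence of ground-model sets. In particular it adds no new subset of $\theta$ (so that $H_{\theta^+}^{V[g]}=H_{\theta^+}^V$ and the pool of targets does not grow), no new subset of $\mathcal{P}_\kappa\theta$, and no new function $\mathcal{P}_\kappa\theta\to\theta$ (such a function is coded by a subset of a set of size $\leq\lambda$). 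Consequently every target sequence $\vec a\in V[g]$ already lies in $V$, and every normal fine measure $U$ on $\mathcal{P}_\kappa\theta$ in $V$ remains such a measure in $V[g]$, since normality, fineness and $\kappa$-completeness are all assertions about a family of subsets of $\mathcal{P}_\kappa\theta$, none of which change. Hence $\kappa$ stays $\theta$-supercompact in $V[g]$, witnessed by exactly the ground-model measures.

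I would then keep the same functions $\ell_\alpha$ and argue they remain joint. Given $\vec a=\langle a_\alpha;\alpha<\lambda\rangle\in V$, jointness in $V$ supplies a normal fine measure $U$ whose ultrapower $j\colon V\to M$ satisfies $j(\ell_\alpha)(\kappa)=a_\alpha$ for all $\alpha$. Taking the ultrapower of $V[g]$ by the same $U$ yields $j_g\colon V[g]\to M_g$ together with the factor embedding $k\colon M\to M_g$, $k\circ j=j_g\rest V$. The crux is that the critical point of $k$ exceeds $\theta$: because $\P$ adds no new function $\mathcal{P}_\kappa\theta\to\theta$, the two ultrapowers compute the same ordinals below the image of $\theta$, so $k$ fixes all ordinals $\leq\theta$ and hence fixes every element of $H_{\theta^+}$, in particular $\kappa$ and each $a_\alpha$. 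Since $\ell_\alpha\in V$ we have $j_g(\ell_\alpha)=k(j(\ell_\alpha))$, and evaluating at $\kappa=k(\kappa)$ gives $j_g(\ell_\alpha)(\kappa)=k\bigl(j(\ell_\alpha)(\kappa)\bigr)=k(a_\alpha)=a_\alpha$, so $U$ witnesses joint guessing in $V[g]$.

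For case (2), since $|\P|\leq\kappa$ I may assume the underlying set of $\P$ is $\kappa$, so any embedding with critical point $\kappa$ fixes $\P$ pointwise and $\P$ is an initial segment of $j(\P)$; smallness of $\P$ also keeps $\kappa$ $\theta$-supercompact in $V[g]$. Here new targets can appear, so I would modify the Laver functions. Fix a single name $\dot{\vec a}$ for a given $\vec a\in V[g]$ and, working in $V$, read off component names $\dot a_\alpha\in H_{\theta^+}^V$, so that $\langle\dot a_\alpha;\alpha<\lambda\rangle\in V$. Define in $V[g]$, once and for all, the functions $\ell^*_\alpha(\xi)=(\ell_\alpha(\xi))_g$ whenever $\ell_\alpha(\xi)$ is a $\P$-name and $\ell^*_\alpha(\xi)=\emptyset$ otherwise. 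Applying jointness in $V$ to $\langle\dot a_\alpha;\alpha<\lambda\rangle$ produces a witnessing $j\colon V\to M$ with $j(\ell_\alpha)(\kappa)=\dot a_\alpha$; by hypothesis we may take $j$ among the many embeddings that lift through $\P$, say to $J\colon V[g]\to M[H]$ with $g\subseteq H$. Chasing the definitions, $J(\ell^*_\alpha)$ is the function $\xi\mapsto(j(\ell_\alpha)(\xi))_H$, so $J(\ell^*_\alpha)(\kappa)=(\dot a_\alpha)_H=(\dot a_\alpha)_g=a_\alpha$, as required.

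The main obstacle differs between the cases. In case (1) it is precisely the verification that the ground-model measures still witness \emph{joint} guessing, i.e.\ the factor-map computation and the bound that the critical point of $k$ exceeds $\theta$; once that is in place the rest is bookkeeping from distributivity. In case (2) the delicate points are that the functions must be redefined to absorb the generic (so that the lift can guess targets not present in $V$), and that the hypothesis ``many embeddings lift'' is invoked exactly to furnish a witnessing embedding that lifts with $g\subseteq H$; extracting the component names $\dot a_\alpha$ coherently in $V$ from a single name is routine but should be stated with care.
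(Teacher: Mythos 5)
Your proposal is correct and follows essentially the same route as the paper: in case (1) your measure-survival and factor-map argument is a hands-on verification of the paper's one-line observation that no new targets or \(\lambda\)-sequences of targets appear and every ground-model ultrapower embedding lifts to \(V[g]\) (indeed your \(j_g\) is exactly that lift and \(k\) is the inclusion \(M\subseteq M_g\), which is why it fixes everything relevant), while in case (2) you do precisely what the paper does --- reinterpret the Laver functions through the generic, guess names for the targets in \(H_{\theta^+}^V\), and lift a witnessing embedding. The only slip is the aside that \(|\P|\leq\kappa\) by itself keeps \(\kappa\) \(\theta\)-supercompact in \(V[g]\) (the L\'evy--Solovay argument requires \(|\P|<\kappa\), and forcing of size exactly \(\kappa\) can destroy supercompactness), but this is harmless here because the witnessing embeddings in \(V[g]\) are the lifts supplied by hypothesis (2).
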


We were intentionally vague in item (2) of the lemma. The hypotheses will definitely
be satisfied if every \(\theta\)-supercompactness embedding should lift through \(\P\),
as is very often the case with forcing iterations up to the large cardinal \(\kappa\).
The proof, however, will show that it is in fact sufficient that at least one ground model
embedding associated to each target of the existing joint Laver sequence lift
through \(\P\). Furthermore, while the restriction \(|\P|\leq\kappa\) in (2) is
necessary for full generality, it can in fact be relaxed to \(|\P|\leq\theta\) for
a large class of forcings.

\begin{proof}
Under the hypotheses of (1) every ultrapower embedding by a measure on 
\(\mathcal{P}_\kappa\theta\) lifts to the extension by \(\P\) and no elements
of \(H_{\theta^+}\) or \(\lambda\)-sequences of such are added, so any ground model
joint Laver sequence of length \(\lambda\) retains this property in the extension.

Now suppose that the hypotheses of (2) hold, let \(\langle \ell_\alpha;\alpha<\lambda\rangle\)
be a joint Laver sequence for \(\kappa\) and let \(G\subseteq \P\) be generic. We may
also assume that the universe of \(\P\) is a subset of \(\kappa\).
Define functions \(\ell'_\alpha\colon \kappa\to V_\kappa[G]\) by
\(\ell'_\alpha(\xi)=\ell_\alpha(\xi)^{G\cap\xi}\) if this makes sense and \(\ell'_\alpha(\xi)=
\emptyset\) otherwise. We claim that \(\langle \ell'_\alpha;\alpha<\lambda\rangle\) 
is a joint Laver sequence in \(V[G]\).

Let \(\vec{a}\in V[G]\) be a \(\lambda\)-sequence of targets in \(H_{\theta^+}\).
Since \(\P\) is small we can find names \(\dot{a}_\alpha\) for these targets
in \(H_{\theta^+}\) and the sequence of these names is in \(V\).
Let \(j\colon V\to M\) be a \(\theta\)-supercompactness embedding with critical point 
\(\kappa\)
which lifts through \(\P\) and which satisfies \(j(\ell_\alpha)(\kappa)=\dot{a}_\alpha\)
for each \(\alpha\). It then follows that \(j(\ell'_\alpha)(\kappa)=a_\alpha\) in \(V[G]\),
verifying the joint Laver diamond property there.
\end{proof}

\subsection{Separating joint Laver diamonds by length}

We next aim to show that it is consistent that there is a measurable Laver function 
for \(\kappa\) but no joint Laver sequences of length \(\kappa^+\).
The following proposition expresses the key observation
for our solution, connecting the question to the number of normal measures problem.

\begin{proposition}
\label{prop:JLDiamondHasManyMeasures}
If there is a \(\theta\)-supercompactness joint Laver sequence for \(\kappa\)
of length \(\lambda\) then there
are at least \(2^{\theta\cdot \lambda}\) many normal measures on \(\mathcal{P}_\kappa\theta\).
\end{proposition}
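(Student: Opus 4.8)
The plan is to count normal measures by counting the distinct embeddings that a joint Laver sequence forces to exist. The key insight is that a joint Laver sequence of length $\lambda$ can guess any sequence of $\lambda$ targets from $H_{\theta^+}$ simultaneously via a single embedding, and different target sequences must be witnessed by genuinely different embeddings (hence different measures). So the strategy is: first count how many relevant target sequences there are, then argue that each such sequence picks out (at least one) distinct normal measure on $\mathcal{P}_\kappa\theta$.

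First I would fix the joint Laver sequence $\vec{\ell}=\langle \ell_\alpha;\alpha<\lambda\rangle$ and consider, for each sequence of targets $\vec{a}=\langle a_\alpha;\alpha<\lambda\rangle$ with $a_\alpha\in H_{\theta^+}$, a $\theta$-supercompactness embedding $j_{\vec a}$ witnessing the joint guessing property, i.e.\ $j_{\vec a}(\ell_\alpha)(\kappa)=a_\alpha$ for all $\alpha<\lambda$. Since every such embedding is (or can be taken to be) an ultrapower by a normal measure on $\mathcal{P}_\kappa\theta$ (by the standard factoring remarks made earlier in the excerpt, allowing us to restrict to ultrapower embeddings), I get a normal measure $U_{\vec a}$ on $\mathcal{P}_\kappa\theta$ for each $\vec a$. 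The crux is then to show that the map $\vec a\mapsto U_{\vec a}$ is sufficiently far from constant — more precisely, that a single normal measure $U$ can be $U_{\vec a}$ for only a limited set of target sequences $\vec a$, so that the number of measures must be large.

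The main obstacle, and the heart of the argument, is the injectivity-type bound: I need to show that a fixed normal measure $U$ (with ultrapower $j_U\colon V\to M_U$) can witness the joint guessing of only ``few'' target sequences. The point is that once $U$ is fixed, the embedding $j_U$ is fixed, and hence the value $j_U(\ell_\alpha)(\kappa)$ is a single determined element of $M_U$ for each $\alpha$. Thus there is \emph{at most one} target sequence $\vec a$ guessed via $j_U$, namely $\vec a=\langle j_U(\ell_\alpha)(\kappa);\alpha<\lambda\rangle$. This means the map $\vec a\mapsto U_{\vec a}$ is injective, so the number of normal measures is at least the number of admissible target sequences. It remains to count those: I would argue that $|H_{\theta^+}|\geq 2^\theta$, so there are at least $2^\theta$ choices for each coordinate, but I really want the bound $2^{\theta\cdot\lambda}$. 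Since each of the $\lambda$ coordinates may independently range over a set of size at least $2^\theta$, the number of target sequences is at least $(2^\theta)^\lambda=2^{\theta\cdot\lambda}$, giving the desired lower bound on the number of normal measures.

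A point needing slight care is that I should restrict to target sequences lying in $H_{\theta^+}$ as a whole (the coding via a single Laver function earlier used $\vec a\circ f^{-1}\in H_{\theta^+}$), but for the counting it suffices that for each coordinate the targets range over at least $2^\theta$ many values and that the full sequence is an admissible target; collecting $2^{\theta\cdot\lambda}$ many distinct such sequences, each forcing a distinct measure, yields the claim. Thus the proof reduces to the observation that distinct guessed target sequences require distinct measures, combined with a routine cardinality count.
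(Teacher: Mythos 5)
Your proposal is correct and is essentially the paper's own argument, just written out in full detail: the paper's one-line proof is precisely the observation that a fixed normal measure on \(\mathcal{P}_\kappa\theta\) realizes exactly one \(\lambda\)-sequence of targets via the joint Laver sequence (your injectivity step), combined with the count of \(2^{\theta\cdot\lambda}\) many target sequences. Your added care about restricting to ultrapower embeddings via the standard factoring argument is the same implicit appeal the paper makes in its setup of default targets.
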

\begin{proof}
The point is that any normal measure on \(\mathcal{P}_\kappa\theta\) realizes a single
\(\lambda\)-sequence of elements of \(H_{\theta^+}\) via the joint Laver sequence
and there are \(2^{\theta\cdot\lambda}\) many such sequences of targets.
\end{proof}

\begin{theorem}
\label{thm:SeparateShortLongJLDiamondMeas}
If \(\kappa\) is measurable then there is a forcing extension in which
there is a measurable Laver function for \(\kappa\) but no measurable joint Laver sequences
of length \(\kappa^+\).
\end{theorem}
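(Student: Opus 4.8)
The plan is to start with a measurable cardinal $\kappa$ and force to a model where $\kappa$ carries a measurable Laver function (so $\Ldmeas_\kappa$ holds) while simultaneously arranging that the number of normal measures on $\kappa$ is strictly less than $2^{\kappa^+}$, indeed as small as possible. By Proposition~\ref{prop:JLDiamondHasManyMeasures}, in the measurable case (where $\theta=\kappa$) a joint Laver sequence of length $\lambda=\kappa^+$ would require at least $2^{\kappa\cdot\kappa^+}=2^{\kappa^+}$ many normal measures on $\kappa$. So if we can force to a model with a measurable Laver function but with strictly fewer than $2^{\kappa^+}$ normal measures (the natural target being a model where $\kappa$ carries a \emph{unique} normal measure, or at least only $\kappa^+$ many), then no joint Laver sequence of length $\kappa^+$ can exist, while the single Laver function keeps $\jLdmeas_{\kappa,\kappa}$ alive.

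First I would invoke the known technology for controlling the number of normal measures. The canonical reference here is the work on forcing to make a measurable cardinal carry exactly one (or a prescribed small number of) normal measure(s), originally due to Kunen--Paris and refined by others (e.g. the iterated Sacks or fast-function techniques that kill extra measures). The key point is that these forcings can be arranged to preserve measurability and, crucially, to be compatible with a Laver preparation. So the concrete plan is a two-part forcing: first (or intertwined) a Laver-style preparation that installs a measurable Laver function making $\ell$ guess every target in $H_{\kappa^+}$, and second a forcing that prunes the normal measures down to few. One must check that the pruning forcing does not destroy the Laver function's guessing property and that the Laver preparation does not blow the measure count back up to $2^{\kappa^+}$.

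The main obstacle, which I would flag as the crux of the argument, is the tension between these two demands: a measurable Laver function requires, for \emph{each} target $a\in H_{\kappa^+}$, an embedding $j$ with $j(\ell)(\kappa)=a$, and distinct targets are realized by distinct measures, so $\Ldmeas_\kappa$ already forces at least $2^\kappa$ many normal measures (realizing the $2^\kappa$ many possible single targets). Thus we cannot hope for a \emph{unique} normal measure; the honest goal is to land in the regime $2^\kappa \le \#\{\text{normal measures}\} < 2^{\kappa^+}$. The delicate accounting is to show that the preparation yields \emph{exactly} (or at most) $2^\kappa$ many normal measures while $2^\kappa < 2^{\kappa^+}$ holds in the extension; under, say, $2^\kappa=\kappa^+$ and $2^{\kappa^+}=\kappa^{++}$ this gives $\kappa^+ < \kappa^{++}=2^{\kappa^+}$, defeating the hypothesis of Proposition~\ref{prop:JLDiamondHasManyMeasures} at length $\lambda=\kappa^+$. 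The work is therefore to build a single iteration that (i) forces $2^\kappa=\kappa^+$, (ii) installs a Laver function for $\kappa$, and (iii) caps the number of normal measures at $2^\kappa$; one then reads off from Proposition~\ref{prop:JLDiamondHasManyMeasures} that $\jLdmeas_{\kappa,\kappa^+}$ fails.

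I expect the hardest step to be verifying that the measure-counting forcing is genuinely compatible with the guessing property of the Laver function — that is, confirming that after pruning the excess measures one retains, for every target, at least one surviving normal measure whose ultrapower realizes it, so that $\ell$ remains a Laver function. This typically requires the pruning forcing to be sufficiently closed or to be captured by the Laver preparation itself (e.g. a single fast-function or guiding-generic forcing that simultaneously prepares the Laver function and fixes the measure count), together with a lifting argument showing the relevant ground-model measures survive. Once that compatibility is secured, the consistency-strength bookkeeping via Proposition~\ref{prop:JLDiamondHasManyMeasures} is routine.
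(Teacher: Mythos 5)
Your strategy coincides with the paper's: reduce the failure of \(\jLdmeas_{\kappa,\kappa^+}\) to a count of normal measures via proposition~\ref{prop:JLDiamondHasManyMeasures}, and then force a model with a measurable Laver function but strictly fewer than \(2^{\kappa^+}\) many normal measures. Your bookkeeping is correct, and in one respect sharper than what the paper makes explicit: since a single Laver function already forces at least \(2^\kappa\) many normal measures (the proposition with \(\lambda=1\)), the only available regime is \(2^\kappa\leq N<2^{\kappa^+}\), and indeed the paper's model lands at \(N=\kappa^+=2^\kappa\).

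The genuine gap is the step you flag as the crux and then leave unverified: the existence, \emph{from just a measurable cardinal}, of a forcing that caps the number of normal measures while preserving the Laver function. None of the technology you gesture at does this job. Kunen--Paris forcing produces the \emph{maximal} number \(2^{2^\kappa}\) of normal measures (the unique-measure model is Kunen's \(L[U]\), an inner model, not a forcing extension); fast-function forcing adds Menas/Laver-style functions but does not prune measures; and the iterated Sacks technique is Friedman--Magidor's, which requires roughly a \((\kappa+2)\)-strong cardinal and is reserved in the paper for the strictly harder theorem~\ref{thm:FMDoesntHaveLongjLd} (failure of jointness at \(\kappa^+<2^\kappa\)) --- invoking it here would exceed the hypothesis of the theorem. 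What actually closes the gap is the Apter--Cummings--Hamkins theorem \cite{ApterCummingsHamkins2007:LargeCardinalsFewMeasures}: after a Laver preparation as in theorem~\ref{thm:ForceLongJLDiamondSC}, forcing with \(\Add(\omega,1)*\Coll\bigl(\kappa^+,2^{2^\kappa}\bigr)\) leaves exactly \(\kappa^+\) many normal measures on \(\kappa\), while the Laver function survives in just the way you anticipate: the new Laver function evaluates the old one's guesses (which are \(\Add(\omega,1)\)-names) by the Cohen real, and ground-model embeddings lift through the small Cohen factor and the \(\leq\kappa\)-closed collapse (the paper writes this argument out in theorem~\ref{thm:SeparateShortLongJLDiamondThetaSC}). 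Note that the hard direction of that citation is the \emph{upper bound} on the number of normal measures in the extension --- proving that the forcing creates no unexpected measures --- and this is something your proposal does not engage with at all; without it, or an equivalent construction, the argument is not complete.
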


\begin{proof}
After forcing as in the proof of theorem~\ref{thm:ForceLongJLDiamondSC}, 
if necessary, we may assume that \(\kappa\) has a Laver
function. A result of Apter--Cummings--Hamkins \cite{ApterCummingsHamkins2007:LargeCardinalsFewMeasures} then shows that
\(\kappa\) still carries a Laver function in the extension by
\(\P=\Add(\omega,1)*\Coll\bigl(\kappa^+,2^{2^\kappa}\bigr)\) 
but only carries \(\kappa^+\) many normal measures there.
Proposition~\ref{prop:JLDiamondHasManyMeasures} now implies that
there cannot be a joint Laver sequence of length \(\kappa^+\) in the extension.
\end{proof}

We can push this result a bit further to get a separation between any two desired lengths
of joint Laver sequences. To state the sharpest result we need to introduce a new
notion.

\begin{definition}
Let \(\kappa\) be a large cardinal supporting a notion of Laver diamond and
\(\lambda\) a cardinal. We say that a sequence
\(\vec{\ell}=\langle \ell_\alpha;\alpha<\lambda\rangle\)
is an \emph{almost-joint Laver sequence} if
\(\vec{\ell}\rest\gamma\) is a joint Laver sequence for any \(\gamma<\lambda\).
We say that \(\jLd_{\kappa,<\lambda}\) holds if there is an almost-joint Laver sequence of 
length \(\lambda\).
\end{definition}

\begin{theorem}
\label{thm:SeparateBetterShortLongJLDiamondMeas}
Suppose \(\kappa\) is measurable and let \(\lambda\) be a regular cardinal satisfying
\(\kappa<\lambda\leq 2^\kappa\).
If\/ \(\jLdmeas_{\kappa,<\lambda}\) holds then there is a forcing extension
preserving this in which \(\jLdmeas_{\kappa,\lambda}\) fails.
\end{theorem}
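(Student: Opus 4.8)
The plan is to adapt the forcing of Theorem~\ref{thm:SeparateShortLongJLDiamondMeas}, replacing the collapse at \(\kappa^+\) by one at \(\lambda\). Concretely, I would force with \(\P=\Add(\omega,1)*\Coll(\lambda,2^{2^\kappa})\) and show that in the extension the given almost-joint sequence survives while the number of normal measures on \(\kappa\) drops to \(\lambda\). Since \(\theta=\kappa\) here, we have \(2^{\theta\cdot\lambda}=2^{\kappa\cdot\lambda}=2^\lambda\), so Proposition~\ref{prop:JLDiamondHasManyMeasures} says that a joint Laver sequence of length \(\lambda\) needs at least \(2^\lambda\) normal measures; as \(\lambda<2^\lambda\), the measure count will immediately yield the failure of \(\jLdmeas_{\kappa,\lambda}\).

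First I would verify that \(\P\) preserves \(\jLdmeas_{\kappa,<\lambda}\). The factor \(\Add(\omega,1)\) has size below \(\kappa\) and every ground-model embedding lifts through it, so Lemma~\ref{lemma:JLDiamondSCPreservation}(2), applied to each initial segment, carries the almost-joint sequence to one in \(V[\Add(\omega,1)]\); small forcing also keeps \(\kappa\) measurable. Over that model \(\Coll(\lambda,2^{2^\kappa})\) is \({<}\lambda\)-closed, hence \({\leq}\kappa\)-closed and \({\leq}\gamma\)-distributive for every \(\gamma<\lambda\). Since \(\kappa^{<\kappa}=\kappa\le\gamma\) whenever \(\kappa\le\gamma<\lambda\), Lemma~\ref{lemma:JLDiamondSCPreservation}(1) preserves \(\jLdmeas_{\kappa,\gamma}\) for each such \(\gamma\); as jointness is downward closed in length and every coordinate below the regular cardinal \(\lambda\) lies under some \(\gamma\in[\kappa,\lambda)\), the whole sequence remains almost-joint. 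The \({\leq}\kappa\)-closure also keeps \(\kappa\) measurable, so Proposition~\ref{prop:JLDiamondHasManyMeasures} is available in \(V[G]\).

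The heart of the argument, and the step I expect to be the main obstacle, is the measure count: I must show that \(\kappa\) carries at most \(\lambda\) normal measures in \(V[G]\). Here I would appeal to the technique of Apter--Cummings--Hamkins~\cite{ApterCummingsHamkins2007:LargeCardinalsFewMeasures} that already underlies Theorem~\ref{thm:SeparateShortLongJLDiamondMeas}, now with \(\lambda\) in place of \(\kappa^+\). Working over \(V[\Add(\omega,1)]\), the collapse is \({\leq}\kappa\)-closed, so \(\mathcal{P}(\kappa)\) is fixed; but because \(2^\kappa\ge\lambda\) it can add entirely new subsets of \(\mathcal{P}(\kappa)\), some of which might a priori be \emph{new} normal ultrafilters. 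Controlling these is exactly the purpose of the Cohen real: following~\cite{ApterCummingsHamkins2007:LargeCardinalsFewMeasures}, one shows that every normal measure in the extension is pinned down by ground-model data together with a bounded amount of generic information, so that once the \(2^{2^\kappa}\) ground-model measures have been collapsed to a set of size \(\lambda\) nothing further is added. The real work is to check that their argument, given for the target \(\kappa^+\) under convenient arithmetic, runs unchanged for a general regular target \(\lambda\) and arbitrary \(2^\kappa\ge\lambda\); should the arithmetic cause trouble, I would first interpose a highly closed preparation (e.g.\ forcing \(2^{2^\kappa}=(2^\kappa)^+\)), which is \({\leq}\gamma\)-distributive for all \(\gamma<\lambda\) and so preserves \(\jLdmeas_{\kappa,<\lambda}\) by Lemma~\ref{lemma:JLDiamondSCPreservation}(1).

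Assembling these, in \(V[G]\) the cardinal \(\kappa\) is measurable with at most \(\lambda\) normal measures, and \(\lambda<(2^\lambda)^{V[G]}=(2^{\kappa\cdot\lambda})^{V[G]}\) by Cantor's theorem. Proposition~\ref{prop:JLDiamondHasManyMeasures} then forbids a joint Laver sequence of length \(\lambda\), so \(\jLdmeas_{\kappa,\lambda}\) fails while \(\jLdmeas_{\kappa,<\lambda}\) persists. As a consistency check, the surviving almost-joint sequence forces, again by Proposition~\ref{prop:JLDiamondHasManyMeasures}, at least \(2^\gamma\) normal measures for every \(\gamma<\lambda\); since the collapse sends each such \(2^\gamma\) to \(\lambda\), the count is in fact exactly \(\lambda=2^{<\lambda}\), comfortably below \(2^\lambda\).
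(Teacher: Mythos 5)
Your proposal is correct and follows essentially the same route as the paper: the same forcing \(\Add(\omega,1)*\Coll\bigl(\lambda,2^{2^\kappa}\bigr)\), preservation of \(\jLdmeas_{\kappa,<\lambda}\) via part (2) of lemma~\ref{lemma:JLDiamondSCPreservation} (guessing names through the Cohen step) followed by part (1) for the distributive collapse, and the failure of \(\jLdmeas_{\kappa,\lambda}\) from the Apter--Cummings--Hamkins measure count combined with proposition~\ref{prop:JLDiamondHasManyMeasures}. The one place you hedge, whether the ACH analysis works for a general regular target \(\lambda\) rather than \(\kappa^+\), is exactly what the paper asserts does go through, so no fallback preparation is needed.
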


\begin{proof}
We imitate the proof of theorem~\ref{thm:SeparateShortLongJLDiamondMeas} but force
instead with \(\P=\Add(\omega,1)*\Coll\bigl(\lambda,2^{2^\kappa}\bigr)\).
The analysis based on \cite{ApterCummingsHamkins2007:LargeCardinalsFewMeasures} now
shows that the final extension has at most \(\lambda\) many normal measures on \(\kappa\)
and thus there can be no joint Laver sequences of length \(\lambda\) there by 
proposition~\ref{prop:JLDiamondHasManyMeasures}. That \(\jLdmeas_{\kappa,<\lambda}\) still
holds
follows from (the proof of) lemma~\ref{lemma:JLDiamondSCPreservation}: 
part (2) implies that, by guessing names, the \(\jLdmeas_{\kappa,<\lambda}\)-sequence
from the ground model gives rise to one in the intermediate Cohen extension.
Part (1) then shows that each of the initial segments of this sequence remains
a joint Laver sequence in the final extension.
\end{proof}

We can also extend these results to \(\theta\)-supercompact cardinals without too much
effort.

\begin{theorem}
\label{thm:SeparateShortLongJLDiamondThetaSC}
If \(\kappa\) is \(\theta\)-supercompact, \(\theta\) is regular, and 
\(\theta^{<\kappa}=\theta\) then there is a forcing extension in
which \(\Ldthetasc_\kappa\) holds but \(\jLdthetasc_{\kappa,\theta^+}\) fails.
\end{theorem}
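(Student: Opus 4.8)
The plan is to imitate the measurable case from Theorem~\ref{thm:SeparateShortLongJLDiamondMeas} and its sharpening in Theorem~\ref{thm:SeparateBetterShortLongJLDiamondMeas}, transplanting the argument from normal measures on $\kappa$ to normal measures on $\mathcal{P}_\kappa\theta$. Let me sketch what I would do.
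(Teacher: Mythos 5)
Your proposal names the correct strategy---it is in fact exactly the route the paper takes---but as written it contains no proof: it is a declaration of intent that stops before the sketch begins, so every substantive step is missing. To complete it you would need to supply at least the following. First, the preparation: arrange by prior forcing (as in theorem~\ref{thm:ForceLongJLDiamondSC}) that \(\Ldthetasc_\kappa\) holds in the ground model. Second, the actual forcing: \(\P=\Add(\omega,1)*\Coll\bigl(\theta^+,2^{2^\theta}\bigr)\); note that the collapse must be calibrated to \(2^{2^\theta}\), the number of potential normal measures on \(\mathcal{P}_\kappa\theta\), rather than \(2^{2^\kappa}\) as in theorem~\ref{thm:SeparateShortLongJLDiamondMeas}. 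Third, the failure side: the Apter--Cummings--Hamkins analysis shows the extension has at most \(\theta^+\) many normal measures on \(\mathcal{P}_\kappa\theta\), whereas by proposition~\ref{prop:JLDiamondHasManyMeasures} a joint Laver sequence of length \(\theta^+\) would require \(2^{\theta\cdot\theta^+}=2^{\theta^+}>\theta^+\) many; this is the part where your ``transplanting'' of the measurable-case argument is essentially painless.

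Fourth---and this is the step your proposal does not even mention, and the only one that requires real work---the preservation side: you must show that \(\Ldthetasc_\kappa\) still holds after forcing with \(\P\), since forcing can easily destroy Laver functions and the new model contains new elements of \(H_{\theta^+}\) (the Cohen real and sets built from it). The paper handles this by a name-guessing argument: because the collapse factor is \(\leq\theta\)-closed, one has \(H_{\theta^+}^{V[g][G]}=H_{\theta^+}^{V[g]}\), so every target in the extension has an \(\Add(\omega,1)\)-name \(\dot{a}\in H_{\kappa^+}^V\); one defines \(\ell'(\xi)=\ell(\xi)^g\) whenever \(\ell(\xi)\) is such a name, chooses a ground-model \(\theta\)-supercompactness embedding \(j\) with \(j(\ell)(\kappa)=\dot{a}\), and lifts \(j\) through the small Cohen factor and then through the \(\leq\theta\)-closed collapse to conclude \(j(\ell')(\kappa)=a\). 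Without an argument of this kind the theorem does not follow from the measure count alone, so as it stands your proposal has a genuine gap rather than a proof.
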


Of course, the theorem is only interesting when \(\kappa\leq\theta<2^\kappa\), in which
case the given separation is best possible in view of 
proposition~\ref{prop:SCHasSomeJLDiamond}.

\begin{proof}
We may assume by prior forcing, as in theorem~\ref{thm:ForceLongJLDiamondSC}, that we have a 
Laver function for \(\kappa\).
We now force with
\(\P=\Add(\omega,1)*\Coll\bigl(\theta^+,2^{2^\theta}\bigr)\) to get an extension
\(V[g][G]\). By the results of \cite{ApterCummingsHamkins2007:LargeCardinalsFewMeasures}
the extension \(V[g][G]\) has at most \(\theta^+\) many normal measures on \(\mathcal{P}_\kappa\theta\)
and therefore there are no joint Laver sequences for \(\kappa\) of length \(\theta^+\)
there by proposition~\ref{prop:JLDiamondHasManyMeasures}. It remains to see that
there is a Laver function in \(V[g][G]\). Let
\(\ell\) be a Laver function in \(V\) and define
\(\ell'\in V[g][G]\) by \(\ell'(\xi)=\ell(\xi)^g\) if \(\ell(\xi)\) is an \(\Add(\omega,1)\)-%
name and \(\ell'(\xi)=\emptyset\) otherwise. For a given \(a\in H_{\theta^+}^{V[g][G]}=
H_{\theta^+}^{V[g]}\) we can select an \(\Add(\omega,1)\)-name \(\dot{a}\in H_{\kappa^+}^V\)
and find a \(\theta\)-supercompactness embedding \(j\colon V\to M\) such that
\(j(\ell)(\kappa)=\dot{a}\). The embedding \(j\) lifts to \(j\colon V[g][G]\to M[g][j(G)]\)
since the Cohen forcing was small and the collapse forcing was \(\leq\theta\)-closed.
But then clearly \(j(\ell')(\kappa)=\dot{a}^g=a\), so \(\ell'\) is a Laver function.
\end{proof}

\begin{theorem}
Suppose \(\kappa\) is \(\theta\)-supercompact and let \(\lambda\) be a regular cardinal
satisfying \(\theta^{<\kappa}<\lambda\leq 2^\kappa\).
If\/ \(\jLdthetasc_{\kappa,<\lambda}\) holds then there is a forcing extension
preserving this in which \(\jLdthetasc_{\kappa,\lambda}\) fails.
\end{theorem}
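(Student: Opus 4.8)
The plan is to imitate the proof of Theorem~\ref{thm:SeparateShortLongJLDiamondMeas} and Theorem~\ref{thm:SeparateBetterShortLongJLDiamondMeas}, but now at the level of \(\theta\)-supercompactness rather than measurability, using the same collapse-based forcing to destroy all but few normal measures on \(\mathcal{P}_\kappa\theta\). Specifically, I would force with \(\P=\Add(\omega,1)*\Coll\bigl(\lambda,2^{2^\theta}\bigr)\). The Cohen factor is small and the collapse factor is \(\leq\theta\)-closed (indeed \(\leq\lambda\)-closed, and since \(\lambda>\theta^{<\kappa}=\theta\) it is in particular \(\leq\theta\)-closed), which is exactly the setup under which a \(\theta\)-supercompactness Laver function survives, as shown in the proof of Theorem~\ref{thm:SeparateShortLongJLDiamondThetaSC}.

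The heart of the argument is the count of normal measures on \(\mathcal{P}_\kappa\theta\) after forcing with \(\P\), for which I would invoke the analysis of Apter--Cummings--Hamkins~\cite{ApterCummingsHamkins2007:LargeCardinalsFewMeasures} as in the earlier separation theorems. The collapse \(\Coll\bigl(\lambda,2^{2^\theta}\bigr)\) should leave at most \(\lambda\) many normal measures on \(\mathcal{P}_\kappa\theta\) in the extension. Then Proposition~\ref{prop:JLDiamondHasManyMeasures} does the destructive work: a \(\theta\)-supercompactness joint Laver sequence of length \(\lambda\) would force the existence of at least \(2^{\theta\cdot\lambda}\geq 2^\lambda>\lambda\) many normal measures on \(\mathcal{P}_\kappa\theta\), a contradiction. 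Hence \(\jLdthetasc_{\kappa,\lambda}\) fails in the extension.

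For the preservation of \(\jLdthetasc_{\kappa,<\lambda}\) I would factor the argument through the two parts of Lemma~\ref{lemma:JLDiamondSCPreservation}, exactly as in Theorem~\ref{thm:SeparateBetterShortLongJLDiamondMeas}. The Cohen factor \(\Add(\omega,1)\) has size \(\leq\kappa\), so part~(2) of the lemma (guessing names, lifting the small forcing through each relevant embedding) transports the ground-model almost-joint Laver sequence into the intermediate Cohen extension. Then, since \(\lambda>\theta^{<\kappa}\) and the collapse \(\Coll\bigl(\lambda,2^{2^\theta}\bigr)\) is \(\leq\lambda\)-closed hence \(\leq\gamma\)-distributive for every \(\gamma<\lambda\), part~(1) of the lemma applies to each proper initial segment \(\vec{\ell}\rest\gamma\) of length \(\gamma<\lambda\), showing that each such initial segment remains a joint Laver sequence in the final extension. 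This is precisely what it means for the full sequence to remain an almost-joint Laver sequence, so \(\jLdthetasc_{\kappa,<\lambda}\) is preserved.

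The main obstacle I anticipate is matching the closure of the collapse forcing to the distributivity hypothesis of part~(1) of Lemma~\ref{lemma:JLDiamondSCPreservation} across \emph{all} initial segments simultaneously. Part~(1) demands \(\lambda\geq\theta^{<\kappa}\) and \(\leq\lambda\)-distributivity of \(\P\) to preserve a length-\(\lambda\) joint sequence; here we only ever apply it to initial segments of length \(\gamma<\lambda\), so we need the collapse to be \(\leq\gamma\)-distributive for each such \(\gamma\), which follows from \(\leq\lambda\)-closure by the regularity of \(\lambda\). I would also need to double-check that the value of \(2^\kappa\) (and hence the meaningfulness of the bound \(\lambda\leq2^\kappa\)) is not disturbed by the collapse in a way that invalidates the normal-measure count; this is a routine cardinal-arithmetic verification that the collapsing of \(2^{2^\theta}\) to \(\lambda\) behaves as in the earlier theorems.
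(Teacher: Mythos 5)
Your proposal follows the paper's strategy step for step---an Apter--Cummings--Hamkins-style collapse to cut down the number of normal measures on \(\mathcal{P}_\kappa\theta\), proposition~\ref{prop:JLDiamondHasManyMeasures} for the failure of \(\jLdthetasc_{\kappa,\lambda}\), and parts (2) and (1) of lemma~\ref{lemma:JLDiamondSCPreservation} for the preservation of \(\jLdthetasc_{\kappa,<\lambda}\)---but there is a genuine gap in the choice of the collapse. You force with \(\Add(\omega,1)*\Coll\bigl(\lambda,2^{2^\theta}\bigr)\), whereas the paper forces with \(\Add(\omega,1)*\Coll\bigl(\lambda,2^{2^{\theta^{<\kappa}}}\bigr)\), and the difference is not cosmetic. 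Unlike theorem~\ref{thm:SeparateShortLongJLDiamondThetaSC}, the present theorem does \emph{not} assume \(\theta^{<\kappa}=\theta\); your parenthetical ``since \(\lambda>\theta^{<\kappa}=\theta\)'' silently smuggles in exactly that hypothesis. The statement only assumes \(\theta^{<\kappa}<\lambda\), so \(\theta\) may well satisfy \(\cf(\theta)<\kappa\), in which case \(\theta^{<\kappa}>\theta\) and it is consistent with the hypotheses that \(2^{2^\theta}<2^{2^{\theta^{<\kappa}}}\).

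Here is why this breaks the destruction half of your argument. Normal measures on \(\mathcal{P}_\kappa\theta\) are subsets of \(\mathcal{P}(\mathcal{P}_\kappa\theta)\), a set of size \(2^{\theta^{<\kappa}}\), so the ground model may carry up to \(2^{2^{\theta^{<\kappa}}}\) many of them, and nothing in the hypotheses caps this number at \(2^{2^\theta}\). All of these measures survive into your extension: each lifts through the Cohen real by Levy--Solovay, and the collapse, being \({<}\lambda\)-closed with \(\lambda>\theta^{<\kappa}\), adds no new subsets of \(\mathcal{P}_\kappa\theta\) and no new \(\mathcal{P}_\kappa\theta\)-indexed families of such subsets, so every measure of the intermediate Cohen extension remains a normal fine measure in the final model. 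Consequently the number of normal measures on \(\mathcal{P}_\kappa\theta\) in your extension is at least \(\bigl|\bigl(2^{2^{\theta^{<\kappa}}}\bigr)^V\bigr|\) as computed there, and your collapse only guarantees that \(\bigl(2^{2^\theta}\bigr)^V\) acquires size \(\lambda\); the larger cardinal can remain of size \(\geq\lambda^+\). In that situation the extension has \emph{more} than \(\lambda\) many normal measures, proposition~\ref{prop:JLDiamondHasManyMeasures} yields no contradiction, and the failure of \(\jLdthetasc_{\kappa,\lambda}\) is simply not established. Replacing your collapse by \(\Coll\bigl(\lambda,2^{2^{\theta^{<\kappa}}}\bigr)\) repairs this and turns your write-up into the paper's proof; the preservation half is correct as written (modulo the minor slip that \(\Coll(\lambda,\cdot)\) is \({<}\lambda\)-closed rather than \(\leq\lambda\)-closed, which is harmless since you only ever use \(\leq\gamma\)-distributivity for \(\gamma<\lambda\), and the observation that initial segments of length \(\gamma<\theta^{<\kappa}\) are handled as subsegments of longer ones).
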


\begin{proof}
The relevant forcing is \(\Add(\omega,1)*\Coll\bigl(\lambda,2^{2^{\theta^{<\kappa}}}\bigr)\).
Essentially the argument from theorem~\ref{thm:SeparateBetterShortLongJLDiamondMeas}
then finishes the proof.
\end{proof}

A question remains about the principles \(\jLd_{\kappa,<\lambda}\), whether they are
genuinely new or whether they reduce to other principles.

\begin{question}[open]
Let \(\kappa\) be \(\theta\)-supercompact and \(\lambda\leq 2^\kappa\). 
Is \(\jLdthetasc_{\kappa,<\lambda}\) equivalent
to \(\jLdthetasc_{\kappa,\gamma}\) holding for all \(\gamma<\lambda\)?
\end{question}

An almost-joint Laver sequence definitely gives instances of
joint Laver diamonds at each particular \(\gamma\).
The reverse implication is particularly interesting in the case when \(\lambda=\mu^+\) is
a successor cardinal. This is because simply rearranging the functions in a joint
Laver sequence of length \(\mu\) gives joint Laver sequences of any length
shorter than \(\mu^+\). The question is thus asking whether \(\jLd_{\kappa,\mu}\)
suffices for \(\jLd_{\kappa,<\mu^+}\).
The restriction to \(\lambda\leq 2^\kappa\) is necessary to avoid the following
triviality.

\begin{proposition}
\(\jLdthetasc_{\kappa,<(2^\kappa)^+}\) fails for every cardinal \(\kappa\)
\end{proposition}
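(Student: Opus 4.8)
The plan is a counting argument built on two facts already isolated in the text: there are at most \(2^\kappa\) many functions \(\kappa\to V_\kappa\), and hence at most \(2^\kappa\) many \(\theta\)-supercompactness Laver functions for \(\kappa\); and a joint Laver sequence can never list the same function on two distinct coordinates, since equal functions are mapped to equal values by any \(j\) and so cannot guess distinct targets. I would combine these with the fact that \((2^\kappa)^+\) strictly exceeds \(2^\kappa\).

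First I would spell out what \(\jLdthetasc_{\kappa,<(2^\kappa)^+}\) demands: an almost-joint Laver sequence \(\vec{\ell}=\langle \ell_\alpha;\alpha<(2^\kappa)^+\rangle\) of \(\theta\)-supercompactness Laver functions, each of which is a function \(\kappa\to V_\kappa\), such that every proper initial segment \(\vec{\ell}\rest\gamma\) is an honest joint Laver sequence. The key step is to extract pairwise distinctness of the \(\ell_\alpha\) from this. Given indices \(\alpha<\beta<(2^\kappa)^+\), the segment \(\vec{\ell}\rest(\beta+1)\) is joint and lists both \(\ell_\alpha\) and \(\ell_\beta\); choosing targets with \(a_\alpha\neq a_\beta\), jointness supplies a single embedding \(j\) with \(j(\ell_\alpha)(\kappa)=a_\alpha\) and \(j(\ell_\beta)(\kappa)=a_\beta\), which forces \(\ell_\alpha\neq \ell_\beta\).

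Consequently such a sequence would exhibit \((2^\kappa)^+\) many pairwise distinct functions \(\kappa\to V_\kappa\), contradicting the bound of \(2^\kappa\) on the total number of such functions. (If \(\kappa\) carries no \(\theta\)-supercompactness Laver function at all, the principle fails vacuously.) There is really no hard part here: the only thing to be careful about is that distinctness of the functions must be read off from the jointness of the initial segments rather than assumed, and it is precisely this distinctness, together with the cardinality of \((2^\kappa)^+\), that yields the contradiction.
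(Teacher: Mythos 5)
Your proof is correct and is essentially the paper's own argument: the paper observes that any sequence of length \((2^\kappa)^+\) must repeat a function (since there are only \(2^\kappa\) functions \(\kappa\to V_\kappa\)) and that a repeated function destroys the jointness of any initial segment containing both coordinates, which is just the contrapositive of your deduction that almost-jointness forces pairwise distinctness. The two presentations differ only in the direction of the contradiction, not in substance.
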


\begin{proof}
Any potential \(\jLdthetasc_{\kappa,<(2^\kappa)^+}\)-sequence must necessarily
have the same function appear on at least two coordinates. But then any initial segment
of this sequence cannot be joint, since it cannot guess distinct targets on those two
coordinates.
\end{proof}

An annoying feature of the models produced in the preceding theorems is that in
all of them
the least \(\lambda\) for which there is no joint Laver sequence of length 
\(\lambda\) is \(\lambda=2^\kappa\). One has to wonder whether this is significant.

\begin{question}
\label{ques:FM}
Is it relatively consistent that there is a \(\theta\)-supercompact cardinal \(\kappa\),
for some \(\theta\), such that \(\Ldthetasc_{\kappa}\) holds
and \(\jLdthetasc_{\kappa,\lambda}\) fails for some \(\lambda\) satisfying 
\(\lambda<2^\kappa\)?
\end{question}

To satisfy the listed conditions, GCH must fail at \(\kappa\) (since we must have
\(\kappa<\lambda<2^\kappa\) by proposition~\ref{prop:SCHasSomeJLDiamond}).
We can therefore expect that achieving the situation described in the question will require
some additional consistency strength. 

In the case of a measurable \(\kappa\) the answer to the question is positive:
we will show in theorem~\ref{thm:FMDoesntHaveLongjLd} that, starting from sufficient
large cardinal hypotheses, we can produce a model where \(\kappa\) is measurable and
has a measurable Laver function but no joint Laver sequences of length \(\kappa^+<2^\kappa\).
The proof relies on an argument
due to Friedman and Magidor~\cite{FriedmanMagidor2009:NumberNormalMeasures}
which facilitates the simultaneous control of the number of measures
at \(\kappa\) and the value of the continuum function at \(\kappa\) and \(\kappa^+\).

Let us briefly give a general setup for the argument 
of~\cite{FriedmanMagidor2009:NumberNormalMeasures} that will allow us to carry out
our intended modifications without repeating too much of the work done there.

Fix a cardinal \(\kappa\) and suppose GCH holds up to and including \(\kappa\).
Furthermore suppose that \(\kappa\) is the critical point of an
elementary embedding \(j\colon V\to M\) satisfying the following properties:
\begin{itemize}
\item \(j\) is an extender embedding, i.e.\ every element of \(M\) has the form
\(j(f)(\alpha)\) for some function \(f\) defined on \(\kappa\) and some \(\alpha<j(\kappa)\);

\item \((\kappa^{++})^M=\kappa^{++}\);

\item there is a function \(f\colon\kappa\to V\), such that \(j(f)(\kappa)\) is, in
\(V\) (and therefore also in \(M\)), a sequence of \(\kappa^{++}\) many disjoint
stationary subsets of \(\kappa^{++}\cap\operatorname{Cof}_{\kappa^+}\).
\end{itemize}

Given this arrangement, Friedman and Magidor define a forcing iteration \(\P\) of length
\(\kappa+1\) (with carefully chosen support) 
which forces at each inaccessible stage \(\gamma\leq\kappa\) with
\(\operatorname{Sacks}^*(\gamma,\gamma^{++})*\operatorname{Sacks}_{\mathrm{id}^{++}}(\gamma)
*\operatorname{Code}(\gamma)\). Here the conditions in 
\(\operatorname{Sacks}_{\mathrm{id}^{++}}\) are perfect trees of height \(\gamma\), splitting
on a club, and in which every splitting node of height \(\delta\) has \(\delta^{++}\)
many successors; furthermore \(\operatorname{Sacks}^*(\gamma,\gamma^{++})\) is a
large product of versions of \(\operatorname{Sacks}(\gamma)\) where splitting is restricted
to a club of singular cardinals and \(\operatorname{Code}(\gamma)\) is a certain
\(\leq\gamma\)-distributive notion of forcing coding information about the stage
\(\gamma\) generics into the stationary sets given by \(f(\gamma)\).

Let \(G\subseteq\P\) be generic. 
In the interest of avoiding a full analysis of the forcing notion \(\P\),
we list some of the properties of the extension \(V[G]\) that we will use
axiomatically to derive the subsequent results (the interested reader is encouraged
to see~\cite{FriedmanMagidor2009:NumberNormalMeasures} for proofs):

\begin{enumerate}[label=(\roman*)]
\item \label{FMlist:PreservesInacc}
\(\P\) preserves cardinals and cofinalities, and increases the values of
the continuum function by at most two cardinal steps. 
In particular, the inaccessible cardinals
of \(V\) remain inaccessible in \(V[G]\);

\item \label{FMlist:GCH}
We have \(2^\kappa=\kappa^{++}\) in \(V[G]\);

\item \label{FMlist:SacksProperty}
\(\P\) has the \(\kappa\)-Sacks property: for any function 
\(f\colon \kappa\to\mathrm{Ord}\) in \(V[G]\) there is a function \(h\in V\) such that
\(f(\alpha)\in h(\alpha)\) for all \(\alpha\) and \(|h(\alpha)|\leq\alpha^{++}\);

\item \label{FMlist:SelfEncoding}
The generic \(G\) is self-encoding in a strong way: in \(V[G]\) there is a unique 
\(M\)-generic for \(j(\P)_{<j(\kappa)}\) extending \(j[G_{<\kappa}]\);

\item \label{FMlist:TuningFork}
If \(S_\kappa\) is the generic added by 
\(\operatorname{Sacks}_{\mathrm{id}^{++}}(\kappa)\) within \(\P\), 
then \(\bigcap j[S_\kappa]\) is a \emph{tuning fork}: the union of \(\kappa^{++}\) many 
branches, all of which split off exactly at level \(\kappa\) and all of which are
generic over \(j(G_{<\kappa})\);

\item \label{FMlist:Lifting}
In \(V[G]\) there are exactly \(\kappa^{++}\) many \(M\)-generics for
\(j(\P)\) extending \(j[G]\), corresponding to the \(\kappa^{++}\) many branches in
\(\bigcap j[S_\kappa]\). In particular, there are exactly \(\kappa^{++}\) many lifts
\(j_\alpha\) of \(j\) in \(V[G]\), distinguished by \(j_\alpha(S_\kappa)(\kappa)=\alpha\)
for \(\alpha<\kappa^{++}\).
\end{enumerate}

\begin{proposition}
\label{prop:FMHasLd}
In the above setup, the iteration \(\P\) adds a measurable Laver function for \(\kappa\).
\end{proposition}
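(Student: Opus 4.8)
The plan is to read a measurable Laver function off the stage-\(\kappa\) Sacks generic \(S_\kappa\) and to let the \(\kappa^{++}\) many lifts of \(j\) supplied by~\ref{FMlist:Lifting} do the guessing. The guiding observation is the numerical coincidence between the \(\kappa^{++}\) many lifts \(j_\alpha\), distinguished by \(j_\alpha(S_\kappa)(\kappa)=\alpha\), and the \(2^\kappa=\kappa^{++}\) many targets in \(H_{\kappa^+}^{V[G]}\) guaranteed by~\ref{FMlist:GCH}. If I can arrange that \(j_\alpha(\ell)(\kappa)\) depends on the lift only through the distinguishing value \(\alpha\), and that it ranges over every target as \(\alpha\) varies, then \(\ell\) will be the Laver function we seek.

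First I fix, in \(V\), a single \(\P\)-name \(\dot c=\langle \dot c_\gamma;\gamma\text{ inaccessible}\rangle\) so that, in \(V[G]\), each \(c_\gamma=\dot c_\gamma^{\,G}\) is a surjection \(c_\gamma\colon\gamma^{++}\to H_{\gamma^+}^{V[G_\gamma]}\) depending only on \(G_{\leq\gamma}\); such surjections exist because \(2^\gamma=\gamma^{++}\) at inaccessible closure points \(\gamma\leq\kappa\), by the reflected form of~\ref{FMlist:GCH} together with~\ref{FMlist:PreservesInacc}. Recalling that the generic branch \(S_\kappa\) records at each splitting level \(\gamma\) an ordinal \(S_\kappa(\gamma)<\gamma^{++}\), I define \(\ell\colon\kappa\to V_\kappa\) by \(\ell(\gamma)=c_\gamma(S_\kappa(\gamma))\) at inaccessible splitting levels \(\gamma\) and \(\ell(\gamma)=\emptyset\) otherwise. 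Since this definition is uniform, elementarity gives, for each lift \(j_\alpha\), that \(j_\alpha(\ell)(\kappa)=j_\alpha(c)_\kappa\bigl(j_\alpha(S_\kappa)(\kappa)\bigr)=j_\alpha(c)_\kappa(\alpha)\), using that \(\kappa\) is an inaccessible splitting level below \(j(\kappa)\) in \(M\) (the tuning fork of~\ref{FMlist:TuningFork} splits exactly there) and that \(j_\alpha(S_\kappa)(\kappa)=\alpha\).

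The crucial point is that the decoding map \(c^*_\kappa:=j_\alpha(c)_\kappa\) does not depend on \(\alpha\). Indeed \(c_\kappa\) was chosen to depend only on \(G_{\leq\kappa}\), so \(j_\alpha(c)_\kappa\) depends only on the restriction of \(j_\alpha(G)\) to stages \(\leq\kappa\); but those stages lie below \(j(\kappa)\), and by the uniqueness in~\ref{FMlist:SelfEncoding} (the lifts of~\ref{FMlist:Lifting} differ only in the stage-\(j(\kappa)\) branch) this restriction is the same for every \(\alpha\), namely the generic determined by \(G\). Thus \(c^*_\kappa\) is a single surjection, computed in \(M[G]\), from \(\kappa^{++}\) onto \(H_{\kappa^+}^{M[G]}\), and \(j_\alpha(\ell)(\kappa)=c^*_\kappa(\alpha)\) genuinely sweeps out \(H_{\kappa^+}^{M[G]}\) as \(\alpha<\kappa^{++}\) varies.

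It remains to see that every target is actually hit, i.e.\ that \(H_{\kappa^+}^{V[G]}\subseteq H_{\kappa^+}^{M[G]}=\operatorname{ran}(c^*_\kappa)\). This is where the \(\kappa\)-Sacks property~\ref{FMlist:SacksProperty} enters: together with \(\mathcal P(\kappa)^V\subseteq M\) (automatic since \(\kappa\) is the critical point) and the extender representation of \(M\) (every element is \(j(f)(\xi)\) with \(\xi<j(\kappa)\)), it lets one reduce every subset of \(\kappa\) in \(V[G]\) to a name coded by an ordinal below \(j(\kappa)\), so that \(\mathcal P(\kappa)^{V[G]}\subseteq M[G]\) and hence \(H_{\kappa^+}^{V[G]}\subseteq M[G]\). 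Granting this, for a target \(a\in H_{\kappa^+}^{V[G]}\) I pick \(\alpha<\kappa^{++}\) with \(c^*_\kappa(\alpha)=a\) and obtain \(j_\alpha(\ell)(\kappa)=a\). Finally each \(j_\alpha\) has critical point \(\kappa\) and transitive target, so it witnesses the measurability of \(\kappa\) in \(V[G]\); should one insist on a normal-measure ultrapower, passing to \(U_\alpha=\{X\subseteq\kappa:\kappa\in j_\alpha(X)\}\) preserves the guessed value, since \(\ell(\gamma)\in H_{\gamma^+}\) gives \([\ell]_{U_\alpha}\in H_{\kappa^+}^{\operatorname{Ult}}\), which the factor map into \(M[j_\alpha(G)]\) fixes, whence \(j_{U_\alpha}(\ell)(\kappa)=[\ell]_{U_\alpha}=a\). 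I expect the main obstacle to be precisely the lift-independence of the decoding and the capturing step \(\mathcal P(\kappa)^{V[G]}\subseteq M[G]\); both hinge on extracting the right consequences of the self-encoding and Sacks properties rather than on any new idea.
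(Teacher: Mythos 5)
Your proof is correct and is essentially the paper's own argument: the paper likewise reads the ordinal \(S_\kappa(\gamma)\) off the stage-\(\kappa\) Sacks generic (its \(\bar{\ell}(\gamma)=S_\kappa(\gamma)\)) and decodes it through an anticipated enumeration of \(H_{\kappa^+}^{M[G]}\), where its function \(F\) with \(\dot{e}=j(F)(\kappa)\) and \(\ell(\gamma)=(F(\gamma)^G)(\bar{\ell}(\gamma))\) plays precisely the role of your stage-wise sequence \(\gamma\mapsto\dot{c}_\gamma\) of surjection names, and your lift-independence observation is exactly what legitimizes the paper's evaluation \(\bigl(j(F)(\kappa)^{j_\alpha(G)}\bigr)(\alpha)=\vec{e}(\alpha)\). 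The only real divergence is at the capturing step \(H_{\kappa^+}^{V[G]}\subseteq H_{\kappa^+}^{M[G]}\), which the paper simply asserts; the direct justification uses the lifts themselves (\(A=j_\alpha(A)\cap\kappa\in M[j_\alpha(G)]\)) together with the closure of the stages of \(j(\P)\) above \(\kappa\), and this is cleaner than the Sacks-property-plus-extender-representation route you sketch, which as stated does not obviously yield the capturing claim.
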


\begin{proof}
Let \(G\subseteq\P\) be generic. As we stated in item~\ref{FMlist:PreservesInacc}, for any \(\alpha<\kappa^{++}\) there is a
lift \(j_\alpha\colon V[G]\to M[j_\alpha(G)]\) of \(j\) such that 
\(j_\alpha(S_\kappa)(\kappa)=\alpha\), where \(S_\kappa\) is the Sacks subset of \(\kappa\)
added by the \(\kappa\)th stage of \(G\). This shows that 
\(\bar{\ell}(\gamma)=S_\kappa(\gamma)\) is a \(\kappa^{++}\)-guessing measurable Laver
function for \(\kappa\).

Note that all of the subsets of \(\kappa\) in \(M[j_\alpha(G)]\) (and \(V[G]\)) 
appear already
in \(M[G]\). Let \(\vec{e}=\langle e_\alpha;\alpha<\kappa^{++}\rangle\) be
an enumeration of \(H_{\kappa^+}^{M[G]}\) in \(M[G]\) and let \(\dot{e}\in M\) be
a name for \(\vec{e}\). We can write \(\dot{e}=j(F)(\kappa)\) for some function \(F\),
defined on \(\kappa\). Now define a function \(\ell\colon\kappa\to V_\kappa\) in \(V[G]\) by
\(\ell(\gamma)=(F(\gamma)^G)(\bar{\ell}(\gamma))\). This is, in fact, our desired Laver
function; given an arbitrary element of \(H_{\kappa^+}^{V[G]}\), we can find it in the 
enumeration \(\vec{e}\). If \(\alpha\) is its index, then
\[
j_\alpha(\ell)(\kappa) =
\bigl(j_\alpha(F)(\kappa)^{j_\alpha(G)}\bigr) (j_\alpha(\bar{\ell})(\kappa)) =
\bigl(j(F)(\kappa)^{j_\alpha(G)}\bigr) (\alpha) =
\vec{e}(\alpha)= e_\alpha
\qedhere
\]
\end{proof}

\begin{theorem}
\label{thm:FMDoesntHaveLongjLd}
Suppose \(\kappa\) is \((\kappa+2)\)-strong and assume that \(V=L[\vec{E}]\) is the
minimal extender model witnessing this. Then there is a forcing extension in which
\(2^\kappa=\kappa^{++}\), the cardinal \(\kappa\) remains measurable, 
\(\kappa\) carries a measurable Laver function but there are no measurable
joint Laver sequences for \(\kappa\) of length \(\kappa^+\).
\end{theorem}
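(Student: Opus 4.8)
The plan is to verify that the Friedman--Magidor setup, as axiomatized in items~\ref{FMlist:PreservesInacc}--\ref{FMlist:Lifting}, applies to the given $\kappa$, and then to assemble the conclusions from the preceding propositions. The large cardinal hypothesis that $\kappa$ is $(\kappa+2)$-strong, together with the assumption that $V=L[\vec E]$ is the minimal such extender model, is precisely what furnishes an extender embedding $j\colon V\to M$ with the three bulleted properties required by the setup: $(\kappa+2)$-strength gives $V_{\kappa+2}\subseteq M$, hence $(\kappa^{++})^M=\kappa^{++}$ and $\mathcal P(\kappa)\subseteq M$; the extender (rather than ultrapower) character of $j$ is what allows every element of $M$ to be written as $j(f)(\alpha)$ for $\alpha<j(\kappa)$; and in the minimal extender model GCH holds, so in particular it holds up to and including $\kappa$, while the fine structure of $L[\vec E]$ supplies the function $f$ whose image $j(f)(\kappa)$ codes a sequence of $\kappa^{++}$ many disjoint stationary subsets of $\kappa^{++}\cap\operatorname{Cof}_{\kappa^+}$. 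I would spend the bulk of the write-up checking these three items carefully, as they are the true content-bearing hypotheses.

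Once the setup is in force, the argument is short. First, force with the Friedman--Magidor iteration $\P$ and pass to $V[G]$. By item~\ref{FMlist:GCH} we have $2^\kappa=\kappa^{++}$ in $V[G]$, and by item~\ref{FMlist:Lifting} (together with item~\ref{FMlist:PreservesInacc}) the cardinal $\kappa$ remains measurable, since there are lifts $j_\alpha\colon V[G]\to M[j_\alpha(G)]$. Proposition~\ref{prop:FMHasLd} then directly gives that $\P$ adds a measurable Laver function for $\kappa$, so $\Ldmeas_\kappa$ holds in $V[G]$. It remains only to bound the number of normal measures: item~\ref{FMlist:Lifting} says there are exactly $\kappa^{++}$ many lifts of the fixed $j$, but to count \emph{all} normal measures on $\kappa$ I must argue that every normal measure in $V[G]$ arises from one of these lifts, which is the crux of the Friedman--Magidor result and is where the self-encoding property~\ref{FMlist:SelfEncoding} and the $\kappa$-Sacks property~\ref{FMlist:SacksProperty} do their work. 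The upshot I would cite is that $V[G]$ carries exactly $\kappa^{++}$, and in particular at most $\kappa^+$-many-failing, normal measures; more precisely the count is $\kappa^{++}=2^\kappa$, so the number of normal measures is strictly below $2^{\kappa^+}$ and indeed there are \emph{not} $2^{\kappa\cdot\kappa^+}=2^{\kappa^+}$ of them.

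Finally I would invoke Proposition~\ref{prop:JLDiamondHasManyMeasures} in the contrapositive: a measurable joint Laver sequence of length $\kappa^+$ would force the existence of at least $2^{\kappa\cdot\kappa^+}=2^{\kappa^+}\geq\kappa^{+++}$ many normal measures on $\kappa$, whereas $V[G]$ has only $\kappa^{++}$ many. Hence no such sequence exists, completing the proof, with $\kappa^+<2^\kappa=\kappa^{++}$ as promised.

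I expect the main obstacle to be the measure-counting step: establishing that the $\kappa^{++}$ lifts of the single extender embedding $j$ exhaust \emph{all} normal measures on $\kappa$ in $V[G]$, rather than merely producing $\kappa^{++}$ of them. This is exactly the delicate rigidity argument of Friedman and Magidor, relying on the strong self-encoding in~\ref{FMlist:SelfEncoding} to show that any measure's ultrapower must recognize the generic $G$ and thereby factor through the prescribed $j$; since the setup lists these facts axiomatically, I would phrase the step as an application of their analysis rather than reprove it, but I would take care to confirm that the minimal extender model hypothesis is what secures the uniqueness (and hence the tight count) in the first place.
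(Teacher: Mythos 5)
There is a genuine gap, and it is precisely the step you wave through with arithmetic: your proposal forces with the Friedman--Magidor iteration $\P$ directly over $V=L[\vec{E}]$ and then asserts that in $V[G]$ we have $2^{\kappa^+}\geq\kappa^{+++}$, so that the $\kappa^{++}$ many normal measures fall strictly below the bound $2^{\kappa\cdot\kappa^+}=2^{\kappa^+}$ demanded by Proposition~\ref{prop:JLDiamondHasManyMeasures}. That assertion is false in your model. The minimal extender model satisfies GCH, and the iteration $\P$ has size $\kappa^{++}$ and only pushes $2^\kappa$ up to $\kappa^{++}$; a nice-name count then gives $2^{\kappa^+}=\kappa^{++}$ in $V[G]$. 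So in your model the number of normal measures ($\kappa^{++}$) \emph{equals} $2^{\kappa^+}$, and the contrapositive of Proposition~\ref{prop:JLDiamondHasManyMeasures} yields no contradiction whatsoever: having exactly $\kappa^{++}$ measures is perfectly compatible with the existence of a joint Laver sequence of length $\kappa^+$. The paper closes this hole with a preliminary forcing that your proposal omits entirely: before defining the Friedman--Magidor iteration, one forces with $\Add(\kappa^+,\kappa^{+3})$ to arrange $2^{\kappa^+}=\kappa^{+3}$, checks that the $(\kappa+2)$-strongness embedding lifts (the poset is $\leq\kappa$-distributive), that $\kappa^{++}$ is still computed correctly, and that the chosen stationary sets stay stationary, and only then runs the Friedman--Magidor construction over $V[g]$. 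In $V[g][G]$ one has $2^{\kappa^+}=\kappa^{+3}$ while the measures still number only $\kappa^{++}$, and now the counting argument actually bites.

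A secondary, related point: because of the preliminary forcing, the final model is \emph{not} the Friedman--Magidor model, so the claim that every normal measure on $\kappa$ in the extension is a lift of the fixed embedding $j$ cannot simply be cited from their paper, and it is also not among the axiomatized items \ref{FMlist:PreservesInacc}--\ref{FMlist:Lifting} (item~\ref{FMlist:Lifting} counts the lifts of $j$, not all measures). The paper reproves it for $V[g][G]$: it uses that $V$ is the core model there, so any normal-measure ultrapower restricts to a normal iteration of extenders on $\vec{E}$; then the preservation of inaccessibility (item~\ref{FMlist:PreservesInacc}) forces the first extender applied to be the top extender, and the $\kappa$-Sacks property (item~\ref{FMlist:SacksProperty}), together with the $\leq\kappa$-closure of the preliminary forcing, rules out any further iteration steps. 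You correctly identify this exhaustion step as the crux, but your plan to treat it as a black-box citation would not survive the modification that the proof actually requires.
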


This finally answers question~\ref{ques:FM} in the positive.

\begin{proof}
Let \(j\colon V\to M\) be the ultrapower embedding by the top extender of \(\vec{E}\),
the unique extender witnessing the \((\kappa+2)\)-strongness of \(\kappa\). In particular,
every element of \(M\) has the form \(j(f)(\alpha)\) for some \(\alpha<j(\kappa)\), and
\(M\) computes \(\kappa^{++}\) correctly. Furthermore, \(V\) has a canonical
\(\diamond_{\kappa^{++}}(\operatorname{Cof}_{\kappa^+})\)-sequence, which is definable over
\(H_{\kappa^{++}}\). Since \(H_{\kappa^{++}}\in M\), this same sequence is also in \(M\)
and, by definability, is of the form \(j(\bar{f})(\kappa)\) for some function \(\bar{f}\).
By having this diamond sequence guess the singletons \(\{\xi\}\) for \(\xi<\kappa^{++}\),
we obtain a sequence of \(\kappa^{++}\) many disjoint stationary subsets of
\(\kappa^{++}\cap\operatorname{Cof}_{\kappa^+}\), and this sequence itself has the form
\(j(f)(\kappa)\) for some function \(f\). We are therefore in a situation where
the definition of the Friedman--Magidor iteration we described above makes sense.
But first, we need to carry out some preliminary forcing.

Let \(g\subseteq\Add(\kappa^+,\kappa^{+3})\) be generic. Since this Cohen poset is
\(\leq\kappa\)-distributive, the embedding \(j\) lifts (uniquely) to an embedding
\(j\colon V[g]\to M[j(g)]\).\footnote{The lifted embedding will not be
a \((\kappa+2)\)-strongness embedding and, in fact, \(\kappa\) is no longer 
\((\kappa+2)\)-strong in \(V[g]\). Nevertheless, the residue of strongness will suffice
for our argument.}
Let us examine the lifted embedding \(j\). It is still an extender embedding.
Additionally, since GCH holds in \(V\), the forcing \(\Add(\kappa^+,\kappa^{+3})\)
preserves cardinals, cofinalities, and stationary subsets of \(\kappa^{++}\).
Together this means that \(M[j(g)]\) computes \(\kappa^{++}\) correctly and the stationary
sets given by the sequence \(j(f)(\kappa)\) above remain stationary. Therefore we may
still define the Friedman--Magidor iteration \(\P\) over \(V[g]\).

Let \(G\subseteq\P\) be generic over \(V[g]\). We claim that \(V[g][G]\) is the model
we want. We have \(2^\kappa=\kappa^{++}\) in the extension, by item~\ref{FMlist:GCH}
of our list,
and proposition~\ref{prop:FMHasLd} implies that \(\kappa\) is
measurable in \(V[g][G]\) and \(\Ldmeas_\kappa\) holds there. So it remains for us to see
that \(\jLdmeas_{\kappa,\kappa^+}\) fails. By proposition~\ref{prop:JLDiamondHasManyMeasures}
it suffices to show that \(\kappa\) does not carry \(2^{\kappa^+}=\kappa^{+3}\) many
normal measures in \(V[g][G]\).

Let \(U^*\in V[g][G]\) be a normal measure on \(\kappa\) and let
\(j^*\colon V[g][G]\to N[j^*(g)][j^*(G)]\) be its associated ultrapower embedding.
This embedding restricts to \(j^*\colon V\to N\). Since \(V\) is the core model
from the point of view of \(V[g][G]\), the 
embedding \(j^*\) arises as the ultrapower map associated to a normal iteration
of extenders on the sequence \(\vec{E}\).

We first claim that the first extender applied in this iteration is the top extender of
\(\vec{E}\). Let us write \(j^*=j_1\circ j_0\), where \(j_0\colon V\to N_0\) results from the
first applied extender. Clearly \(j_0\) has critical point \(\kappa\). 
Now suppose first that \(j_0(\kappa)<\kappa^{++}\). Of course,
\(j_0(\kappa)\) is inaccessible in \(N_0\) and, since \(N\) is an inner model of \(N_0\),
also in \(N\). But \(j_0(\kappa)\) is not inaccessible in \(N[j^*(g)][j^*(G)]\), since
\(2^\kappa=\kappa^{++}\) there. This is a contradiction, since passing from \(N\)
to \(N[j^*(g)][j^*(G)]\) preserves inaccessibility, by item~\ref{FMlist:PreservesInacc}
of our list.

It follows that we must have \(j_0(\kappa)\geq\kappa^{++}\). We will argue that
the extender \(E\) applied to get \(j_0\) witnesses the \((\kappa+2)\)-strongness of 
\(\kappa\), so it must be the top extender of \(\vec{E}\) and \(j_0=j\). 
Using a suitable indexing of
\(\vec{E}\), the extender \(E\) has index \((j_0(\kappa)^+)^{N_0}>\kappa^{++}\),
and the coherence of the extender sequence implies that the sequences in \(V\) and in
\(N_0\) agree up to \(\kappa^{++}\). By the acceptability of these extender models
it now follows that \(H_{\kappa^{++}}^V=H_{\kappa^{++}}^{N_0}\) or, equivalently,
\(V_{\kappa+2}\in N_0\).

Finally, we claim that the iteration giving rise to \(j^*\) ends after one step, meaning
that \(j^*=j\). Suppose to the contrary that \(j_1\) is nontrivial. By the normality of
the iteration, the critical point of \(j_1\) must be some \(\lambda>\kappa\).
We can find a function \(h\in V[g][G]\), defined on \(\kappa\), such that
\(j^*(h)(\kappa)=\lambda\), since \(j^*\) is given by a measure ultrapower of
\(V[g][G]\). By the \(\kappa\)-Sacks property of \(\P\) (see item \ref{FMlist:SacksProperty}) 
we can cover the function
\(h\) by a function \(\bar{h}\in V[g]\); in fact, since the forcing to add \(g\)
was \(\leq\kappa\)-closed, we have \(\bar{h}\in V\).
Now
\[
\lambda=j^*(h)(\kappa)\in j^*(\bar{h})(\kappa) = j_1(j(\bar{h}))(j_1(\kappa))
= j_1(j(\bar{h})(\kappa))
\]
and \(A=j(\bar{h})(\kappa)\) has cardinality at most \(\kappa^{++}\) in \(M\).
In particular, since \(\kappa^{++}<\lambda\), we have \(\lambda\in j_1(A)=j_1[A]\),
which is a contradiction, since \(\lambda\) was the critical point of \(j_1\).

We can conclude that any embedding \(j^*\) arising from a normal measure on \(\kappa\)
in \(V[g][G]\) is a lift of the ground model \((\kappa+2)\)-strongness embedding \(j\).
But there are exactly \(\kappa^{++}\) many such lifts: the lift to \(V[g]\) is unique,
and there are \(\kappa^{++}\) many possibilities for the final lift to \(V[g][G]\),
according to item~\ref{FMlist:Lifting}.
Therefore there are only \(\kappa^{++}\) many normal measures on \(\kappa\)
in \(V[g][G]\).
\end{proof}

Ben Neria and Gitik have recently announced that the consistency strength required to
achieve the failure of GCH at a measurable cardinal carrying a unique normal measure
is exactly that of a measurable cardinal \(\kappa\) with \(o(\kappa)=\kappa^{++}\)
(see~\cite{BenNeriaGitik:UniqueMeasureWithoutGCH}).
Their method is flexible enough to allow us to incorporate it into our proof of
theorem~\ref{thm:FMDoesntHaveLongjLd}, reducing the consistency strength hypothesis required there from a \((\kappa+2)\)-strong cardinal \(\kappa\) to just \(o(\kappa)=\kappa^{++}\).
We have chosen to present the proof based on the original Friedman--Magidor argument
since it avoids some complications arising from using the optimal hypothesis.

\subsection{(Joint) Laver diamonds and the number of normal measures}

The only method of controlling the existence of (joint) Laver diamonds we have seen
is by controlling the number of large cardinal measures, relying on the rough bound
given by proposition~\ref{prop:JLDiamondHasManyMeasures}. One has to wonder whether
merely the existence of sufficiently many measures guarantees the existence of
(joint) Laver diamonds. We focus on the simplest form of the question, concerning
measurable cardinals.

\begin{question}[open]
\label{q:MeasuresGiveLavers}
Suppose \(\kappa\) is measurable and there are at least \(2^\lambda\) many normal measures
on \(\kappa\) for some \(\lambda\geq\kappa\). 
Does there exist a measurable joint Laver sequence for \(\kappa\) of length \(\lambda\)?
\end{question}

As the special case when \(\lambda=\kappa\), the question includes the possibility
that having \(2^\kappa\) many normal measures, the minimum required, suffices to give
the existence of a measurable Laver function for \(\kappa\).
Even in this very special case
it seems implausible that simply having enough measures
would automatically yield a Laver function. Nevertheless, in all of the examples
of models obtained by forcing and in which we have control over the number of measures
that we have seen, Laver functions have existed. 
On the other hand, Laver functions and joint Laver sequences also exist in
canonical inner models that have sufficiently many measures. These models carry
long Mitchell-increasing sequences of normal measures that we can use to obtain
ordinal-guessing Laver functions. We can then turn these into actual Laver functions
by exploiting the coherence and absoluteness of these models.

\begin{definition}
Let \(A\) be a set (or class) of ordinals and let \(\bar{\ell}\) be an \(A\)-guessing
Laver function for some large cardinal \(\kappa\). Let \(\triangleleft\) be some
well-order (one arising from an \(L\)-like inner model, for example). 
We say that \(\triangleleft\) is \emph{suitable} for \(\bar{\ell}\) if,
for any \(\alpha\in A\), there is an elementary embedding \(j\), witnessing the largeness of
\(\kappa\), such that \(\bar{\ell}\) guesses \(\alpha\) via \(j\) and \(j(\triangleleft)\rest 
(\alpha+1) = \triangleleft \rest (\alpha+1)\); that is, the well-orders \(j(\triangleleft)\)
and \(\triangleleft\) agree on their first \(\alpha+1\) many elements.

If \(\mathcal{J}\) is a class of elementary embeddings witnessing the largeness of 
\(\kappa\),
we say that \(\triangleleft\) is \emph{supersuitable for \(\mathcal{J}\)} if 
\(j(\triangleleft)\rest j(\kappa)=\triangleleft\rest j(\kappa)\) for any \(j\in\mathcal{J}\).
\end{definition}

We could, for example, take the class \(\mathcal{J}\) to consist of all ultrapower embeddings
by normal measures on \(\kappa\) or, more to the point, all ultrapower embeddings arising
from a fixed family of extenders. We should also note that, for the notion to make sense,
the order type of \(\triangleleft\) must be quite high: at least \(\sup A\) in the case
of well-orders suitable for an \(A\)-guessing Laver function and at least
\(\sup_{j\in\mathcal{J}} j(\kappa)\) for supersuitable well-orders (the latter would also
make sense if the order type of \(\triangleleft\) were smaller than \(\kappa\), but that
case is not of much interest).

Clearly any supersuitable well-order is suitable for any ordinal-guessing Laver function
\(\bar{\ell}\), provided that the class \(\mathcal{J}\) includes the embeddings via which
\(\bar{\ell}\) guesses its targets. The following obvious lemma describes the way in which 
suitable well-orders will be used to turn ordinal-guessing Laver functions into set-guessing 
ones.

\begin{lemma}
\label{lemma:SetGuessingFromOrdinalGuessing}
Let \(A\) be a set (or class) of ordinals and let \(\bar{\ell}\) be an
\(A\)-guessing Laver function for some large cardinal \(\kappa\). Let \(\triangleleft\)
be a well-order such that \(\operatorname{otp}(\triangleleft)\subseteq A\).
If \(\triangleleft\) is suitable for \(\bar{\ell}\), then there is a \(B\)-guessing Laver 
function for \(\kappa\), where \(B\) is the field of \(\triangleleft\).
\end{lemma}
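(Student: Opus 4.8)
The plan is to read the desired set-guessing function \(\ell\) off the ordinal-guessing function \(\bar{\ell}\), using the well-order \(\triangleleft\) as a dictionary translating ordinals into elements of \(B\). Concretely, I would define \(\ell\) on \(\kappa\) by letting \(\ell(\xi)\) be the unique element of \(B\) whose \(\triangleleft\)-rank (its number of \(\triangleleft\)-predecessors) equals \(\bar{\ell}(\xi)\), whenever \(\bar{\ell}(\xi)\) happens to be an ordinal strictly below \(\operatorname{otp}(\triangleleft)\), and \(\ell(\xi)=\emptyset\) otherwise. Since any \(L\)-like \(\triangleleft\) is set-like (its proper initial segments are sets), this prescription produces a genuine function on \(\kappa\). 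Note that, in contrast to the supercompact case, the general notion of a Laver function from Definition~\ref{def:guessingFunctions} imposes no codomain restriction on \(\ell\), so I would not have to worry about whether these targets land in \(V_\kappa\); this removes what would otherwise be the one genuine nuisance in the construction.

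To verify that \(\ell\) is \(B\)-guessing, I would fix an arbitrary target \(b\in B\) and let \(\alpha\) be its \(\triangleleft\)-rank. Since \(\operatorname{otp}(\triangleleft)\subseteq A\) and \(\alpha<\operatorname{otp}(\triangleleft)\), we get \(\alpha\in A\), so suitability of \(\triangleleft\) for \(\bar{\ell}\) supplies an embedding \(j\) witnessing the largeness of \(\kappa\) with \(j(\bar{\ell})(\kappa)=\alpha\) and \(j(\triangleleft)\rest(\alpha+1)=\triangleleft\rest(\alpha+1)\). Because \(\ell\) was defined from the parameters \(\triangleleft\) and \(\bar{\ell}\) by a fixed formula, elementarity yields that \(j(\ell)\) is defined from \(j(\triangleleft)\) and \(j(\bar{\ell})\) by that same formula; hence \(j(\ell)(\kappa)\) is the element of \(j(\triangleleft)\)-rank \(j(\bar{\ell})(\kappa)=\alpha\). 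The agreement \(j(\triangleleft)\rest(\alpha+1)=\triangleleft\rest(\alpha+1)\) guarantees both that this rank-\(\alpha\) element exists (so the exceptional clause does not trigger) and that it coincides with the \(\triangleleft\)-rank-\(\alpha\) element, namely \(b\). Thus \(j(\ell)(\kappa)=b\), so \(\ell\) guesses \(b\) via \(j\).

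As \(b\in B\) was arbitrary, this would exhibit \(\ell\) as a \(B\)-guessing Laver function for \(\kappa\). I do not expect any serious obstacle — in keeping with the lemma's billing as obvious — and the only two points deserving a sentence of care are exactly those flagged above: confirming that \(j(\ell)\) is computed from \(j(\triangleleft)\) and \(j(\bar{\ell})\) by the same defining formula, so that elementarity transfers the definition cleanly, and observing that ``agreement on the first \(\alpha+1\) elements'' is precisely the hypothesis needed to pin the rank-\(\alpha\) element down on the nose.
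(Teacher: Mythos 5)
Your proposal is correct and is essentially the paper's own proof: you define \(\ell(\xi)\) as the \(\bar{\ell}(\xi)\)th element of \(\triangleleft\), guess the index \(\alpha\) of the target \(b\) via an embedding supplied by suitability, and use the agreement \(j(\triangleleft)\rest(\alpha+1)=\triangleleft\rest(\alpha+1)\) to conclude \(j(\ell)(\kappa)=b\). The extra care you take (the default value \(\emptyset\), the elementarity of the defining formula) only makes explicit what the paper leaves implicit.
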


\begin{proof}
We can define a \(B\)-guessing Laver function by simply letting
\(\ell(\xi)\) be the \(\bar{\ell}(\xi)\)th element of \(\triangleleft\). Then,
given a target \(b\in B\), we can find its index \(\alpha\) in the well-order 
\(\triangleleft\) and an embedding \(j\) such that \(j(\bar{\ell})(\kappa)=\alpha\).
Since \(\triangleleft\) is suitable for \(\bar{\ell}\), the orders \(\triangleleft\) and
\(j(\triangleleft)\) agree on their \(\alpha\)th element and so \(\ell\) guesses \(b\)
via \(j\).
\end{proof}

It follows from the above lemma that in any model with a sufficiently supersuitable
well-order, being able to guess ordinals suffices to be able to guess arbitrary sets.

\begin{lemma}
Let \(X\) be a set (or class) of ordinals and let \(\mathcal{J}\) be a class of elementary
embeddings of \(L[X]\) with critical point \(\kappa\) such that 
\(j(X)\cap j(\kappa) = X\cap j(\kappa)\) for any
\(j\in\mathcal{J}\).
Then \(\leq_X\), the canonical order of \(L[X]\), is supersuitable for \(\mathcal{J}\).
\end{lemma}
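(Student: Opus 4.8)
The plan is to combine the absoluteness of the canonical well-order of \(L[X]\) with the hypothesis that \(X\) and \(j(X)\) agree below \(j(\kappa)\). Fix \(j\in\mathcal{J}\), so that \(j\colon L[X]\to M\) with \(M=L[j(X)]\). Since \(\leq_X\) is defined by a formula that is uniform in the predicate \(X\), elementarity gives \(j(\leq_X)=\leq_{j(X)}\), the canonical order of \(M\). It therefore suffices to show that \(\leq_X\) and \(\leq_{j(X)}\) have the same initial segment of order type \(j(\kappa)\).

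Next I would invoke the standard fact that \(L_\alpha[X]=L_\alpha[X\cap\alpha]\) for every ordinal \(\alpha\), since the construction at each stage below \(\alpha\) only queries the predicate on ordinals already present, all of which lie below \(\alpha\). Consequently the restriction \(\leq_X\rest L_\alpha[X]\) is definable from \(X\cap\alpha\) alone, and each level \(L_\alpha[X]\) forms an initial segment of \((L[X],\leq_X)\). Applying this at \(\alpha=j(\kappa)\) and using the hypothesis \(X\cap j(\kappa)=j(X)\cap j(\kappa)\) yields \(L_{j(\kappa)}[X]=L_{j(\kappa)}[j(X)]\), and moreover \(\leq_X\) and \(\leq_{j(X)}\) induce the very same well-order on this common set.

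The key observation, and the step I expect to require the most care, is that every element of \(\leq_X\)-rank below \(j(\kappa)\) already lies in \(L_{j(\kappa)}[X]\). Indeed, if \(a\notin L_{j(\kappa)}[X]\), then \(a\) first appears at some level \(\gamma+1\) with \(\gamma\geq j(\kappa)\), so that all of \(L_{j(\kappa)}[X]\) precedes \(a\) in \(\leq_X\); since \(L_{j(\kappa)}[X]\supseteq j(\kappa)\) already has at least \(j(\kappa)\) many elements, \(a\) has \(\leq_X\)-rank at least \(j(\kappa)\). Contrapositively, \(\leq_X\)-rank below \(j(\kappa)\) forces membership in \(L_{j(\kappa)}[X]\), and the same argument carried out inside \(M\) shows the first \(j(\kappa)\) elements of \(\leq_{j(X)}\) lie in \(L_{j(\kappa)}[j(X)]\).

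Finally I would assemble the pieces. The first \(j(\kappa)\) elements of each of \(\leq_X\) and \(\leq_{j(X)}\) are contained in the common initial segment \(L_{j(\kappa)}[X]=L_{j(\kappa)}[j(X)]\), on which the two orders literally coincide; hence they select the same elements in the same order. This gives \(\leq_X\rest j(\kappa)=\leq_{j(X)}\rest j(\kappa)=j(\leq_X)\rest j(\kappa)\), which is precisely the statement that \(\leq_X\) is supersuitable for \(\mathcal{J}\).
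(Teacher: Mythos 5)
Your proof is correct and follows essentially the same route as the paper's: the paper's (one-line) argument is exactly that \(\leq_X\rest j(\kappa)\) is definable in \(L_{j(\kappa)}[X]\) and that the coherence hypothesis gives \(L_{j(\kappa)}[X]=L_{j(\kappa)}[j(X)]\). You have merely spelled out the two facts the paper leaves implicit, namely that \(L_\alpha[X]=L_\alpha[X\cap\alpha]\) and that the first \(j(\kappa)\) elements of the canonical order lie inside \(L_{j(\kappa)}[X]\).
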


\begin{proof}
This is obvious; the order \(\leq_X\rest j(\kappa)\) is definable in \(L_{j(\kappa)}[X]\),
but by our coherence hypothesis this structure is just the same as \(L_{j(\kappa)}[j(X)]\).
\end{proof}

We are mostly interested in this lemma in the case when \(X=\vec{E}\) is an extender sequence
and \(L[\vec{E}]\) is an extender model in the sense of~\cite{Zeman2002:InnerModels}.
In particular, we want \(\vec{E}\) to be \emph{acceptable} (a technical condition which 
implies enough condensation properties in \(L[\vec{E}]\) to conclude
\(H_\lambda^{L[\vec{E}]}=L_\lambda[\vec{E}]\)), \emph{coherent} (meaning that if
\(j\colon L[\vec{E}]\to L[\vec{F}]\) is an ultrapower by the \(\alpha\)th extender of
\(\vec{E}\) then \(\vec{F}\rest (\alpha+1)= \vec{E}\rest\alpha\)), and to use Jensen indexing
(meaning that the index of an extender \(E\) on \(\vec{E}\) with critical point \(\kappa\) 
is \(j_E(\kappa)^+\), as computed in the ultrapower).

\begin{corollary}
\label{cor:SupersuitableOrderInExtenderModels}
Let \(V=L[\vec{E}]\) be an extender model.
Then the canonical well-order is supersuitable for the class of ultrapower embeddings
by the extenders on the sequence \(\vec{E}\).
\end{corollary}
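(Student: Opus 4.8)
The plan is to read the statement as an immediate instance of the preceding lemma. I would apply that lemma with \(X=\vec{E}\), where the extender sequence is coded as a class of ordinals in the standard way, and with \(\mathcal{J}\) taken to be the class of ultrapower embeddings by the extenders appearing on \(\vec{E}\). The conclusion of the lemma is exactly that \(\leq_{\vec{E}}\), the canonical well-order of \(L[\vec{E}]\), is supersuitable for \(\mathcal{J}\), which is what the corollary asserts. So the entire task reduces to checking the single hypothesis of the lemma: that \(j(\vec{E})\cap j(\kappa)=\vec{E}\cap j(\kappa)\) for every \(j\in\mathcal{J}\) with critical point \(\kappa\).

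To verify this, fix an extender \(E\) on \(\vec{E}\) with critical point \(\kappa\) and let \(j=j_E\colon V\to M\) be the associated ultrapower, so that \(M=L[j(\vec{E})]\). Let \(\alpha\) be the index of \(E\) on \(\vec{E}\). The coherence of \(\vec{E}\), recalled just before the corollary, gives \(j(\vec{E})\rest(\alpha+1)=\vec{E}\rest\alpha\); in particular the two sequences \(\vec{E}\) and \(j(\vec{E})\) carry exactly the same extenders at every index below \(\alpha\). It remains only to see that this agreement reaches past \(j(\kappa)\). This is where the indexing convention is used: under Jensen indexing the index of \(E\) is \(\alpha=(j(\kappa)^+)^M\), which is strictly greater than \(j(\kappa)\). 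Hence the agreement \(\vec{E}\rest\alpha=j(\vec{E})\rest\alpha\) already covers all extenders of index below \(j(\kappa)\), and after translating back through the coding of the sequence as ordinals this is precisely the equality \(j(\vec{E})\cap j(\kappa)=\vec{E}\cap j(\kappa)\).

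With the coherence hypothesis confirmed, the preceding lemma applies verbatim and delivers supersuitability of \(\leq_{\vec{E}}\) for \(\mathcal{J}\). There is no substantial obstacle here, since this is genuinely a corollary; the only point requiring care is the bookkeeping that links the index-based formulation of coherence to the ordinal-coded formulation \(j(\vec{E})\cap j(\kappa)=\vec{E}\cap j(\kappa)\) demanded by the lemma, together with the observation that Jensen indexing forces the relevant index \(\alpha\) to exceed \(j(\kappa)\). Once those two conventions are aligned, the verification is immediate.
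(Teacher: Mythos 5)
Your proposal is correct and takes exactly the route the paper does: the paper's proof is the one-line observation that the corollary is immediate from the preceding lemma together with coherence and Jensen indexing, and your argument simply spells out that verification (coherence gives agreement of \(\vec{E}\) and \(j(\vec{E})\) below the index \(\alpha\) of the applied extender, and Jensen indexing guarantees \(\alpha=(j(\kappa)^+)^M>j(\kappa)\), so the agreement reaches \(j(\kappa)\)). The unpacking, including the remark about aligning the index-based coherence with the ordinal-coded hypothesis of the lemma, is exactly the content the paper leaves implicit.
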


\begin{proof}
This is immediate from the preceding lemma and the fact that our extender sequences are
coherent and use Jensen indexing.
\end{proof}

\begin{theorem}
\label{thm:LaverFunctionsInExtenderModels}
Let \(V=L[\vec{E}]\) be an extender model. Let \(\kappa\) be a
cardinal such that every normal measure on \(\kappa\) appears on the sequence \(\vec{E}\). 
If \(o(\kappa)\geq\kappa^+\) then \(\Ldmeas_\kappa\) holds.
Moreover, if \(o(\kappa)=\kappa^{++}\) then \(\jLdmeas_{\kappa,\kappa^+}\),
and even \(\Ldmeas_\kappa(H_{\kappa^{++}})\), holds.
\end{theorem}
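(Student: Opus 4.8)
The plan is to split the argument into a combinatorial core---manufacturing an \emph{ordinal}-guessing Laver function out of the Mitchell order---and then to feed that into the suitable-well-order machinery already developed to upgrade it to set-guessing, finally coding sequences to produce jointness. Throughout I use that GCH holds in \(V=L[\vec{E}]\), so that \(\kappa\) is inaccessible, \(|H_{\kappa^+}|=\kappa^+\), \(|H_{\kappa^{++}}|=\kappa^{++}\), and \(2^\kappa=\kappa^+\).

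First I would build the ordinal-guessing function. Enumerate the normal measures on \(\kappa\) in Mitchell-increasing order as \(\langle U_\alpha;\alpha<o(\kappa)\rangle\), with ultrapowers \(j_\alpha\colon V\to M_\alpha\), and set \(\bar{\ell}(\xi)=o(\xi)\), the Mitchell rank of \(\xi\) (and \(0\) when \(\xi\) is not measurable); since \(\kappa\) is inaccessible this is a genuine function \(\kappa\to\kappa\). By elementarity \(j_\alpha(\bar{\ell})(\kappa)=o^{M_\alpha}(\kappa)\), so the entire content is the identity \(o^{M_\alpha}(\kappa)=\alpha\). This is exactly where the hypotheses bite: because every normal measure on \(\kappa\) lies on \(\vec{E}\) and \(\vec{E}\) is coherent, \(M_\alpha=L[j_\alpha(\vec{E})]\) is again an extender model whose sequence agrees with \(\vec{E}\) below the index of \(U_\alpha\), so the normal measures on \(\kappa\) surviving into \(M_\alpha\) are precisely \(\{U_\beta;\beta<\alpha\}\), whence \(o^{M_\alpha}(\kappa)=\alpha\). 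Thus \(\bar{\ell}\) guesses, via \(U_\alpha\), every ordinal \(\alpha<o(\kappa)\). I expect this step---pinning down the Mitchell rank inside the ultrapower purely from coherence and the all-measures-on-\(\vec{E}\) assumption---to be the main obstacle; everything downstream is bookkeeping on top of the cited lemmas.

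Next I would convert ordinal-guessing into set-guessing. By corollary~\ref{cor:SupersuitableOrderInExtenderModels} the canonical order \(\leq_{\vec{E}}\) is supersuitable for the ultrapowers by extenders on \(\vec{E}\), which include all the \(U_\alpha\); hence \(\leq_{\vec{E}}\) is suitable for \(\bar{\ell}\). For the first claim, with \(o(\kappa)\geq\kappa^+\), take \(\triangleleft=\leq_{\vec{E}}\rest H_{\kappa^+}\); by acceptability \(H_{\kappa^+}=L_{\kappa^+}[\vec{E}]\) forms an initial segment of \(\leq_{\vec{E}}\) of order type \(\kappa^+\leq o(\kappa)\), so \(\triangleleft\) inherits suitability for \(\bar{\ell}\) (its first \(\alpha+1\) elements, for \(\alpha<\kappa^+<j(\kappa)\), are fixed by supersuitability) and \(\operatorname{otp}(\triangleleft)\subseteq A\). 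Lemma~\ref{lemma:SetGuessingFromOrdinalGuessing} then yields an \(H_{\kappa^+}\)-guessing Laver function, i.e.\ \(\Ldmeas_\kappa\). When \(o(\kappa)=\kappa^{++}\), the identical argument with \(\triangleleft=\leq_{\vec{E}}\rest H_{\kappa^{++}}\) (order type \(\kappa^{++}\)) gives \(\Ldmeas_\kappa(H_{\kappa^{++}})\).

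Finally I would extract jointness by a coding argument in the spirit of proposition~\ref{prop:SCHasSomeJLDiamond}. Using \(2^\kappa=\kappa^+\), fix a bijection \(f\colon\kappa^+\to\mathcal{P}(\kappa)\) and, from the \(H_{\kappa^{++}}\)-guessing function \(\ell\), define \(\ell_\alpha(\xi)=\ell(\xi)(f(\alpha)\cap\xi)\) (and \(\emptyset\) when meaningless). Given a target sequence \(\vec{a}=\langle a_\alpha;\alpha<\kappa^+\rangle\) with each \(a_\alpha\in H_{\kappa^+}\), the map \(\vec{a}\circ f^{-1}\colon\mathcal{P}(\kappa)\to H_{\kappa^+}\) has transitive closure of size \(\kappa^+\) and so lies in \(H_{\kappa^{++}}\); choosing a normal-measure ultrapower \(j\) with \(j(\ell)(\kappa)=\vec{a}\circ f^{-1}\) and using \(j(f(\alpha))\cap\kappa=f(\alpha)\) (as \(\kappa=\operatorname{crit}(j)\)) gives \(j(\ell_\alpha)(\kappa)=a_\alpha\) for every \(\alpha\) simultaneously. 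Hence \(\langle\ell_\alpha;\alpha<\kappa^+\rangle\) witnesses \(\jLdmeas_{\kappa,\kappa^+}\), completing the proof.
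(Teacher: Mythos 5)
Your proposal is correct and follows essentially the same route as the paper's proof: take the Mitchell-rank function \(\bar{\ell}(\xi)=o(\xi)\) as an ordinal-guessing Laver function, upgrade it to \(H_{\kappa^+}\)- (respectively \(H_{\kappa^{++}}\)-) guessing via corollary~\ref{cor:SupersuitableOrderInExtenderModels} and lemma~\ref{lemma:SetGuessingFromOrdinalGuessing}, and then obtain jointness by the coding argument of proposition~\ref{prop:SCHasSomeJLDiamond}. You in fact give more detail than the paper on the step \(o^{M_\alpha}(\kappa)=\alpha\) (which the paper simply asserts); the only small point to add there is that excluding measures \(U_\gamma\) with \(\gamma\geq\alpha\) from \(M_\alpha\) uses the well-foundedness (no two-cycles) of the Mitchell order together with the hypothesis that all measures on \(\kappa\) lie on \(\vec{E}\), not coherence alone.
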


In particular, the above theorem implies \(\Ldmeas_\kappa\) holds in the least inner model
with the required number of measures and the same holds for \(\jLdmeas_{\kappa,\kappa^+}\).
This provides further evidence that the answer to question~\ref{q:MeasuresGiveLavers}, which
remains open, might turn out to be positive

\begin{proof}
We can argue for the two cases more or less uniformly: let 
\(\lambda\in\{\kappa^+,\kappa^{++}\}\) such that \(\lambda\leq o(\kappa)\).
The function \(\bar{\ell}(\xi)=o(\xi)\) is a \(\lambda\)-guessing measurable Laver
function for \(\kappa\).  By the acceptability of
\(\vec{E}\) we have that \(H_{\lambda}=L_{\lambda}[\vec{E}]\). The canonical well-order
\(\leq_{\vec{E}}\cap L_\lambda[\vec{E}]\) has order type \(\lambda\) and, by
corollary~\ref{cor:SupersuitableOrderInExtenderModels}, is supersuitable for the class
of ultrapower embeddings by normal measures on \(\kappa\). It follows that
\(\leq_{\vec{E}}\) is suitable for \(\bar{\ell}\), so, by 
lemma~\ref{lemma:SetGuessingFromOrdinalGuessing}, there is an \(H_{\lambda}\)-guessing
measurable Laver function for \(\kappa\).

To finish the proof we still need to produce a joint measurable Laver sequence
for \(\kappa\), in the case that \(o(\kappa)=\kappa^{++}\). This is done in exactly the same
way as in proposition~\ref{prop:SCHasSomeJLDiamond}; one simply uses the 
\(H_{\kappa^{++}}\)-guessing Laver function to guess the whole sequence of targets
for a joint Laver sequence.
\end{proof}

We should also mention that, if we restrict to a smaller set of targets, having
enough normal measures \emph{does} give us Laver functions.

\begin{lemma}
\label{lemma:DiscreteMeasures}
Let \(\kappa\) be a regular cardinal and \(\gamma\leq\kappa\)
and suppose that \(\langle \mu_\alpha;\alpha<\gamma\rangle\) is a sequence of distinct normal
measures on \(\kappa\). Then there is a sequence \(\langle X_\alpha;\alpha<\gamma\rangle\)
of pairwise disjoint subsets of \(\kappa\) such that \(X_\alpha\in\mu_\beta\) if
and only if \(\alpha=\beta\).
\end{lemma}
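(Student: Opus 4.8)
The plan is to realize the desired sets as the fibres of a single partition of $\kappa$, which makes pairwise disjointness automatic, and to observe at the outset that once the $X_\alpha$ are pairwise disjoint with $X_\alpha\in\mu_\alpha$, the ``only if'' direction comes for free: if we had $X_\alpha\in\mu_\beta$ for some $\beta\neq\alpha$, then $X_\alpha$ and $X_\beta$ would be two disjoint sets both lying in the ultrafilter $\mu_\beta$, which is impossible. So it suffices to produce pairwise disjoint sets $X_\alpha$ with $X_\alpha\in\mu_\alpha$.

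First I would extract separating sets from the distinctness of the measures: for each pair $\alpha\neq\beta$ below $\gamma$, since $\mu_\alpha\neq\mu_\beta$ there is a set $S_{\alpha\beta}\in\mu_\alpha\setminus\mu_\beta$, so that its complement lies in $\mu_\beta$. When $\gamma<\kappa$ this is already enough: by $\kappa$-completeness $A_\alpha=\bigcap_{\beta\neq\alpha}S_{\alpha\beta}\in\mu_\alpha$, while $A_\alpha\subseteq S_{\alpha\beta}\notin\mu_\beta$ shows $A_\alpha\notin\mu_\beta$ for every $\beta\neq\alpha$.

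The main obstacle is the case $\gamma=\kappa$, where the intersection $\bigcap_{\beta\neq\alpha}S_{\alpha\beta}$ ranges over $\kappa$ many sets and $\kappa$-completeness no longer applies; this is precisely where the hypothesis that the $\mu_\alpha$ are \emph{normal}, rather than merely $\kappa$-complete, is used. I would replace the plain intersection by a diagonal one. Setting $S_{\alpha\alpha}=\kappa$ (and $S_{\alpha\beta}=\kappa$ for $\gamma\leq\beta<\kappa$ when $\gamma<\kappa$, so as to handle both cases uniformly), let $A_\alpha=\triangle_{\beta<\kappa}S_{\alpha\beta}$. By normality $A_\alpha\in\mu_\alpha$. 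For a fixed $\beta\neq\alpha$, every $\xi\in A_\alpha$ with $\xi>\beta$ lies in $S_{\alpha\beta}$, so $A_\alpha\setminus(\beta+1)\subseteq S_{\alpha\beta}\notin\mu_\beta$; since $\mu_\beta$ is uniform the bounded set $\beta+1$ is null, and hence $A_\alpha\notin\mu_\beta$. Thus in all cases I obtain sets $A_\alpha\in\mu_\alpha$ with $A_\alpha\notin\mu_\beta$ for all $\beta\neq\alpha$.

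Finally I would disjointify, setting $X_\alpha=A_\alpha\setminus\bigcup_{\beta<\alpha}A_\beta$. These are pairwise disjoint by construction. To see that $X_\alpha\in\mu_\alpha$, note that the union is taken over $\alpha<\kappa$ many sets, each of which is \emph{not} in $\mu_\alpha$ (as $\beta\neq\alpha$); by $\kappa$-completeness the complements of these sets have their intersection in $\mu_\alpha$, so the union itself is not in $\mu_\alpha$, and removing it from $A_\alpha\in\mu_\alpha$ leaves a set of full $\mu_\alpha$-measure. This yields the required partition $\langle X_\alpha;\alpha<\gamma\rangle$.
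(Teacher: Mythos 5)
Your proof is correct, but it is organized quite differently from the paper's. The paper argues by transfinite induction on \(\gamma\): a base step, a successor step (shrinking the previously obtained sets \(Y_\alpha\) by separating sets \(Z_\alpha\in\mu_\gamma\setminus\mu_\alpha\)), and a limit step, where for \(\gamma=\kappa\) it takes diagonal intersections of the inductively produced families to get \emph{almost disjoint} sets \(Y_\alpha\) (pairwise intersections bounded in \(\kappa\)) and then disjointifies by discarding those bounded overlaps. You instead give a direct, one-pass construction: fix separating sets \(S_{\alpha\beta}\in\mu_\alpha\setminus\mu_\beta\) for all pairs simultaneously, pass to plain intersections when \(\gamma<\kappa\) or diagonal intersections when \(\gamma=\kappa\) to get \(A_\alpha\in\mu_\alpha\) with \(A_\alpha\notin\mu_\beta\) for \(\beta\neq\alpha\), and then disjointify by \(X_\alpha=A_\alpha\setminus\bigcup_{\beta<\alpha}A_\beta\), justified purely measure-theoretically (a union of fewer than \(\kappa\) many \(\mu_\alpha\)-null sets is null by \(\kappa\)-completeness). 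Both proofs use the same essential ingredients -- separating sets, \(\kappa\)-completeness for short sequences, normality precisely and only at \(\gamma=\kappa\), and a final disjointification -- but your disjointification is more robust, since it needs nothing about how the \(A_\alpha\) overlap, whereas the paper's limit step leans on the structural fact that its diagonal intersections are almost disjoint. The payoff of your route is uniformity and brevity; the payoff of the paper's induction is the extra almost-disjointness information at the penultimate stage, which, however, is not needed for the statement itself. Your opening observation that the ``only if'' direction is automatic for pairwise disjoint sets with \(X_\alpha\in\mu_\alpha\) is also correct and matches what the paper leaves implicit.
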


\begin{proof}
We prove the lemma by induction on \(\gamma\). In the base step, \(\gamma=1\),
we simply observe that, since \(\mu_0\neq\mu_1\), we must have a set \(X_0\subseteq\kappa\)
such that \(X_0\in\mu_0\) and \(\kappa\setminus X_0\in \mu_1\).

The successor step proceeds similarly. 
Suppose that the lemma holds for sequences of length \(\gamma\) and fix a sequence
of measures \(\langle \mu_\alpha;\alpha<\gamma+1\rangle\). By the induction hypothesis
we can find pairwise disjoint sets \(\langle Y_\alpha;\alpha<\gamma\rangle\) such
that each \(Y_\alpha\) picks out a unique measure among those with indices below
\(\gamma\). Again, since \(\mu_\gamma\) is distinct from all of the other measures,
we can find sets \(Z_\alpha\in \mu_\gamma\setminus \mu_\alpha\) for each \(\alpha<\gamma\).
Then the sets \(X_\alpha=Y_\alpha\setminus Z_\alpha\) for \(\alpha<\gamma\) and
\(X_\gamma=\bigcap_\alpha Z_\alpha\) are as required.

In the limit step suppose that the lemma holds for all \(\delta<\gamma\).
We can then fix sequences \(\langle X_\alpha^\delta;\alpha<\delta\rangle\) for each 
\(\delta<\gamma\)
as above. The argument proceeds slightly differently depending on whether \(\gamma=\kappa\)
or not. If \(\gamma<\kappa\) we can simply let 
\(X_\alpha=\bigcap_{\alpha<\delta<\gamma} X_\alpha^\delta\in\mu_\alpha\). 
If, on the other hand, we have \(\gamma=\kappa\) then first let
\(Y_\alpha=\diag_{\alpha<\delta<\kappa}X_\alpha^\delta\in \mu_\alpha\).
Observe that the \(Y_\alpha\) are almost disjoint: \(Y_\alpha\cap Y_\beta\) is
bounded in \(\kappa\) for any \(\alpha,\beta<\kappa\).
Now consider 
\[X_\alpha=Y_\alpha\setminus \bigcup_{\beta<\alpha}(Y_\alpha\cap Y_\beta)\]
for \(\alpha<\kappa\). Since \(Y_\alpha\cap Y_\beta\) is bounded for all \(\beta<\alpha\),
we still have \(X_\alpha\in \mu_\alpha\). Furthermore, we obviously have
\(X_\alpha\cap X_\beta=\emptyset\) for \(\beta<\alpha\) and this implies that the
\(X_\alpha\) are pairwise disjoint.
\end{proof}

\begin{theorem}
\label{thm:OrdGuessingIffEnoughMeasures}
Let \(\kappa\) be a measurable cardinal and \(\gamma<\kappa^+\) an ordinal.
There is a \(\gamma\)-guessing measurable Laver function for \(\kappa\) if and only if
there are at least \(|\gamma|\) many normal measures on \(\kappa\).
\end{theorem}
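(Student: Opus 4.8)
This is a biconditional whose two halves are of quite different weight. The necessity direction is immediate from the definitions: if \(\ell\) is a \(\gamma\)-guessing measurable Laver function, then to each target \(\alpha<\gamma\) we may associate a normal measure \(\mu_\alpha\) with \(j_{\mu_\alpha}(\ell)(\kappa)=\alpha\). A single measure cannot guess two distinct targets, since \(j_\mu(\ell)(\kappa)\) is one fixed value, so \(\alpha\mapsto\mu_\alpha\) is injective on the \(|\gamma|\) many ordinals below \(\gamma\); hence \(\kappa\) carries at least \(|\gamma|\) many normal measures.

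The substance is in the sufficiency direction, and the plan is to separate the measures using the discreteness lemma and then splice together local representatives of the targets. Fix distinct normal measures \(\langle\nu_i;i<|\gamma|\rangle\), which exist by hypothesis; note \(|\gamma|\le\kappa\) as \(\gamma<\kappa^+\). Since \(|\gamma|\le\kappa\), lemma~\ref{lemma:DiscreteMeasures} yields pairwise disjoint sets \(\langle X_i;i<|\gamma|\rangle\) with \(X_i\in\nu_j\) exactly when \(i=j\). Fixing a bijection \(e\colon\gamma\to|\gamma|\), I would then, for each \(i\), pick a function \(g_i\colon\kappa\to\kappa\) representing the target \(e^{-1}(i)\) in the ultrapower, i.e.\ \([g_i]_{\nu_i}=e^{-1}(i)\); this is possible because every target is below \(\kappa^+\le j_{\nu_i}(\kappa)\), and the ordinals below \(j_{\nu_i}(\kappa)\) are precisely those of the form \([h]_{\nu_i}\) for \(h\colon\kappa\to\kappa\). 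Now define \(\ell\colon\kappa\to V_\kappa\) by \(\ell(\xi)=g_i(\xi)\) for the unique \(i\) with \(\xi\in X_i\), and \(\ell(\xi)=\emptyset\) otherwise; disjointness of the \(X_i\) makes this well-defined, and the values lie in \(V_\kappa\).

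It then remains to verify guessing, which is where disjointness does its work. Given \(\alpha<\gamma\), set \(i=e(\alpha)\); since \(X_i\in\nu_i\) and \(\ell\) agrees with \(g_i\) throughout \(X_i\), we get \(j_{\nu_i}(\ell)(\kappa)=[\ell]_{\nu_i}=[g_i]_{\nu_i}=e^{-1}(i)=\alpha\), so \(\nu_i\) witnesses the guess and \(\ell\) is a \(\gamma\)-guessing Laver function. The only genuinely nontrivial ingredient is lemma~\ref{lemma:DiscreteMeasures}, already established; beyond it, the one point to be careful about is that the pairwise disjoint pieces allow a single \(\ell\) to carry all the distinct representatives \(g_i\) simultaneously, with each measure \(\nu_i\) seeing only its own coordinate and thus reading off exactly its assigned target.
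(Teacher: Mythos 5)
Your proof is correct and takes essentially the same route as the paper: necessity by assigning to each target the measure that witnesses its guess, and sufficiency by applying lemma~\ref{lemma:DiscreteMeasures} to get pairwise disjoint distinguishing sets and splicing representing functions for the targets along those pieces. The only cosmetic difference is that you pick, for each measure \(\nu_i\), a function \(g_i\) representing its target with respect to that measure alone, whereas the paper uses canonical functions \(f_\alpha\) satisfying \(j(f_\alpha)(\kappa)=\alpha\) for every normal ultrapower \(j\); both choices suffice, since each measure only ever evaluates \(\ell\) on its own set \(X_i\).
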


\begin{proof}
First suppose that \(\Ldmeas_\kappa(\gamma)\) holds. Then, just as in
proposition~\ref{prop:JLDiamondHasManyMeasures}, each target \(\alpha<\gamma\)
requires its own embedding \(j\) via which it is guessed and this gives us
\(|\gamma|\) many distinct normal measures.

Conversely, suppose that we have at least \(|\gamma|\) many normal measures on
\(\kappa\). We can apply lemma~\ref{lemma:DiscreteMeasures} to find a sequence
of pairwise disjoint subsets distinguishing these measures. By reorganizing
the measures and the distinguishing sets we may assume that they are given in sequences
of length \(\gamma\). We now have normal measures \(\langle \mu_\alpha;\alpha<\gamma\rangle\)
and sets \(\langle X_\alpha;\alpha<\gamma\rangle\) such that \(\mu_\alpha\) is the unique
measure concentrating on \(X_\alpha\); we may even assume that the \(X_\alpha\) partition
\(\kappa\). 
Let \(f_\alpha\) for
\(\alpha<\gamma\) be the representing functions for \(\alpha\), that is,
\(j(f_\alpha)(\kappa)=\alpha\) for any ultrapower embedding \(j\) by a normal measure on
\(\kappa\). We can now define a \(\gamma\)-guessing Laver function \(\ell\) by letting
\(\ell(\xi)=f_\alpha(\xi)\) where \(\alpha\) is the unique index such that 
\(\xi\in X_\alpha\). This function indeed guesses any target \(\alpha<\gamma\):
simply let \(j\colon V\to M\) be the ultrapower by \(\mu_\alpha\).
Since \(\mu_\alpha\) concentrates on \(X_\alpha\) we have 
\(j(\ell)(\kappa)=j(f_\alpha)(\kappa)=\alpha\).
\end{proof}

\begin{corollary}
\label{cor:SmallGuessingIffEnoughMeasures}
Let \(\kappa\) be a measurable cardinal and fix a subset \(A\subseteq H_{\kappa^+}\)
of size at most \(\kappa\). Then there is an \(A\)-guessing measurable Laver function for
\(\kappa\) if and only if there are at least \(|A|\) many normal measures on \(\kappa\).
\end{corollary}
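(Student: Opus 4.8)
The plan is to read this off Theorem~\ref{thm:OrdGuessingIffEnoughMeasures}, reusing its construction essentially verbatim and simply replacing the ordinal targets by the set targets from $A$. The forward implication I would dispatch immediately, exactly as in the forward direction of that theorem and in proposition~\ref{prop:JLDiamondHasManyMeasures}: if $\ell$ is an $A$-guessing measurable Laver function, then each $a\in A$ is guessed by $\ell$ via some normal measure ultrapower $j_a$, and distinct targets force distinct measures, since a single embedding $j$ yields only the single value $j(\ell)(\kappa)$. Hence there are at least $|A|$ many normal measures on $\kappa$.

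For the converse, suppose there are at least $|A|$ many normal measures. Writing $\gamma=|A|\leq\kappa$, I would fix an enumeration $A=\langle a_\alpha;\alpha<\gamma\rangle$ and select $\gamma$ many distinct normal measures $\langle \mu_\alpha;\alpha<\gamma\rangle$. Applying lemma~\ref{lemma:DiscreteMeasures} (legitimate since $\gamma\leq\kappa$) yields pairwise disjoint sets $\langle X_\alpha;\alpha<\gamma\rangle$ with $X_\alpha\in\mu_\beta$ if and only if $\alpha=\beta$. The Laver function is then assembled by patching representing functions across these sets: for $\xi\in X_\alpha$ set $\ell(\xi)=h_\alpha(\xi)$, where $h_\alpha$ represents the target $a_\alpha$ in the ultrapower by $\mu_\alpha$, and $\ell(\xi)=\emptyset$ off $\bigcup_\alpha X_\alpha$. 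Since $X_\alpha\in\mu_\alpha$, the function $\ell$ agrees with $h_\alpha$ on a $\mu_\alpha$-large set, so $j_{\mu_\alpha}(\ell)(\kappa)=j_{\mu_\alpha}(h_\alpha)(\kappa)=a_\alpha$, and thus $\ell$ guesses every element of $A$.

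The one step requiring care — the analogue of the ``representing functions for $\alpha$'' step of Theorem~\ref{thm:OrdGuessingIffEnoughMeasures} — is producing the functions $h_\alpha$ for the \emph{set} targets, and I expect this to be the main (though mild) obstacle. Here I would use that the ultrapower $M_{\mu_\alpha}$ by a normal measure is closed under $\kappa$-sequences and contains every subset of $\kappa$, so that $H_{\kappa^+}^V\subseteq M_{\mu_\alpha}$ and in particular $a_\alpha\in M_{\mu_\alpha}$; since $M_{\mu_\alpha}=\{\,j_{\mu_\alpha}(h)(\kappa):h\in{}^\kappa V\,\}$, a representing function $h_\alpha$ with $j_{\mu_\alpha}(h_\alpha)(\kappa)=a_\alpha$ exists automatically. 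Note that, unlike in the ordinal case, no uniformity across all measures is needed, as $\ell$ is only ever evaluated against $\mu_\alpha$ on $X_\alpha$. If one prefers an explicit choice, I would instead code each $a_\alpha$ by a subset of $\kappa$ via a Mostowski code and let $h_\alpha(\xi)$ be the set decoded from that code below $\xi$; absoluteness of the decoding between $V$ and $M_{\mu_\alpha}$ then delivers $j_{\mu_\alpha}(h_\alpha)(\kappa)=a_\alpha$ directly, in the same spirit as the coding used in Theorem~\ref{thm:ForceLongJLDiamondSC}.
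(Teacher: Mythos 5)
Your proof is correct, but it follows a different decomposition than the paper's. The paper's converse uses theorem~\ref{thm:OrdGuessingIffEnoughMeasures} as a black box: it takes the \(|A|\)-guessing \emph{ordinal} Laver function \(\bar{\ell}\) supplied by that theorem, fixes a bijection \(f\colon |A|\to A\) in \(V\), assumes \(A\subseteq\mathcal{P}(\kappa)\) (via coding), and sets \(\ell(\xi)=f(\bar{\ell}(\xi))\cap\xi\); guessing then follows from \(j(\ell)(\kappa)=j(f)(\alpha)\cap\kappa=f(\alpha)\), since \(\alpha<|A|\leq\kappa\) is fixed by \(j\). You instead inline the construction behind that theorem: you invoke lemma~\ref{lemma:DiscreteMeasures} directly for the pairwise disjoint discretizing sets \(X_\alpha\) and patch together representing functions \(h_\alpha\) for the set targets themselves, justified by \(H_{\kappa^+}\subseteq M_{\mu_\alpha}\) and the fact that every element of a normal ultrapower has the form \(j(h)(\kappa)\). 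Both arguments rest on the same key lemma, so this is a variant rather than a fundamentally new idea; the paper's reduction buys brevity (no need to argue that set targets are representable), while yours is more self-contained, makes explicit that no uniformity across the measures is needed, and in effect re-proves the theorem with arbitrary \(H_{\kappa^+}\) targets. One cosmetic point: the paper's convention for measurable cardinals asks for \(\ell\colon\kappa\to V_\kappa\), and an abstract representing function \(h_\alpha\) need not take values in \(V_\kappa\); your Mostowski-code fallback (or replacing \(h_\alpha(\xi)\) by \(\emptyset\) whenever \(h_\alpha(\xi)\notin V_\kappa\), which alters nothing on a \(\mu_\alpha\)-large set) disposes of this, just as the paper's intersection with \(\xi\) does.
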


\begin{proof}
The forward direction follows just as before: each target in \(A\) gives its own
normal measure on \(\kappa\). Conversely, if there are at least \(|A|\) many normal
measures on \(\kappa\) then, by theorem~\ref{thm:OrdGuessingIffEnoughMeasures},
there is an \(|A|\)-guessing measurable Laver function \(\bar{\ell}\). Fix a bijection
\(f\colon |A|\to A\). We may assume, moreover, that \(A\subseteq\mathcal{P}(\kappa)\). 
Then we can define an \(A\)-guessing Laver function \(\ell\) by letting 
\(\ell(\xi)=f(\bar{\ell}(\xi))\cap\xi\). This definition works: to guess \(f(\alpha)\) we let
\(\bar{\ell}\) guess \(\alpha\) via some \(j\). Then 
\(j(\ell)(\kappa)=j(f(\alpha))\cap\kappa=f(\alpha)\).
\end{proof}

Lemma~\ref{lemma:DiscreteMeasures} can be recast in somewhat different language,
giving it, and the subsequent results, a more topological flavour.

Given a cardinal \(\kappa\) let \(\mathcal{M}(\kappa)\) be the set of normal measures on
\(\kappa\). We can topologize \(\mathcal{M}(\kappa)\) by having, for each 
\(X\subseteq\kappa\), a basic neighbourhood \([X]=\{\mu\in \mathcal{M}(\kappa); X\in\mu\}\)
(this is just the topology induced on \(\mathcal{M}(\kappa)\) by the Stone topology on
the space of ultrafilters on \(\kappa\)). Lemma~\ref{lemma:DiscreteMeasures} can now
be restated to say that any subspace of \(\mathcal{M}(\kappa)\) of size at most \(\kappa\)
is discrete and, moreover, the \emph{discretizing family} of subsets of \(\kappa\)
witnessing this can be taken to be pairwise disjoint. One might thus hope to show
the existence of Laver functions by exhibiting even larger discrete subspaces of
\(\mathcal{M}(\kappa)\). In pursuit of that goal we obtain the following generalization of 
corollary~\ref{cor:SmallGuessingIffEnoughMeasures}.

\begin{theorem}
Let \(\kappa\) be a measurable cardinal and \(A\subseteq\mathcal{P}(\kappa)\). 
Then \(\Ldmeas_\kappa(A)\) holds if and only if
there are for each \(a\in A\) a set \(S_a\subseteq\kappa\) and a normal
measure \(\mu_a\) on \(\kappa\) such that \(\{\mu_a;a\in A\}\) is discrete in \(\mathcal{M}(\kappa)\),
as witnessed by \(\{S_a;a\in A\}\), and \(S_a\cap S_b\subseteq \{\xi;
a\cap\xi=b\cap\xi\}\).
\end{theorem}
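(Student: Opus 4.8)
The plan is to set up an explicit dictionary between an $A$-guessing Laver function $\ell$ and the localizing sets $S_a=\{\xi<\kappa : \ell(\xi)=a\cap\xi\}$, and to observe that the coherence condition $S_a\cap S_b\subseteq\{\xi: a\cap\xi=b\cap\xi\}$ is exactly what makes such a family reconstructible into a single function. In one direction I read the sets off a given Laver function; in the other I read the function off a given coherent discrete family.

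For the forward direction, assume $\Ldmeas_\kappa(A)$ holds, witnessed by $\ell$. For each $a\in A$ fix a normal measure $\mu_a$, with ultrapower $j_a\colon V\to M_a$, such that $j_a(\ell)(\kappa)=a$, and set $S_a=\{\xi<\kappa:\ell(\xi)=a\cap\xi\}$. Since $\mathrm{crit}(j_a)=\kappa$ and $a\subseteq\kappa$ we have $j_a(a)\cap\kappa=a$, so $j_a(\ell)(\kappa)=a=j_a(a)\cap\kappa$ says precisely that $\kappa\in j_a(S_a)$, i.e.\ $S_a\in\mu_a$. The coherence condition is immediate: if $\xi\in S_a\cap S_b$ then $a\cap\xi=\ell(\xi)=b\cap\xi$. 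For discreteness I first note that $a\mapsto\mu_a$ is injective, since $\mu_a=\mu_b$ would force $a=j_a(\ell)(\kappa)=b$; and then that $S_a\notin\mu_b$ whenever $b\neq a$, since $S_a\in\mu_b$ would give $j_b(\ell)(\kappa)=j_b(a)\cap\kappa=a$, contradicting $j_b(\ell)(\kappa)=b$. Thus $[S_a]\cap\{\mu_b:b\in A\}=\{\mu_a\}$, so the family is discrete as witnessed by the $S_a$.

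For the converse, given the data $\langle S_a,\mu_a\rangle$ I define $\ell\colon\kappa\to V_\kappa$ by $\ell(\xi)=a\cap\xi$ for some (equivalently any) $a\in A$ with $\xi\in S_a$, and $\ell(\xi)=\emptyset$ if no such $a$ exists; the coherence condition guarantees the value is independent of the choice of $a$, so $\ell$ is well-defined and has values in $V_\kappa$. Writing $\vec{S}=\langle S_a:a\in A\rangle$, applying $j_a$ yields the analogously defined function $j_a(\ell)$ over $j_a(A)$ and $j_a(\vec{S})$, and by elementarity the $j_a$-image of the coherence condition holds, so $j_a(\ell)$ is well-defined in $M_a$. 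Since $S_a\in\mu_a$ we have $\kappa\in j_a(S_a)=j_a(\vec{S})_{j_a(a)}$, so $j_a(a)$ is a legitimate witness at the coordinate $\kappa$ and therefore $j_a(\ell)(\kappa)=j_a(a)\cap\kappa=a$. Hence $\ell$ guesses every $a\in A$ via $\mu_a$, and $\Ldmeas_\kappa(A)$ holds.

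The only genuinely delicate point is the well-definedness of $j_a(\ell)$ inside $M_a$, which is where the coherence condition does its real work and must be transferred by elementarity rather than recomputed; everything else is a direct critical-point computation resting on $j_a(a)\cap\kappa=a$. It is worth noting that the converse uses only $S_a\in\mu_a$ together with coherence, so the full force of discreteness (the clauses $S_a\notin\mu_b$) is needed only to produce the stated configuration in the forward direction. This mirrors, and strictly generalizes, corollary~\ref{cor:SmallGuessingIffEnoughMeasures}, where the discretizing sets are taken pairwise disjoint and hence satisfy the coherence condition trivially.
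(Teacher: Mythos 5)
Your proof is correct and follows essentially the same route as the paper: reading the sets \(S_a=\{\xi;\ell(\xi)=a\cap\xi\}\) off a given Laver function and deriving the discrete family of measures in the forward direction, and defining \(\ell(\xi)=a\cap\xi\) for \(\xi\in S_a\) (well-defined by coherence) in the converse. Your version merely spells out the details the paper leaves implicit, such as the critical-point computation \(j_a(a)\cap\kappa=a\) and the elementarity transfer of coherence into \(M_a\).
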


\noindent Note that we could have relaxed our hypothesis to \(A\subseteq H_{\kappa^+}\)
by working with Mostowski codes.

\begin{proof}
Assume first that \(\ell\) is a measurable \(A\)-guessing Laver function for \(\kappa\).
Then we can let \(S_a=\{\xi;\ell(\xi)=a\cap\xi\}\). Obviously we have \(j(\ell)(\kappa)=a\)
if and only if the measure derived from \(j\) concentrates on \(S_a\). It follows
that the measures \(\mu_a\) derived this way form a discrete subspace of
\(\mathcal{M}(\kappa)\) and we obviously have
\(S_a\cap S_b\subseteq \{\xi;a\cap \xi=b\cap\xi\}\).

Conversely, assume we have such a discrete family of measures \(\mu_a\) 
and a discretizing family of sets \(S_a\). We can define an \(A\)-guessing
measurable Laver function \(\ell\) by letting \(\ell(\xi)=a\cap \xi\) where
\(a\) is such that \(\xi\in S_a\). This is well defined by the coherence condition
imposed upon the \(S_a\) and it is easy to see that \(\ell\) satisfies the guessing property.
\end{proof}

This topological viewpoint presents a number of questions which might suggest an
approach to question~\ref{q:MeasuresGiveLavers}. For example, 
it is unclear whether, given a discrete family of normal measures one can find an almost
disjoint discretizing family as in the above theorem. Even more pressingly, we do not know
whether it is possible for \(\mathcal{M}(\kappa)\) to have no discrete subspaces of size
\(\kappa^+\) (while itself having size at least \(\kappa^+\)).

\subsection{Laver trees}

Thus far we have thought of joint Laver diamonds as simply matrices or sequences of Laver
diamonds. To better facilitate the reflection properties required for the usual forcing
iterations using prediction, we would now like a different representation.
A reasonable attempt seems to be trying to align the joint Laver sequence
with the full binary tree of height \(\kappa\).

\begin{definition}
Let \(\kappa\) be a large cardinal supporting a notion of Laver diamond.
A \emph{Laver tree}\footnote{Not to be confused with particularly bushy trees used as conditions in Laver forcing.} 
for \(\kappa\) is a labelling of the binary tree such that the labels along the
branches form a joint Laver sequence. More precisely, a Laver tree is a function \(D\colon 
\funcs{<\kappa}{2}\to V\)
such that for any sequence of targets \(\langle a_s;s\in\funcs{\kappa}{2}\rangle\)
there is an elementary embedding \(j\), witnessing the largeness \(\kappa\),
such that \(j(D)(s)=a_s\) for all \(s\in \funcs{\kappa}{2}\).

Given an \(I\subseteq\funcs{\kappa}{2}\), an \emph{\(I\)-Laver tree} for \(\kappa\) is
a function \(D\) as above, satisfying the same guessing property but only for sequences of
targets indexed by \(I\).
\end{definition}

The guessing property of a Laver tree is illustrated in the following diagram:

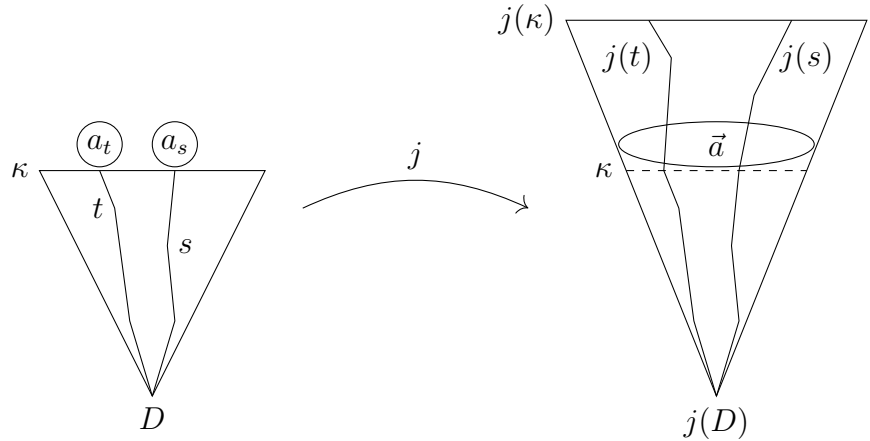
\begin{figure}[ht]
\centering
\begin{tikzpicture}
\draw (0,0)--(3,0)--(1.5,-3)--cycle;
\node [below] at (1.5,-3) {\(D\)};
\node [left] at (0,0) {\(\kappa\)};
\draw (1.5,-3)--(1.8,-2)--(1.7,-1)--(1.8,0);
\node [right] at (1.7,-1) {\(s\)};
\draw (1.8,0.35) circle [radius=0.3] node {\(a_s\)};
\draw (1.5,-3)--(1.2,-2)--(1,-0.5)--(0.8,0);
\node [left] at (1,-0.5) {\(t\)};
\draw (0.8, 0.35) circle [radius=0.3] node {\(a_t\)};
\draw [-{>[scale=1.5]}] (3.5,-0.5) to [out=25, in=155] node[midway,above]{\(j\)} (6.5,-0.5);
\draw (7,2)--(11,2)--(9,-3)--cycle;
\node [below] at (9,-3) {\(j(D)\)};
\node [left] at (7,2) {\(j(\kappa)\)};
\node at (7.5,0) {\(\kappa\)};
\draw [dashed] (7.8,0)--(10.2,0);
\draw (9,0.35) ellipse (1.3 and 0.3) node {\(\vec{a}\)};
\draw (9,-3)--(9.3,-2)--(9.2,-1)--(9.3,0)--(9.5,1)--(10,2);
\node [right] at (9.7,1.5) {\(j(s)\)};
\draw (9,-3)--(8.7,-2)--(8.5,-0.5)--(8.3,0)--(8.4,1.5)--(8.1,2);
\node [left] at (8.3,1.5) {\(j(t)\)};
\end{tikzpicture}
\caption{The guessing property of Laver trees}
\end{figure}

If the degree of supercompactness of \(\kappa\) is sufficiently large then Laver trees 
are nothing new.

\begin{proposition}
\label{prop:FullyTreeableThetaSCJLDExist}
Suppose \(\kappa\) is \(\theta\)-supercompact and \(\theta\geq 2^\kappa\). Then
a \(\theta\)-supercompactness Laver tree for \(\kappa\) exists if and only if a 
\(\theta\)-supercompactness Laver function for \(\kappa\) does
(if and only if\/ \(\jLdthetasc_{\kappa,2^\kappa}\) holds).
\end{proposition}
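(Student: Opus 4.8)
The plan is to prove the three-way equivalence by establishing a cycle of implications, where the genuinely new work is showing that a single Laver function produces a Laver tree in this high-supercompactness regime. The equivalence of the existence of a Laver function with $\jLdthetasc_{\kappa,2^\kappa}$ is already essentially in hand: Proposition~\ref{prop:SCHasSomeJLDiamond} gives a joint Laver sequence of length $\min\{\theta,2^\kappa\}=2^\kappa$ (using $\theta\geq 2^\kappa$) from a single Laver function, and conversely any member of a joint Laver sequence is itself a Laver function. So the only substantive implications to verify are that a Laver tree yields a Laver function, and that a Laver function (equivalently $\jLdthetasc_{\kappa,2^\kappa}$) yields a Laver tree.

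The easy direction, that a Laver tree gives a joint Laver sequence (hence a Laver function), is immediate: restricting the tree to the labels along a fixed set of $2^\kappa$ many branches, or simply reading off any single branch, recovers the requisite guessing functions, since the tree's guessing property specializes to any indexed subfamily of branches. Indeed, a Laver tree is literally a joint Laver sequence indexed by $\funcs{\kappa}{2}$, which has size $2^\kappa$, so it directly witnesses $\jLdthetasc_{\kappa,2^\kappa}$.

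For the hard direction I would build a Laver tree $D\colon\funcs{<\kappa}{2}\to V$ out of a Laver function $\ell$. The key point is that, because $\theta\geq 2^\kappa$, a single $\theta$-supercompactness embedding can simultaneously realize a sequence of $2^\kappa$ many targets—this is exactly the mechanism underlying Proposition~\ref{prop:SCHasSomeJLDiamond}. A sequence of targets for the tree, $\langle a_s; s\in\funcs{\kappa}{2}\rangle$, is itself an object of size $2^\kappa\leq\theta$, hence lies in $H_{\theta^+}$. The plan is to have $\ell$ guess this entire sequence at once: given such a target sequence, viewed (via a fixed coding) as a single element $\vec a\in H_{\theta^+}$, there is a $\theta$-supercompactness embedding $j$ with $j(\ell)(\kappa)=\vec a$. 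I would then define $D(t)$, for $t\in\funcs{<\kappa}{2}$, by reading off the appropriate coordinate of $\ell(\xi)$ relative to some enumeration aligning nodes of the tree with indices below $\kappa$, exactly as $\ell_\alpha$ was extracted from $\ell$ in Proposition~\ref{prop:SCHasSomeJLDiamond}. The verification that $j(D)(s)=a_s$ for every branch $s\in\funcs{\kappa}{2}$ then follows by elementarity, using that $j(s)\cap\kappa$ or the analogous restriction recovers the correct index, just as the computation $j(\ell_\alpha)(\kappa)=j(\ell)(\kappa)(f(\alpha))=a_\alpha$ did there.

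The main obstacle, and the step deserving genuine care, is the bookkeeping that aligns the tree structure with the flat index set used in Proposition~\ref{prop:SCHasSomeJLDiamond}: I must fix an injection of $\funcs{\kappa}{2}$ into $\mathcal P(\kappa)$ (or a suitable index set of size $2^\kappa$) that commutes appropriately with $j$, so that a branch $s$ of the full tree in the extension is recognized by $j$ as $j(s)$ and the coordinate $D$ reads off at $\kappa$ under $j$ is precisely $a_s$. One must check that the coding of branches as subsets of $\kappa$ behaves correctly under $j$—concretely, that $j$ applied to the indexing function, evaluated at a ground-model branch, returns the index that $D$ consults—mirroring the identity $j(f(\alpha))\cap\kappa=f(\alpha)$ that made the earlier proof work. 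Once this alignment is set up, everything else is a routine transfer of the argument already given, so I expect the proof to be short, with the bulk of the statement's content simply being that $\theta\geq 2^\kappa$ collapses the distinction between guessing one target and guessing a whole $2^\kappa$-indexed family, tree-shaped or not.
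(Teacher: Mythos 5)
Your proposal is correct and follows essentially the same route as the paper: the tree-to-function direction is immediate, and for the converse one uses \(\theta\geq 2^\kappa\) to place the entire branch-indexed target sequence \(\vec{a}\) in \(H_{\theta^+}\) and have the single Laver function guess it via one embedding. The only difference is that the paper dissolves your ``main obstacle'' outright by indexing nodes by themselves, defining \(D(t)=\ell(|t|)(t)\), so that for any branch \(s\) one computes \(j(D)(s)=j(\ell)(\kappa)(s)=a_s\) directly, with no auxiliary injection into \(\mathcal{P}(\kappa)\) and no separate coherence check required.
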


\begin{proof}
The forward implication is trivial, so we focus on the reverse implication. Let
\(\ell\) be a Laver function. For any \(t\in\funcs{<\kappa}{2}\) define
\(D(t)=\ell(|t|)(t)\) if this makes sense and \(D(t)=\emptyset\) otherwise. 
We claim this defines a Laver tree for \(\kappa\). Indeed, let 
\(\vec{a}=\langle a_s;s\in\funcs{\kappa}{2}\rangle\) be a sequence of targets. Since
\(\theta\geq 2^\kappa\) we get \(\vec{a}\in H_{\theta^+}\), so there is a 
\(\theta\)-supercompactness embedding \(j\) such that \(j(\ell)(\kappa)=\vec{a}\).
Therefore, given any \(s\in\funcs{\kappa}{2}\), we have
\(
j(D)(s)=j(\ell)(\kappa)(s)=a_s
\)
\end{proof}

In other situations, however, the existence of a Laver tree
can have strictly higher consistency strength than merely a 
\(\theta\)-supercompact cardinal.

\begin{definition}
Let \(X\) be a set and \(\theta\) a cardinal.
A cardinal \(\kappa\) is \emph{\(X\)-strong with closure \(\theta\)} if there is an
elementary embedding \(j\colon V\to M\) with critical point \(\kappa\) such that
\(\funcs{\theta}{M}\subseteq M\) and \(X\in M\).
\end{definition}

\begin{proposition}
\label{prop:FullyTreeableThetaSCJLDStronger}
Suppose \(\kappa\) is \(\theta\)-supercompact and there is a \(\theta\)-supercompactness 
Laver tree 
for \(\kappa\). Then \(\kappa\) is \(X\)-strong with closure
\(\theta\) for any \(X\subseteq H_{\theta^+}\) of size at most \(2^\kappa\).
\end{proposition}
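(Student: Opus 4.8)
The plan is to use the single labelling function $D$ to smuggle all of $X$ into the target model as one object, exploiting that a Laver tree can code $2^\kappa$ many targets simultaneously. Since $|X|\le 2^\kappa=|\funcs{\kappa}{2}|$, I first fix a surjection $s\mapsto a_s$ from $\funcs{\kappa}{2}$ onto $X$; each $a_s$ lies in $H_{\theta^+}$ because $X\subseteq H_{\theta^+}$, and $\{a_s;s\in\funcs{\kappa}{2}\}=X$. Applying the guessing property of $D$ to this sequence of targets produces a single $\theta$-supercompactness embedding $j\colon V\to M$ with critical point $\kappa$ such that $j(D)(s)=a_s$ for every branch $s\in\funcs{\kappa}{2}$.

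The closure clause is then free: as $j$ witnesses $\theta$-supercompactness we already have $\funcs{\theta}{M}\subseteq M$, so it remains only to check that $X\in M$. The idea is that $X$ is exactly the range of $j(D)$ restricted to the $\kappa$th level of the binary tree. To run this inside $M$ I need $M$ to compute that level correctly, i.e.\ $(\funcs{\kappa}{2})^M=(\funcs{\kappa}{2})^V$. This follows from $\theta\ge\kappa$ together with $\funcs{\theta}{M}\subseteq M$: every subset of $\kappa$ is a $\kappa$-sequence of elements of $M$ and hence lies in $M$, so $\mathcal{P}(\kappa)^M=\mathcal{P}(\kappa)^V$ and the two versions of $\funcs{\kappa}{2}$ coincide and both belong to $M$.

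Granting this, $j(D)\rest\funcs{\kappa}{2}$ is a set of $M$ --- it is the restriction of the $M$-set $j(D)$ to the $M$-set $\funcs{\kappa}{2}$, and since $\kappa<j(\kappa)$ these branches are genuine nodes of $\operatorname{dom}(j(D))$ --- and by the guessing property its range is $\{a_s;s\in\funcs{\kappa}{2}\}=X$. Thus $X\in M$ and $j$ witnesses that $\kappa$ is $X$-strong with closure $\theta$. The one step needing care is precisely this recovery: the interesting regime is $\theta<2^\kappa$, where $|X|$ may exceed $\theta$ and $X$ need not be contained in $M$ elementwise by closure alone, so it is essential that the tree delivers all of $X$ into $M$ at once, packaged as the range of the single function $j(D)$ on the $\kappa$th level.
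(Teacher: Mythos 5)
Your proof is correct and is essentially the paper's own argument: enumerate \(X\) by the branches of \(\funcs{\kappa}{2}\), use the Laver tree to guess the entire enumeration via a single \(\theta\)-supercompactness embedding \(j\), and recover \(X\) as \(j(D)[\funcs{\kappa}{2}]\in M\). The only difference is that you spell out why this image computation works inside \(M\) (namely that \(\theta\)-closure with \(\theta\geq\kappa\) gives \((\funcs{\kappa}{2})^M=(\funcs{\kappa}{2})^V\)), a point the paper leaves implicit.
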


\begin{proof}
Suppose \(D\colon \funcs{<\kappa}{2}\to V_\kappa\) is a Laver tree
and fix an \(X\subseteq H_{\theta^+}\) of size at most \(2^\kappa\).
Let \(f\colon \funcs{\kappa}{2}\to X\) enumerate \(X\). We can then find a 
\(\theta\)-supercompactness embedding \(j\colon V\to M\) with critical point \(\kappa\)
such that \(j(D)(s)=f(s)\) for all \(s\in\funcs{\kappa}{2}\). In particular,
\(X=j(D)[\funcs{\kappa}{2}]\) is an element of \(M\), as required.
\end{proof}

If \(2^\kappa\leq\theta\) then \(X\)-strongness with closure \(\theta\) for
all \(X\subseteq H_{\theta^+}\) of size \(2^\kappa\) 
amounts to just \(\theta\)-supercompactness and 
proposition~\ref{prop:FullyTreeableThetaSCJLDExist} gives the full equivalence of 
Laver functions
and Laver trees. But if \(\theta< 2^\kappa\) then \(X\)-strongness with closure \(\theta\)
can have additional consistency strength. For example, 
we might choose \(X\) to be a normal measure on \(\kappa\) to see that \(\kappa\) must
have nontrivial Mitchell rank (by iterating this idea we can even deduce that
\(o(\kappa)=(2^\kappa)^+\)). In the typical scenario
where \(2^\kappa=2^\theta=\theta^+\), we can also reach higher and choose \(X\) to be a 
normal measure on \(\mathcal{P}_\kappa\theta\) and see that \(\kappa\) must also have 
nontrivial \(\theta\)-supercompactness Mitchell rank. 
We can use this observation to show
that there might not be any Laver trees, even in the presence of very long joint Laver
sequences.

\begin{theorem}
\label{thm:SeparateLongAndTreeableSCJLD}
Suppose GCH holds and let \(\kappa\) be \(\theta\)-supercompact where either 
\(\theta=\kappa\) or \(\cf(\theta)>\kappa\). Then there is a cardinal-preserving
forcing extension in which \(\kappa\) remains \(\theta\)-supercompact, has a 
\(\theta\)-supercompactness joint
Laver sequence of length \(2^\kappa\), but is also the least measurable cardinal.
In particular, \(\theta<2^\kappa\) and \(\kappa\) has no \(\theta\)-supercompactness
Laver trees in the extension.
\end{theorem}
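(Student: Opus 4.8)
I would isolate the two ``in particular'' clauses, since they follow quickly from the main assertion together with earlier results. Granting that in the extension $\kappa$ is simultaneously $\theta$-supercompact and the least measurable cardinal, I first observe that $\theta<2^\kappa$: were $\theta\geq 2^\kappa$, then any derived normal measure on $\kappa$ would have size $2^\kappa\leq\theta$ and hence, enumerated as a $2^\kappa$-sequence of subsets of $\kappa$, would lie in the $\theta$-closed target $M$ of a $\theta$-supercompactness embedding $j\colon V\to M$; thus $M\models\kappa\text{ is measurable}$, and since $\kappa<j(\kappa)$ elementarity would produce a measurable cardinal below $\kappa$, contradicting minimality. (The construction will in fact deliver the sharper $2^\kappa=\theta^+$.) For the failure of Laver trees, suppose toward a contradiction that a $\theta$-supercompactness Laver tree existed, and take $X$ to be any normal measure on $\kappa$, a subset of $H_{\kappa^+}\subseteq H_{\theta^+}$ of size $2^\kappa$. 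Proposition~\ref{prop:FullyTreeableThetaSCJLDStronger} then makes $\kappa$ be $X$-strong with closure $\theta$, yielding $j\colon V\to M$ with critical point $\kappa$, $\funcs{\theta}{M}\subseteq M$, and $X\in M$. Then $X\cap\mathcal P(\kappa)^M\in M$ is a normal measure on $\kappa$ in $M$, so $M\models\kappa\text{ is measurable}$, and again elementarity gives a measurable below $\kappa$, contradicting that $\kappa$ is the least measurable.

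\textbf{The construction.} For the main assertion I would force with a single Easton-support iteration $\P=\P_\kappa*\Q_\kappa$ of length $\kappa+1$ combining two tasks. Fixing a Menas function $f$ as in lemma~\ref{lemma:ThetaSCMenasFunction}, the top stage $\Q_\kappa=\Add(\kappa,\theta^+)$ plays exactly the role it did in theorem~\ref{thm:ForceLongJLDiamondSC}: it drives $2^\kappa$ up to $\theta^+$, and its generic $g$ furnishes, via the coding described there, the sequence $\langle\ell_\alpha;\alpha<2^\kappa\rangle$ destined to be the joint Laver sequence. The new ingredient lives in $\P_\kappa$: at each measurable $\gamma<\kappa$ I interpose a $\gamma$-strategically closed poset $\operatorname{Kill}(\gamma)$ destroying the measurability of $\gamma$ (for instance, adding a non-reflecting stationary subset of $\gamma\cap\operatorname{Cof}_\omega$), while still performing the Cohen blow-up at the inaccessible closure points of $f$. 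Routine bookkeeping shows $\P$ preserves cardinals and cofinalities, leaves $2^\kappa=\theta^+$, and, by the low-gap analysis of Hamkins already invoked after corollary~\ref{cor:ForceLongJLDiamondMeas}, creates no new measurable cardinals below $\kappa$.

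\textbf{The lift, and the main obstacle.} The heart of the matter is to lift a $\theta$-supercompactness embedding $j\colon V\to M$ through $\P$ in $V[G*g]$. Since $\theta^{<\kappa}=\theta$ (guaranteed by the hypothesis $\theta=\kappa$ or $\cf\theta>\kappa$) and $\theta<2^\kappa=\theta^+$, this is precisely the delicate second case of theorem~\ref{thm:ForceLongJLDiamondSC}, whose master-filter argument I would reuse almost verbatim to build the $M$-generic above $\kappa$ and the final lift through $j(\Q_\kappa)$, keeping $\kappa$ $\theta$-supercompact and arranging $j(\ell_\alpha)(\kappa)=a_\alpha$ for every target sequence. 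The genuinely new difficulty is the interaction of $j$ with the killing forcings, and it is sharpest when $\cf\theta>\kappa$: there ground-model $\theta$-supercompactness under GCH reflects measurability downward, so there really are measurables below $\kappa$ that must be destroyed, and moreover the target satisfies $M\models\kappa\text{ is measurable}$, so the stage of $j(\P_\kappa)$ at $\kappa$ carries an extra factor $\operatorname{Kill}(\kappa)$ on the $M$-side with no counterpart in our ground generic. The crux is that this residual factor, being $\leq\kappa$-strategically closed, is absorbed by the very diagonalization that produces the tail generic: the Menas function makes $\kappa$ an inaccessible closure point of $j(f)$, so the blow-up supplied by $g$ fires at $\kappa$ as required, and the $\leq\theta$-closure of the relevant posets in $M[j(G)]$ lets me thread an $M$-generic through $\operatorname{Kill}(\kappa)*\Q_\kappa$ below a master condition. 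It is exactly here---reconciling the downward reflection of measurability (which compels the killing) with the survival of $\kappa$'s $\theta$-supercompactness (which the killing must leave intact)---that the decoupling $2^\kappa=\theta^+>\theta$ is indispensable: because the lifted $M$ is only $\theta$-closed it cannot recognize a normal measure on $\kappa$ of size $\theta^+$, so nothing in the embedding reinstates a measurable below $\kappa$, and $\kappa$ survives as the least measurable while remaining $\theta$-supercompact and carrying its joint Laver sequence of length $2^\kappa$.
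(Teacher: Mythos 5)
Your reduction of the two ``in particular'' clauses is correct and matches the paper's own discussion: \(\theta<2^\kappa\) follows from the reflection argument, and a Laver tree would, via proposition~\ref{prop:FullyTreeableThetaSCJLDStronger} applied to a normal measure on \(\kappa\), place such a measure inside the target of an embedding and thereby reflect a measurable below \(\kappa\). The gap is in the main construction. The paper does not attempt to produce the identity crisis by hand: it first arranges, by citing Magidor's Prikry iteration~\cite{Magidor1976:IdentityCrises} when \(\theta=\kappa\) and the Apter--Shelah theorem~\cite{ApterShelah1997:MenasResultBestPossible} when \(\cf(\theta)>\kappa\), that \(\kappa\) is \(\theta\)-supercompact and simultaneously the least measurable, and only then runs the joint Laver sequence forcing of theorem~\ref{thm:ForceLongJLDiamondSC}, whose low gap guarantees that no measurables appear below \(\kappa\) (corollary~\ref{cor:LeastSCCanHaveLongJLD}). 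Your proposal instead re-derives the identity-crisis step inline, and that step, precisely in the case \(\cf(\theta)>\kappa\) that you yourself flag as the crux, does not work as described.

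Concretely: since GCH holds in \(V\) and \(\theta\geq\kappa^+\), \emph{every} \(\theta\)-supercompactness target \(M\) is closed under \(\kappa^+\)-sequences and hence contains normal measures on \(\kappa\) (each has size \(2^\kappa=\kappa^+\)); thus \(M\models\)``\(\kappa\) is measurable'' no matter which embedding you choose --- there is no analogue here of picking a Mitchell-order-zero measure --- and stage \(\kappa\) of \(j(\P_\kappa)\) genuinely carries the factor \(\operatorname{Kill}(\kappa)\). You propose to absorb this factor into the diagonalization using ``\(\leq\theta\)-closure'', but \(\operatorname{Kill}(\kappa)\) is a poset of size \(\kappa\) whose conditions are bounded subsets of \(\kappa\); it is only \({<}\kappa\)-strategically closed (a play of length \(\kappa\) has unbounded union, which is not a condition), so it is not \(\leq\kappa\)-closed, let alone \(\leq\theta\)-closed, and no diagonalization of the required length \(\theta^+\) is available. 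Worse, the step is not merely unproved but impossible: if you could produce an \(M[G]\)-generic \(H\) for \(\operatorname{Kill}(\kappa)^{M[G]}\) and lift, then \(S=\bigcup H\) would be stationary and non-reflecting in the lifted target model; since that model is closed under \(\kappa\)-sequences in \(V[G][g]\), every club of \(\kappa\) from \(V[G][g]\) belongs to it, so \(S\) would be a genuine non-reflecting stationary subset of \(\kappa\) in \(V[G][g]\), contradicting the stationary reflection that the (to-be-preserved) measurability of \(\kappa\) implies there. This is exactly the tension that Apter and Shelah resolve by designing the iteration so that the \(M\)-side forcing at stage \(\kappa\) is trivial, which is where the requirement \(2^\kappa=\theta^+\) really enters; your closing remark that the lifted \(M\) ``cannot recognize a normal measure on \(\kappa\) of size \(\theta^+\)'' concerns the model obtained after the lift, whereas the offending factor is determined by what \(M[G]\) believes about \(\kappa\) at the moment \(j(\P_\kappa)\) is evaluated. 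Breaking that circularity is the actual content of the cited theorems and cannot be dismissed as routine bookkeeping.
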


\begin{proof}
We may assume by prior forcing
as in the proof of theorem~\ref{thm:ForceLongJLDiamondSC}, 
that \(\kappa\) has a Laver function. 
Additionally, by performing either Magidor's iteration of Prikry forcing
(see~\cite{Magidor1976:IdentityCrises}) or applying an argument due to
Apter and Shelah (see~\cite{ApterShelah1997:MenasResultBestPossible}), depending
on whether \(\theta=\kappa\) or not, we may assume that, in addition to being
\(\theta\)-supercompact, \(\kappa\) is also the least measurable cardinal.\footnote{Of
course, if \(\theta>\kappa\), this arrangement requires a strong failure of GCH at 
\(\kappa\). In fact, \(2^\kappa=\theta^+\) in the Apter--Shelah model.}

We now apply corollary~\ref{cor:LeastSCCanHaveLongJLD} and
arrive at a model where \(\kappa\) carries a \(\theta\)-supercompactness joint Laver sequence 
of length \(2^\kappa\), but is also the least measurable. It follows that there can be no 
(\(\theta\)-supercompactness or even measurable) Laver trees for \(\kappa\),
since, by the discussion above, their existence would imply that \(\kappa\) has nontrivial Mitchell rank, implying that there are many measurables below \(\kappa\).
\end{proof}

Proposition~\ref{prop:FullyTreeableThetaSCJLDStronger} can be improved slightly to give a 
jump in consistency strength even for \(I\)-Laver trees where \(I\) is not the whole
set of branches.
A simple modification of the proof given there yields the following result, together
with the corresponding version of theorem~\ref{thm:SeparateLongAndTreeableSCJLD}.

\begin{theorem}
\label{thm:DefinableITreeableThetaSCJLDStronger}
Suppose \(\kappa\) is \(\theta\)-supercompact and there is a \(\theta\)-supercompactness 
\(I\)-Laver tree for \(\kappa\)
for some \(I\subseteq\funcs{\kappa}{2}\) of size \(2^\kappa\). 
If \(I\) is definable (with parameters) over \(H_{\kappa^+}\)
then \(\kappa\) is \(X\)-strong with closure \(\theta\) for any 
\(X\subseteq H_{\theta^+}\) of size at most \(2^\kappa\).
\end{theorem}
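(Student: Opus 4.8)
The plan is to follow the proof of proposition~\ref{prop:FullyTreeableThetaSCJLDStronger} almost verbatim; the one new ingredient is an argument that the index set \(I\), which is now a proper subset of the branches rather than all of them, is available inside the target model \(M\). First I would let \(D\colon\funcs{<\kappa}{2}\to V_\kappa\) be the given \(\theta\)-supercompactness \(I\)-Laver tree and fix \(X\subseteq H_{\theta^+}\) with \(|X|\leq 2^\kappa=|I|\). Choosing a surjection \(f\colon I\to X\) and regarding \(\langle f(s);s\in I\rangle\) as a sequence of targets indexed by \(I\), the \(I\)-Laver tree guessing property supplies a \(\theta\)-supercompactness embedding \(j\colon V\to M\) with critical point \(\kappa\) such that \(j(D)(s)=f(s)\) for all \(s\in I\). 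Since each \(s\in I\) has length \(\kappa<j(\kappa)\), it lies in the domain of \(j(D)\) as computed in \(M\), and consequently \(X=j(D)[I]\), exactly as in the full-tree case.

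The point at which the weaker hypothesis must be compensated is the conclusion \(X\in M\). In proposition~\ref{prop:FullyTreeableThetaSCJLDStronger} the index set was all of \(\funcs{\kappa}{2}\), which is patently an element of \(M\) since \(\mathcal{P}(\kappa)^V=\mathcal{P}(\kappa)^M\); here I must instead argue that the specific definable \(I\) lands in \(M\) and is computed there correctly, and this is exactly where the hypothesis that \(I\) be definable over \(H_{\kappa^+}\) enters. Because \(j\) has critical point \(\kappa\) and \(M^\theta\subseteq M\) with \(\theta\geq\kappa\), one has \(A=j(A)\cap\kappa\in M\) for every \(A\subseteq\kappa\), so \(\mathcal{P}(\kappa)^V=\mathcal{P}(\kappa)^M\); decoding Mostowski codes (an absolute operation) then yields \(H_{\kappa^+}^V=H_{\kappa^+}^M\), and this common set is an element of \(M\). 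If \(I\) is defined over \(H_{\kappa^+}\) by a formula \(\varphi\) with parameters drawn from \(H_{\kappa^+}\), then those parameters lie in \(M\), the structure \((H_{\kappa^+},\in)\) is literally the same in \(V\) and in \(M\), and hence \(M\) may form the very same set by separation. Thus \(I^M=I^V=I\) and \(I\in M\).

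With \(I\in M\) and \(j(D)\in M\), the image \(j(D)[I]\) is definable in \(M\) and therefore an element of \(M\), so \(X=j(D)[I]\in M\). Combined with the closure \(M^\theta\subseteq M\) that \(j\) carries as a \(\theta\)-supercompactness embedding, this witnesses that \(\kappa\) is \(X\)-strong with closure \(\theta\). I expect the only genuinely delicate step to be the verification that the definable \(I\) is absolute between \(V\) and \(M\); everything else is a direct transcription of the earlier proof. The point deserving modest care is that the parameters in the definition of \(I\) really are taken from \(H_{\kappa^+}\) — which is precisely the standing hypothesis — since this is exactly what guarantees they survive into \(M\) unchanged and that the computation of \(I\) there agrees with the one in \(V\).
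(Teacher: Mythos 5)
Your proof is correct and is exactly the ``simple modification'' of proposition~\ref{prop:FullyTreeableThetaSCJLDStronger} that the paper has in mind: guess an enumeration of \(X\) indexed by \(I\), and use the definability of \(I\) over \(H_{\kappa^+}\) together with \(\mathcal{P}(\kappa)^V=\mathcal{P}(\kappa)^M\) (hence \(H_{\kappa^+}^V=H_{\kappa^+}^M\)) to see that \(I\in M\), so that \(X=j(D)[I]\in M\). Your identification of the absoluteness of \(I\) as the one genuinely new step, and your handling of it, match the intended argument.
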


The above theorem notwithstanding, 
we shall give a construction which shows that the 
existence of an \(I\)-Laver tree does not
yield additional consistency strength, provided that we allow \(I\) to be sufficiently
foreign to \(H_{\kappa^+}\). 
The argument will rely on being able to surgically alter
a Cohen subset of \(\kappa^+\) in a variety of ways. To this end we fix some
notation beforehand.

\begin{definition}
Let \(f\) and \(g\) be functions.
The \emph{graft} of \(f\) onto \(g\) is the function \(g\wr f\), defined on \(\dom(g)\) by
\[(g\wr f)(x)=\begin{cases}f(x);&x\in\dom(g)\cap\dom(f)\\ 
g(x);& x\in\dom(g)\setminus\dom(f)\end{cases}\]
\end{definition}

\noindent Essentially, the graft replaces the values of \(g\) with those of \(f\) on their 
common domain.

\begin{theorem}
\label{thm:ThinSubsets}
Let \(\lambda\) be a regular cardinal and assume \(\diamond_\lambda\) holds.
Suppose \(M\) is a transitive model of \textup{ZFC} (either set- or class-sized) such that
\(\lambda\in M\) and \(M^{<\lambda}\subseteq M\) and \(|\mathcal{P}(\lambda)^M|=\lambda\).
Then there are an unbounded  set \(I\subseteq\lambda\) and a function \(g\colon\lambda\to H_\lambda\)
such that, given any \(f\colon I\to H_\lambda\), the graft \(g\wr f\) is generic for
\(\Add(\lambda,1)\) over \(M\).\footnote{Here we take the forcing-equivalent version of
\(\Add(\lambda,1)\) which adds a function \(g\colon\lambda\to H_\lambda\) by initial segments.}
\end{theorem}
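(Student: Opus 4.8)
The plan is to reduce the statement to a transfinite construction of $g$ driven by $\diamond_\lambda$, with $I$ chosen to steer clear of the places where genericity is secured. First I would record two consequences of the hypotheses on $M$. Since $M^{<\lambda}\subseteq M$, every bounded subset of $\lambda$ lies in $M$, so $\mathcal{P}(\mu)^M=\mathcal{P}(\mu)$ for all $\mu<\lambda$ and hence $H_\lambda^M=H_\lambda$; in particular $\Add(\lambda,1)^M$ and $\Add(\lambda,1)$ are literally the same poset, so the conditions $(g\wr f)\rest\alpha$ we build will be genuine conditions of it. Combining $\mathcal{P}(\mu)^M=\mathcal{P}(\mu)$ with $|\mathcal{P}(\lambda)^M|=\lambda$ gives $2^\mu\leq\lambda$ for every $\mu<\lambda$, so $2^{<\lambda}=\lambda$, whence $|H_\lambda|=\lambda$ and the poset has size $\lambda$. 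Coding the poset into $\lambda$ inside $M$, the dense subsets of $\Add(\lambda,1)$ belonging to $M$ are coded by subsets of $\lambda$ in $M$, of which there are only $\lambda$; fix an enumeration $\langle D_i\,;\,i<\lambda\rangle$ of them. The theorem now reduces to producing $g$ and an unbounded $I$ such that for every $f\colon I\to H_\lambda$ and every $i<\lambda$ some initial segment of $g\wr f$ lands in $D_i$.

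The governing idea is that off $I$ the graft $g\wr f$ equals $g$, which I control completely, whereas on $I$ it equals the adversarial $f$. I cannot defeat every $f$ coordinatewise, but genericity only asks that each $D_i$ be met at one stage, so it suffices to handle, for each $i$, a single well-chosen initial segment of $f$. This is what $\diamond_\lambda$ supplies: recasting it (using $|H_\lambda|=\lambda$) as a sequence $\langle (q_\alpha,i_\alpha)\,;\,\alpha<\lambda\rangle$ with $q_\alpha\colon\alpha\to H_\lambda$ and $i_\alpha<\lambda$, such that for every $Q\colon\lambda\to H_\lambda$ and every $i<\lambda$ the set $\{\alpha<\lambda\,;\,q_\alpha=Q\rest\alpha,\ i_\alpha=i\}$ is stationary. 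Here $q_\alpha$ anticipates $(g\wr f)\rest\alpha$, including its unknown restriction to $I\cap\alpha$, and $i_\alpha$ names the dense set I will try to meet.

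I then build $g$ and decide membership in $I$ by recursion on $\alpha<\lambda$, reserving bounded intervals for ``meeting'' dense sets. At a \emph{fresh} stage $\alpha$ (one lying in no previously reserved meeting interval) I test whether $q_\alpha$ agrees with the already-constructed $g$ on $(\lambda\setminus I)\cap\alpha$, a necessary condition for $q_\alpha$ to equal $(g\wr f)\rest\alpha$ for some $f$. If it does, I use density to choose $r\in D_{i_\alpha}$ end-extending $q_\alpha$, set $g\rest[\alpha,\dom(r))$ equal to $r$ there, and declare $[\alpha,\dom(r))\cap I=\emptyset$; crucially the newly committed coordinates all lie off $I$, so they are genuinely controlled by $g$ rather than by $f$. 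Otherwise I set $g(\alpha)=\emptyset$ and put $\alpha\in I$. To keep $I$ unbounded I also throw the right endpoint of each completed meeting interval into $I$. Because every meeting interval is bounded and $\lambda$ is regular, the fresh stages form a club $E$, and $I$ comes out unbounded and disjoint from every meeting interval.

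For the verification, fix $f\colon I\to H_\lambda$ and $i<\lambda$, and put $Q=g\wr f$. By the diamond property the set of $\alpha$ with $q_\alpha=Q\rest\alpha$ and $i_\alpha=i$ is stationary, so it meets the club $E$ at some $\alpha$. There $q_\alpha=(g\wr f)\rest\alpha$ is a bona fide condition agreeing with $g$ off $I$, so at stage $\alpha$ the construction did perform a meeting, producing $r\in D_i$ with $r\rest\alpha=q_\alpha$ and with $g$ equal to $r$ on the off-$I$ interval $[\alpha,\dom(r))$; hence $(g\wr f)\rest\dom(r)=r\in D_i$. Thus $g\wr f$ meets every dense set of $M$ and is $M$-generic. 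The main obstacle I anticipate is the bookkeeping that marries the constructed club $E$ of meeting stages with the stationarity of the diamond guesses while simultaneously keeping $I$ unbounded and disjoint from all meeting intervals; the conceptual key that makes it go through is that, since genericity needs each $D_i$ only once, stationarily-correct anticipation of the otherwise-arbitrary values of $f$ on $I\cap\alpha$ is enough.
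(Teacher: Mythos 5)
Your proof is correct, and it shares the paper's governing strategy---use \(\diamond_\lambda\) to anticipate the unknown values of \(f\) on \(I\), and arrange that every coordinate committed while meeting a dense set lies off \(I\)---but the mechanism is genuinely different, in a way worth comparing. The paper's recursion performs surgery at \emph{every} stage: it grafts the diamond guess of \(f\rest I^*_\gamma\) onto the condition built so far, extends the grafted condition into the \emph{least} open dense set it fails to meet, then undoes the graft and adds one point just past the new domain to \(I\); genericity is then verified by a coherence argument along the stationary set of correct guesses, which uses openness of the dense sets so that sets once met stay met under end-extension. You instead enrich the diamond so that it guesses a pair---an initial segment of \(g\wr f\) together with the index \(i\) of the dense set to attack---and you act only at stages where the guess is consistent with \(g\) off \(I\); each \(D_i\) is then met at a single stage of its own stationary set. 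This buys you a one-shot verification with no openness hypothesis and no least-unmet-index bookkeeping, at the price of the consistency test and the fresh-stage/reserved-interval machinery; the paper's version buys a simpler diamond (it guesses only \(f\) on \(I\), not the dense-set schedule) at the price of the coherence argument.

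One spot does need tightening: your rules collide at the right endpoint \(\delta=\dom(r)\) of a meeting interval, which you want in \(I\) (for unboundedness) while your verification assumes that every point of \(E\) at which the guess is correct triggers a meeting. The clean fix is to reserve the \emph{closed} interval \([\alpha,\dom(r)]\), putting \(\delta\in I\) with \(g(\delta)=\emptyset\) as part of the meeting step itself; fresh stages are then those beyond all reserved closed intervals, \(E\) remains a club, \(I\) is unbounded, and every correct guess at a fresh stage does perform a meeting, so your verification goes through verbatim. (Alternatively, keep your rules, prioritize placing right endpoints into \(I\), and observe that the set of right endpoints contains none of its limit points, hence is nonstationary and can simply be avoided in the verification.) Finally, your opening reduction---enumerating the dense subsets of \(\Add(\lambda,1)\) lying in \(M\) in order type \(\lambda\)---is exactly the paper's starting point as well; note that, like the paper, it tacitly requires that \(M\) itself can code its poset into \(\lambda\) (i.e.\ that \(M\) computes \(2^{<\lambda}=\lambda\)), which your \(V\)-side cardinal arithmetic does not literally provide but which holds in all of the paper's intended applications.
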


\noindent The hypothesis of \(\diamond_\lambda\) is often automatically satisfied.
Specifically, our assumptions about \(M\) imply that \(2^{<\lambda}=\lambda\).
If \(\lambda=\kappa^+\) is a successor, this gives
\(2^\kappa=\kappa^+\) which already implies \(\diamond_\lambda\) by
a result of Shelah~\cite{Shelah2010:Diamonds}.

\begin{proof}[Proof]
Let \(\langle f_\alpha;\alpha<\lambda\rangle\), with 
\(f_\alpha\colon \alpha\to H_{|\alpha|}\), be a \(\diamond_\lambda\)-sequence and
fix an enumeration \(\langle D_\alpha;\alpha<\lambda\rangle\) of the open dense subsets of
\(\Add(\lambda,1)\) in \(M\). We shall construct by recursion a descending sequence of
conditions \(p_\alpha\in\Add(\lambda,1)\) and an increasing sequence of sets
\(I_\alpha\) as approximations to \(g\) and \(I\). Specifically, we shall use
\(\diamond_\lambda\) to guess pieces of any potential function \(f\) and ensure along the
way that the modified conditions \(p_\alpha\wr f\) meet all of the listed dense sets.

Suppose we have built the sequences \(\langle p_\alpha;\alpha<\gamma\rangle\) and
\(\langle I_\alpha;\alpha<\gamma\rangle\) for some \(\gamma<\lambda\).
Let \(I_\gamma^*=\bigcup_{\alpha<\gamma} I_\alpha\).
Let \(p_\gamma^*\in M\) be an extension of \(\bigcup_{\alpha<\gamma} p_\alpha\) 
such that \(I_\gamma^*\subseteq\dom(p_\gamma^*)\in\lambda\); such an extension exists
in \(M\) by our assumption on the closure of \(M\).

Let us briefly summarize the construction. We shall surgically modify the condition 
\(p_\gamma^*\) by grafting the function given by \(\diamond_\lambda\) onto it.
We shall then extend this modified condition to meet one of our dense sets, after
which we will undo the surgery. We will be left with a condition \(p_\gamma\)
which is one step closer to ensuring that the result of one particular grafting
\(g\wr f\) is generic. At the same time we also extend \(I_\gamma^*\) by
adding a point beyond the domains of all the conditions constructed so far.

More precisely, let  \(\tilde{p}^*_\gamma=p^*_\gamma \wr (f_\gamma\rest I^*_\gamma)\).
This is still a condition in \(M\).
Let \(\widetilde{p}_\gamma\) be any extension of this condition
inside \(D_{\eta_\gamma}\), where \(\eta_\gamma\) is the least such that
\(\tilde{p}^*_\gamma\notin D_{\eta_\gamma}\), and satisfying 
\(\dom(\widetilde{p}_\gamma)\in\lambda\). Finally, we undo the initial graft and set
\(p_\gamma=\widetilde{p}_\gamma \wr (p^*_\gamma\rest I^*_\gamma)\). Note that
we have \(p_\gamma\leq p^*_\gamma\). We also extend our approximation to \(I\) with
the first available point, letting
\(I_\gamma=I_\gamma^*\cup\{\min(\lambda\setminus\dom(p_\gamma))\}\).

Once we have completed this recursive construction we can set
\(I=\bigcup_{\gamma<\lambda}I_\gamma\) and \(g=\bigcup_{\gamma<\lambda}p_\gamma\).
Let us check that these do in fact have the desired properties.

Let \(f\colon I\to H_\lambda\) be a function. We need to show that \(g\wr f\) is generic over
\(M\). Using \(\diamond_\lambda\), we find that there are stationarily many \(\gamma\)
such that \(f_\gamma=f\rest\gamma\).
Note also that there are club many \(\gamma\) such that \(I_\gamma^*\subseteq\gamma\)
is unbounded, and together this means that 
\(S=\set{\gamma}{f_\gamma\rest I_\gamma^*=f\rest(I\cap\gamma)}\) is stationary.
The conditions \(\widetilde{p}_\gamma\) for \(\gamma\in S\) extend each other
and we have \(\bigcup_{\gamma\in S}\widetilde{p}_\gamma=g\wr f\). Furthermore, since
the sets \(D_\alpha\) are open, the construction of \(\widetilde{p}_\gamma\) ensures
that eventually these conditions will meet every such dense set, showing that \(g\wr f\)
really is generic.
\end{proof}

The construction in the above proof is quite flexible and can be modified to make the
set \(I\) generic in various ways as well (for example, we can arrange for \(I\) to be
Cohen or dominating over \(M\) etc.).

\begin{theorem}
\label{thm:ForceITreeableThetaSCJLD}
If \(\kappa\) is \(\theta\)-supercompact then there is a forcing extension in which there is
a \(\theta\)-supercompactness \(I\)-Laver tree for \(\kappa\) for some 
\(I\subseteq\funcs{\kappa}{2}\) of size \(2^\kappa\).
\end{theorem}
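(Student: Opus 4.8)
The plan is to read the Laver tree off a single Cohen subset of $\kappa^+$ and to exploit the surgical freedom of theorem~\ref{thm:ThinSubsets} to realize arbitrary target sequences along a \emph{foreign} set of branches, so that the extra strength exacted by proposition~\ref{prop:FullyTreeableThetaSCJLDStronger} and theorem~\ref{thm:DefinableITreeableThetaSCJLDStronger} is evaded precisely because the index set $I$ is undefinable over $H_{\kappa^+}$. First I would carry out preliminary forcing, as in theorem~\ref{thm:ForceLongJLDiamondSC}, so that I may assume $\kappa$ carries a $\theta$-supercompactness Laver function and GCH holds at and above $\kappa$; in particular $2^\kappa=\kappa^+$, so $\diamond_{\kappa^+}$ is available and $\funcs{\kappa}{2}$ may be identified with $\kappa^+$. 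I then force with $\Add(\kappa^+,1)$, construed as adding a function $C$ by initial segments, and in $V[C]$ I define $D\colon\funcs{<\kappa}{2}\to V_\kappa$ by having $D(t)$ decode the value of $C$ at a fixed location $\mathrm{loc}(t)$ attached to the node $t$, with $D(t)=\emptyset$ when this is malformed. The point of decoding locations rather than storing values is that, under an embedding $j$, the value $j(D)(s)$ along a branch $s$ is computed from the image generic along that branch, which is exactly the object the surgery will let me prescribe.

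Next, given a target sequence $\langle a_s;s\in I\rangle$, I would produce a witnessing embedding of $V[C]$. The key observation is that the target model $N$ of a ground-model embedding $j\colon V\to N$, being an ultrapower, is thin at $\kappa^+$: one has $N^{\le\kappa}\subseteq N$ and, by the usual representing-function count (subsets of $\kappa^+$ are coded by functions bounded below $\kappa$), $|\mathcal{P}(\kappa^+)^N|=\kappa^+$. This is exactly the situation of theorem~\ref{thm:ThinSubsets} with $\lambda=\kappa^+$ and $M=N$, which therefore furnishes a fixed $g$ together with an unbounded, foreign set from which $I$ is extracted, and the guarantee that every graft $g\wr f$ is generic over $N$. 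Using this freedom I would, for each target sequence, surgically modify the generic on the coordinates indexed by $I$ so that the value decoded along each branch $s\in I$ becomes $a_s$, leaving the remaining coordinates fixed so that the modification remains generic over $N$ and extends the relevant master condition. Reading the witnessing embedding off the modified generic then yields $j(D)(s)=a_s$ for all $s\in I$. (The substance of this lies in the regime $\theta=\kappa$, where $\theta<2^\kappa=\kappa^+$; for $\theta\ge\kappa^+$ under GCH a full Laver tree already exists by proposition~\ref{prop:FullyTreeableThetaSCJLDExist}.)

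The heart of the matter, and the reason foreignness of $I$ is essential, is that the reflection behind proposition~\ref{prop:FullyTreeableThetaSCJLDStronger} would, for a definable $I$, force $\{a_s;s\in I\}=j(D)[I]$ into $N$ and thereby demand $X$-strongness with closure $\theta$; but the $I$ delivered by theorem~\ref{thm:ThinSubsets} is not in $N$, so $j(D)[I]$ need not be an element of $N$ and no strength is charged. I would finally check that the witnessing embedding retains enough closure to witness $\theta$-supercompactness, which follows since the modified generic is fully generic over $N$ and is assembled inside $V[C]$ using only the closure that $N$ and the $\le\kappa$-distributive forcing $\Add(\kappa^+,1)$ provide.

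The main obstacle is to reconcile three competing demands on the modified generic at once: it must extend the master condition, so that the embedding exists; it must carry the completely arbitrary targets $a_s$ along the branches in $I$; and it must stay fully generic over $N$, so that the embedding is elementary and sufficiently closed. It is exactly this trilemma that theorem~\ref{thm:ThinSubsets} resolves, since the fixed part $g$ absorbs the genericity and master-condition demands while the coordinates in $I$ are left entirely free to receive the targets, with $g\wr f$ generic for \emph{every} $f$. Beyond invoking the surgery, the remaining work is bookkeeping: checking that $\mathrm{loc}$ carries the branches of $I$ into the free coordinates and transforms correctly under the embedding, so that decoding along a branch really returns the grafted value.
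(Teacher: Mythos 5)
Your proposal founders at its central step: producing the witnessing embedding. In your setup the tree \(D\) lives in \(V[C]\), where \(C\) is generic for \(\Add(\kappa^+,1)\) over \(V\), so \(j(D)\) reads its branch labels from \(j(C)\). To produce an embedding of \(V[C]\) you must therefore lift a ground-model embedding \(j\colon V\to N\) through this forcing, and that requires an \(N\)-generic filter on \(j(\Add(\kappa^+,1))=\Add(j(\kappa^+),1)^N\) — the image poset at \(j(\kappa^+)\) — which moreover contains \(j[C]\). But theorem~\ref{thm:ThinSubsets}, applied as you propose with \(M=N\) and \(\lambda=\kappa^+\), only manufactures generics for \(N\)'s own poset \(\Add(\kappa^+,1)^N\) over \(N\). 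That is a different poset entirely, and genericity for it gives no purchase on \(j(C)\): the coordinates at which you need to prescribe values are the ordinals \(j(\mathrm{loc})(s)<j(\kappa^+)\) for branches \(s\), not coordinates below \(\kappa^+\). So the "trilemma" you describe is not resolved by the cited surgery; the surgical freedom sits at the wrong poset.

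Moreover, in the case you yourself identify as the substance of the theorem (\(\theta=\kappa\), so \(\theta<2^\kappa=\kappa^+\)), the obstruction is essential and cannot be patched by rerouting the surgery. Here \(N\) is closed only under \(\theta=\kappa\)-sequences, while \(j[C]\) has size \(\kappa^+\): the set \(j[\kappa^+]\) is not in \(N\) (it would singularize \(j(\kappa^+)\) there), so \(\bigcup j[C]\) is not a condition and there is no master condition at all; and building \(j(C)\) by hand with prescribed values would require meeting the \(|j(\kappa^{++})|=\kappa^{++}\) many dense subsets of \(\Add(j(\kappa^+),1)^N\) lying in \(N\), which the available closure cannot support (descending \(\kappa^+\)-chains built in \(V[C]\) leave \(N\) and need not have lower bounds, and theorem~\ref{thm:ThinSubsets} cannot be invoked at \(\lambda=j(\kappa^+)\) since its closure and counting hypotheses fail). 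The only lift you get for free is the canonical one obtained by re-taking the ultrapower of \(V[C]\) by the same measure, and then \(j(C)\) is completely determined, with no room for targets. The paper's proof is engineered precisely to dodge this trap: the forcing there is an Easton iteration adding the tree \emph{levels} at inaccessible stages \emph{below} \(\kappa\), so every condition \(p\in\P\) satisfies \(j(p)=p\) and no master condition is needed, while the level-\(\kappa\) labelling of \(j(D)\) — exactly the branch targets — is supplied by the stage-\(\kappa\) factor \(\Q_\kappa\) of \(j(\P)\), whose generic may be chosen to be \emph{any} \(M[G]\)-generic filter; theorem~\ref{thm:ThinSubsets} is then applied to \(M[G]\) and that factor, where coordinates literally are branches and the grafting freedom does its work. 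Repairing your proposal amounts to replacing the single step of forcing at \(\kappa^+\) by such an iteration, i.e., to reproducing the paper's argument.
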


\begin{proof}
If \(\theta\geq 2^\kappa\) then even a single Laver function for \(\kappa\) gives rise
to a full Laver tree, by proposition~\ref{prop:FullyTreeableThetaSCJLDExist},
and we can force the existence of a Laver function by theorem~\ref{thm:ForceLongJLDiamondSC}.
We thus focus on the remaining case when \(\kappa\leq\theta<2^\kappa\).

We make similar simplifying assumptions as in theorem~\ref{thm:ForceLongJLDiamondSC}.
Just as there we assume that \(\theta=\theta^{<\kappa}\).
Furthermore, we may assume that \(2^\theta=\theta^+\), since this can be forced
without adding subsets to \(\mathcal{P}_\kappa\theta\) and affecting the 
\(\theta\)-supercompactness of \(\kappa\). Note that these cardinal arithmetic hypotheses
imply that \(2^\kappa=\theta^+\).

Let \(\P\) be the length \(\kappa\) Easton support iteration
which adds, in a recursive fashion, a labelling of the tree \(\funcs{<\kappa}{2}\)
of the extension. Specifically, let \(\P\)
force with \(\Q_\gamma=\Add(2^\gamma,1)\) at each inaccessible \(\gamma<\kappa\) stage
\(\gamma\). Let \(G\subseteq\P\) be generic and let \(G_\gamma\) be the piece added at stage 
\(\gamma\).
Using suitable coding, we can see each \(G_\gamma\), in \(V[G]\),
as a function \(G_\gamma\colon \funcs{\gamma}{2}\to H_{\gamma^+}\);
in particular, we should note that the iteration \(\P\) does not add any nodes to the
tree \((\funcs{\leq\gamma}{2})^{V[G]}\) at stage \(\gamma\) or later, so that \(G_\gamma\)
really does label the whole level \(\funcs{\gamma}{2}\).
Thus \(G\) induces a map \(D\colon \funcs{<\kappa}{2}\to V_\kappa[G]\), by 
extending the \(G_\gamma\) in any way we like to the entire tree. 
We shall show that \(D\) is an \(I\)-Laver tree for some specifically chosen \(I\).

Fix a
\(\theta\)-supercompactness embedding \(j\colon V\to M\) in \(V\). Note that
\(M[G]^{\theta}\subseteq M[G]\) in \(V[G]\) as well, since the forcing \(\P\) is
\(\theta^+\)-cc. Furthermore, in \(V[G]\) we still have \(2^\theta=\theta^+\), which
implies \(\diamond_{\theta^+}\) by a result of Shelah~\cite{Shelah2010:Diamonds}.
Now apply theorem~\ref{thm:ThinSubsets} to \(M[G]\) and \(\lambda=\theta^+\) to obtain
an \(I\subseteq \funcs{\kappa}{2}\) of size \(\theta^+\) and a function 
\(g\colon \funcs{\kappa}{2}\to H_{\theta^+}\)
such that for any \(f\colon I\to H_{\theta^+}\) in \(V[G]\), 
the graft \(g\wr f\) is generic over
\(M[G]\). We claim that \(D\) is an \(I\)-Laver tree.

To check the guessing property, fix a sequence of targets 
\(\vec{a}=\langle a_s;s\in I\rangle\) in \(V[G]\). We shall lift the embedding \(j\)
to \(V[G]\). Let us write \(j(\P)=\P*\Q_\kappa*\Ptail\).
We know that \(g\wr \vec{a}\) is \(M[G]\)-generic for \(\Q_\kappa\), so we only need to find
the further generic for \(\Ptail\). We easily see that
\(M[G][g\wr \vec{a}]^{\theta}\subseteq M[G][g\wr \vec{a}]\) in \(V[G]\), that \(\Ptail\) is
\(\leq\theta\)-closed in that model, and that \(M[G][g\wr \vec{a}]\) only has
\(\theta^+\)-many subsets of \(\Ptail\). We can thus diagonalize against these
dense sets in \(\theta^+\)-many steps and produce a generic \(\Gtail\) for
\(\Ptail\). Putting all of this together, we can lift \(j\) to
\(j\colon V[G]\to M[j(G)]\) in \(V[G]\), where \(j(G)=G*(g\wr \vec{a})*\Gtail\). 
Now consider \(j(D)\). This is
exactly the labelling of the tree \(\funcs{<j(\kappa)}{2}\) in \(M[j(G)]\)
given by \(j(G)\). Furthermore, for any \(s\in I\), we have 
\(j(D)(s)=(g\wr \vec{a})(s)=a_s\), verifying the guessing property.
\end{proof}

Given a Laver tree for \(\kappa\), it is easy to produce a joint Laver sequence of length
\(2^\kappa\) from it by just reading the labels along each branch of the Laver tree.
The resulting sequence then exhibits a large degree of coherence.
We might wonder about the possibility of reversing this process, starting with a joint
Laver sequence and attempting to fit it into a tree. A sequence for which this can
be done might be called treeable. But, taken literally, this notion is not very robust.
For example, all functions in a treeable joint Laver sequence must have the same value at
0. This means that we could take a treeable sequence and modify it in an inessential way
to destroy its treeability. To avoid such trivialities, we relax the definition to only
ask for coherence modulo bounded perturbations.

\begin{definition}
\label{def:treeability}
Let \(\kappa\) be a regular cardinal and \(\vec{f}=\langle f_\alpha;\alpha<\lambda\rangle\)
a sequence of functions defined on \(\kappa\). The sequence \(\vec{f}\) is
\emph{treeable} if there are a bijection \(e\colon \lambda\to \funcs{\kappa}{2}\)
and a tree labelling \(D\colon \funcs{<\kappa}{2}\to V\) such that, for all \(\alpha<\lambda\),
we have \(D(e(\alpha)\rest\xi)=f_\alpha(\xi)\) for all but boundedly many \(\xi<\kappa\).

Given an \(I\subseteq\funcs{\kappa}{2}\), the sequence \(\vec{f}\) is \emph{\(I\)-treeable}
if the above holds for a bijection \(e\colon \lambda\to I\).
\end{definition}

\begin{lemma}
\label{lemma:GenericNotTreeable}
Let \(\kappa\) be a regular cardinal and assume that \((2^{<\kappa})^+\leq 2^\kappa\). 
Let \(G\subseteq\Add(\kappa,2^\kappa)\) be generic. Then \(G\) is not treeable.
\end{lemma}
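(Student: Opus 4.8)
The plan is to argue by contradiction, exploiting that a single Cohen real cannot be recovered from the part of the generic that omits it. So suppose $G=\langle f_\alpha;\alpha<2^\kappa\rangle$ were treeable, witnessed by a bijection $e\colon 2^\kappa\to\funcs{\kappa}{2}$ and a labelling $D\colon\funcs{<\kappa}{2}\to V[G]$. Since $e$ is onto and the only values of $D$ that matter are the bits $f_\alpha(\xi)$ read along branches, I would first replace $D$ by its $2$-valued trace on the nodes it actually uses, so that the relevant part of $D$ is coded by a subset of $\funcs{<\kappa}{2}$, an object of size at most $2^{<\kappa}$. Here I use that $\funcs{<\kappa}{2}$, and hence $2^{<\kappa}$, is computed the same in $V$ and in $V[G]$ because $\Add(\kappa,2^\kappa)$ is ${<}\kappa$-closed.

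The first real step is to localise $D$. As $\Add(\kappa,2^\kappa)$ has the $(2^{<\kappa})^+$-chain condition and its conditions have support of size ${<}\kappa$, a nice name for (the trace of) $D$ mentions a set $C$ of at most $2^{<\kappa}$ coordinates, so $D\in V[G\rest C]$. I then want to pair this $D$ with a branch that is independent of some coordinate lying outside $C$, and surjectivity of $e$ is the crucial lever: every node lies on a ground-model branch, and ground-model branches are automatically independent of every coordinate. Concretely, I look for a branch $b\in(\funcs{\kappa}{2})^V$ whose index $\alpha:=e^{-1}(b)$ avoids $C$. The ground-model branches number $(2^\kappa)^V$, and by König's theorem $\cf\bigl((2^\kappa)^V\bigr)>\kappa$; unless $2^{<\kappa}$ happens to be attained with cofinality above $\kappa$, this forces $(2^\kappa)^V>2^{<\kappa}\ge|C|$, and the hypothesis $(2^{<\kappa})^+\le 2^\kappa$ is exactly what makes this surplus available in the cases of interest. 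So such a $b$ and $\alpha$ exist.

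The contradiction is then immediate. Writing $G_{\hat\alpha}=G\rest\bigl(2^\kappa\setminus\{\alpha\}\bigr)$, we have $D\in V[G\rest C]\subseteq V[G_{\hat\alpha}]$ because $\alpha\notin C$, and $b\in V\subseteq V[G_{\hat\alpha}]$, so the function $\xi\mapsto D(b\rest\xi)$ lies in $V[G_{\hat\alpha}]$. Treeability supplies a bound $\xi_\alpha<\kappa$ with $f_\alpha(\xi)=D(e(\alpha)\rest\xi)=D(b\rest\xi)$ for all $\xi\ge\xi_\alpha$, whence the tail $f_\alpha\rest[\xi_\alpha,\kappa)\in V[G_{\hat\alpha}]$. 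But $\Add(\kappa,2^\kappa)$ factors as a product of $\Add(\kappa,1)$ (in the $\alpha$-th coordinate) with the remainder, so $f_\alpha$ is Cohen-generic over $V[G_{\hat\alpha}]$, and no tail of a Cohen real can lie in the model it is generic over. This contradiction shows $G$ is not treeable.

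The genericity punchline is the easy end; the delicate part, which I expect to be the main obstacle, is the selection of $\alpha$, that is, guaranteeing a branch whose $e$-index both escapes $\operatorname{supp}(D)$ and does not itself occur in that branch's own support. The clean version above handles this via ground-model branches and the arithmetic $2^{<\kappa}<(2^\kappa)^V$. The one genuinely annoying case is the degenerate collapse $(2^\kappa)^V=2^{<\kappa}$: there I would note that treeability now forces \emph{every} index outside $C$ to be self-dependent (i.e.\ $\alpha\in\operatorname{supp}(e(\alpha))$, since otherwise the argument above applies verbatim), and then aim to push the same genericity obstruction through a pair of mutually generic coordinates outside $C$, whose branches are interdefinable with their Cohen reals over $V[G_{\hat\alpha}]$. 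It is this bookkeeping about supports, rather than any forcing subtlety, that I anticipate needing the most care.
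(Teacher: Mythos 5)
Your proof is correct, but it takes a genuinely different route from the paper's. Both arguments begin identically: use the \((2^{<\kappa})^+\)-cc of \(\Add(\kappa,2^\kappa)\), together with the fact that \(<\kappa\)-closure makes \(\funcs{<\kappa}{2}\) and \(2^{<\kappa}\) the same in \(V\) and \(V[G]\), to localize (the two-valued trace of) the labelling \(D\) into \(V[G\rest C]\) for a set \(C\) of at most \(2^{<\kappa}\) coordinates. From there the paper stays syntactic: it fixes a coordinate \(\alpha\) and an arbitrary condition \(p\) deciding the name \(\dot\gamma\) for the agreement bound, and builds an \(\omega\)-chain of conditions below \(p\) deciding longer and longer initial segments of the name \(\dot e(\alpha)\) while arranging that, for some \(\delta>\gamma\), the pair \((\alpha,\delta)\) stays out of the domain of the union \(p^*\); then \(p^*\) is extended to force \(G_\alpha(\delta)\neq D(\dot e(\alpha)\rest\delta)\), contradicting treeability below every condition. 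You instead argue semantically: since \(e\) is a bijection in \(V[G]\) and there are more ground-model branches than elements of \(C\), some \(b\in(\funcs{\kappa}{2})^V\) has index \(\alpha=e^{-1}(b)\notin C\); then the tail of the slice \(f_\alpha\) is computable from \(D\) and \(b\) inside \(V[G_{\hat\alpha}]\), and since initial segments of \(f_\alpha\) are conditions lying in \(V\), this would put \(f_\alpha\) itself in \(V[G_{\hat\alpha}]\), contradicting its Cohen-genericity over that model via the product factoring. Your version avoids the fiddly condition-by-condition construction and never has to manipulate the name \(\dot e\); the paper's version avoids any counting of ground-model branches. One point you should make explicit: the counting that produces \(\alpha\) takes place in \(V[G]\), so you need that \((2^\kappa)^V\) is not collapsed there to a set of size \(\leq 2^{<\kappa}\); this follows from the same \((2^{<\kappa})^+\)-cc you already invoked, since \(\bigl((2^{<\kappa})^+\bigr)^V\) remains a cardinal in \(V[G]\).

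Your closing paragraph, however, frets over a case that cannot occur. Since \(2^{<\kappa}\leq 2^\kappa\) always holds, the hypothesis \((2^{<\kappa})^+\leq 2^\kappa\) is literally equivalent to \(2^{<\kappa}<2^\kappa\) in \(V\). So the ``degenerate collapse'' \((2^\kappa)^V=2^{<\kappa}\) is excluded by assumption, the appeal to König's theorem and cofinality considerations is unnecessary, and your main argument already covers every case; the vague sketch about pairs of mutually generic coordinates can simply be deleted.
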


\begin{proof}
Let us write \(G=\langle g_\alpha;\alpha<2^\kappa\rangle\) as a sequence of its slices.
Now suppose that this
sequence were treeable and let \(\dot{e}\) and \(\dot{D}\) be names for the indexing function
and the labelling of \(\funcs{<\kappa}{2}\), respectively. Our cardinal arithmetic
assumption implies that the name \(\dot{D}\) only
involves conditions from a bounded part of the poset \(\Add(\kappa,2^\kappa)\), so we
may assume that the labelling \(D\) exists already in the ground model.
Let \(p\) be an arbitrary condition and \(\alpha<\kappa\). Since we assumed that
\(G\) was forced to be treeable, there is a name \(\dot{\gamma}\)
for an ordinal, beyond which \(G_\alpha\) agrees with \(D\rest f(\alpha)\).
By strengthening \(p\) if necessary, we may assume that the value of \(\dot{\gamma}\) has
been decided. We now inductively construct a countable descending sequence of conditions 
below \(p\), deciding longer and longer initial segments of \(\dot{e}(\alpha)\),
in such a way that, for some \(\delta>\gamma\), their union \(p^*\leq p\) decides
\(\dot{e}(\alpha)\rest\delta\) but does not decide \(G_\alpha(\delta)\).
Then \(p^*\) can be further extended to a condition forcing
\(G_\alpha(\delta)\neq D(\dot{e}(\alpha)\rest\delta)\), which contradicts the fact
that \(p\) forces that \(G\) is treeable.
\end{proof}

\begin{corollary}
\label{cor:NontreeableSCJLD}
If \(\kappa\) is \(\theta\)-supercompact then there is a forcing extension in which
there is a nontreeable \(\theta\)-supercompactness joint Laver sequence for \(\kappa\) of 
length \(2^\kappa\).
\end{corollary}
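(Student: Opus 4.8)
The plan is to force exactly as in the proof of Theorem~\ref{thm:ForceLongJLDiamondSC} and then check that the joint Laver sequence produced there is already nontreeable. Recall that the forcing is \(\P=\P_\kappa*\Q_\kappa\) with \(\Q_\kappa=\Add(\kappa,2^\kappa)\), and that the witnessing sequence \(\langle\ell_\alpha;\alpha<2^\kappa\rangle\) is read off, slice by slice, from the top generic \(g\subseteq\Add(\kappa,2^\kappa)\): for inaccessible \(\xi\) the value \(\ell_\alpha(\xi)\) is the set whose Mostowski code sits in the \(\alpha\)th slice \(g_\alpha\) of \(g\) at \(\xi\), and \(\ell_\alpha(\xi)=\emptyset\) otherwise. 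By that theorem \(\kappa\) remains \(\theta\)-supercompact in \(V[G][g]\) and \(\langle\ell_\alpha\rangle\) is a \(\theta\)-supercompactness joint Laver sequence of length \(2^\kappa\); so it only remains to verify that this particular sequence is not treeable in the sense of Definition~\ref{def:treeability}.

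First I would apply Lemma~\ref{lemma:GenericNotTreeable} to the intermediate model \(V[G]\) and the generic \(g\). Its hypothesis \((2^{<\kappa})^+\le 2^\kappa\) is automatic here: \(\kappa\) remains measurable, hence inaccessible and a strong limit, in \(V[G]\) (inaccessibility passes down from the final model \(V[G][g]\)), so \(2^{<\kappa}=\kappa\) and \((2^{<\kappa})^+=\kappa^+\le 2^\kappa\). The lemma then tells us that the sequence of slices \(\langle g_\alpha;\alpha<2^\kappa\rangle\) is not treeable.

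The remaining, and main, step is to transfer nontreeability from the raw generic \(g\) to the decoded sequence \(\langle\ell_\alpha\rangle\). I would argue contrapositively: a treeing of \(\langle\ell_\alpha\rangle\), say by a bijection \(e\colon 2^\kappa\to\funcs{\kappa}{2}\) and a labelling \(D\), yields a treeing of \(\langle g_\alpha\rangle\) using the \emph{same} indexing \(e\), contradicting Lemma~\ref{lemma:GenericNotTreeable}. The key observation is that the recovery of \(g_\alpha\) from \(\langle\ell_\alpha(\xi);\xi\text{ inaccessible}\rangle\) is \emph{causal} along each branch: a bit \(g_\alpha(\zeta)\) is the appropriate bit of the Mostowski code of \(\ell_\alpha(\xi)\), where \(\xi\le\zeta\) is the start of the code segment containing \(\zeta\), and the marker \(\xi\) is located from \(g_\alpha\rest\zeta\). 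Since \(\ell_\alpha(\xi)=D(e(\alpha)\rest\xi)\) and \(e(\alpha)\rest\xi\) is an initial segment of \(e(\alpha)\rest\zeta\), one can define the transferred labelling \(D'(s)\), for \(s=e(\alpha)\rest\zeta\), by recursively reconstructing \(g_\alpha\rest\zeta\) from the values of \(D\) on the initial segments of \(s\), locating the current marker \(\xi\), and extracting the correct bit of \(D(s\rest\xi)\). This makes \(D'\) a genuine function of the node \(s\) alone, and the bounded-exception clause in the definition of treeability is preserved because the markers reset the decoding, so an error in \(\ell_\alpha\) below some bound only corrupts \(g_\alpha\) below a bound.

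The delicate point is precisely this transfer. Treeability is sensitive to the requirement that the value at level \(\xi\) depend only on the node up to \(\xi\), and the reverse recovery of \(\ell_\alpha\) from \(g_\alpha\) is \emph{anti}-causal, since decoding a set at \(\xi\) reads bits \emph{beyond} \(\xi\); this is exactly why I set up the contrapositive in the direction that recovers \(g\) from \(\ell\), where the reconstruction is causal and the recursive, marker-based bookkeeping circumvents the look-ahead. Once the transfer is in place, the nontreeability of \(\langle g_\alpha\rangle\) from Lemma~\ref{lemma:GenericNotTreeable} yields the nontreeability of \(\langle\ell_\alpha\rangle\), giving the desired nontreeable \(\theta\)-supercompactness joint Laver sequence of length \(2^\kappa\) and completing the proof.
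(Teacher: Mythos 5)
Your steps (1) and (2) are fine, and your instinct that a bare citation of lemma~\ref{lemma:GenericNotTreeable} does not literally suffice is a good one: that lemma speaks about the raw generic \(\langle g_\alpha;\alpha<2^\kappa\rangle\), while the joint Laver sequence \(\langle \ell_\alpha;\alpha<2^\kappa\rangle\) of theorem~\ref{thm:ForceLongJLDiamondSC} is a decoded image of it. But your step (3), the transfer, has a genuine gap: the generic \(g\) is \emph{not} reconstructible from \(\ell\), causally or otherwise. First, the coding is many-to-one: \(\ell_\alpha(\xi)\) is the \emph{set} coded by the segment of \(g_\alpha\) starting at \(\xi\), and a set in \(V_\kappa\) has many distinct codes, so there is no such thing as ``the appropriate bit of the Mostowski code of \(\ell_\alpha(\xi)\)''; the set simply does not determine the bits. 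Second, many bits of \(g_\alpha\) are invisible to \(\ell_\alpha\) altogether: decoding only starts at \emph{inaccessible} coordinates and stops at the first end-of-code marker, so every bit lying between such a marker and the next inaccessible coordinate is never read by any decoding, and genericity produces such bits densely often. Consequently two branches \(\alpha\neq\beta\) with \(e(\alpha)\rest\zeta=e(\beta)\rest\zeta\) can have (modulo bounded sets) identical decoded sequences below \(\zeta\) while \(g_\alpha\rest\zeta\neq g_\beta\rest\zeta\), so no labelling \(D'\) that is a function of the node alone can predict both; your recursive definition of \(D'\) is not well posed, and the implication ``a treeing of \(\langle\ell_\alpha\rangle\) yields a treeing of \(\langle g_\alpha\rangle\)'' is exactly the step that fails.

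The repair --- and what the paper's one-line proof is implicitly invoking --- is to run the \emph{proof} of lemma~\ref{lemma:GenericNotTreeable} directly against the decoded sequence instead of transferring from the raw generic. Suppose some condition \(p\) forces \((\dot e,\dot D)\) to tree \(\langle\dot\ell_\alpha\rangle\); as in the lemma, \(\dot D\) may be assumed to lie in the ground model for the forcing \(\Add(\kappa,2^\kappa)\), and \(p\) may be assumed to decide the bound \(\dot\gamma=\gamma\) for a fixed \(\alpha\). Note that the lemma's countable construction is now useless: it produces a level \(\delta\) of cofinality \(\omega\), where \(\ell_\alpha(\delta)=\emptyset\) carries no information, since the treeing of \(\ell\) only has content at inaccessible levels. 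Instead, using the \({<}\kappa\)-closure of \(\Add(\kappa,2^\kappa)\), build a descending sequence of conditions of length \(\kappa\) below \(p\), deciding ever longer initial segments of \(\dot e(\alpha)\); the levels \(\delta\) at which the condition so far decides \(\dot e(\alpha)\rest\delta\) and has its \(\alpha\)-th column contained in \(\delta\) form a club. Since \(\kappa\) is Mahlo in \(V[G]\) (it is \(\theta\)-supercompact, hence measurable, in \(V[G][g]\), and the inaccessibles below \(\kappa\) are the same in the two models), this club contains an inaccessible \(\delta>\gamma\). Now extend the condition by writing into column \(\alpha\) from \(\delta\) on a code for any set different from \(D(s)\), where \(s\) is the decided value of \(\dot e(\alpha)\rest\delta\); this forces \(\dot\ell_\alpha(\delta)\neq \dot D(\dot e(\alpha)\rest\delta)\) at an inaccessible level beyond \(\gamma\), contradicting what \(p\) forces. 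So the conclusion of the corollary is correct, but the extra work lives in adapting the density argument (longer construction plus Mahloness), not in a reconstruction of \(g\) from \(\ell\), which is impossible.
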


\begin{proof}
The joint Laver sequence constructed in theorem~\ref{thm:ForceLongJLDiamondSC} was
added by forcing with \(\Add(\kappa,2^\kappa)\), so it is not treeable by
lemma~\ref{lemma:GenericNotTreeable}.
\end{proof}

\section{Joint Laver diamonds for strong cardinals}
\label{sec:JLDStrong}
\begin{definition}
A function \(\ell\colon\kappa\to V_\kappa\) is
a \(\theta\)-strongness Laver function if it guesses elements of \(V_\theta\)
via \(\theta\)-strongness embeddings with critical point \(\kappa\).

If \(\kappa\) is fully strong then a function \(\ell\colon\kappa\to V_\kappa\) is a
Laver function for \(\kappa\) if it is a \(\theta\)-strongness Laver function
for \(\kappa\) for all \(\theta\).
\end{definition}

As in the supercompact case, \(2^\kappa\) is the largest possible cardinal length of a
\(\theta\)-strongness joint Laver sequence for \(\kappa\), just because there are only
\(2^\kappa\) many functions \(\ell\colon\kappa\to V_\kappa\).

The set of targets \(V_\theta\) is a bit unwieldy and lacks some basic closure
properties, particularly in the case when \(\theta\) is a successor ordinal.
The following lemma shows that, modulo some coding, we can recover a good deal of closure
under sequences.

\begin{lemma}
\label{lemma:FlatCoding}
Let \(\theta\) be an infinite ordinal and let \(I\in V_\theta\) be a set.
If \(\theta\) is  successor ordinal or \(\cf(\theta)>|I|\) then \(V_\theta\) is closed
under a coding scheme for sequences indexed by \(I\). Moreover, this coding is
\(\Delta_0\)-definable.
\end{lemma}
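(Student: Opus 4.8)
The plan is to separate the argument according to the two hypotheses, which turn out to correspond to two genuinely different phenomena. In the cofinality case (so that \(\theta\) is a limit, for \(I\neq\emptyset\)) I claim no coding is needed at all: the literal sequence already lives in \(V_\theta\). Indeed, writing a sequence \(\langle a_i;i\in I\rangle\) of elements of \(V_\theta\) as the function \(\{\langle i,a_i\rangle_K;i\in I\}\) (with \(\langle\cdot,\cdot\rangle_K\) the Kuratowski pair), each entry has rank \(\max(\operatorname{rank}(i),\operatorname{rank}(a_i))+2<\theta\), since \(i\in I\in V_\theta\) and \(a_i\in V_\theta\) while \(\theta\) is a limit. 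The rank of the whole function is a supremum of \(|I|<\cf(\theta)\) many ordinals below \(\theta\), hence still below \(\theta\), so the function is itself a member of \(V_\theta\). Thus \(V_\theta\) is literally closed under such sequences and the coding is the identity, which is trivially \(\Delta_0\).

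The successor case \(\theta=\beta+1\) is the substantive one: now \(V_\theta=\mathcal P(V_\beta)\), and a literal function \(\{\langle i,a_i\rangle_K\}\) can have rank as large as \(\beta+3\) and so escapes \(V_\theta\). The goal is instead to code the sequence as a single \emph{subset} of \(V_\beta\), and everything reduces to producing a \(\Delta_0\) \emph{flat} pairing \(p\colon V_\beta\times V_\beta\to V_\beta\), i.e.\ one that does not raise rank out of \(V_\beta\), with \(\Delta_0\) projection maps \(\pi_1,\pi_2\). Granting such a \(p\), I set
\[
c=\{p(\emptyset,i);i\in I\}\cup\{p(\{\emptyset\},p(i,u));i\in I,\ u\in a_i\},
\]
noting \(c\subseteq V_\beta\), hence \(c\in\mathcal P(V_\beta)=V_\theta\); the index set is recovered as \(\{\pi_2(w);w\in c,\ \pi_1(w)=\emptyset\}\) and each fibre \(a_i\) as \(\{\pi_2(\pi_2(w));w\in c,\ \pi_1(w)=\{\emptyset\},\ \pi_1(\pi_2(w))=i\}\), all by bounded quantification over \(c\), so that both ``\(c\) codes a sequence'' and ``the \(i\)th entry is \(w\)'' come out \(\Delta_0\). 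The two outer tags \(\emptyset\neq\{\emptyset\}\) keep the record of \(I\) disjoint from the record of the fibres regardless of what the indices \(i\) happen to be.

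It therefore remains to build the flat pairing \(p\) on \(V_\beta\) for infinite \(\beta\), and this is the one genuinely delicate point, since naive Kuratowski or singleton tagging raises rank by a finite amount that is fatal at a successor level. I would obtain \(p\) by \emph{climbing finitely many levels} from a limit. Write \(\beta=\lambda+n\) with \(\lambda\) a limit ordinal (necessarily \(\lambda\geq\omega\)) and \(n<\omega\). At \(V_\lambda\) ordinary Kuratowski pairing already maps \(V_\lambda\times V_\lambda\) into \(V_\lambda\), raising rank only by \(2\), which \(\lambda\) absorbs; this is the base pairing \(p_\lambda\). Given a \(\Delta_0\) pairing \(p_\mu\) on \(V_\mu\) with \(\Delta_0\) projections, I build one on \(V_{\mu+1}=\mathcal P(V_\mu)\) by the ``evens and odds'' transport
\[
p_{\mu+1}(a,b)=\{p_\mu(\emptyset,u);u\in a\}\cup\{p_\mu(\{\emptyset\},u);u\in b\};
\]
since \(a,b\subseteq V_\mu\) this is again a subset of \(V_\mu\), hence a member of \(V_{\mu+1}\), and its projections are \(\Delta_0\) in those of \(p_\mu\). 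Iterating this step \(n\) times from \(p_\lambda\) yields the required \(\Delta_0\) flat pairing \(p=p_\beta\), completing the successor case. The main obstacle, as anticipated, is precisely the rank bookkeeping here: one must keep the code inside \(\mathcal P(V_\beta)\) at every stage and check that each decoding stays bounded, which is exactly what secures the \(\Delta_0\) (hence absolute) character that later lets the coding commute with the large cardinal embeddings \(j\).
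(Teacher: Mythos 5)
Your proof is correct, but it is organized quite differently from the paper's. The paper treats both hypotheses uniformly: it fixes once and for all a ``simply definable'' flat pairing function \([\cdot,\cdot]\) --- citing the Quine--Rosser pair, whose defining property is precisely that every infinite \(V_\alpha\) is closed under it --- and codes \(\langle a_i; i\in I\rangle\) as \(\widetilde{a}=\bigcup_{i\in I}\{[i,b];b\in a_i\}\). The two hypotheses then enter only in a single rank computation: one picks infinite \(\theta_i<\theta\) with \(a_i\cup\{i\}\subseteq V_{\theta_i}\) and needs \(\sup_i\theta_i<\theta\), which is automatic for successor \(\theta\) (take every \(\theta_i=\theta-1\)) and follows from \(\cf(\theta)>|I|\) otherwise, with \(\theta=\omega\) handled separately at the start. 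You instead split the hypotheses into genuinely different arguments. In the limit case you observe that no coding is needed at all, since the literal function of Kuratowski pairs already has rank below \(\theta\) by the cofinality assumption --- a simplification the paper does not make, and one that renders absoluteness trivial there. In the successor case, where the paper leans on the citation of Quine--Rosser, you rebuild a flat pairing from scratch by iterating your ``evens and odds'' transport finitely many times above the largest limit ordinal \(\lambda\leq\beta\); your rank, injectivity, and \(\Delta_0\) checks for this construction all go through, as does the doubly-tagged coding built on top of it (the extra tagging of \(I\) is harmless). What your route buys is self-containedness; what it gives up is uniformity, since your successor-case formula depends on the finite distance \(n\) from \(\lambda\) to \(\beta\), so the scheme varies with \(\theta\), whereas the paper's single scheme is one fixed \(\Delta_0\) definition for all \(\theta\). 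For the lemma as stated, and for its uses in the strongness arguments (where \(\theta\) is fixed and only \(V\)-to-\(M\) absoluteness of the decoding is needed), this difference costs nothing.
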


\begin{proof}
If \(\theta=\omega\) then the \(I\) under consideration are finite. Since \(V_\omega\)
is already closed under finite sequences we need only deal with \(\theta>\omega\).

Fix in advance a simply definable flat pairing function \([\cdot,\cdot]\)
(flat in the sense that any infinite \(V_\alpha\) is closed under it; the Quine--Rosser
pairing function will do).

Let \(\vec{a}=\langle a_i; i\in I\rangle\) be a sequence of elements of \(V_\theta\).
For each \(i\in I\) we can find an (infinite) ordinal \(\theta_i<\theta\) such that
\(a_i\cup \{i\}\subseteq V_{\theta_i}\). Now let \(\widetilde{a}_i=\{[i,b];b\in a_i\}
\subseteq V_{\theta_i}\) and finally define
\(\widetilde{a}=\bigcup_{i\in I}\widetilde{a}_i\). We see that 
\(\widetilde{a}\subseteq V_{\sup_i \theta_i}\) and, under our hypotheses,
\(\sup_i\theta_i<\theta\). It follows that
\(\widetilde{a}\in V_{(\sup_i\theta_i)+1} \subseteq V_\theta\) as required.
\end{proof}

\begin{proposition}
\label{prop:ThetaStrJLDiamond}
Let \(\kappa\) be \(\theta\)-strong with \(\kappa+2\leq\theta\) and let 
\(\lambda\leq 2^\kappa\) be a cardinal. If there is a \(\theta\)-strongness Laver 
function for \(\kappa\)
and \(\theta\) is either a successor ordinal or \(\lambda<\cf(\theta)\) then there is a
\(\theta\)-strongness joint Laver sequence of length \(\lambda\) for \(\kappa\).
\end{proposition}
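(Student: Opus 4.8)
The plan is to adapt the argument of Proposition~\ref{prop:SCHasSomeJLDiamond}, with the flat coding of Lemma~\ref{lemma:FlatCoding} standing in for the naive function-as-set-of-pairs that is harmless in the supercompact setting. First I would let \(\ell\) be a \(\theta\)-strongness Laver function and, using \(\lambda\leq 2^\kappa\), fix a set \(I\subseteq\mathcal{P}(\kappa)\) of size \(\lambda\) together with an enumeration \(f\colon\lambda\to I\). Since every element of \(I\) is a subset of \(\kappa\), we have \(I\subseteq V_{\kappa+1}\) and hence \(I\in V_{\kappa+2}\subseteq V_\theta\) (using \(\kappa+2\leq\theta\)), so \(I\) is an admissible index set for the coding scheme. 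As \(\theta\) is a successor or \(\cf(\theta)>\lambda=|I|\), Lemma~\ref{lemma:FlatCoding} furnishes a \(\Delta_0\)-definable coding of \(I\)-indexed sequences under which \(V_\theta\) is closed.

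With this in hand I would define, for each \(\alpha<\lambda\), a function \(\ell_\alpha\colon\kappa\to V_\kappa\) by letting \(\ell_\alpha(\xi)\) be the \((f(\alpha)\cap\xi)\)-th entry of the sequence coded by \(\ell(\xi)\), if this makes sense, and \(\ell_\alpha(\xi)=\emptyset\) otherwise; this is a well-defined operation on \(V_\kappa\) precisely because the decoding is \(\Delta_0\). To verify jointness, fix a sequence of targets \(\vec{a}=\langle a_\alpha;\alpha<\lambda\rangle\) in \(V_\theta\). The reindexed sequence \(\langle a_{f^{-1}(x)};x\in I\rangle\) has a code \(\widetilde{a}\), and by closure of \(V_\theta\) under the coding scheme we have \(\widetilde{a}\in V_\theta\). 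Thus \(\widetilde{a}\) is a legitimate target, and the Laver property of \(\ell\) yields a \(\theta\)-strongness embedding \(j\colon V\to M\) with critical point \(\kappa\) such that \(j(\ell)(\kappa)=\widetilde{a}\).

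It then remains to compute \(j(\ell_\alpha)(\kappa)\). Since the definition of \(\ell_\alpha\) is uniform in the parameter \(f(\alpha)\) and the flat pairing is definable without parameters, elementarity gives that \(j(\ell_\alpha)(\kappa)\) is the \((j(f(\alpha))\cap\kappa)\)-th entry of the sequence coded by \(j(\ell)(\kappa)=\widetilde{a}\). As \(f(\alpha)\subseteq\kappa\) we have \(j(f(\alpha))\cap\kappa=f(\alpha)\), and the \(\Delta_0\) decoding is absolute between \(M\) and \(V\) because \(\widetilde{a}\in V_\theta\subseteq M\) and \(M\) is transitive. Hence \(j(\ell_\alpha)(\kappa)=a_{f^{-1}(f(\alpha))}=a_\alpha\) for every \(\alpha\), which is exactly the joint guessing property (and, as usual, jointness subsumes each \(\ell_\alpha\) being a Laver function, by fixing all but one coordinate).

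The main obstacle, and the reason for the hypotheses on \(\theta\), is the control of rank: a genuine function \(I\to V_\theta\) need not lie in \(V_\theta\) once the \(a_\alpha\) have rank approaching \(\theta\), since Kuratowski pairing inflates rank. This is exactly what forces the use of the flat (Quine--Rosser) pairing of Lemma~\ref{lemma:FlatCoding}, whose closure of \(V_\theta\) requires either that \(\theta\) be a successor or that \(\cf(\theta)\) exceed the number \(\lambda\) of entries being coded. The only other point requiring care is that decoding commute with \(j\), which is handled cleanly by the \(\Delta_0\)-definability of the scheme.
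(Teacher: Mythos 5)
Your proposal is correct and follows essentially the same route as the paper's proof: both imitate Proposition~\ref{prop:SCHasSomeJLDiamond} by indexing with sets \(f(\alpha)\cap\xi\) for \(f\colon\lambda\to I\subseteq\mathcal{P}(\kappa)\), coding the target sequence into a single element \(\widetilde{a}\in V_\theta\) via Lemma~\ref{lemma:FlatCoding}, and guessing \(\widetilde{a}\) with the single Laver function \(\ell\). Your write-up in fact supplies details the paper leaves implicit (that \(I\in V_{\kappa+2}\subseteq V_\theta\), the elementarity computation \(j(f(\alpha))\cap\kappa=f(\alpha)\), and the \(\Delta_0\)-absoluteness of decoding in the transitive \(M\supseteq V_\theta\)), all of which are correct.
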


\noindent In particular, if \(\theta\) is a successor then a single \(\theta\)-strongness
Laver function already yields a joint Laver sequence of length \(2^\kappa\), the maximal
possible.

\begin{proof}
We aim to imitate the proof of proposition~\ref{prop:SCHasSomeJLDiamond}. 
%
To that end, fix an \(I\subseteq\mathcal{P}(\kappa)\) of size \(\lambda\) and a
bijection \(f\colon \lambda\to I\).
If \(\ell\) is a Laver function for \(\kappa\), we define a 
joint Laver sequence by letting \(\ell_\alpha(\xi)\), for each \(\alpha<\lambda\),
be the element of \(\ell(\xi)\) with index \(f(\alpha)\cap \xi\) in the coding scheme
described in lemma~\ref{lemma:FlatCoding}.

It is now easy to verify that the functions \(\ell_\alpha\) form a joint Laver sequence:
given a sequence of targets \(\vec{a}\), we can replace it, by using \(f\) and 
lemma~\ref{lemma:FlatCoding}, with a coded version \(\widetilde{a}\in V_\theta\).
We can then use \(\ell\) to guess \(\widetilde{a}\) and the \(\theta\)-strongness
embedding obtained this way will witness the joint guessing property of the \(\ell_\alpha\).
\end{proof}

Again, as in the supercompact case, if the Laver diamond we started with works for
several different \(\theta\) then the joint Laver sequence derived above
will also work for those same \(\theta\). In particular, if \(\kappa\) is strong
then combining the argument from proposition~\ref{prop:ThetaStrJLDiamond} with
the Gitik--Shelah construction of a strongness Laver function in
\cite{GitikShelah1989:IndestructibilityOfStrong} gives an analogue of corollary~\ref{cor:SCHasLongJLDiamond}.

\begin{corollary}
\label{cor:StrHasJLD}
If \(\kappa\) is strong then there is a strongness joint Laver sequence for \(\kappa\)
of length \(2^\kappa\).
\end{corollary}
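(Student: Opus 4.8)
The plan is to establish Corollary~\ref{cor:StrHasJLD} by combining two ingredients already available to us: the joint-construction technique of Proposition~\ref{prop:ThetaStrJLDiamond} and a uniform supply of strongness Laver functions witnessing strongness at every degree $\theta$. First I would invoke the Gitik--Shelah construction from \cite{GitikShelah1989:IndestructibilityOfStrong}, which produces, for a strong cardinal $\kappa$, a single function $\ell\colon\kappa\to V_\kappa$ that is simultaneously a $\theta$-strongness Laver function for \emph{all} $\theta$. The point is that this one $\ell$ serves as the seed for the joint construction at every degree of strongness at once.

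Next I would apply the derivation in the proof of Proposition~\ref{prop:ThetaStrJLDiamond} to this $\ell$. Fix a set $I\subseteq\mathcal{P}(\kappa)$ of size $2^\kappa$ (this is the maximal possible length, as noted in the excerpt, since there are only $2^\kappa$ many functions $\kappa\to V_\kappa$) together with a bijection $f\colon 2^\kappa\to I$, and define $\ell_\alpha(\xi)$ to be the element of $\ell(\xi)$ indexed by $f(\alpha)\cap\xi$ under the flat coding scheme of Lemma~\ref{lemma:FlatCoding}. The crucial observation is that the verification in Proposition~\ref{prop:ThetaStrJLDiamond} is entirely local to the chosen $\theta$: once a sequence of targets $\vec{a}$ (from $V_\theta$) is coded into a single $\widetilde{a}\in V_\theta$ and guessed by $\ell$ via a $\theta$-strongness embedding $j$, elementarity yields $j(\ell_\alpha)(\kappa)=a_\alpha$ for all $\alpha$. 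Since our $\ell$ guesses via $\theta$-strongness embeddings for every $\theta$, the \emph{same} derived sequence $\langle\ell_\alpha;\alpha<2^\kappa\rangle$ works at each degree $\theta$ simultaneously, and hence forms a full strongness joint Laver sequence.

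The one genuine point to check is the length: Proposition~\ref{prop:ThetaStrJLDiamond} requires that $\lambda$ be at most $2^\kappa$ and, crucially, that $\theta$ be a successor ordinal \emph{or} $\lambda<\cf(\theta)$. For full strongness we must guess targets of arbitrarily high rank, so we need the joint sequence of length $2^\kappa$ to function at unboundedly many (indeed all) $\theta$. This is where the remark following Proposition~\ref{prop:ThetaStrJLDiamond} does the work: when $\theta$ is a successor the length restriction disappears entirely, and for any fixed target sequence $\vec{a}$ there is some successor ordinal $\theta$ with $\vec{a}\subseteq V_\theta$, so we may always arrange to witness guessing via a $\theta$-strongness embedding for such a successor $\theta$. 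Thus the potential obstacle---that the cofinality condition $\lambda<\cf(\theta)$ might fail for a length-$2^\kappa$ sequence at limit $\theta$ of small cofinality---is sidestepped, since we are free to choose $\theta$ to be a successor large enough to contain the given targets. I expect this cofinality bookkeeping to be the only subtle step; the rest is a direct transfer of the supercompact argument (Corollary~\ref{cor:SCHasLongJLDiamond}) into the strongness setting, exactly paralleling how that corollary followed from Proposition~\ref{prop:SCHasSomeJLDiamond}.
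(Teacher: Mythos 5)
Your proposal is correct and follows essentially the same route as the paper, which proves this corollary exactly by combining the Gitik--Shelah strongness Laver function with the derivation of Proposition~\ref{prop:ThetaStrJLDiamond}, noting that the derived sequence works at every degree \(\theta\) for which the seed function works. Your explicit handling of the cofinality issue---choosing a successor \(\theta\) large enough to contain any given target sequence---is precisely the point the paper leaves implicit in its remark that a successor \(\theta\) imposes no length restriction.
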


Proposition~\ref{prop:ThetaStrJLDiamond} implies that in most cases (that is, for most 
\(\theta\)) we do not need to do any additional work beyond ensuring that there is a 
\(\theta\)-strongness Laver function for \(\kappa\) to automatically also find the longest 
possible joint Laver sequence. For example, if
\(\theta\) is a successor or if \(\cf(\theta)>2^\kappa\) then a single \(\theta\)-strongness
Laver function yields a joint Laver sequence of length \(2^\kappa\).
To gauge the consistency strength of the existence of \(\theta\)-strongness
joint Laver sequences for \(\kappa\) we should therefore only focus on the consistency
strength required for a single Laver diamond, and, separately, 
on \(\theta\) of low cofinality.

The following lemma is implicit in many arguments involving the preservation of
strongness; we include a proof for completeness.

\begin{lemma}
\label{lemma:NamesInVAlpha}
Let \(\kappa\) be a cardinal and suppose \(\P\subseteq V_\kappa\) is a
poset. Let \(G\subseteq \P\) be generic and assume that \(\kappa\) is a \(\beth\)-fixed
point in \(V[G]\). If \(\alpha\geq\kappa+1\) is an ordinal then
\(V_\alpha\) contains (codes for) names for all elements of \(V[G]_\alpha=V_\alpha^{V[G]}\).
\end{lemma}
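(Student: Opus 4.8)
The plan is to prove this by induction on $\alpha \geq \kappa+1$, building (codes for) $\P$-names recursively so that the name for an element of $V_\alpha^{V[G]}$ lands in $V_\alpha$. The key resource is that $\P \subseteq V_\kappa$ is small, so every $\P$-name can be taken to be a set of pairs $\langle \sigma, p\rangle$ where $\sigma$ is a previously-constructed name and $p \in \P \subseteq V_\kappa$; the rank cost of adjoining such conditions $p$ is absorbed once we are at level $\kappa$ or above. The hypothesis that $\kappa$ is a $\beth$-fixed point in $V[G]$ (hence also in $V$, since $\P$ is small and does not move $\beth_\kappa$) is what makes the bookkeeping work: it guarantees $|V_\kappa| = \kappa = \beth_\kappa$, so that names assembled from $V_\kappa$-conditions do not inflate rank past the level of the object they name.

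First I would set up the recursion. At the base level $\alpha = \kappa+1$, an element $x \in V_{\kappa+1}^{V[G]}$ is a subset of $V_\kappa^{V[G]} = V_\kappa$ (using that $\P \subseteq V_\kappa$ adds no new sets of rank below $\kappa$ when $\kappa$ is inaccessible-like, or more carefully that $V_\kappa^{V[G]} = V_\kappa[G]$ has a canonical name); a name for such an $x$ can be taken as a set of pairs drawn from canonical names for elements of $V_\kappa$ together with conditions in $\P$, and since $\kappa$ is a $\beth$-fixed point this name has rank at most $\kappa+1$, hence codes into $V_{\kappa+1}$. For the inductive step at $\alpha$, given $x \in V_\alpha^{V[G]}$, each element of $x$ lies in some $V_\beta^{V[G]}$ with $\beta < \alpha$, so by the induction hypothesis has a name coded in $V_\beta \subseteq V_\alpha$; I then assemble a name $\dot{x}$ for $x$ as the set of pairs $\langle \dot{y}, p \rangle$ where $\dot{y}$ ranges over these already-constructed names and $p$ forces $\dot{y} \in \dot{x}$, checking that the resulting object has rank below $\alpha$ (at limit $\alpha$ this is immediate; at successor $\alpha = \beta+1$ the conditions from $V_\kappa$ contribute rank bounded by $\kappa+1 \leq \alpha$).

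The main obstacle is the rank accounting at successor stages: naively, a name $\langle \dot{y}, p\rangle$ has rank governed by $\max(\operatorname{rank}(\dot{y}), \operatorname{rank}(p))$ plus a small constant, and one must verify that throwing in conditions $p \in V_\kappa$ never pushes the name's rank above the target level $\alpha$. This is exactly where $\beth$-fixedness is used: it lets me choose a coding of pairs (a flat or rank-preserving pairing, as in the Quine--Rosser scheme mentioned in Lemma~\ref{lemma:FlatCoding}) under which the contribution of the $V_\kappa$-sized condition-part is absorbed, so that $\operatorname{rank}(\dot{x}) \leq \alpha$ whenever $\operatorname{rank}(x) \leq \alpha$ in $V[G]$. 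I would handle the truth condition ``$p$ forces $\dot{y} \in \dot{x}$'' by fixing, for the given name $\dot{x}$ of the actual object $x$ we are naming, a particular condition in $G$ witnessing membership, and closing the construction under the forcing relation restricted to the small poset $\P$; because $\P$ is a set, the forcing relation is definable and the collection of relevant names stays set-sized and rank-controlled.
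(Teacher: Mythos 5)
Your overall skeleton --- induction on \(\alpha\), nice names assembled at successor stages from the names given by the inductive hypothesis, nothing to do at limits --- is the same as the paper's, and your successor and limit steps are essentially fine. The gap is the base case \(\alpha=\kappa+1\), which is where the entire content of the lemma (and its \(\beth\)-fixed point hypothesis) lives. The claim \(V_\kappa^{V[G]}=V_\kappa\) is false for every nontrivial \(\P\subseteq V_\kappa\): forcing adds new sets of small rank (new reals, the filter \(G\) itself). The hedged version is no better: it is not true in general that every element of \(V[G]\) of rank below \(\kappa\) has a \(\P\)-name of rank below \(\kappa\). For instance, if \(\kappa\) has cofinality \(\omega\) (as the least \(\beth\)-fixed point does), one can take \(\P\) to be an isomorphic copy of \(\Add(\omega,1)\) whose conditions sit at ranks cofinal in \(\kappa\), so that \(\P\cap V_\gamma\) is finite for every \(\gamma<\kappa\); then every name of rank below \(\kappa\) evaluates to a ground-model set, and the generic real has no such name. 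Consequently you cannot manufacture names for arbitrary subsets of \(V[G]_\kappa\) out of check-names of elements of \(V_\kappa\), nor out of any stock of names lying inside \(V_\kappa\). Nor does it help to use names one level up: even granting that each element of \(V[G]_\kappa\) has a name in \(V_{\kappa+1}\), assembling such names into a name for a subset of \(V[G]_\kappa\) lands you in \(V_{\kappa+2}\), an off-by-one error that propagates up the whole induction.

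Relatedly, you misidentify what the \(\beth\)-fixed point hypothesis is for. It is not needed to absorb the rank of pairs: flat (Quine--Rosser) pairing is available at every infinite level \(V_\alpha\) with no hypothesis at all, and that alone handles the ``rank cost'' of adjoining conditions from \(\P\). What \(\beth\)-fixedness \emph{in \(V[G]\)} provides is cardinality: \(|V[G]_\kappa|^{V[G]}=\beth_\kappa^{V[G]}=\kappa\), so in \(V[G]\) there is a bijection \(b\colon\kappa\to V[G]_\kappa\), and each \(x\in V[G]_{\kappa+1}\) is coded by the set of ordinals \(b^{-1}[x]\subseteq\kappa\). Nice names for subsets of \(\kappa\) mention only the canonical names \(\check{\beta}\) for \(\beta<\kappa\) together with conditions, so they are essentially subsets of \(\kappa\times\P\) and hence (are coded by) elements of \(V_{\kappa+1}\); this is the paper's base case. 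The price is that decoding requires the single fixed parameter \(b\) (a name for it), which need not itself lie in \(V_{\kappa+1}\) --- exactly why the lemma speaks of ``(codes for) names''. With that base case in place, your successor and limit steps go through as written; note also that the hypothesis is only used in \(V[G]\), so your side remark about transferring it to \(V\) is both unnecessary and not clearly true.
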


\begin{proof}
We argue by induction on \(\alpha\); the limit case is clear. 
For the base case \(\alpha=\kappa+1\) notice that,
since \(\kappa\) is a \(\beth\)-fixed point in \(V[G]\), 
names for elements of \(V[G]_{\kappa+1}\) can be recovered from names for subsets of 
\(\kappa\). But nice names for subsets of \(\kappa\) are essentially just subsets
of \(\kappa\times \P\) and are thus elements of \(V_{\kappa+1}\).

For the successor case, observe that nice names for subsets of \(V[G]_\alpha\)
can be represented by subsets of \(V_\alpha\times\P\) since, by the induction hypothesis,
\(V_\alpha\) has names for elements of \(V[G]_\alpha\). Therefore \(V_{\alpha+1}\) has
all of these nice names for elements of \(V[G]_{\alpha+1}\).
\end{proof}


%

In the following theorem we give a forcing construction of a \(\theta\)-strongness
Laver function.
Combined with proposition~\ref{prop:ThetaStrJLDiamond} this will give us better bounds on 
the consistency strength of the existence of \(\theta\)-strongness joint Laver
sequences of various lengths. For this we need a preliminary lemma,
analogous to lemma~\ref{lemma:ThetaSCMenasFunction}.

\begin{lemma}
\label{lemma:ThetaStrMenasFunction}
If \(\kappa\) is \(\theta\)-strong then there is a function \(f\colon \kappa\to\kappa\)
with the Menas property; i.e.\ we have \(j(f)(\kappa)=\theta\) for some
\(\theta\)-strongness embedding \(j\colon V\to M\) with critical point \(\kappa\).
\end{lemma}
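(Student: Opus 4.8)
The plan is to mimic the standard Menas-function argument used for the supercompact analogue (Lemma~\ref{lemma:ThetaSCMenasFunction}), but to arrange equality rather than a strict inequality by measuring strongness \emph{exactly} and by choosing the witnessing embedding as economically as possible. The guiding idea is that a function which records, at each $\gamma<\kappa$, the least degree at which $\gamma$ fails to be strong will, after applying a carefully chosen $\theta$-strongness embedding, report precisely the least degree at which $\kappa$ fails to be strong in the target model $M$; so the whole argument reduces to pinning that value down to $\theta$.

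Concretely, I would define $f\colon\kappa\to\kappa$ by letting $f(\gamma)$ be the least ordinal $\beta$ such that $\gamma$ is not $\beta$-strong, whenever $\gamma$ is a measurable cardinal for which this least failure is below $\kappa$, and $f(\gamma)=0$ otherwise. Since a measurable $\gamma$ is automatically $(\gamma+1)$-strong, the value $f(\gamma)$ sits just above $\gamma$ in the typical case and in any event stays below $\kappa$, so $f$ genuinely maps $\kappa$ into $\kappa$. I would then take $j\colon V\to M$ to be the ultrapower by a $(\kappa,\theta)$-extender $E$ of minimal length derived from a fixed $\theta$-strongness embedding, chosen so that $j(\kappa)>\theta$; by elementarity $j(f)(\kappa)$ is the least $\beta$ such that $\kappa$ is not $\beta$-strong in $M$, and the task becomes showing this equals $\theta$.

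The lower bound $j(f)(\kappa)\geq\theta$ is the routine half: for each $\beta<\theta$ the corresponding initial segment of $E$ is coded inside $V_\theta\subseteq M$, and this initial extender witnesses in $M$ that $\kappa$ is $\beta$-strong, so the first failure cannot occur before $\theta$. The hard part, and the step I expect to be the main obstacle, is the matching upper bound: one must ensure that $\kappa$ is genuinely \emph{not} $\theta$-strong in $M$, so that $j(f)(\kappa)\leq\theta$ and equality holds. Here the naive choice can overshoot, since a value $j(f)(\kappa)$ must be a non-generator of $E$ (it is of the form $j(f)(\kappa)$ with $f$ mapping into $\kappa$), whereas an overly long or overly short extender may place $\theta$ among the generators. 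I would therefore calibrate the length of $E$ precisely at $\theta$, using the minimality of the extender together with the coherence of its indexing to guarantee both that $M$ lacks any witness to the $\theta$-strongness of $\kappa$ and that $\theta$ is realized as $j(f)(\kappa)$; combining the two bounds then yields $j(f)(\kappa)=\theta$, as required.
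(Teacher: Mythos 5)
The paper states this lemma without proof, as a known preliminary (like its supercompact analogue, lemma~\ref{lemma:ThetaSCMenasFunction}), so your proposal has to stand on its own. Your overall strategy is indeed the standard one: take \(f(\gamma)\) to be the least \(\beta\) such that \(\gamma\) is not \(\beta\)-strong, and then produce a \(\theta\)-strongness embedding \(j\colon V\to M\) such that \(M\) sees the failure of strongness of \(\kappa\) occurring exactly at \(\theta\). Your lower-bound half is essentially right: for \(\beta<\theta\) (with a few ranks of room to spare) the derived extender \(E\rest V_\beta\) has rank roughly \(\max(\beta,\kappa+1)+3\), so it lies in \(V_\theta\subseteq M\) and, by the agreement \(\mathcal{P}(V_\kappa)^M=\mathcal{P}(V_\kappa)^V\), witnesses \(\beta\)-strongness of \(\kappa\) inside \(M\); monotonicity of \(\beta\)-strongness in \(\beta\) then covers all \(\beta<\theta\) at least when \(\theta\) is a limit. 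You also correctly located the real difficulty in the upper bound, namely arranging that \(\kappa\) is \emph{not} \(\theta\)-strong in \(M\).

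The gap is that your mechanism for the upper bound does not work. ``Minimal length'' is vacuous here: any extender witnessing \(\theta\)-strongness must have length (at least) \(\theta\), and cutting the derived extender down to exactly \((\kappa,V_\theta)\) in no way prevents \(M=\mathrm{Ult}(V,E)\) from containing some \emph{other} extender that witnesses the \(\theta\)-strongness of \(\kappa\). Concretely, if your ``fixed \(\theta\)-strongness embedding'' is the ultrapower by an extender \(F\) that is not minimal in the \(\theta\)-strongness Mitchell order, say \(F'\triangleleft F\), then \(F'\in\mathrm{Ult}(V,F)\), and since the factor map \(k\colon\mathrm{Ult}(V,E)\to\mathrm{Ult}(V,F)\) fixes \(V_\theta\) (and typically much more), \(M\) will again believe \(\kappa\) is \(\theta\)-strong, giving \(j(f)(\kappa)>\theta\). ``Coherence of indexing'' cannot be invoked either: that is a property of extender sequences in fine-structural models, not of an arbitrary extender derived from an arbitrary embedding. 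What is actually needed is the well-foundedness of the \(\theta\)-strongness Mitchell order -- precisely the notion the paper introduces later in this section -- so that one may choose \(E\) Mitchell-minimal among \(\theta\)-strongness extenders; then, if \(M\) believed \(\kappa\) were \(\theta\)-strong, the agreement between \(M\) and \(V\) below \(\theta\) (using \(\theta\geq\kappa+2\)) lets one transfer the \(M\)-witness to a genuine \(\theta\)-strongness extender \(F\in M\), i.e.\ \(F\triangleleft E\), contradicting minimality. Finally, your worry about \(\theta\) being a ``non-generator'' is a red herring: once \(M\) computes the least failure of strongness of \(\kappa\) to be exactly \(\theta\), elementarity alone yields \(j(f)(\kappa)=\theta\); no separate representability argument is needed.
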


\begin{theorem}
\label{thm:ThetaStrLDForcing}
Let \(\kappa\) be \(\theta\)-strong with \(\kappa+2\leq\theta\). 
Then there is a forcing extension in which there is a \(\theta\)-strongness Laver function for \(\kappa\).
\end{theorem}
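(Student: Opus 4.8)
The plan is to build the desired Laver function by a Laver-style preparation forcing that uses the Menas function from lemma~\ref{lemma:ThetaStrMenasFunction} to anticipate all targets in \(V_\theta\). Let \(f\colon\kappa\to\kappa\) be a Menas function with \(j(f)(\kappa)=\theta\) for some \(\theta\)-strongness embedding \(j\colon V\to M\). I would set up an Easton-support iteration \(\P_\kappa\) of length \(\kappa\) which, at suitable inaccessible closure points \(\gamma\) of \(f\), forces with a small ``guessing'' poset that commits to a value of the form \(\ell(\gamma)\in V_\gamma\); the standard Laver trick is to design the stage-\(\gamma\) forcing so that, for any target, some generic makes \(\ell\) guess it. Concretely, one takes \(\Q_\gamma\) to be a lottery sum (or a direct coding poset) over the relevant local structure, and defines \(\ell(\gamma)\) to read off the chosen element of \(V_\gamma\). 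The final poset is \(\P=\P_\kappa*\Q_\kappa\), and the claim is that in \(V[G][g]\) the function \(\ell\) read off from \(g\) is a \(\theta\)-strongness Laver function for \(\kappa\).

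To verify the guessing property, fix a target \(a\in V_\theta^{V[G][g]}\). The key is that, because \(j(f)(\kappa)=\theta\), the poset \(j(\P_\kappa)\) factors as \(\P_\kappa*\Q_\kappa*\P_{\text{tail}}\), where the next nontrivial stage of \(j(\P_\kappa)\) above \(\kappa\) occurs at or beyond \(\theta\); this positions \(\kappa\) as the critical stage where we can arrange \(j(\ell)(\kappa)=a\). I would lift \(j\) through \(\P_\kappa\) by the usual diagonalization against the (suitably few) dense sets of the closed tail \(\P_{\text{tail}}\), obtaining \(j\colon V[G]\to M[j(G)]\), and then handle the final factor \(\Q_\kappa\): since the stage-\(\kappa\) forcing of \(j(\P_\kappa)\) is a copy of the local guessing poset, I choose the generic \(g^*\) so that its stage-\(\kappa\) component codes \(a\), which forces \(j(\ell)(\kappa)=a\) by the very definition of \(\ell\). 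A master-condition or master-filter argument (as in the proof of theorem~\ref{thm:ForceLongJLDiamondSC}) produces the \(M[j(G)]\)-generic lifting \(j\) to \(V[G][g]\).

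The main obstacle is the bookkeeping that lets names for elements of \(V_\theta^{V[G][g]}\) be anticipated at the ground-model stages, and in particular ensuring that the target \(a\), which lives in the extension, can be represented so that \(M[j(G)]\) sees enough of it to commit \(g^*\) at stage \(\kappa\). This is exactly where lemma~\ref{lemma:NamesInVAlpha} does the work: since \(\P_\kappa\subseteq V_\kappa\) and \(\kappa\) is a \(\beth\)-fixed point in the extension, the (codes for) names for elements of \(V_\theta^{V[G][g]}\) already lie in \(V_\theta\), so the target has a name available to the embedding at the right level. One must also confirm that \(M[j(G)]\) remains sufficiently correct about \(V_\theta\)—that the lift is still a \(\theta\)-strongness embedding—which follows from \(V_\theta\subseteq M\) being preserved by the factored forcing together with the closure obtained from the diagonalization. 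Once these name-reflection and correctness points are secured, the guessing computation \(j(\ell)(\kappa)=a\) is immediate from the definition of \(\ell\) off the generic, and \(\ell\) is the desired \(\theta\)-strongness Laver function.
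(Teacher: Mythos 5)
Your overall scaffolding matches the paper's: a Menas function from lemma~\ref{lemma:ThetaStrMenasFunction} guiding an Easton-support iteration with coding posets at inaccessible stages, lemma~\ref{lemma:NamesInVAlpha} to get a name \(\dot{A}\in V_\theta\) for the target, and arranging the stage-\(\kappa\) component of \(j(G)\) to code \(A\). But there is a genuine gap at the single step that is the actual content of the theorem: you write that one lifts \(j\) ``by the usual diagonalization against the (suitably few) dense sets of the closed tail \(\P_{\text{tail}}\),'' optionally with a master-condition/master-filter argument ``as in the proof of theorem~\ref{thm:ForceLongJLDiamondSC}.'' Neither works here. A \(\theta\)-strongness embedding is an extender embedding: \(M=\{j(h)(a); a\in V_\theta, h\colon V_\kappa\to V\}\) is closed under \(\kappa\)-sequences at best, not \(\theta\)- or \(\beth_\theta\)-sequences, and the number of dense subsets of \(\P_{\text{tail}}\) lying in \(M[G]\), counted in \(V[G]\), is on the order of \(\beth_\theta\) or more --- far exceeding the \(\kappa^+\)-many sets a diagonalization with the available closure can meet. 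The dense sets are precisely \emph{not} ``suitably few.'' Likewise, the master filter argument of theorem~\ref{thm:ForceLongJLDiamondSC} relies on \(M\) being a \(\mathcal{P}_\kappa\theta\)-measure ultrapower, closed under \(\theta\)-sequences and with \(j\) continuous at \(\theta^+=2^\kappa\); none of this is available for strongness extenders, so the citation does not transfer.

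The paper's proof exists exactly to get around this mismatch. It passes to the Skolem hull \(X=\{j(h)(\kappa,\dot{A}); h\colon V_\kappa\to V\}\), which behaves like an ultrapower by a measure on \(\kappa\); under GCH, \(X[G]\) has only \(\kappa^+\) many maximal antichains of \(\P_{\text{tail}}\) (counted in \(V[G]\)), so a diagonalization \emph{is} possible there, producing an \(X[G]\)-generic \(G^*_{\text{tail}}\) that also codes \(A\) at stage \(\kappa\). The second, crucial step is to show that \(X[G]\)-genericity already implies \(M[G]\)-genericity: given dense open \(D=j(\vec{D})(a)^G\in M[G]\), one forms \(\overline{D}=\bigcap_b j(\vec{D})(b)^G\) (intersecting over all \(b\in V_\theta\) giving dense open sets), which is still dense by the \(\leq\beth_\theta\)-closure of \(\P_{\text{tail}}\) in \(M[G]\), and --- since \(V_\theta\in X\) --- is definable from parameters in \(X[G]\), hence lies in \(X[G]\) and is met by \(G^*_{\text{tail}}\). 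Without this hull-plus-refinement argument (or some equivalent device for lifting extender embeddings), your plan stalls at the lift: you cannot produce the \(M[G]\)-generic filter your last paragraph presupposes. Separately, and less importantly, your forcing \(\P_\kappa*\Q_\kappa\) has a top stage over \(V\) that the paper's proof does not use (the coding happens at stage \(\kappa\) of \(j(\P)\), which is forcing over \(M[G]\), not over \(V\)); that organizational difference could probably be made to work, but only after the genericity problem above is solved.
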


\begin{proof}
We may assume by prior forcing that GCH holds.
Fix a Menas function \(f\) as in lemma~\ref{lemma:ThetaStrMenasFunction} and let
\(\P\) be the length \(\kappa\) Easton support iteration which forces with
\(\Q_\gamma=\Add(\beth_{f(\gamma)}^+,1)\) at inaccessible \(\gamma\). If \(G\subseteq\P\)
is generic we can extract a Laver function \(\ell\) from it by letting
\(\ell(\xi)\) be the set whose Mostowski code appears as the length \(\beth_{f(\xi)}\)
initial segment of \(G_\xi\) if \(\xi\) is inaccessible and \(\ell(\xi)=\emptyset\)
otherwise.

To see that this definition works let \(A\in V_\theta^{V[G]}\). Since 
\(\P\subseteq V_\kappa\), lemma~\ref{lemma:NamesInVAlpha} implies that 
\(A\) has a name \(\dot{A}\in V_\theta\).

Now fix a \(\theta\)-strongness embedding \(j\colon V\to M\) such that \(j(f)(\kappa)=
\theta\). We may additionally assume that this embedding is an ultrapower given by
a \((\kappa,V_\theta)\)-extender, i.e.\ that \(M=\{j(h)(a);a\in V_\theta,
h\colon V_\kappa\to V\}\). We will lift this embedding through the forcing \(\P\).

The poset \(j(\P)\) factors as \(j(\P)=\P*\P_{\text{tail}}\), where
\(M[G]\vDash \text{``\(\P_{\text{tail}}\) is \(\leq\beth_\theta\)-closed''.}\)
Let \(X=\{j(h)(\kappa,\dot{A});h\colon V_\kappa\to V\}\) be the Skolem hull of
\(\{\kappa,\dot{A}\}\cup\operatorname{ran}(j)\) in \(M\). 
Then \(X^\kappa\subseteq X\) in \(V\),
\(\P_\kappa\in X\) and \(\P_{\text{tail}}\in X[G]\). Furthermore, since \(\theta=j(f)(\kappa)
\in X\), we also have \(V_\theta\in X\). 
Now observe that, since \(X[G]\) is essentially an
ultrapower by a measure on \(\kappa\), \(\P_{\text{tail}}\) has
size \(j(\kappa)\) in \(X[G]\), and GCH holds, \(X[G]\) has at most \(\kappa^+\) many maximal antichains
for \(\P_{\text{tail}}\), counted in \(V[G]\). In addition, as \(X[G]^\kappa\subseteq X[G]\)
in \(V[G]\) and \(\P_{\text{tail}}\) is \(\leq\kappa\)-closed in \(X[G]\), we may
diagonalize against these maximal antichains and build a \(X[G]\)-generic \(G_{\text{tail}}^*
\subseteq\P_{\text{tail}}\cap X[G]\). Since we put \(A,V_\theta\in X[G]\), we may
also ensure in this construction that the length \(\beth_\theta\) initial segment of
\(G_\kappa\) codes \(A\).

To finish the proof we show that \(G^*_{\text{tail}}\) generates an
\(M[G]\)-generic filter \(G_{\text{tail}}\). To see this let \(D\in M[G]\) be an open
dense subset of \(\P_{\text{tail}}\). Choosing a name \(\dot{D}\in M\) for \(D\),
we can find a function \(\vec{D}\) and an \(a\in V_\theta\) such that \(j(\vec{D})(a)=
\dot{D}\). We now let \(\overline{D}=\bigcap_b j(\vec{D}(b)^G\) where the intersection
runs over those \(b\in V_\theta\) for which \(j(\vec{D}(b)^G\) is a dense open subset of
\(\P_{\text{tail}}\). Clearly \(\overline{D}\subseteq D\) and \(\overline{D}\) is a dense
subset of \(\P_{\text{tail}}\), as this poset is \(\leq\beth_\theta\)-closed in \(M[G]\).
But now notice that
\(\overline{D}\) was defined using only parameters in \(X[G]\) (recall that we had
\(V_\theta\in X\)). Thus, since \(X[G]\prec M[G]\), we get \(\overline{D}\in X[G]\).
The filter \(G^*_{\text{tail}}\) being \(X[G]\)-generic, the intersection
\(G^*_{\text{tail}}\cap \overline{D}\) is nonempty, which shows that \(G^*_{\text{tail}}\)
meets \(D\) and generates an \(M[G]\)-generic filter.

We may thus lift the embedding \(j\) to \(j\colon V[G]\to M[j(G)]\), where
\(j(G)=G*G_{\text{tail}}\). It now follows from lemma~\ref{lemma:NamesInVAlpha}
that the lift of \(j\)
witnesses the \(\theta\)-strongness of \(\kappa\) in \(V[G]\). Finally, we have arranged
matters to ensure that \(j(\ell)(\kappa)=A\), witnessing the guessing property.
\end{proof}

\begin{corollary}
Let \(\kappa\) be \(\theta\)-strong with \(\kappa+2\leq\theta\). 
If \(\theta\) is either a successor ordinal or \(\cf(\theta)\geq\kappa^+\) then
there is a forcing extension in which \(\jLdthetastr_{\kappa,2^\kappa}\) holds.
\end{corollary}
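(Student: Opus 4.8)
The plan is to read the corollary off the two preceding results, combining theorem~\ref{thm:ThetaStrLDForcing} with proposition~\ref{prop:ThetaStrJLDiamond}. First I would pass to the forcing extension supplied by theorem~\ref{thm:ThetaStrLDForcing}, in which \(\kappa\) carries a \(\theta\)-strongness Laver function. Two features of that construction are worth recording for later use. The proof there begins by forcing GCH, and the subsequent iteration \(\P\subseteq V_\kappa\) is an Easton-support iteration of length \(\kappa\) of size \(\kappa\); a routine nice-name count then shows that GCH is preserved at \(\kappa\), so that \(2^\kappa=\kappa^+\) in the extension. Moreover, the lift \(j\colon V[G]\to M[j(G)]\) produced there witnesses that \(\kappa\) is still \(\theta\)-strong (with \(\kappa+2\le\theta\)) after the forcing. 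Thus in the extension all the structural hypotheses of proposition~\ref{prop:ThetaStrJLDiamond} are in place.

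Next I would apply proposition~\ref{prop:ThetaStrJLDiamond} directly, with \(\lambda=2^\kappa\). Its conclusion is precisely a \(\theta\)-strongness joint Laver sequence of length \(2^\kappa\) for \(\kappa\), i.e.\ \(\jLdthetastr_{\kappa,2^\kappa}\), so nothing further is needed once the hypotheses are checked. We already have a \(\theta\)-strongness Laver function and \(\lambda=2^\kappa\le 2^\kappa\), so the only condition demanding attention is the cofinality clause. In the successor case that clause is vacuous and the proposition applies immediately. In the limit case it requires \(\lambda\) to be admissible for the flat-coding scheme of lemma~\ref{lemma:FlatCoding}, and this is exactly where the hypothesis \(\cf(\theta)\ge\kappa^+\) is consumed: since \(2^\kappa=\kappa^+\) in the extension, the assumption on \(\cf(\theta)\) controls \(\lambda=2^\kappa\) against \(\cf(\theta)\) so that a length-\(2^\kappa\) sequence of targets can be coded into a single element of \(V_\theta\) and guessed by the Laver function.

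I do not expect a substantial obstacle here, since the genuine work has already been done in the two cited results — the forcing construction of the Laver function in theorem~\ref{thm:ThetaStrLDForcing} and the extraction of a joint sequence by flat coding in proposition~\ref{prop:ThetaStrJLDiamond}. The corollary is essentially a matter of matching their hypotheses. The one point where care is needed, and the closest thing to a subtlety, is the cardinal-arithmetic bookkeeping: one must confirm that the degree \(\theta\) of strongness survives the Laver-function forcing and that the value of \(2^\kappa\) in the extension (namely \(\kappa^+\)) is small enough relative to \(\cf(\theta)\) for the coding to yield a joint sequence of the \emph{full} length \(2^\kappa\) rather than only a shorter one.
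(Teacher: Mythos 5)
Your proposal is exactly the paper's intended derivation: the corollary is stated there without proof, as the immediate combination of theorem~\ref{thm:ThetaStrLDForcing} (force a \(\theta\)-strongness Laver function) with proposition~\ref{prop:ThetaStrJLDiamond} (turn one Laver function into a joint sequence via the coding of lemma~\ref{lemma:FlatCoding}), and your bookkeeping---GCH preserved at \(\kappa\) so that \(2^\kappa=\kappa^+\) in the extension, and \(\theta\)-strongness preserved by the lift---is the right verification of the hypotheses.

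There is, however, one boundary case where the step ``apply proposition~\ref{prop:ThetaStrJLDiamond} with \(\lambda=2^\kappa\)'' fails, and your wording glosses over it. The proposition demands the \emph{strict} inequality \(\lambda<\cf(\theta)\), because lemma~\ref{lemma:FlatCoding} needs \(\cf(\theta)>|I|\): if \(|I|=\cf(\theta)\), the ranks of the targets \(a_i\) may be cofinal in \(\theta\), so the code \(\widetilde{a}\) has rank \(\theta\) and lies in \(V_{\theta+1}\setminus V_\theta\), outside the guessing domain of the Laver function. In the extension you have \(\lambda=2^\kappa=\kappa^+\), so the proposition applies only when \(\cf(\theta)>\kappa^+\). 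The case \(\cf(\theta)=\kappa^+\), which the corollary's hypothesis \(\cf(\theta)\geq\kappa^+\) permits, is not covered---and indeed can never be covered by this route, since \(\cf(\theta)=\kappa^+\) forces \(2^\kappa\geq\cf(\theta)\) in \emph{any} model, so a length-\(2^\kappa\) sequence of targets can always be chosen cofinal in rank in \(V_\theta\) and hence un-codable there. So your claim that \(\cf(\theta)\geq\kappa^+\) ``controls \(\lambda=2^\kappa\) against \(\cf(\theta)\)'' is accurate only for \(\theta\) a successor or \(\cf(\theta)\geq\kappa^{++}\); for \(\cf(\theta)=\kappa^+\) a genuinely different argument would be required. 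This off-by-one issue is inherited from the paper itself (its statement should read \(\cf(\theta)>\kappa^+\), or equivalently \(2^\kappa<\cf(\theta)\) in the extension), so your proof is faithful to the source, but the gap is real in both.
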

%
Of course, the question of the necessity of the hypotheses of 
proposition~\ref{prop:ThetaStrJLDiamond} remains very attractive.

\begin{question}
Suppose there is a \(\theta\)-strongness Laver function for \(\kappa\) (with \(\theta\)
possibly being a limit of low cofinality).
Is there a \(\theta\)-strongness joint Laver sequence of length \(\kappa\)? Or even of length 
\(\omega\)?
\end{question}

We give a partial answer to this question.
In contrast to the supercompact case, some restrictions are in fact necessary to allow for the
existence of joint Laver sequences for \(\theta\)-strong cardinals. 
The existence of even the
shortest of such sequences can surpass the existence of a \(\theta\)-strong cardinal in
consistency strength.

To give a better lower bound on the consistency strength required, we introduce
a notion of Mitchell rank for \(\theta\)-strong cardinals, inspired by 
Carmody~\cite{Carmody2015:Thesis}.

\begin{definition}
Let \(\kappa\) be a cardinal and \(\theta\) an ordinal. 
The \emph{\(\theta\)-strongness Mitchell order} \(\triangleleft\) for \(\kappa\) is defined 
on the set of \((\kappa,V_\theta)\)-extenders, by letting \(E\triangleleft F\) if
\(E\) is an element of the (transitive collapse of the) ultrapower of \(V\) by \(F\).
\end{definition}

Unsurprisingly, this Mitchell order has properties analogous to those of the usual
Mitchell order on normal measures on \(\kappa\) or the \(\theta\)-supercompactness Mitchell
order on normal fine measures on
\(\mathcal{P}_\kappa\theta\), as studied by Carmody. In particular, the \(\theta\)-strongness
Mitchell order is well-founded and gives rise to a notion of a
\emph{\(\theta\)-strongness Mitchell rank}. Having \(\theta\)-strongness Mitchell
rank at least 2 implies that many cardinals below \(\kappa\) have reflected
versions of \(\theta\)-strongness; for example, if \(\kappa\) has 
\((\kappa+\omega)\)-strongness Mitchell rank at least 2, then there are stationarily
many cardinals \(\lambda<\kappa\) which are \((\lambda+\omega)\)-strong (and much more is true).

We should mention a bound on the \(\theta\)-strongness Mitchel rank of a cardinal \(\kappa\).
If \(j\colon V\to M\) is the ultrapower by a \((\kappa,V_\theta)\)-extender then any
\((\kappa,V_\theta)\)-extenders in \(M\) appear in \(V_{j(\kappa)}^M\). It follows
that these extenders are represented by a function \(f\colon V_\kappa\to V_\kappa\) and
a seed \(a\in V_\theta\). In particular, there are at most \(\beth_\theta\)
many such extenders, counted in \(V\). Any given extender therefore has at most
\(\beth_\theta\) many predecessors in the Mitchell order, so the highest possible
\(\theta\)-strongness Mitchell rank of \(\kappa\) is \(\beth_\theta^+\).

\begin{theorem}
Let \(\kappa\) be a \(\theta\)-strong cardinal, where \(\theta\) is a limit ordinal,
and \(\cf(\theta)\leq\kappa<\theta\). 
If there is a \(\theta\)-strongness joint Laver sequence for \(\kappa\) of length 
\(\cf(\theta)\)
then \(\kappa\) has maximal \(\theta\)-strongness Mitchell rank. 
\end{theorem}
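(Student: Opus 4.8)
The plan is to show that the $\theta$-strongness Mitchell order $\triangleleft$ carries $(\kappa,V_\theta)$-extenders of rank cofinal in $\beth_\theta$; since $\beth_\theta$ is the maximal possible rank, this yields the conclusion (and, by a final application of the construction below to a cofinal family, an actual extender of rank $\beth_\theta$). Write \(o(E)\) for the \(\triangleleft\)-rank of a \((\kappa,V_\theta)\)-extender \(E\). Since \(\theta\) is a limit and \(\cf(\theta)\le\kappa<\theta\), I would fix an increasing sequence \(\langle\theta_\alpha;\alpha<\cf(\theta)\rangle\) cofinal in \(\theta\) with \(\kappa<\theta_0\), so that \(V_\theta=\bigcup_{\alpha<\cf(\theta)}V_{\theta_\alpha}\), and fix the given joint Laver sequence \(\langle\ell_\alpha;\alpha<\cf(\theta)\rangle\).

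The engine of the argument is a \emph{deposit lemma}: for any set \(X\subseteq V_\theta\) there is a \(\theta\)-strongness embedding \(j\colon V\to M\) with critical point \(\kappa\) such that \(X\in M\). To see this, slice \(X\) as \(\langle X\cap V_{\theta_\alpha};\alpha<\cf(\theta)\rangle\), a sequence of elements of \(V_\theta\), and use jointness to find a single \(j\) with \(j(\ell_\alpha)(\kappa)=X\cap V_{\theta_\alpha}\) for every \(\alpha\). The point of jointness, and the reason a single Laver function would not suffice, is that the whole sequence of guesses \(\langle j(\ell_\alpha)(\kappa);\alpha<\cf(\theta)\rangle\) is then recoverable inside \(M\) from \(j(\langle\ell_\alpha\rangle)\) and \(\kappa\), so that \(X=\bigcup_\alpha(X\cap V_{\theta_\alpha})\in M\) even though \(X\) is a \(V_{\theta+1}\)-object lying beyond the reach of mere \(\theta\)-strongness. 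Applying this to a code for a \((\kappa,V_\theta)\)-extender \(F\) of \(V\), which can be taken to be a subset of \(V_\theta\), places \(F\) into \(M=\mathrm{Ult}(V,E)\), where \(E\) is the extender of \(j\); hence \(F\triangleleft E\) and \(o(E)>o(F)\).

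With the deposit lemma in hand I would prove by induction on \(\delta\le\beth_\theta\) that some \((\kappa,V_\theta)\)-extender has rank at least \(\delta\). The base case is exactly the \(\theta\)-strongness of \(\kappa\). For the inductive step at any \(\delta\), successor or limit alike, the inductive hypothesis furnishes for each \(\gamma<\delta\) an extender \(F_\gamma\) with \(o(F_\gamma)\ge\gamma\); since \(|\delta|\le\beth_\theta=|V_\theta|\), the entire family \(\langle F_\gamma;\gamma<\delta\rangle\) can be coded by a single subset \(X\subseteq V_\theta\). Depositing \(X\) into \(M=\mathrm{Ult}(V,E)\) puts every \(F_\gamma\) into \(M\), so \(F_\gamma\triangleleft E\) for all \(\gamma<\delta\), whence \(o(E)\ge\sup_{\gamma<\delta}\bigl(o(F_\gamma)+1\bigr)\ge\delta\). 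Taking \(\delta=\beth_\theta\) produces an extender of maximal \(\theta\)-strongness Mitchell rank.

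The crucial and slightly delicate point is this coding step, and it is where the hypotheses conspire. A naive approach that slices each extender separately and builds a \(\triangleleft\)-chain by repeated single deposits boosts the rank too slowly: closing only under successors and suprema of length \(\cf(\theta)\) one cannot cross an ordinal of cofinality above \(\cf(\theta)\), and so reaches merely \(\cf(\theta)^+\), far short of \(\beth_\theta\). The fix is to deposit an entire rank-cofinal family \emph{at once}, coded as one subset of \(V_\theta\); this is possible precisely because the number of \((\kappa,V_\theta)\)-extenders, and hence the length of any rank-cofinal family, is bounded by \(\beth_\theta=|V_\theta|\), so the family fits into a single \(V_{\theta+1}\)-object, while the length \(\cf(\theta)\) of the joint sequence is exactly what is needed to slice that object along \(\langle V_{\theta_\alpha}\rangle\) and guess it into \(M\). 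The remaining work is the routine bookkeeping: arranging the coding of the family so that its decoding is definable over \((V_\theta,\in)\), which \(M\) possesses, and checking that each decoded \(F_\gamma\) is genuinely a \((\kappa,V_\theta)\)-extender of \(V\), so that \(F_\gamma\triangleleft E\) holds on the nose.
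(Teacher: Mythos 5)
Your engine is exactly the paper's: use the shortness of the joint sequence (\(\cf(\theta)\le\kappa\)) to recover the whole sequence of guesses \(\langle j(\ell_\alpha)(\kappa);\alpha<\cf(\theta)\rangle\) inside \(M\), slice an arbitrary subset of \(V_\theta\) along a cofinal sequence \(\langle\theta_\alpha;\alpha<\cf(\theta)\rangle\) so that it lands in \(M\), observe that a \((\kappa,V_\theta)\)-extender—and indeed any family of at most \(\beth_\theta\) many of them—can be coded by a single subset of \(V_\theta\), and then induct on rank. Your ``deposit lemma'' and the coding of whole families at once are precisely the two steps of the paper's argument, including the observation that family-at-a-time deposits are what let the induction pass limit stages.

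The gap is in where you stop. You assert that \(\beth_\theta\) is the maximal possible rank, but it is not: as the paper computes just before this theorem, each \((\kappa,V_\theta)\)-extender has at most \(\beth_\theta\) many Mitchell predecessors, so individual extender ranks are bounded only below \(\beth_\theta^+\) (using the regularity of \(\beth_\theta^+\)), and the maximal possible \(\theta\)-strongness Mitchell rank of \(\kappa\) is \(\beth_\theta^+\), not \(\beth_\theta\). Your induction up to \(\delta\le\beth_\theta\) therefore only yields Mitchell rank at least \(\beth_\theta+1\) for \(\kappa\), which falls short of ``maximal.'' The repair is immediate and uses nothing new: your inductive step works verbatim for every \(\delta<\beth_\theta^+\), since any such \(\delta\) still has cardinality at most \(\beth_\theta\), so the family \(\langle F_\gamma;\gamma<\delta\rangle\) can still be coded by one subset of \(V_\theta\) and deposited. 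Running the induction through all \(\delta<\beth_\theta^+\) shows the extender ranks are cofinal in \(\beth_\theta^+\), i.e.\ that \(\kappa\) has the maximal rank \(\beth_\theta^+\). Note also that your parenthetical promise of producing a single extender ``of maximal rank'' should be dropped: no individual extender can have rank \(\beth_\theta^+\), since its predecessors number at most \(\beth_\theta\); maximality of the rank of \(\kappa\) is witnessed only by cofinally many extenders, not by one.
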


\begin{proof}
We first show that the existence of a short \(\theta\)-strongness joint Laver sequence 
implies a certain degree
of hypermeasurability for \(\kappa\). Let \(\vec{\ell}=
\langle \ell_\alpha;\alpha<\cf(\theta)\rangle\) be the
joint Laver sequence. If \(\vec{a}=\langle a_\alpha;\alpha<\cf(\theta)\rangle\) 
is any sequence of
targets in \(V_\theta\) there is, by definition, a \(\theta\)-strongness embedding
\(j\colon V\to M\) with critical point \(\kappa\) such that \(j(\ell_\alpha)(\kappa)=a_\alpha\). 
But we can recover \(\vec{\ell}\) from \(j(\vec{\ell})\) as an initial segment, since
\(\vec{\ell}\) is so short. Therefore we actually get the whole sequence \(\vec{a}\in M\),
just by evaluating that initial segment at \(\kappa\).
Now consider any \(a\subseteq V_\theta\). We can resolve
\(a\) into a \(\cf(\theta)\)-sequence of elements \(a_\alpha\) of \(V_\theta\) such that 
\(a=\bigcup_\alpha a_\alpha\).
Our argument then implies that \(a\) is an element of \(M\).

Now let \(E\) be an arbitrary \((\kappa,V_\theta)\)-extender.
Since \(E\) can be represented as a family of measures on \(\kappa\) indexed by \(V_\theta\),
it is coded by a subset of \(V_\theta\) 
(using the coding scheme from lemma~\ref{lemma:FlatCoding}, for example). 
Applying the argument
above, there is a \(\theta\)-strongness embedding \(j\colon V\to M\) with critical point
\(\kappa\) such that \(E\in M\). It follows that \(\kappa\) is \(\theta\)-strong in \(M\),
giving \(\kappa\) nontrivial \(\theta\)-strongness Mitchell rank in \(V\).

The argument in fact yields more: given any collection of at most
\(\beth_\theta\) many \((\kappa,V_\theta)\)-extenders, we can code the whole family
by a subset of \(V_\theta\) and, again, obtain an extender whose ultrapower contains
the entire collection we started with. It follows that, given any family of at most
\(\beth_\theta\) many extenders, we can find an extender which is above all of them
in the \(\theta\)-strongness Mitchell order. Applying this fact inductively now
shows that \(\kappa\) must have maximal \(\theta\)-strongness Mitchell rank.
\end{proof}


Just as in the case of \(\theta\)-supercompactness we can also consider 
\(\theta\)-strongness Laver trees. In view of propositions~\ref{prop:ThetaStrJLDiamond}
and \ref{prop:FullyTreeableThetaSCJLDExist} it is not surprising that again,
for most \(\theta\), a single \(\theta\)-strongness Laver diamond yields a
full Laver tree.

\begin{proposition}
\label{prop:ThetaStrLaverTree}
Suppose \(\kappa\) is \(\theta\)-strong where \(\kappa+2\leq\theta\) and
\(\theta\) is either a successor ordinal or \(\cf(\theta)>2^\kappa\). Then a
\(\theta\)-strongness Laver tree for \(\kappa\) exists if and only if a
\(\theta\)-strongness Laver function for \(\kappa\) does (if and only if\/
\(\jLdthetasc_{\kappa,2^\kappa}\) holds).
\end{proposition}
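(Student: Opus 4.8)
The plan is to follow the proof of Proposition~\ref{prop:FullyTreeableThetaSCJLDExist} in structure, substituting the flat coding of Lemma~\ref{lemma:FlatCoding} for the trivial $H_{\theta^+}$-coding used there (which was available only because one had $\theta\geq 2^\kappa$). First I would dispose of the three-way equivalence. A $\theta$-strongness Laver tree yields a joint Laver sequence of length $2^\kappa$ by reading off the labels along its branches, and any such joint sequence yields a single $\theta$-strongness Laver function by projecting onto one coordinate; both of these are immediate. Thus it remains only to manufacture a Laver tree from a single Laver function $\ell$, and this construction is the strongness analogue of, and uses the same coding idea as, Proposition~\ref{prop:ThetaStrJLDiamond}.

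For the construction I would define the labelling $D\colon\funcs{<\kappa}{2}\to V$ as follows. Since $\kappa+2\leq\theta$, the branch set $\funcs{\kappa}{2}$, of size $2^\kappa$, lies in $V_\theta$, and the hypothesis that $\theta$ is a successor or $\cf(\theta)>2^\kappa$ is precisely what Lemma~\ref{lemma:FlatCoding} requires in order to code $\funcs{\kappa}{2}$-indexed sequences of elements of $V_\theta$ back inside $V_\theta$. For $t\in\funcs{<\kappa}{2}$ with $|t|=\gamma$, I would let $D(t)$ be the component indexed by $t$ decoded from $\ell(\gamma)$ under the flat coding scheme (applied with index set $\funcs{\gamma}{2}$), if $\ell(\gamma)$ happens to code such a sequence, and $D(t)=\emptyset$ otherwise.

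To verify the guessing property, given a sequence of targets $\vec{a}=\langle a_s;s\in\funcs{\kappa}{2}\rangle$ in $V_\theta$, I would first pack it into a single code $\widetilde{a}\in V_\theta$ via Lemma~\ref{lemma:FlatCoding}, then invoke $\ell$ to obtain a $\theta$-strongness embedding $j\colon V\to M$ with critical point $\kappa$ satisfying $j(\ell)(\kappa)=\widetilde{a}$. For any branch $s\in\funcs{\kappa}{2}$ we have $|s|=\kappa<j(\kappa)$, so $s$ is a node of the tree $\funcs{<j(\kappa)}{2}$ in $M$, and by elementarity $j(D)(s)$ is the $s$-component decoded from $j(\ell)(\kappa)=\widetilde{a}$. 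Because this decoding is $\Delta_0$-definable, as asserted in Lemma~\ref{lemma:FlatCoding}, and hence absolute, and because $\widetilde{a}$, $s$, and $a_s$ all lie in $V_\theta=V_\theta^M$, the decoding computed inside $M$ returns $a_s$; thus $j(D)(s)=a_s$ for every $s$, which is exactly the Laver-tree guessing property.

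The only genuine obstacle is the coding step, and it is entirely packaged in Lemma~\ref{lemma:FlatCoding}: I must make sure that the decoding used to define $D$ is uniform enough to survive application of $j$, and that its $\Delta_0$ character lets me read the correct value off inside $M$. The cofinality assumption on $\theta$ is indispensable here, since without it a $2^\kappa$-indexed sequence need not compress into a single element of $V_\theta$. This is the expected counterpart of the phenomenon, visible in Proposition~\ref{prop:FullyTreeableThetaSCJLDStronger} and in the Mitchell-rank results of this section, that for $\theta$ a limit of small cofinality even short joint Laver sequences, let alone Laver trees, can demand additional consistency strength.
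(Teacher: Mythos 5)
Your proposal is correct and follows essentially the same route as the paper: define \(D(t)\) as the \(t\)-indexed component decoded from \(\ell(|t|)\) via the coding of lemma~\ref{lemma:FlatCoding} (whose hypotheses are exactly met since \(\funcs{\kappa}{2}\in V_\theta\) and \(\theta\) is a successor or of cofinality above \(2^\kappa\)), then guess the code \(\widetilde{a}\in V_\theta\) of the target sequence with \(\ell\) and use the \(\Delta_0\)-absoluteness of the decoding to conclude \(j(D)(s)=a_s\). Your added detail on the trivial directions of the equivalence and on why the decoding transfers correctly to \(M\) only makes explicit what the paper leaves implicit.
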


\begin{proof}
We follow the proof of proposition~\ref{prop:FullyTreeableThetaSCJLDExist}.
Note that, since \(\theta\geq\kappa+2\), we have \(\funcs{\kappa}{2}\in V_\theta\),
so \(V_\theta\) is closed under sequences indexed by \(\funcs{\kappa}{2}\) via the coding
scheme given by lemma~\ref{lemma:FlatCoding}.
If \(\ell\) is a \(\theta\)-strongness Laver function for \(\kappa\) we define
a Laver tree by letting \(D(t)\) be the element with index \(t\) in the sequence
coded by \(\ell(|t|)\), if this makes sense. It is now easy to check that this truly is
a Laver tree: given any sequence of targets \(\vec{a}\) we simply use the Laver function
\(\ell\) to guess it (or, rather, its code), and the embedding \(j\) obtained this way
will witness the guessing property for \(D\).
\end{proof}

\section{Joint diamonds}
\label{sec:JD}
Motivated by the joint Laver sequences of the previous sections we now apply
the jointness concept to smaller cardinals. Of course, since we do not have any 
elementary embeddings of the universe with critical point \(\omega_1\), say,
we need a reformulation that will make sense in this setting as well.

As motivation, consider a measurable Laver function \(\ell\) and let \(a\subseteq \kappa\). 
By definition there
is an elementary embedding \(j\colon V\to M\) such that \(j(\ell)(\kappa)=a\). Let \(U\)
be the normal measure on \(\kappa\) derived from this embedding. Observe that \(a\)
is represented in the ultrapower by the function \(f_a(\xi)=a\cap \xi\) and thus, by
Łoś's theorem, we conclude that \(\ell(\xi)=a\cap\xi\) for \(U\)-almost all \(\xi\).
Therefore \(\ell\) is (essentially) a special kind of \(\diamond_\kappa\)-sequence,
guessing not just on stationary sets but on sets of measure one. 
Similarly, if we are dealing with a joint Laver sequence 
\(\langle \ell_\alpha;\alpha<\lambda\rangle\) there is
for every sequence \(\langle a_\alpha;\alpha<\lambda\rangle\) of subsets of \(\kappa\)
a normal measure on \(\kappa\) with respect to which each of the sets
\(\{\xi<\kappa;\ell_\alpha(\xi)=a_\alpha\cap \xi\}\) has measure one.

This understanding of jointness seems amenable to transfer to smaller cardinals.
There are still no normal measures on \(\omega_1\), but perhaps we can weaken
that requirement slightly.

Recall that a filter on a regular cardinal \(\kappa\) is \emph{normal} if it is closed under
diagonal intersections, and \emph{uniform} if it extends the cobounded filter. 
It is a standard result that the club filter on \(\kappa\) is the least normal
uniform filter on \(\kappa\) (in fact, a normal filter is uniform if and only if it extends the
club filter). It follows that any subset of \(\kappa\) contained in a (proper) normal uniform 
filter is stationary. Conversely, if \(S\subseteq\kappa\) is stationary, then it is
easy to check that \(S\), together with the club filter, generates a normal uniform
filter on \(\kappa\). Altogether, we see that a set is stationary if and only if it is contained
in a normal uniform filter. This observation suggest an analogy between Laver functions
and \(\diamond_\kappa\)-sequences: in the same way that Laver functions guess their
targets on positive sets with respect to some large cardinal measure, 
\(\diamond_\kappa\)-sequences guess their targets on positive sets with respect to some normal uniform
filter. Extending the analogy, in the same way that a joint Laver sequence is
a collection of Laver functions that guess sequences of targets on positive sets
with respect to a \emph{common} large cardinal measure (corresponding to the single
embedding \(j\)), a collection of \(\diamond_\kappa\)-sequences will be joint
if they guess sequences of targets on positive sets with respect to a common
normal uniform filter.

We will adopt the following terminology: if \(\kappa\) is a cardinal then a
\emph{\(\kappa\)-list} is a function \(d\colon \kappa\to\mathcal{P}(\kappa)\) with
\(d(\alpha)\subseteq\alpha\).

\begin{definition}
Let \(\kappa\) be an uncountable regular cardinal. A \(\jd_{\kappa,\lambda}\)-sequence is a
sequence \(\vec{d}=\langle d_\alpha;\alpha<\lambda\rangle\) of \(\kappa\)-lists 
such that for every sequence \(\langle a_\alpha;\alpha<\lambda\rangle\) of subsets of 
\(\kappa\) there is a (proper) normal uniform filter \(\mathcal{F}\) on \(\kappa\) 
such that for every \(\alpha\) the \emph{guessing set}
\(S_\alpha=S(d_\alpha,a_\alpha)=\{\xi<\kappa;d_\alpha(\xi)=a_\alpha\cap \xi\}\) 
is in \(\mathcal{F}\).
\end{definition}

An alternative, apparently simpler attempt at defining jointness would be to require
that all the \(\kappa\)-lists in the sequence guess their respective targets on
the same stationary set. However, a straightforward diagonalization argument shows that,
with this definition, there are no \(\jd_{\kappa,\kappa}\)-sequences at all. 
Upon reflection, the definition we gave above corresponds more closely to the one in the
case of Laver diamonds and, hopefully, is not vacuous.

We will not use the following proposition going forward, but it serves to give
another parallel between \(\diamond_\kappa\)-sequences and Laver diamonds. It turns out that
\(\diamond_\kappa\)-sequences can be seen as Laver functions, except that they work with
generic elementary embeddings.\footnote{In the terminology introduced by Gitman and
Schindler in~\cite{GitmanSchindler:VirtualLargeCardinals}, we could say that 
\(\diamond_\kappa\)-sequences are \emph{virtual} Laver functions.}

\begin{proposition}
\label{prop:JDisGenericLaver}
Let \(\kappa\) be an uncountable regular cardinal and let \(d\) be a \(\kappa\)-list.
Then \(d\) is a \(\diamond_\kappa\)-sequence if and only if there is, for any \(a\subset\kappa\),
a generic elementary embedding \(j\colon V\to M\) with critical point \(\kappa\)
and \(M\) wellfounded up to \(\kappa+1\) such that \(j(d)(\kappa)=a\).
\end{proposition}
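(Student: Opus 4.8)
The plan is to realize the abstract normal uniform filters appearing in the definition of a \(\diamond_\kappa\)-sequence as genuine (generic) ultrafilters, and to pass between filters and generic elementary embeddings via the generic ultrapower construction. In both directions the heart of the matter is the computation of \(j(d)(\kappa)\) by {\L}o\'s's theorem, read forwards and backwards; the remaining work is bookkeeping about the wellfounded part of \(M\). I will force with positive sets of the nonstationary ideal to build the generic embedding in one direction, and read off stationarity from an arbitrary given embedding in the other.

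For the forward direction, suppose \(d\) is a \(\diamond_\kappa\)-sequence and fix \(a\subseteq\kappa\). Then \(S=S(d,a)=\{\xi<\kappa;\,d(\xi)=a\cap\xi\}\) is stationary, so I would force with the poset of stationary subsets of \(S\) ordered by inclusion (equivalently \(\mathcal{P}(\kappa)/\operatorname{NS}\) below \(S\)); the generic filter \(G\) is a \(V\)-normal, \(V\)-uniform \(V\)-ultrafilter on \(\kappa\) with \(S\in G\). Form the generic ultrapower \(j\colon V\to M=\operatorname{Ult}(V,G)\). The key structural fact is that \(V\)-normality of \(G\) delivers Fodor's lemma, so any function regressive on a set in \(G\) is constant on a set in \(G\); hence the \(<_G\)-predecessors of \([\operatorname{id}]_G\) are exactly the genuine ordinals below \(\kappa\). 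This yields \([\operatorname{id}]_G=\kappa\), critical point \(\kappa\), and \(M\) wellfounded up to \(\kappa+1\). Since \(j(d)(\kappa)=[\xi\mapsto d(\xi)]_G\), \(S\in G\), and membership below \(\kappa\) is decided by \(G\), a direct {\L}o\'s computation gives \(\gamma\in j(d)(\kappa)\) iff \(\gamma\in a\) for each \(\gamma<\kappa\), that is, \(j(d)(\kappa)=a\).

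For the backward direction, fix \(a\subseteq\kappa\) and an arbitrary club \(C\subseteq\kappa\); I must produce a point of \(S(d,a)\cap C\). Take a generic elementary embedding \(j\colon V\to M\) with critical point \(\kappa\), with \(M\) wellfounded up to \(\kappa+1\), and with \(j(d)(\kappa)=a\). Because \(M\) is wellfounded up to \(\kappa+1\), the ordinals of \(M\) below \(\kappa\) are genuine, so \(j(C)\cap\kappa=C\) and \(j(a)\cap\kappa=a\) as honest sets; closure of \(j(C)\) in \(M\) together with \(\sup C=\kappa\) then forces \(M\models\kappa\in j(C)\). Likewise \(M\models j(d)(\kappa)=a=j(a)\cap\kappa\), which is exactly \(M\models\kappa\in j(S(d,a))\). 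Hence \(M\models\kappa\in j(C)\cap j(S(d,a))=j(C\cap S(d,a))\), so \(j(C\cap S(d,a))\neq\emptyset\) and, by elementarity of \(j\), \(C\cap S(d,a)\neq\emptyset\) in \(V\). As \(C\) was an arbitrary club, \(S(d,a)\) is stationary, and as \(a\) was arbitrary, \(d\) is a \(\diamond_\kappa\)-sequence.

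The main obstacle is keeping the partially wellfounded target model honest: since \(M\) is only assumed wellfounded up to \(\kappa+1\), I must ensure that the objects I actually compute with, namely \(j(d)(\kappa)\), \(j(a)\cap\kappa\), and \(j(C)\cap\kappa\), all lie in the wellfounded part and are correctly identified with genuine subsets of \(\kappa\), and that the closure argument placing \(\kappa\) into \(j(C)\) uses only the correctly computed initial segment below \(\kappa\). On the forcing side the corresponding obstacle is verifying that the generic ultrafilter \(G\) is \(V\)-normal, whence its ultrapower is wellfounded up to \(\kappa+1\) and satisfies {\L}o\'s's theorem; this is precisely what makes the two directions mirror one another.
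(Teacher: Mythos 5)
Your proof is correct and follows essentially the same route as the paper: in the forward direction, forcing with the stationary subsets of the guessing set to obtain a \(V\)-normal generic ultrafilter whose ultrapower is wellfounded up to \(\kappa+1\) with \([\mathrm{id}]=\kappa\), and in the backward direction, reading off stationarity of the guessing set from the embedding. The only cosmetic difference is that the paper passes to the derived normal ultrafilter and cites the fact that normal uniform filters extend the club filter, whereas you verify \(\kappa\in j(C)\) for each club \(C\) directly from wellfoundedness up to \(\kappa+1\) --- the same argument, unpacked.
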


\begin{proof}
Suppose \(d\) is a \(\diamond_\kappa\)-sequence and fix a target \(a\subseteq\kappa\).
Let \(S(d,a)=\{\xi<\kappa;d(\xi)=a\cap\xi\}\) be the guessing set. By our discussion above
there is a normal uniform filter \(\mathcal{F}\) on \(\kappa\) with \(S\in\mathcal{F}\).
Let \(G\) be a generic ultrafilter extending \(\mathcal{F}\) and \(j\colon V\to M\)
the generic ultrapower by \(G\). Then \(M\) is wellfounded up to \(\kappa^+\) and
\(\kappa=[\mathrm{id}]_G\). Since \(S\in G\), Łoś's theorem now implies that
\(j(d)(\kappa)=a\).

Conversely, fix a target \(a\subseteq\kappa\) and suppose that there is a
generic embedding \(j\) with the above properties. We can replace \(j\) with the
induced normal ultrapower embedding and let \(U\) be the derived ultrafilter in
the extension. Since \(j(d)(\kappa)=a\) it follows that \(S(d,a)\in U\). But since
\(U\) extends the club filter, \(S(d,a)\) must be stationary.
\end{proof}

Similarly to the above proposition, a sequence of \(\kappa\)-lists is joint
if they can guess any sequence of targets via a single generic elementary embedding.

We would now like to find a ``bottom up'' criterion deciding when a collection of subsets of 
\(\kappa\) (namely, some guessing sets) is contained in a normal uniform filter.
The following key lemma gives such a criterion, which is completely analogous to the
finite intersection property characterizing containment in a filter. 

\begin{definition}
Let \(\kappa\) be an uncountable regular cardinal. A family \(\mathcal{A}\subseteq
\mathcal{P}(\kappa)\) has the \emph{diagonal intersection property} if
for any function \(f\colon\kappa\to\mathcal{A}\) the diagonal intersection
\(\diag_{\alpha<\kappa}f(\alpha)\) is stationary.
\end{definition}

\begin{lemma}
\label{lemma:DiagonalIntersectionProperty}
Let \(\kappa\) be uncountable and regular and let \(\mathcal{A}\subseteq\mathcal{P}(\kappa)\).
The family \(\mathcal{A}\) is contained in a normal uniform filter on \(\kappa\) if and only if
\(\mathcal{A}\) satisfies the diagonal intersection property.
\end{lemma}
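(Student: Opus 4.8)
The forward direction is quick. Suppose $\mathcal{A}$ is contained in a proper normal uniform filter $\mathcal{F}$. Given any $f\colon\kappa\to\mathcal{A}$, each $f(\alpha)$ lies in $\mathcal{F}$, so by normality $\diag_{\alpha<\kappa}f(\alpha)\in\mathcal{F}$. Since $\mathcal{F}$ extends the club filter and is proper, each of its members is stationary: were some $S\in\mathcal{F}$ disjoint from a club $C$, then $C\in\mathcal{F}$ would force $S\cap C=\emptyset\in\mathcal{F}$. Hence $\diag_{\alpha<\kappa}f(\alpha)$ is stationary and $\mathcal{A}$ has the diagonal intersection property.

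For the converse I would construct the least normal uniform filter lying over $\mathcal{A}$ and the club filter and read off properness from the hypothesis. Assuming $\mathcal{A}\neq\emptyset$ (otherwise take the club filter), set
\[
\mathcal{F}=\bigl\{X\subseteq\kappa;\ C\cap\diag_{\alpha<\kappa}f(\alpha)\subseteq X\text{ for some }f\colon\kappa\to\mathcal{A}\text{ and some club }C\subseteq\kappa\bigr\}.
\]
It is immediate that $\mathcal{F}$ is upward closed, that it contains every club (take $C$ to be the club in question and $f$ arbitrary), and that it contains each $A\in\mathcal{A}$ (take $f\equiv A$ and $C=\kappa\setminus\{0\}$ to excise the spurious point produced by the diagonal intersection of a constant sequence). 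In particular $\mathcal{F}$ extends the club filter, hence is uniform.

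The real work is verifying that $\mathcal{F}$ is a normal filter. I would isolate a single combinatorial fact: \emph{modulo a club, a double diagonal intersection collapses to a single one}. Fixing a bijection $p\colon\kappa\to\kappa\times\kappa$, the closure points $\xi$ (those with $p[\xi]=\xi\times\xi$) form a club $C^*$, and setting $h(\beta)=g_\delta(\alpha)$ whenever $p(\beta)=(\delta,\alpha)$ one checks directly that $\diag_\delta\diag_\alpha g_\delta(\alpha)$ and $\diag_\beta h(\beta)$ agree on $C^*$; note that $h$ is again $\mathcal{A}$-valued. The analogous argument with a bijection $\kappa\to\kappa\times 2$ shows that the pairwise intersection $\diag_\alpha f_0(\alpha)\cap\diag_\alpha f_1(\alpha)=\diag_\alpha\bigl(f_0(\alpha)\cap f_1(\alpha)\bigr)$ likewise collapses, modulo a club, to a diagonal intersection from $\mathcal{A}$. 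Combining these with the standard facts that $\diag$ is monotone, distributes over $\cap$ coordinatewise, and sends clubs to clubs, one obtains: if $X_\delta\supseteq C_\delta\cap\diag_\alpha f_\delta(\alpha)$ witness membership in $\mathcal{F}$, then $\diag_\delta X_\delta$ contains $(\diag_\delta C_\delta\cap C^*)\cap\diag_\beta h(\beta)$, so $\diag_\delta X_\delta\in\mathcal{F}$; the same bookkeeping handles finite intersections. Thus $\mathcal{F}$ is a normal uniform filter containing $\mathcal{A}$.

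Finally, properness is exactly the point at which the hypothesis is used, and it is now transparent: $\emptyset\in\mathcal{F}$ would mean $C\cap\diag_\alpha f(\alpha)=\emptyset$ for some club $C$ and some $f\colon\kappa\to\mathcal{A}$, i.e.\ $\diag_{\alpha<\kappa}f(\alpha)$ would be nonstationary, contradicting the diagonal intersection property. Hence $\mathcal{F}$ is proper, completing the construction. The one genuinely delicate step is the normality verification --- specifically the pairing-and-closure-point argument collapsing the double diagonal intersection --- while the forward direction, uniformity, and properness are all short.
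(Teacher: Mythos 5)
Your proof is correct and takes essentially the same route as the paper's: your filter $\mathcal{F}$ is exactly the paper's family $E=\{C\cap\diag_{\alpha<\kappa}f(\alpha)\}$ closed under supersets, and your key step --- collapsing a double diagonal intersection to a single $\mathcal{A}$-valued one via a pairing bijection and the club of its closure points --- is precisely the paper's calculation with $F(\langle\alpha,\beta\rangle)=f_\alpha(\beta)$. The only differences are cosmetic: you spell out the forward direction, properness, finite intersections, and the small edge cases ($\mathcal{A}=\emptyset$, the spurious point $0$) that the paper leaves implicit.
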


\begin{proof}
The forward direction is clear, so let us focus on the converse. Consider the family
of sets
\[
E=\Bigl\{C\cap \diag_{\alpha<\kappa}f(\alpha);f\in\funcs{\kappa}{\mathcal{A}}, 
C\subseteq\kappa\text{ club}\Bigr\}
\]
We claim that \(E\) is directed under diagonal intersections: any diagonal intersection of
\(\kappa\) many elements of \(E\) includes another element of \(E\). To see this take
\(C_\alpha\cap \diag_{\beta<\kappa}f_\alpha(\beta)\in E\) for \(\alpha<\kappa\).
Let \(\langle\cdot,\cdot\rangle\) be a pairing function and define 
\(F\colon \kappa\to\lambda\) by \(F(\langle\alpha,\beta\rangle)=f_\alpha(\beta)\).
A calculation then shows that
\[
\diag_{\alpha<\kappa}(C_\alpha\cap\diag_{\beta<\kappa}f_\alpha(\beta))
=\diag_\alpha C_\alpha\cap\diag_\alpha\diag_\beta f_\alpha(\beta)
\supseteq \diag_\alpha C_\alpha\cap D\cap \diag_\alpha F(\alpha)
\]
where \(D\) is the club of closure points of the pairing function.

It follows that closing \(E\) under supersets yields a normal uniform filter on \(\kappa\).
By considering constant functions \(f\) we also see that every \(a\in \mathcal{A}\) is in 
this filter.
\end{proof}

Lemma~\ref{lemma:DiagonalIntersectionProperty} will be the crucial tool for verifying
\(\jd_{\kappa,\lambda}\). More specifically, we shall often apply the following corollary.

\begin{corollary}
\label{cor:JDIffShortSubsequenceJD}
A sequence \(\vec{d}=\langle d_\alpha;\alpha<\lambda\rangle\)
is a \(\jd_{\kappa,\lambda}\)-sequence if and only if every subsequence of length \(\kappa\) is a
\(\jd_{\kappa,\kappa}\)-sequence.
\end{corollary}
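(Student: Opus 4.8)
The plan is to prove Corollary~\ref{cor:JDIffShortSubsequenceJD} by applying Lemma~\ref{lemma:DiagonalIntersectionProperty} to reduce jointness to the diagonal intersection property, which is itself a property of $\kappa$-indexed families, and therefore insensitive to subsequences of length $\kappa$.

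First I would observe that the forward direction is trivial: if $\vec d$ is a $\jd_{\kappa,\lambda}$-sequence, then any subsequence of length $\kappa$ inherits the guessing property by simply restricting the sequence of targets, since the common normal uniform filter obtained for the full target sequence still puts each relevant guessing set into the filter.

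For the substantive converse, suppose every length-$\kappa$ subsequence of $\vec d$ is a $\jd_{\kappa,\kappa}$-sequence, and fix an arbitrary target sequence $\langle a_\alpha;\alpha<\lambda\rangle$. The goal is to produce a single normal uniform filter $\mathcal F$ containing all the guessing sets $S_\alpha=S(d_\alpha,a_\alpha)$. By Lemma~\ref{lemma:DiagonalIntersectionProperty}, it suffices to check that the family $\mathcal A=\{S_\alpha;\alpha<\lambda\}$ has the diagonal intersection property, i.e.\ that for any $f\colon\kappa\to\mathcal A$ the diagonal intersection $\diag_{\xi<\kappa}f(\xi)$ is stationary. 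The key point is that any such $f$ selects at most $\kappa$ many of the sets $S_\alpha$, say those indexed by some set $B\subseteq\lambda$ of size at most $\kappa$. Enlarging $B$ to size exactly $\kappa$ if necessary, the hypothesis applied to the corresponding length-$\kappa$ subsequence $\langle d_\beta;\beta\in B\rangle$ with targets $\langle a_\beta;\beta\in B\rangle$ yields a normal uniform filter $\mathcal F_B$ with $S_\beta\in\mathcal F_B$ for all $\beta\in B$. By the forward direction of Lemma~\ref{lemma:DiagonalIntersectionProperty}, the family $\{S_\beta;\beta\in B\}$ then has the diagonal intersection property, so in particular $\diag_{\xi<\kappa}f(\xi)$ is stationary. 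Since $f$ was arbitrary, $\mathcal A$ has the diagonal intersection property, and Lemma~\ref{lemma:DiagonalIntersectionProperty} produces the desired filter $\mathcal F\supseteq\mathcal A$, witnessing that $\vec d$ is a $\jd_{\kappa,\lambda}$-sequence.

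The only delicate point, and the step I would state carefully, is the reduction from an arbitrary $f\colon\kappa\to\mathcal A$ to a fixed length-$\kappa$ subsequence: one must note that the range of $f$ meets only $\le\kappa$ of the guessing sets and that the diagonal intersection depends only on the values $f(\xi)$, so passing to any length-$\kappa$ subsequence whose index set contains $B$ loses no information. This is where the whole argument hinges on the fact that diagonal intersections are indexed by $\kappa$ rather than by $\lambda$, which is precisely why length-$\kappa$ subsequences suffice and why the characterization via the diagonal intersection property is the right tool.
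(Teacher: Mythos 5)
Your proposal is correct and takes essentially the same approach as the paper: the converse is reduced via lemma~\ref{lemma:DiagonalIntersectionProperty} to verifying the diagonal intersection property, and since any \(f\colon\kappa\to\mathcal{A}\) involves at most \(\kappa\) many guessing sets, the hypothesis applied to a suitable length-\(\kappa\) subsequence yields stationarity of \(\diag_{\xi<\kappa}f(\xi)\). Your explicit step of enlarging the index set \(B\) to size exactly \(\kappa\) is a minor tidying of a point the paper leaves implicit (its proof invokes jointness of \(\vec{d}\rest r\) for \(|r|\) possibly below \(\kappa\)), but the argument is the same.
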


\begin{proof}
The forward implication is obvious; let us check the converse. Let
\(\vec{a}=\langle a_\alpha;\alpha<\lambda\rangle\) be a sequence of targets and let
\(S_\alpha\) be the corresponding guessing sets. By 
lemma~\ref{lemma:DiagonalIntersectionProperty} we need to check that the
family \(\mathcal{S}=\{S_\alpha;\alpha<\lambda\}\)
satisfies the diagonal intersection property. So fix a function 
\(f\colon \kappa\to\mathcal{S}\) and let \(r=\{\alpha;S_\alpha\in f[\kappa]\}\).
By our assumption \(\vec{d}\rest r\) is a \(\jd_{\kappa,|r|}\)-sequence, so
\(f[\kappa]\) is contained in a normal uniform filter and, in particular,
\(\diag_\alpha f(\alpha)\) is stationary.
\end{proof}

This characterization leads to fundamental differences between joint diamonds and
joint Laver diamonds. While the definition of joint diamonds was inspired by large cardinal
phenomena, the absence of a suitable analogue of the diagonal intersection property in the
large cardinal setting provides for some very surprising results. 

\begin{definition}
Let \(\kappa\) be an uncountable regular cardinal. A \emph{\(\diamond_\kappa\)-tree} is a
function \(D\colon \funcs{<\kappa}{2}\to \mathcal{P}(\kappa)\) such that for any sequence
\(\langle a_s;s\in\funcs{\kappa}{2}\rangle\) of subsets of \(\kappa\)
there is a (proper) normal uniform filter on \(\kappa\) containing all the guessing sets
\(S_s=S(D,a_s)=\{\xi<\kappa;D(s\rest\xi)=a_s\cap\xi\}\).
\end{definition}

This definition clearly imitates the definition of Laver trees. We also have a
correspondence in the style of proposition~\ref{prop:JDisGenericLaver}: a
\(\diamond_\kappa\)-tree acts like a Laver tree for \(\kappa\) using
generic elementary embeddings.

The following theorem, the main result of this section, shows that, in complete contrast
to our experience with joint Laver diamonds, \(\diamond_\kappa\) already implies all of
its stronger, joint versions.

\begin{theorem}
\label{thm:DiamondEquivalentToJD}
Let \(\kappa\) be an uncountable regular cardinal.
The following are equivalent:
\begin{enumerate}
\item \(\diamond_\kappa\)
\item \(\jd_{\kappa,\kappa}\)
\item \(\jd_{\kappa,2^\kappa}\)
\item there exists a \(\diamond_\kappa\)-tree.
\end{enumerate}
\end{theorem}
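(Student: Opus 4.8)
The plan is to establish the cycle of implications $(1)\Rightarrow(4)\Rightarrow(3)\Rightarrow(2)\Rightarrow(1)$, with the main content living in the step $(1)\Rightarrow(4)$. The two implications $(4)\Rightarrow(3)$ and $(3)\Rightarrow(2)$ should be nearly immediate: a $\diamond_\kappa$-tree $D$ yields a $\jd_{\kappa,2^\kappa}$-sequence by reading off the labels along each branch (indexing by a bijection $\funcs{\kappa}{2}\to 2^\kappa$ and setting $d_s(\xi)=D(s\rest\xi)$), and $(3)\Rightarrow(2)$ holds because a $\jd_{\kappa,2^\kappa}$-sequence restricts to a $\jd_{\kappa,\kappa}$-sequence on any $\kappa$-sized subfamily, which by corollary~\ref{cor:JDIffShortSubsequenceJD} is all that jointness of length $\kappa$ requires. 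The implication $(2)\Rightarrow(1)$ is trivial, since a single coordinate of a $\jd_{\kappa,\kappa}$-sequence is a $\diamond_\kappa$-sequence: for any $a\subseteq\kappa$ the relevant normal uniform filter contains the guessing set, which is therefore stationary.

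The heart of the matter is thus $(1)\Rightarrow(4)$: from a single $\diamond_\kappa$-sequence $\langle e_\xi;\xi<\kappa\rangle$ I must manufacture a labelling $D\colon\funcs{<\kappa}{2}\to\mathcal{P}(\kappa)$ whose branches jointly guess an \emph{arbitrary} family $\langle a_s;s\in\funcs{\kappa}{2}\rangle$ of $2^\kappa$ many targets. The key tool is lemma~\ref{lemma:DiagonalIntersectionProperty}: rather than producing the filter directly, it suffices to show that the family of guessing sets $\{S_s;s\in\funcs{\kappa}{2}\}$ has the diagonal intersection property, i.e.\ that every diagonal intersection of $\kappa$ many of these sets is stationary. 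The idea is to exploit the fact that a $\diamond_\kappa$-sequence guesses not just subsets of $\kappa$ but, via standard coding, arbitrary functions on $\kappa$ — in particular functions whose values code the relevant initial data of the target family. Concretely, I would fix a bijective coding identifying $\funcs{<\kappa}{2}\times\kappa$ (or a suitable bookkeeping domain) with $\kappa$ and arrange $D(t)$, for $t$ of length $\xi$, to read off from $e_\xi$ the $\xi$-th approximation to the target sitting on any branch through $t$; the point is that at a guessing point $\xi$ where $e_\xi$ correctly codes the pair $(\text{branch data}, \text{target data})$, every branch $s$ passing the bookkeeping threshold simultaneously has $D(s\rest\xi)=a_s\cap\xi$.

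The main obstacle will be verifying the diagonal intersection property uniformly across \emph{all} $\kappa$ branches at once, since a naive diamond argument guesses one target on a stationary set but we need $\kappa$ many targets guessed on sets that remain stationary under diagonal intersection. The technique to overcome this is to feed the diamond a single coding object that simultaneously encodes the finitely-many (really, boundedly-many below any $\xi$) relevant targets, using the regularity of $\kappa$ to ensure that for a diagonal intersection $\diag_{\alpha<\kappa}S_{s_\alpha}$, at any candidate club point $\xi$ only the targets $s_\alpha$ with $\alpha<\xi$ are in play, and these have been coded into a single subset of $\xi$ that $\diamond_\kappa$ guesses on a stationary set. Thus I would reduce the diagonal intersection of $\kappa$ guessing sets to a single ordinary $\diamond_\kappa$-guessing event for an appropriately amalgamated target, whose guessing set is stationary by the diamond property; closure under the bookkeeping club then places the result inside $\diag_{\alpha}S_{s_\alpha}$, establishing stationarity and hence, by lemma~\ref{lemma:DiagonalIntersectionProperty}, the existence of the required normal uniform filter.
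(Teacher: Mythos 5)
Your proposal is correct, and its engine is the same as the paper's, but the decomposition is genuinely different. The paper proves the cycle \((1)\Rightarrow(2)\Rightarrow(4)\Rightarrow(3)\Rightarrow(1)\): it first splits a single \(\diamond_\kappa\)-sequence into \(\kappa\) many columns to obtain a \(\jd_{\kappa,\kappa}\)-sequence (guessing the amalgamated target \(\bigcup_\alpha\{\alpha\}\times a_\alpha\) and generating the witnessing filter from the club filter together with the guessing set), and then builds the \(\diamond_\kappa\)-tree from that \(\jd_{\kappa,\kappa}\)-sequence, letting the pair \((d_{2\alpha}(\gamma),d_{2\alpha+1}(\gamma))\) dictate a node on level \(\gamma\) and its label, scrapping any level where two lists interfere. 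You instead collapse these two steps into one direct construction \((1)\Rightarrow(4)\), decoding each diamond guess \(e_\xi\) into a set of (node, label) pairs, and close the cycle as \((4)\Rightarrow(3)\Rightarrow(2)\Rightarrow(1)\) (where, incidentally, \((3)\Rightarrow(2)\) needs no appeal to corollary~\ref{cor:JDIffShortSubsequenceJD}: just pad a \(\kappa\)-sequence of targets to a \(2^\kappa\)-sequence). The verification is the same in both routes: lemma~\ref{lemma:DiagonalIntersectionProperty} reduces jointness to stationarity of diagonal intersections, and a single amalgamated diamond target handles any \(\kappa\) many branches at once. The one point your sketch leaves implicit, and which you should spell out, is the conflict clause: the decoding of \(e_\xi\) may assign two labels to the same node, since two distinct chosen branches \(s_\alpha,s_\beta\) with \(\alpha,\beta<\xi\) may still agree below \(\xi\); so the construction must default or scrap at such levels, and the verification needs the bootstrapping club of \(\gamma\) at which the restrictions \(s_\alpha\rest\gamma\) for \(\alpha<\gamma\) are pairwise distinct --- exactly the paper's ``interference'' clause and its club \(C\). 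What the paper's factorization buys is a clean, independently quotable intermediate statement \((1)\Rightarrow(2)\) and coding bookkeeping split across two easier arguments; what your consolidation buys is brevity, one construction instead of two.
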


\begin{proof}
For the implication \((1)\Longrightarrow (2)\), let 
\(d\colon\kappa\to\mathcal{P}(\kappa)\) be a \(\diamond_\kappa\)-sequence and fix
a bijection \(f\colon \kappa\to\kappa\times\kappa\). Define
\[
d_\alpha(\xi)= (f[d(\xi)])_\alpha\cap\alpha =
\{\eta<\alpha; (\alpha,\eta)\in f[d(\xi)]\}
\]
We claim that \(\langle d_\alpha;\alpha<\kappa\rangle\)
is a \(\jd_{\kappa,\kappa}\)-sequence.

To see this take a sequence of targets \(\langle a_\alpha;\alpha<\lambda\rangle\) 
and let \(S_\alpha=\{\xi<\kappa;d_\alpha(\xi)=a_\alpha\cap\xi\}\) be the guessing sets.
The set
\[
T=\biggl\{
\xi<\kappa; f^{-1}\biggl[\bigcup_{\alpha<\kappa}\{\alpha\}\times a_\alpha\biggr]\cap\xi 
= d(\xi) \biggr\}
\]
is stationary in \(\kappa\). Let \(\mathcal{F}\) be the filter generated by the club filter 
on \(\kappa\)
together with \(T\). This is clearly a proper filter and, since a set \(Y\) is 
\(\mathcal{F}\)-positive if and only if \(Y\cap T\) is stationary, 
Fodor's lemma immediately implies that it is also normal. 
We claim that we have \(S_\alpha\in \mathcal{F}\) for all \(\alpha<\kappa\),
so that \(\mathcal{F}\) witnesses the defining property of a 
\(\jd_{\kappa,\kappa}\)-sequence.
Since \(f[\xi]=\xi\times\xi\) for club many 
\(\xi<\kappa\), the set
\[
T'= \biggl\{
\xi<\kappa; d(\xi)=f^{-1}\biggl[
\bigcup_{\alpha<\kappa}\{\alpha\}\times a_\alpha\biggr]\cap f^{-1}[\xi\times\xi]
\biggr\}
\]
is just the intersection of \(T\) with some club and is therefore in \(\mathcal{F}\). 
But now observe that
\begin{align*}
T' &= \biggl\{
\xi<\kappa; d(\xi)=f^{-1}\biggl[
\bigcup_{\alpha<\xi}\{\alpha\}\times(a_\alpha\cap\xi)
\biggr]\biggr\}\\
&= \{\xi<\kappa;\forall\alpha<\xi\colon a_\alpha\cap\xi = (f[d(\xi)])_\alpha=
d_\alpha(\xi)\} 
= \diag_{\alpha<\kappa}S_\alpha
\end{align*}
We see that \(T'\in \mathcal{F}\) is, modulo a bounded set, contained in each \(S_\alpha\) and can thus
conclude, since \(\mathcal{F}\) is uniform, that \(S_\alpha\in F\) for all \(\alpha<\kappa\).

Instead of proving \((2)\Longrightarrow (3)\) it will be easier to show 
\((2)\Longrightarrow (4)\) directly. Since the implications 
\((4)\Longrightarrow (3)\Longrightarrow (1)\) are obvious, this will finish the proof.

Fix a \(\jd_{\kappa,\kappa}\)-sequence \(\vec{d}\).
We proceed to construct the \(\diamond_\kappa\)-tree \(D\) level by level;
in fact the only meaningful work will take place at limit levels.
At a limit stage \(\gamma<\kappa\) we shall let the first \(\gamma\) many 
\(\diamond_\kappa\)-sequences
anticipate the labels and their positions. 
Concretely, consider the sets \(d_\alpha(\gamma)\) for
\(\alpha<\gamma\). For each \(\alpha\) we interpret \(d_{2\alpha}(\gamma)\) as a node on
the \(\gamma\)-th level of \(\funcs{<\kappa}{2}\) and let \(D(d_{2\alpha}(\gamma))=
d_{2\alpha+1}(\gamma)\), provided that there is no interference between the different
\(\diamond_\kappa\)-sequences. If it should happen that for some \(\alpha\neq \beta\)
we get \(d_{2\alpha}(\gamma)=d_{2\beta}(\gamma)\) but 
\(d_{2\alpha+1}(\gamma)\neq d_{2\beta+1}(\gamma)\)
we scrap the whole level and move on with the construction
higher in the tree.
At the end we extend \(D\) to be defined on the nodes of \(\funcs{<\kappa}{2}\) that
were skipped along the way in any way we like.

We claim that the function \(D\) thus constructed is a \(\diamond_\kappa\)-tree. 
To check this let us fix a sequence of targets 
\(\vec{a}=\langle a_s;s\in\funcs{\kappa}{2}\rangle\)
and let \(S_s\) be the guessing sets.
By lemma~\ref{lemma:DiagonalIntersectionProperty} it now suffices to check that
\(\diag_{\alpha<\kappa}S_{s_\alpha}\) is stationary for any sequence of branches 
\(\langle s_\alpha;\alpha<\kappa\rangle\).

For \(\alpha<\kappa\) let 
\(T_{2\alpha}=\{\xi;s_\alpha^{-1}[\{1\}]\cap\xi=d_{2\alpha}(\xi)\}\)
and \(T_{2\alpha+1}=\{\xi;A^{s_\alpha}\cap\xi=d_{2\alpha+1}(\xi)\}\).
Since our construction was guided by a \(\jd_{\kappa,\kappa}\)-sequence, 
there is a normal uniform
filter on \(\kappa\) which contains every \(T_\alpha\). In particular,
\(T=\diag_{\alpha<\kappa}T_\alpha\) is stationary. By a simple bootstrapping argument there
is a club \(C\) of limit ordinals \(\gamma\) such that all \(s_\alpha\rest\gamma\) for 
\(\alpha<\gamma\) are distinct. Let \(\gamma\in C\cap T\). We now have
\(s_\alpha^{-1}[\{1\}]\cap\gamma=d_{2\alpha}(\gamma)\) and
\(a_{s_\alpha}\cap\gamma=d_{2\alpha+1}(\gamma)\) for all \(\alpha<\gamma\).
But this means precisely that the construction of \(D\) goes through at level \(\gamma\)
and that \(\gamma\in\bigcap_{\alpha<\gamma}S_{s_\alpha}\) and it follows that
\(\diag_{\alpha<\kappa}S_{s_\alpha}\) is stationary.
\end{proof}

We can again consider the treeability of joint diamond sequences, as we did in
definition~\ref{def:treeability}. We get the following analogue of 
corollary~\ref{cor:NontreeableSCJLD}.

\begin{theorem}
\label{thm:NontreeableJDConsistent}
If \(\kappa\) is an uncountable regular cardinal and GCH holds
then after forcing with \(\Add(\kappa,2^\kappa)\) there is a nontreeable 
\(\jd_{\kappa,2^\kappa}\)-sequence.
\end{theorem}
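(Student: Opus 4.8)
The plan is to follow the route of corollary~\ref{cor:NontreeableSCJLD}: exhibit the witnessing sequence as a decoding of the $\Add(\kappa,2^\kappa)$-generic itself, so that nontreeability comes (as for joint Laver sequences) essentially from lemma~\ref{lemma:GenericNotTreeable}, and establish jointness by a direct genericity argument. Write $G=\langle g_\alpha;\alpha<2^\kappa\rangle\subseteq\Add(\kappa,2^\kappa)$ for the generic sequence of slices. A slice $g_\alpha\subseteq\kappa$ is not itself a $\kappa$-list, so I first fix a pairing function $\langle\cdot,\cdot\rangle\colon\kappa\times\kappa\to\kappa$ with $\langle\xi,\eta\rangle\geq\max(\xi,\eta)$ and a club of closure points, and decode $d_\alpha(\xi)=\{\eta<\xi;\langle\xi,\eta\rangle\in g_\alpha\}$, so that $d_\alpha(\xi)\subseteq\xi$. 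The sequence $\vec d=\langle d_\alpha;\alpha<2^\kappa\rangle$ will be the desired $\jd_{\kappa,2^\kappa}$-sequence. Since $\Add(\kappa,2^\kappa)$ is $<\kappa$-closed and $\kappa^+$-cc of size $\kappa^+$ under GCH, it preserves cardinals and cofinalities and keeps $2^{<\kappa}=\kappa$ and $2^\kappa=\kappa^+$ in $V[G]$; in particular $(2^{<\kappa})^+=2^\kappa$ there, so the hypothesis of lemma~\ref{lemma:GenericNotTreeable} is met. One cannot merely quote theorem~\ref{thm:DiamondEquivalentToJD} (which applies since $\diamond_\kappa$ holds in $V[G]$), as the sequence it produces passes through a $\diamond_\kappa$-tree and is therefore treeable, which is exactly what we must avoid.

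For jointness I would use corollary~\ref{cor:JDIffShortSubsequenceJD} to reduce to showing that every length-$\kappa$ subsequence is a $\jd_{\kappa,\kappa}$-sequence, and then lemma~\ref{lemma:DiagonalIntersectionProperty} to reduce further to the diagonal intersection property of the guessing sets. Concretely, fix a subsequence $\langle d_{\alpha_i};i<\kappa\rangle$ with the $\alpha_i$ distinct, targets $\langle a_i;i<\kappa\rangle$, and guessing sets $S_i$; one must show that $\diag_{i<\kappa}S_{h(i)}$ is stationary for every $h\colon\kappa\to\kappa$. This is a density argument: working with names for the targets, for $h$, and for a club $\dot C$, I build a descending sequence of conditions of length $<\kappa$ (using $<\kappa$-closure at limits) deciding $h\rest\gamma$, the initial segments $a_{h(i)}\cap\gamma$ for $i<\gamma$, and cofinally many elements of $\dot C$ below a closure point $\gamma$ chosen above the (bounded) support accumulated so far. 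I then extend the resulting condition $p_\gamma$ by setting, for each $i<\gamma$, the block $\{\langle\gamma,\eta\rangle;\eta<\gamma\}$ of slice $\alpha_{h(i)}$ so that $d_{\alpha_{h(i)}}(\gamma)=a_{h(i)}\cap\gamma$. These demands are mutually consistent because distinct blocks are disjoint and because $\alpha_{h(i)}=\alpha_{h(i')}$ forces $h(i)=h(i')$, hence the same target, and they are realizable because every position $\langle\gamma,\eta\rangle\geq\gamma$ lies above the support of $p_\gamma$. The extension forces $\gamma\in\dot C\cap\diag_{i<\kappa}S_{h(i)}$, so the diagonal intersection meets every club.

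Nontreeability of $\vec d$ then runs exactly as in the proof of lemma~\ref{lemma:GenericNotTreeable}, now for $\kappa$-lists rather than subsets. Suppose towards a contradiction that $\vec d$ is treeable, with names $\dot e$ for the indexing bijection and $\dot D$ for the labelling; since $D$ is determined by only $\kappa$ many coordinates of the poset, the cardinal arithmetic lets me assume $\dot D$ does not mention a chosen slice $g_\alpha$, so $D$ is fixed independently of it. Deciding a long enough initial segment $\dot e(\alpha)\rest\delta$ by a descending sequence of bounded support, I may take $\delta$ above that support, leaving the block coding $d_\alpha(\delta)$ entirely undecided; I then freely force $d_\alpha(\delta)\neq D(\dot e(\alpha)\rest\delta)$, contradicting the treeability relation $d_\alpha(\delta)=D(\dot e(\alpha)\rest\delta)$ that is supposed to hold for all but boundedly many $\delta$.

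The main obstacle is the jointness argument. Unlike the single-diamond case, one must satisfy $\kappa$-many guessing requirements simultaneously at a single ordinal $\gamma$; the reductions through corollary~\ref{cor:JDIffShortSubsequenceJD} and lemma~\ref{lemma:DiagonalIntersectionProperty} are precisely what localize the task to each such $\gamma$, and the genuine content is that the mutual genericity of the slices lets all these requirements be met on pairwise disjoint blocks lying above a common closure point. Once this is in place, the nontreeability is a routine adaptation of lemma~\ref{lemma:GenericNotTreeable}.
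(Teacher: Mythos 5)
Your proposal is correct and follows essentially the same route as the paper: the witnessing sequence is the decoded $\Add(\kappa,2^\kappa)$-generic, jointness is established by a density argument showing that each diagonal intersection of guessing sets meets every (named) club, and nontreeability comes from (the proof of) lemma~\ref{lemma:GenericNotTreeable}. The only differences are cosmetic: the paper applies lemma~\ref{lemma:DiagonalIntersectionProperty} directly to the full family with a name $\dot f\colon\kappa\to 2^\kappa$ (using an $\omega$-step fusion rather than your $<\kappa$-length one), where you first reduce via corollary~\ref{cor:JDIffShortSubsequenceJD}, and the paper cites lemma~\ref{lemma:GenericNotTreeable} outright where you rerun its proof for the decoded $\kappa$-lists.
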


\begin{proof}
Let \(\P=\Add(\kappa,2^\kappa)\) and \(G\subseteq\P\) generic; 
we refer to the \(\alpha\)-th subset added by \(G\) as \(G_\alpha\). 
We will show that the generic \(G\), seen as a sequence of
\(2^\kappa\) many \(\diamond_\kappa\)-sequences in the usual way, is a nontreeable 
\(\jd_{\kappa,2^\kappa}\)-sequence.

Showing that \(G\) is a \(\jd_{\kappa,2^\kappa}\)-sequence requires only minor modifications to
the usual proof that a Cohen subset of \(\kappa\) codes a \(\diamond_\kappa\)-sequence.
Thus, we view each \(G_\alpha\) as a sequence, defined on \(\kappa\), with
\(G_\alpha(\xi)\subseteq\xi\). Fix a sequence \(\langle \dot{a}_\alpha;\alpha<2^\kappa
\rangle\) of names for subsets of \(\kappa\), a name \(\dot{f}\) for a function
from \(\kappa\) to \(2^\kappa\) and a name \(\dot{C}\) for a club in \(\kappa\)
as well as a condition \(p\in\P\). We will find a condition \(q\leq p\)
forcing that \(\dot{C}\cap\diag_{\alpha<\kappa}S_{\dot{f}(\alpha)}\) is nonempty, where
\(S_{\dot{f}(\alpha)}\) names the set \(\{\xi<\kappa;\dot{a}_{\dot{f}(\alpha)}\cap\xi=
G_\alpha(\xi)\}\);
this will show that \(G\) codes a \(\jd_{\kappa,2^\kappa}\)-sequence by 
lemma~\ref{lemma:DiagonalIntersectionProperty}.

We build the condition \(q\) in \(\omega\) many steps. To start with, let \(p_0=p\) and
let \(\gamma_0\) be an ordinal such that \(\dom(p_0)\subseteq 2^\kappa\times\gamma_0\).
We now inductively find ordinals \(\gamma_n\), sets \(B^\alpha_n\subseteq\gamma_n\),
functions \(f_n\) and a descending sequence of
conditions \(p_n\) satisfying \(\dom(p_n)\subseteq 2^\kappa\times\gamma_n\) and
\(p_{n+1}\forces \gamma_n\in\dot{C}\) as well as 
\(p_{n+1}\forces \dot{f}\rest\gamma_n=f_n\) and \(p_{n+1}\forces \dot{a}_{f_n(\alpha)}=
B^\alpha_n\) for \(\alpha<\gamma_n\).
Let \(\gamma=\sup_n \gamma_n\) and
\(p_\omega=\bigcup_n p_n\) and \(f_\omega=\bigcup_n f_n\) and \(B^\alpha_\omega=
\bigcup_n B_n^\alpha\). 
The construction of these ensures that \(\dom(p_\omega)
\subseteq 2^\kappa\times\gamma\) and \(p_\omega\) forces that
\(\dot{f}\rest\gamma=f_\omega\) and \(\dot{a}_{\dot{f}(\alpha)}\cap\gamma=B^\alpha_\omega\)
for \(\alpha<\gamma\) as well as \(\gamma\in\dot{C}\). 
To obtain the final condition \(q\) we now simply extend \(p_\omega\)
by placing the code of \(B^\alpha_\omega\) on top of the \(f(\alpha)\)-th column
for all \(\alpha<\gamma\). It now follows immediately that \(q\forces\gamma
\in\dot{C}\cap\diag_{\alpha<\kappa}S_{\dot{f}(\alpha)}\).

It remains to show that the generic \(\jd_{\kappa,2^\kappa}\)-sequence is not treeable.
This follows from lemma~\ref{lemma:GenericNotTreeable}. 
\end{proof}

In the case of Laver diamonds we were able to produce models with quite long
joint Laver sequences but no Laver trees simply on consistency strength grounds (see
theorem~\ref{thm:SeparateLongAndTreeableSCJLD}). In other words, we have models where
there are long joint Laver sequences, but none of them are treeable. The situation seems
different for ordinary diamonds, as theorem~\ref{thm:DiamondEquivalentToJD} tell us
that treeable joint diamond sequences exist as soon as a single diamond sequence
exists. While theorem~\ref{thm:NontreeableJDConsistent} shows that it is at least
consistent that there are nontreeable such sequences, we should ask whether this is 
simply always the case.

\begin{question}[open]
Is it consistent for a fixed \(\kappa\) that every \(\jd_{\kappa,2^\kappa}\)-sequence is
treeable? Is it consistent that all \(\jd_{\kappa,2^\kappa}\)-sequences are treeable for
all \(\kappa\)?
\end{question}

\renewcommand{\c}{\mathfrak{c}}
\chapter{The grounded Martin's axiom}
\label{chap:grma}

The standard Solovay-Tennenbaum proof of the consistency of Martin's axiom with
a large continuum starts by choosing a suitable cardinal \(\kappa\) and then
proceeds in an iteration of length \(\kappa\) by forcing with ccc posets of size
less than \(\kappa\), and not just those in the ground model but also those
arising in the intermediate extensions. To ensure that all of the potential ccc posets 
are considered, some bookkeeping device is usually employed.

Consider now the following reorganization of the argument. Instead of iterating
for \(\kappa\) many steps we build a length \(\kappa^2\) iteration by first dealing
with the \(\kappa\) many small posets in the ground model, then the small posets in
that extension, and so on. The full length \(\kappa^2\) iteration can be seen as a length 
\(\kappa\) iteration, all of whose iterands are themselves length \(\kappa\) iterations.

In view of this reformulation, we can ask what happens if we halt this
construction after forcing with the first length \(\kappa\) iteration, when we have, in 
effect, ensured that Martin's axiom holds for posets from the ground model. What 
combinatorial consequences of Martin's axiom follow already from this weaker principle? 
We aim in this chapter to answer these questions (at least partially).

\section{Forcing the grounded Martin's axiom}

\begin{definition}
%
The \emph{grounded Martin's axiom} (\grma) asserts that \(V\) is a ccc forcing
extension of some ground model \(W\) and \(V\) satisfies the conclusion of
Martin's axiom for posets \(\Q\in W\) which are still ccc in \(V\).
\end{definition}

To be clear, in the definition we require that \(V\) have \(\mathcal{D}\)-generic
filters for any family \(\mathcal{D}\in V\) of fewer than \(\c^V\) dense
subsets of \(\Q\).
We should also note that, while the given definition is second-order, \grma is in fact 
first-order expressible, using the result of Reitz \cite{Reitz2007:GroundAxiom} that the 
ground models of the universe are uniformly definable.

%
%

If Martin's axiom holds, we may simply take \(W=V\) in the definition,
which shows that the grounded Martin's axiom is implied by Martin's axiom.
In particular, by simply performing the usual Martin's axiom iteration,
\(\grma+\c=\kappa\) can be forced from any model where \(\kappa\) is regular
and satisfies \(2^{<\kappa}=\kappa\). It will be shown in theorem~\ref{thm:CanonicalgrMA},
however, that the grounded Martin's axiom is strictly weaker than Martin's axiom.
Ultimately we shall see that \grma retains some of the interesting combinatorial
consequences of Martin's axiom (corollary~\ref{cor:grMALargeCharacts}), while 
also being more robust with respect to mild forcing 
(theorems~\ref{thm:CohenPreservesgrMA} and \ref{thm:RandomGivesgrMA}).

As in the case of Martin's axiom, a key property of the grounded Martin's axiom is
that it is equivalent to its restriction to posets of small size.

\begin{lemma}
\label{lemma:grMASmallPosets}
The grounded Martin's axiom is equivalent to its restriction to posets of size less 
than continuum, i.e.\ to the following principle:

{\newlength{\bit}%
\setlength{\bit}{\linewidth}%
\addtolength{\bit}{-5em}%
\vspace{\baselineskip}%
\parbox{\bit}{The universe \(V\) is a ccc forcing extension of some ground
model \(W\) and \(V\) satisfies the conclusion of Martin's axiom for
posets \(\Q\in W\) of size less than \(\c^V\) which are still ccc in \(V\).}%
\hspace*{2em}\textup{(\(\ast\))}%
\vspace*{\baselineskip}}
\end{lemma}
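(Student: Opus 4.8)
The forward direction is immediate, since \((\ast)\) is a special case of \grma. For the converse I would assume \((\ast)\), witnessed by a ground model \(W\) with \(V=W[g]\), where \(g\subseteq\P\) is generic for a poset \(\P\in W\) that is ccc in \(W\). Fix a poset \(\Q\in W\) that is ccc in \(V\) and a family \(\mathcal{D}=\langle D_\alpha;\alpha<\lambda\rangle\in V\) of \(\lambda<\c^V\) dense subsets of \(\Q\); the goal is to produce a \(\mathcal{D}\)-generic filter. The plan is to find a subposet \(Q\subseteq\Q\) of size less than \(\c^V\) which \emph{lies in \(W\)}, is ccc in \(V\), and has each \(D_\alpha\cap Q\) dense in \(Q\). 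Granting such a \(Q\), principle \((\ast)\) applies to \(Q\) together with the family \(\langle D_\alpha\cap Q;\alpha<\lambda\rangle\in V\) of fewer than \(\c^V\) dense subsets, and yields a filter \(G\) on \(Q\) meeting each \(D_\alpha\cap Q\); its upward closure \(\{s\in\Q;\exists q\in G,\ q\leq s\}\) is then easily checked to be a filter on \(\Q\) meeting every \(D_\alpha\), which is what we want.

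The construction of \(Q\) is the crux, the whole difficulty being that the dense sets \(D_\alpha\) live in \(V\) whereas \(Q\) must be built inside \(W\). Working in \(W\), I would fix a name \(\langle\dot{D}_\alpha;\alpha<\lambda\rangle\) for \(\mathcal{D}\) together with a condition \(p_0\in g\) forcing that each \(\dot{D}_\alpha\) is a dense subset of \(\check{\Q}\). The idea is to replace the \(V\)-choice ``pick some \(r\in D_\alpha\) below \(q\)'' by a \(W\)-choice that throws in countably many candidates, one of which is guaranteed to be correct. Precisely, for each \(q\in\Q\) and \(\alpha<\lambda\), since \(p_0\) forces \(\dot{D}_\alpha\) dense, the conditions below \(p_0\) that decide some specific \(r\leq q\) with \(r\in\dot{D}_\alpha\) are dense below \(p_0\); using that \(\P\) is ccc in \(W\), I would choose in \(W\) a \emph{countable} maximal antichain \(A_{q,\alpha}\) of such conditions together with a decided value \(r_p\leq q\) for each \(p\in A_{q,\alpha}\). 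Then I would build an increasing sequence inside \(W\) by starting from a single condition and setting \(Q_{n+1}=Q_n\cup\{r_p;q\in Q_n,\ \alpha<\lambda,\ p\in A_{q,\alpha}\}\), and let \(Q=\bigcup_n Q_n\). Since this recursion is carried out entirely in \(W\), we get \(Q\in W\); and as each step adds at most \(|Q_n|\cdot\lambda\cdot\aleph_0\) elements, we have \(|Q|\leq\lambda\cdot\aleph_0<\c^V\) (cardinals being absolute between \(W\) and \(V\)). Finally, \(Q\) is ccc in \(V\) simply because it is a subposet of the ccc poset \(\Q\).

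It then remains to verify, back in \(V\), that each \(D_\alpha\cap Q\) is dense in \(Q\). Given \(q\in Q\), say \(q\in Q_n\), and any \(\alpha<\lambda\), the antichain \(A_{q,\alpha}\) is predense below \(p_0\in g\), so by genericity there is some \(p^*\in A_{q,\alpha}\cap g\). Then \(r_{p^*}\in Q_{n+1}\subseteq Q\), and since \(p^*\in g\) forces \(\check{r}_{p^*}\leq\check{q}\) and \(\check{r}_{p^*}\in\dot{D}_\alpha\), we conclude \(r_{p^*}\leq q\) with \(r_{p^*}\in D_\alpha\cap Q\); hence \(D_\alpha\cap Q\) is dense in \(Q\), completing the construction. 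I expect the only genuine obstacle to be exactly this matter of keeping the small poset \(Q\) inside the ground model \(W\): everything else is a faithful copy of the classical reduction of Martin's axiom to posets of size less than the continuum, and it is precisely the ccc maximal-antichain trick that allows the otherwise \(V\)-flavoured construction to descend into \(W\).
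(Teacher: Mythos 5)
Your overall architecture is sound, and it is genuinely different from the paper's proof (which works with an elementary submodel \(X\prec H_{\theta^+}^W\) chosen in \(W\) and uses ccc-ness of \(\P\) to see that \(\Q\cap X[G]=\Q\cap X\in W\)); your device of countable maximal antichains of deciding conditions correctly solves the main difficulty of keeping \(Q\) inside \(W\), and your verification that each \(D_\alpha\cap Q\) is dense in \(Q\) is correct. However, there is one genuine gap: the claim that ``\(Q\) is ccc in \(V\) simply because it is a subposet of the ccc poset \(\Q\)'' is false. Compatibility in a subposet requires a common lower bound \emph{inside} the subposet, so antichains of \(Q\) need not be antichains of \(\Q\); for instance, in \(\Add(\omega,\omega_1)\) the single-bit conditions \(r_\alpha=\{((\alpha,0),0)\}\) are pairwise compatible, yet as a subposet \(\{r_\alpha;\alpha<\omega_1\}\) is an uncountable antichain. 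Worse, your particular \(Q\) can actually fail to be ccc: the recursion only ever adds witnesses \emph{below} existing elements and never adds common lower bounds for the pairwise-\(\Q\)-compatible elements it creates, so \(Q\) is tree-like; with an unlucky (perfectly legal, \(W\)-chosen) assignment of the witnesses \(r_p\) --- e.g.\ \(\Q\) Cohen forcing, \(D_\alpha\) the set of conditions deciding coordinate \(\alpha\), and witnesses tagged on auxiliary coordinates so that no member of \(Q\) ever refines two distinct level-one witnesses --- the level-one witnesses form an antichain of \(Q\) of size \(\lambda\), which is uncountable whenever \(\lambda\geq\aleph_1\). Since \((\ast)\) applies only to posets that are ccc in \(V\), the proof breaks exactly where you invoke it.

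The gap is repairable within your framework. Compatibility of two elements of \(\Q\) is absolute between \(W\) and \(V\) (it is a statement about the ground-model poset \(\Q\) alone), so in \(W\) you may fix a function \(c\) assigning to each \(\Q\)-compatible pair a common lower bound in \(\Q\), and interleave closure under \(c\) into your recursion, say \(Q_{n+1}=Q_n\cup\{r_p; q\in Q_n,\alpha<\lambda, p\in A_{q,\alpha}\}\cup\{c(r,r'); r,r'\in Q_n \text{ compatible in } \Q\}\). The resulting \(Q\) still lies in \(W\) and still has size at most \(\max(\lambda,\aleph_0)<\c^V\), but now any two of its elements that are compatible in \(\Q\) have a common lower bound in \(Q\); hence antichains of \(Q\) are antichains of \(\Q\), and ccc-ness of \(\Q\) in \(V\) transfers to \(Q\). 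With this one additional closure step your argument goes through and constitutes a legitimate alternative to the paper's proof, which obtains all of these closure properties simultaneously by taking \(\Q^*=\Q\cap X\) for an elementary submodel \(X\in W\) containing \(\P\), \(\Q\), and the names \(\dot{D}_\alpha\).
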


\begin{proof}
Assume \(V\) satisfies (\(\ast\)) and let \(\Q\in W\) be a poset which is ccc in \(V\) and 
\(\D=\set{D_\alpha}{\alpha<\kappa}\in V\)
a family of \(\kappa<\c^V\) many dense subsets of \(\Q\). Let \(V=W[G]\) for some
\(W\)-generic \(G\subseteq \P\) and let \(\dot{D}_\alpha\in W\) be \(\P\)-names for the
\(D_\alpha\). Choose \(\theta\) large enough so that \(\P,\Q\) and all of the 
\(\dot{D}_\alpha\) are in \(H_{\theta^+}^W\).
We can then find an \(X\in W\) of size at most \(\kappa\) such that 
\(X\prec H_{\theta^+}^W\) and \(X\) contains \(\P,\Q\) and the 
\(\dot{D}_\alpha\).

Now let \[X[G]=\set{\tau^G}{\text{\(\tau \in X\) is a \(\P\)-name}}\in W[G]\]
We can verify the Tarski-Vaught criterion to show that \(X[G]\) is an
elementary substructure of \(H_{\theta^+}^W[G]=H_{\theta^+}^{W[G]}\). Specifically,
suppose that \(H_{\theta^+}^W[G]\models\exists x\colon\varphi(x,\tau^G)\)
for some \(\tau\in X\). Let \(S\) be the set of conditions \(p\in\P\) which force
\(\exists x\colon\varphi(x,\tau)\). Since \(S\) is definable from the parameters
\(\P\) and \(\tau\) we get \(S\in X\). Let \(A\in X\) be an antichain, maximal
among those contained in \(S\). By mixing over \(A\) we can obtain a name
\(\sigma\in X\) such that \(p\forces \varphi(\sigma,\tau)\) for
any \(p\in A\) and it follows that
\(H_{\theta^+}^W[G]\models\varphi(\sigma^G,\tau^G)\), which completes the verification.

Now let \(\Q^*=\Q\cap X[G]\) and \(D_\alpha^*=D_\alpha\cap X[G]\). Then
\(\Q^*\prec \Q\) and it follows that \(\Q^*\) is ccc (in \(W[G]\)) and that 
\(D_\alpha^*\) is dense in \(\Q^*\) for any \(\alpha<\kappa\). Furthermore, \(\Q^*\) has
size at most \(\kappa\). Finally, since \(\P\) is ccc,
the filter \(G\) is \(X\)-generic and so \(\Q^*=\Q\cap X[G]=\Q\cap X\) is an element
of \(W\). If we now apply \((\ast)\) to \(\Q^*\) and \(\mathcal{D}^*=\set{D_\alpha^*}{\alpha<\kappa}\), we find in \(W[G]\) a filter \(H\subseteq \Q^*\) intersecting every \(D_\alpha^*\), and
thus every \(D_\alpha\). Thus \(H\) generates a \(\mathcal{D}\)-generic filter on \(\Q\).
\end{proof}

%
%

The reader has likely noticed that the proof of 
lemma~\ref{lemma:grMASmallPosets} is somewhat more involved than the proof of
the analogous result for Martin's axiom. The argument there hinges on the
straightforward observation that elementary subposets of ccc posets are themselves
ccc. While that remains true in our setting, of course, matters are made more difficult 
since we require that all of our posets come from a ground model that may not
contain the dense sets under consideration.
It is therefore not at all clear that taking
appropriate elementary subposets will land us in the ground model and a slightly more
elaborate argument is needed.

Let us point out a deficiency in the definition of \grma. As we have described 
it, the principle posits the existence of a ground model for the universe, a 
considerable global assumption. On the other hand, lemma~\ref{lemma:grMASmallPosets} 
suggests that the 
operative part of the axiom is, much like Martin's axiom, a statement about \(H_\c\). 
This discrepancy allows for some undesirable phenomena. For example, 
Reitz~\cite{Reitz2007:GroundAxiom} shows that it is possible to perform arbitrarily 
closed class forcing over a given model and obtain a model of the ground axiom,
the assertion that the universe is not a nontrivial set forcing extension over any
ground model at all. This 
implies that there are models which have the same \(H_\c\) as a model of the
grounded Martin's axiom but which fail to satisfy it simply because they have
no nontrivial ground models at all. 
To avoid this situation we can weaken the definition of \grma in a technical way.

\begin{definition}
The \emph{local grounded Martin's axiom} asserts that there are a cardinal
\(\kappa\geq\c\) and a transitive \(\mathrm{ZFC}^-\) model 
\(M\subseteq H_{\kappa^+}\) such that
\(H_{\kappa^+}\) is a ccc forcing extension of \(M\) and \(V\) satisfies the
conclusion of Martin's axiom for posets \(\Q\in M\) which are still ccc in \(V\).
\end{definition}

Of course, if the grounded Martin's axiom holds, over the ground model \(W\) via
the forcing notion \(\P\), then its 
local version holds as well. We can simply take \(\kappa\) to be large enough
so that \(M=H_{\kappa^+}^W\) contains \(\P\) and that \(M[G]=H_{\kappa^+}^V\).
One should view the local version of the axiom as capturing all of the relevant
combinatorial effects of \grma (which, as we have seen, only involve \(H_\c\)),
while disentangling it from the structure of the universe higher up.

We now aim to give a model where the Martin's axiom fails but the grounded version
holds. The idea is to imitate the Solovay-Tennenbaum argument, but to only use ground
model posets in the iteration. While it is then relatively clear that \grma will hold
in the extension, a further argument is needed to see that \ma itself fails. 
The key will be a kind of product analysis, given in the next few lemmas. We will show
that the iteration of ground model posets, while not exactly
a product, is close enough to a product to prevent Martin's axiom from holding in
the final extension by a result of Roitman. An extended version of this argument will
also yield the consistency of \grma with a singular continuum.

\begin{lemma}
\label{lemma:TwoStepIterationDecidesccc}
Let \(\Q_0\) and \(\R\) be posets and \(\tau\) a \(\Q_0\)-name for a poset
such that \(\Q_0*\tau\) is ccc and
\(\Q_0\forces\textup{``if \(\check{\R}\) is ccc then \(\tau=\check{\R}\)''}\).
Furthermore, suppose that \(\Q_0*\tau\forces\textup{``\(\check{\Q}_0\) is ccc''}\). Then either
\(\Q_0\forces\textup{``\(\check{\R}\) is ccc''}\) or \(\Q_0\forces\textup{``\(\check{\R}\) is not ccc''}\).
\end{lemma}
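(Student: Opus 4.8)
The plan is to argue by contradiction, showing that if the ccc-ness of $\check\R$ were not decided by $\Q_0$, then the product $\Q_0\times\R$ would turn out to be ccc, which forces $\check\R$ to be ccc everywhere and so contradicts the undecided case. So suppose neither alternative in the conclusion holds. Then there are conditions $p_0,p_1\in\Q_0$ with $p_0\forces\text{``}\check\R\text{ is ccc''}$ and $p_1\forces\text{``}\check\R\text{ is not ccc''}$.

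First I would exploit the situation below $p_0$. Since $p_0\forces\text{``}\check\R\text{ is ccc''}$, the second hypothesis gives $p_0\forces\tau=\check\R$, so the restriction of $\Q_0*\tau$ to conditions below $p_0$ (in the first coordinate) is just $(\Q_0\rest p_0)*\check\R$; as the second iterand is a check-name, this is isomorphic to the product $(\Q_0\rest p_0)\times\R$. Being a restriction of the ccc poset $\Q_0*\tau$ (hypothesis~1), this product is ccc. From the standard characterization of when a product is ccc I would read off three facts: $\R$ is ccc in $V$, the poset $\Q_0\rest p_0$ is ccc in $V$, and $\R\forces\text{``}(\Q_0\rest p_0)\text{ is ccc''}$. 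The downward absoluteness of ccc-ness used here is safe, since $\Q_0\rest p_0$ is ccc and hence preserves $\omega_1$.

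The heart of the argument is to upgrade this last statement from $\Q_0\rest p_0$ to the \emph{full} poset $\Q_0$, and this is precisely where I would invoke hypothesis~3. Let $H$ be $\R$-generic over $V$; then $V[H]\models\text{``}\Q_0\rest p_0\text{ is ccc''}$, so I may force with $\Q_0\rest p_0$ over $V[H]$ to obtain a generic $G$. By the product mutual-genericity lemma, $V[H][G]=V[G][H]$ is a generic extension of $V$ by $(\Q_0\rest p_0)\times\R\cong(\Q_0*\tau)$ restricted below $p_0$, that is, a $\Q_0*\tau$-extension meeting the relevant condition. Hypothesis~3 then yields $V[H][G]\models\text{``}\check\Q_0\text{ is ccc''}$ for the full $\Q_0$; and since $\Q_0\rest p_0$ is ccc over $V[H]$ and so preserves $\omega_1$, ccc-ness descends to give $V[H]\models\text{``}\Q_0\text{ is ccc''}$. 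As $H$ was an arbitrary $\R$-generic, this shows $\R\forces\text{``}\check\Q_0\text{ is ccc''}$.

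Finally I would close the loop. Since $\R$ is ccc in $V$ and $\R\forces\text{``}\check\Q_0\text{ is ccc''}$, the product $\R\times\Q_0$ is ccc. But $\R\times\Q_0\cong\Q_0\times\R\cong\Q_0*\check\R$, and a ccc product forces each factor to remain ccc, so $\Q_0\forces\text{``}\check\R\text{ is ccc''}$ — directly contradicting $p_1\forces\text{``}\check\R\text{ is not ccc''}$. I expect the main obstacle to be the careful bookkeeping of downward absoluteness of ccc across the auxiliary forcings: each use must be justified by noting that the intermediate poset is ccc, hence $\omega_1$-preserving, and one must invoke the product forcing lemma precisely enough to reinterpret $V[H][G]$ as a bona fide $\Q_0*\tau$-extension so that hypothesis~3 legitimately applies.
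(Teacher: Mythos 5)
Your proof is correct, and it reaches the conclusion by a recognizably different route than the paper, although both rest on the same toolbox: the factorization of \(\Q_0*\tau\) below \(p_0\) into \((\Q_0\rest p_0)\times\R\) (via hypothesis~2), mutual genericity to realize a product extension as a \(\Q_0*\tau\)-extension where hypothesis~3 applies, both directions of the ccc product lemma, and downward absoluteness of ccc through \(\omega_1\)-preserving forcing. The paper argues ``negatively'': from the condition \(q_1\) forcing non-ccc it extracts, by the product lemma, a single condition \(r\in\R\) forcing \(\check\Q_0\rest q_1\) to be non-ccc, and then derives the contradiction inside one generic extension chosen to contain \((q_0,\check{r})\), where hypothesis~3 collides with the choice of \(r\). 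You argue ``positively'': \(p_1\) never enters your argument until the final line; instead you upgrade hypothesis~3 to the global statement \(\R\forces\textup{``\(\check\Q_0\) is ccc''}\) (for the full poset, not just the cone below \(p_0\)), deduce that the full product \(\Q_0\times\R\) is ccc, and then read the decision off the product lemma. In effect you prove the cleaner implication that if \emph{some} condition of \(\Q_0\) forces \(\check\R\) to be ccc then \(\Q_0\forces\textup{``\(\check\R\) is ccc''}\), with the ccc-ness of \(\Q_0\times\R\) as a quotable byproduct --- which is, incidentally, exactly the product structure that the subsequent factorization lemma in the paper exploits. What each approach buys: the paper's version is more economical, needing only one generic extension and no quantification over all \(\R\)-generics; yours is more modular and makes explicit the fact (\(\R\) is ccc, and forces \(\check\Q_0\) ccc) that the paper leaves implicit when it ``switches the factors''. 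One small point of bookkeeping: your identification of the cone of \(\Q_0*\tau\) below \((p_0,\check{1})\) with \((\Q_0\rest p_0)\times\R\) holds up to a dense embedding rather than literal isomorphism (second coordinates of conditions need not be check names, but those with check-name second coordinate are dense below \((p_0,\check{1})\)); this is harmless, since ccc and the associated generic extensions transfer across dense suborders, and the paper makes the same identification implicitly when it declares \((q_0,\check{r})\) to be a condition.
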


\begin{proof}
Suppose the conclusion fails, so that there are conditions \(q_0,q_1\in\Q_0\) which
force \(\check{\R}\) to either be or not be ccc, respectively. It follows that
\(\Q_0\rest q_1\times\R\) is not ccc. Switching the factors, there must be a condition
\(r\in\R\) forcing that \(\check{\Q}_0\rest\check{q}_1\) is not ccc. Now let \(G*H\)
be generic for \(\Q_0*\tau\) with \((q_0,\check{r})\in G*H\) (note that 
\((q_0,\check{r})\) is really a condition since \(q_0\forces\tau=\check{\R}\)).
Consider the extension \(V[G*H]\). On the one hand \(\Q_0\) must be ccc there, since
this was one of the hypotheses of our statement, but on the other hand \(\Q_0\rest q_1\) 
is not ccc there since \(r\in H\) forces this.
\end{proof}

\begin{lemma}
\label{lemma:FSIterationFactorsAsProduct}
Let \(\P=\langle \P_\alpha,\tau_\alpha;\alpha<\gamma\rangle\), with \(\gamma>0\), be a finite-support ccc
iteration such that for each \(\alpha\) there is some poset \(\Q_\alpha\) for which
\[
\P_\alpha\forces\textup{``if \(\check{\Q}_\alpha\) is ccc then \(\tau_\alpha=\check{\Q}_\alpha\) and \(\tau_\alpha\) is trivial otherwise''}
\]
Furthermore assume that \(\P\forces\textup{``\(\check{\Q}_0\) is ccc''}\). Then
\(\P\) is forcing equivalent to the product \(\Q_0\times\overline{\P}\) for some
poset \(\overline{\P}\).
\end{lemma}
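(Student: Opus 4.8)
The plan is to factor out $\Q_0$ by showing that the tail of the iteration, computed over any extension by $\Q_0$, is in fact a fixed ground model poset independent of the $\Q_0$-generic. Writing $\P=\Q_0*\dot{S}$, where $\dot{S}$ is the $\Q_0$-name for the iteration on the coordinates $1\le\beta<\gamma$, it suffices to produce a poset $\overline{\P}\in V$ with $\Q_0\forces\dot{S}=\check{\overline{\P}}$, since then $\P=\Q_0*\check{\overline{\P}}=\Q_0\times\overline{\P}$. I would prove by induction on $\beta$ that the initial tail $\overline{S}_{<\beta}$ is (forced to be) a fixed finite-support product $\prod_{1\le\delta<\beta}\overline{\Q}_\delta$ of ground model posets $\overline{\Q}_\delta\in V$, each equal to $\Q_\delta$ or trivial, that this product does not depend on the $\Q_0$-generic, and that $\P_\beta$ is forcing equivalent to $\Q_0\times\overline{S}_{<\beta}$.

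The engine is Lemma~\ref{lemma:TwoStepIterationDecidesccc}, but applied with $\Q_0$ itself --- rather than the initial segment $\P_\beta$ --- in the role of the distinguished first poset. At stage $\beta$ I would work over the base model $W=V[\overline{S}_{<\beta}]$, which by the inductive hypothesis lies in $V$ and completely embeds into $\P$ as a factor of $\P_\beta$. Over $W$ I consider the $\Q_0$-name $\tau$ equal to $\check{\Q}_\beta$ if $\Q_\beta$ is ccc and trivial otherwise; by construction $\Q_0$ forces over $W$ that $\tau=\check{\Q}_\beta$ whenever $\Q_\beta$ is ccc, and $\Q_0*\tau$ is ccc over $W$ because it completely embeds into the ccc poset $\P$ (using $\P_\beta\cong\Q_0\times\overline{S}_{<\beta}$ to commute $\Q_0$ in front of $\overline{S}_{<\beta}$). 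Lemma~\ref{lemma:TwoStepIterationDecidesccc} then tells me that $\Q_0$ decides over $W$, uniformly, whether $\Q_\beta$ is ccc. Consequently the genuine iterand $\sigma_\beta$ --- which over $V[\P_\beta]=W[\Q_0]$ is $\check{\Q}_\beta$ or trivial according to that very decision --- is forced to equal a check name $\check{\overline{\Q}}_\beta$ with $\overline{\Q}_\beta\in V$; moreover, since the decision is made by the trivial condition over $W$, the value $\overline{\Q}_\beta$ depends only on $W=V[\overline{S}_{<\beta}]$ and not on the $\Q_0$-generic. This advances the induction, and the limit stages are immediate since finite-support limits are determined by their proper initial segments, so this independence and the product form pass through. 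Assembling the stages yields $\overline{\P}=\prod_{1\le\beta<\gamma}\overline{\Q}_\beta$ and the equivalence $\P\cong\Q_0\times\overline{\P}$.

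The step I expect to be the main obstacle is verifying the third hypothesis of Lemma~\ref{lemma:TwoStepIterationDecidesccc}, that $\Q_0*\tau$ forces $\check{\Q}_0$ to remain ccc. This is exactly where the global assumption $\P\forces\textup{``}\check{\Q}_0\textup{ is ccc''}$ is indispensable: since $\P$ is ccc it preserves $\omega_1$, so ccc-ness of $\Q_0$ is downward absolute from $V^{\P}$ to each of its sub-extensions, and $W[\Q_0][\Q_\beta]$ is such a sub-extension. It is crucial that the distinguished poset be $\Q_0$ and not a later initial segment $\P_\beta$: one has no corresponding guarantee that $\P_\beta$ stays ccc in its own further extensions (indeed this can genuinely fail, as when a Suslin tree is used as a forcing), so running the two-step lemma with $\P_\beta$ in the first coordinate would stall. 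Organizing the induction around $\Q_0$, whose ccc-preservation is handed to us globally, is what makes the argument go through; the remaining bookkeeping --- that the relevant two-step forcings are complete suborders of $\P$ and hence ccc, and that $\overline{S}_{<\beta}$ is a factor of $\P_\beta$ --- is routine.
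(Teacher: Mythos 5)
There is a genuine gap: your inductive hypothesis is too strong, and it is false. You claim the factored-out part \(\overline{S}_{<\beta}\) can be taken to be a finite-support \emph{product} \(\prod_{1\le\delta<\beta}\overline{\Q}_\delta\) of posets \(\overline{\Q}_\delta\in V\), each equal to \(\Q_\delta\) or trivial, with the choice ``depending only on \(W=V[\overline{S}_{<\beta}]\) and not on the \(\Q_0\)-generic.'' But ``determined by \(W\)'' is not ``determined in \(V\)'': the model \(W\) itself varies with the \(\overline{S}_{<\beta}\)-generic, and whether \(\Q_\beta\) is ccc in \(W[\Q_0]\) --- hence which poset \(\overline{\Q}_\beta\) is --- can genuinely depend on that generic. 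Lemma~\ref{lemma:TwoStepIterationDecidesccc} erases only the dependence on coordinate \(0\); it says nothing about coordinates \(1,\dots,\beta-1\). Concretely, let \(\Q_0=\Add(\omega,1)\), let \(\Q_1=T_1\oplus T_2\) be the lottery sum of two Suslin trees with \(T_1\times T_2\) ccc (such pairs exist under \(\diamondsuit\)), and let \(\Q_2=T_1\). All hypotheses of the lemma hold (countable posets preserve ccc-ness of trees, and \(\Q_0\) is ccc in any extension), yet stage \(2\) forces with \(T_1\) exactly when the stage-\(1\) generic chose the \(T_2\) side of the lottery. No product of your prescribed form can represent this \(\P\): the candidate \(\Add(\omega,1)\times(T_1\oplus T_2)\times T_1\) is not ccc (below the \(T_1\) side it contains a copy of \(T_1\times T_1\), and the square of a Suslin tree is never ccc), so it cannot be equivalent to the ccc poset \(\P\); and \(\Add(\omega,1)\times(T_1\oplus T_2)\) has no extension containing cofinal branches through both trees, whereas \(\P\) does. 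So the reduction ``\(\Q_0\forces\dot{S}=\check{\overline{\P}}\) for a ground-model product \(\overline{\P}\)'' cannot be carried out, and the induction stalls at the first stage where the ccc decision is not made uniformly over \(V\).

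The repair --- and this is what the paper does --- is to weaken the inductive hypothesis so that \(\overline{\P}\) is an \emph{iteration} rather than a product. Much of your scheme then survives: your placement of \(\Q_0\) (rather than \(\P_\beta\)) in the first coordinate of the two-step lemma, your verification of its third hypothesis by downward absoluteness of ccc-ness to sub-extensions of the ccc extension \(V^{\P}\), and your treatment of limits via direct limits all match the paper's argument. The difference is at successors: after applying the decision lemma over \(V^{\overline{\P}_\gamma}\), one does not obtain a fixed poset in \(V\), but rather a \(\overline{\P}_\gamma\)-\emph{name} \(\dot{\R}\), defined from a maximal antichain \(A_0\cup A_1\subseteq\overline{\P}_\gamma\) whose members decide which way \(\Q_0\) decides the ccc-ness of \(\Q_\gamma\): \(\dot{\R}\) is \(\check{\Q}_\gamma\) below \(A_1\) and trivial below \(A_0\). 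One then sets \(\overline{\P}_{\gamma+1}=\overline{\P}_\gamma*\dot{\R}\) and maintains, as the inductive hypothesis, a coherent system of dense embeddings \(\Q_0\times\overline{\P}_\alpha\hookrightarrow\P_\alpha\). The conclusion of the lemma is exactly this weaker statement, and it is all that is needed in its applications.
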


Before we give the (technical) proof, let us provide some intuition for this lemma.
We can define the iteration \(\overline{\P}\) in the same way as \(\P\) (i.e.\ using the
same \(\Q_\alpha\)) but skipping the first step of forcing.
The idea is that, by lemma~\ref{lemma:TwoStepIterationDecidesccc}, the posets which
appear in the iteration \(\P\) do not depend on the first stage of forcing \(\Q_0\).
We thus expect that generics \(G\subseteq\P\) will correspond exactly to generics
\(H\times\overline{G}\subseteq \Q_0\times\overline{\P}\), since the first stage \(G_0\)
of the generic \(G\) does not affect the choice of posets in the rest of
the iteration.
 
\begin{proof}
We show the lemma by induction on \(\gamma\), the length of the iteration \(\P\).
In fact, we shall work with a stronger induction hypothesis. Specifically, we shall show
that for each \(\alpha<\gamma\) there is a poset \(\overline{\P}_\alpha\) such that
\(\Q_0\times\overline{\P}_\alpha\) embeds densely into \(\P_\alpha\), that
the \(\overline{\P}_\alpha\) form the initial segments of a finite-support iteration
and that the dense embeddings extend one another. For the
purposes of this proof we shall take all two-step iterations to be in the style of
Kunen, i.e.\ the conditions in \(\P*\tau\) are pairs \((p,\sigma)\) such that
\(p\in\P\) and \(\sigma\in\dom(\tau)\) and \(p\forces\sigma\in\tau\). Furthermore
we shall assume that the \(\tau_\alpha\) are full names. These assumptions make no
difference for the statement of the lemma, but ensure that certain embeddings will in
fact be dense.

Let us start with the base case \(\gamma=2\), when \(\P=\Q_0*\tau_1\). 
By lemma~\ref{lemma:TwoStepIterationDecidesccc} whether or not \(\check{\Q}_1\) is ccc
is decided by every condition in \(\Q_0\). But then, by our assumption on \(\tau_1\),
if \(\Q_0\) forces \(\check{\Q}_1\) to be ccc then \(\tau_1\) is forced to be equal
to \(\check{\Q}_1\) and \(\Q_0\times\Q_1\) embeds densely into \(\P\), and otherwise
\(\tau_1\) is forced to be trivial and \(\Q_0\) embeds densely into \(\P\). Depending
on which is the case, we can thus take either \(\overline{\P}_2=\Q_1\) or 
\(\overline{\P}_2=1\).

For the induction step let us assume that the stronger induction hypothesis holds for 
iterations of length \(\gamma\) and show that it holds for iterations of length 
\(\gamma+1\).
Let us write \(\P=\P_\gamma*\tau_\gamma\). By the induction hypothesis there is
a \(\overline{\P}_\gamma\) such that \(\Q_0\times\overline{\P}_\gamma\) embeds densely
into \(\P_\gamma\).

Before we give the details, let us sketch the string of equivalences that will yield
the desired conclusion. We have
\[
\P\equiv (\Q_0\times\overline{\P}_\gamma)*\bar{\tau}_\gamma \equiv
(\overline{\P}_\gamma\times\Q_0)*\bar{\tau}_\gamma \equiv
\overline{\P}_\gamma*(\check{\Q}_0*\bar{\tau}_\gamma)
\]
Here \(\bar{\tau}_\gamma\) is the \(\Q_0\times\overline{\P}_\gamma\)-name (or
\(\overline{\P}_\gamma\times\Q_0\)-name) resulting from pulling back the 
\(\P_\gamma\)-name \(\tau_\gamma\) along the dense embedding provided by the induction
hypothesis. We will specify what exactly we mean by 
\(\check{\Q}_0*\bar{\tau}_\gamma\) later.

We can apply the base step of the induction to the iteration 
\(\check{\Q}_0*\bar{\tau}_\gamma\)
in \(V^{\overline{\P}_\gamma}\) and obtain a \(\overline{\P}_\gamma\)-name
\(\dot{\R}\) such that \(\check{\Q}_0\times\dot{\R}\) is forced to densely embed into
\(\check{\Q}_0*\bar{\tau}_\gamma\). We can then continue the chain above with
\[
\overline{\P}_\gamma*(\check{\Q}_0*\bar{\tau}_\gamma) \equiv
\overline{\P}_\gamma*(\check{\Q}_0\times\dot{\R}) \equiv
\overline{\P}_\gamma*(\dot{\R}\times\check{\Q}_0) \equiv
\overline{\P}_{\gamma+1}\times\Q_0
\]
where \(\overline{\P}_{\gamma+1}=\overline{\P}_\gamma*\dot{\R}\). While this is 
apparently enough to finish the successor step for the bare statement of the lemma,
we wish to preserve the stronger induction hypothesis, and this requires a bit more work.

We first pick a specific \(\overline{\P}_\gamma\)-name for \(\check{\Q}_0*\bar{\tau}_\gamma\).
Let
\[
\tau=\set{((\check{q}_0,\rho),\bar{p})}{q_0\in\Q_0, \rho\in\dom(\bar{\tau}_\gamma),\bar{p}\in\overline{\P}_\gamma,(\bar{p},q_0)\forces \rho\in\bar{\tau}_\gamma}
\]
Then \(\overline{\P}_\gamma\forces \tau=\check{\Q}_0*\tau_\gamma\). Next we pin down
\(\dot{\R}\). Note that \(\overline{\P}_\gamma\) forces that \(\check{\Q}_0\)
decides whether \(\check{\Q}_\gamma\) is ccc or not by 
lemma~\ref{lemma:TwoStepIterationDecidesccc}. Let 
\(A=A_0\cup A_1\subseteq\overline{\P}_\gamma\) be a maximal antichain such that each
\(\bar{p}\in A_0\) forces \(\check{\Q}_\gamma\) to not be ccc and each \(\bar{p}\in A_1\)
forces it to be ccc. Now let
\[
\dot{\R}=\set{(\check{1},\bar{p})}{\bar{p}\in A_1}\cup \set{(\check{q},\bar{p})}{q\in\Q_\gamma,\bar{p}\in A_0}
\]
Observe that \(\dot{\R}\) has the properties we require of it: it is forced by 
\(\overline{\P}_\gamma\) that \(\dot{\R}=\check{\Q}_\gamma\) if \(\check{\Q}_0\) forces
that \(\check{\Q}_\gamma\) is ccc, and \(\dot{\R}\) is trivial otherwise, and that
\(\check{\Q}_0\times \dot{\R}\) embeds densely into \(\tau\).

Finally, let us define \(\overline{\P}_{\gamma+1}=\overline{\P}_\gamma*\dot{\R}\).
We can now augment the equivalences given above with dense embeddings:

\begin{itemize}
\item The embedding \(\overline{\P}_{\gamma+1}\times\Q_0\hookrightarrow
\overline{\P}_\gamma*(\check{\Q}_0\times\dot{\R})\) is clear.

\item To embed \(\overline{\P}_\gamma*(\check{\Q}_0\times\dot{\R})\) into 
\(\overline{\P}_\gamma*\tau\) we can send \((\bar{p},(\check{q}_0,\rho))\) to \((\bar{p},
(\check{q}_0,\rho'))\) where \(\rho'\) is some element of \(\dom(\bar{\tau}_\gamma)\) for which
\((\bar{p},q_0)\forces \rho=\rho'\) (this is where the fullness of \(\tau_\gamma\) is
needed).

\item With our specific choice of the name \(\tau\) we in fact get an isomorphism
between \(\overline{\P}_\gamma*\tau\) and 
\((\overline{\P}_\gamma\times\Q_0)*\bar{\tau}_\gamma\), given by sending
\((\bar{p},(\check{q}_0,\rho))\) to \(((\bar{p},q_0),\rho)\).

\item The final embedding from \((\overline{\P}_\gamma\times\Q_0)*\bar{\tau}_\gamma\) into 
\(\P\) is given by the induction hypothesis.
\end{itemize}

After composing these embeddings, we notice that the first three steps essentially fixed 
the \(\overline{\P}_\gamma\) part of the condition and the last step fixed the
\(\tau_\gamma\) part. It follows that the embedding 
\(\overline{\P}_{\gamma+1}\times\Q_0\hookrightarrow \P_{\gamma+1}\) we constructed
extends the embedding
\(\overline{\P}_\gamma\times\Q_0\hookrightarrow \P_\gamma\) given by the induction
hypothesis. This completes the successor step of the induction.

We now look at the limit step of the induction. The induction hypothesis gives us
for each \(\alpha<\gamma\) a poset \(\overline{\P}_\alpha\) and a dense embedding
\(\overline{\P}_\alpha\times\Q_0\hookrightarrow \P_\alpha\) and we also know that
the \(\overline{\P}_\alpha\) are the initial segments of a finite-support iteration
and that the dense embeddings extend each other. If we now let \(\overline{\P}_\gamma\)
be the direct limit of the \(\overline{\P}_\alpha\), we can easily find a dense embedding
of \(\overline{\P}_\gamma\times\Q_0\) into \(\P_\gamma\). Specifically, given a condition
\((\bar{p},q)\in\overline{\P}_\gamma\times\Q_0\), we can find an \(\alpha<\gamma\)
such that \(\bar{p}\) is essentially a condition in \(\overline{\P}_\alpha\), since
\(\overline{\P}_\gamma\) is the direct limit of these. Now we can map
\((\bar{p},q)\) using the stage \(\alpha\) dense embedding, landing in
\(\P_\alpha\) and interpreting this as an element of \(\P_\gamma\). This map
is independent of the particular choice of \(\alpha\) since all the dense embeddings
extend one another and it is itself a dense embedding since all the previous stages
were.
\end{proof}

\begin{theorem}
\label{thm:CanonicalgrMA}
Let \(\kappa>\omega_1\) be a cardinal of uncountable cofinality satisfying 
\(2^{<\kappa}=\kappa\). Then there is a ccc forcing extension that satisfies 
\(\grma + \lnot\ma + \c=\kappa\).
\end{theorem}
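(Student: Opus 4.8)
The plan is to run the Solovay--Tennenbaum construction but to feed it only posets from the ground model, so that the resulting iteration falls under the product analysis of lemmas~\ref{lemma:TwoStepIterationDecidesccc} and~\ref{lemma:FSIterationFactorsAsProduct}. Concretely, I would build over $V$ a finite-support iteration $\P=\langle\P_\alpha,\tau_\alpha;\alpha<\kappa\rangle$ in which $\Q_0=\Add(\omega,1)$ and, for $\alpha>0$, a bookkeeping function assigns a ground-model poset $\Q_\alpha\in V$ of size less than $\kappa$; the name $\tau_\alpha$ is taken to be $\check\Q_\alpha$ whenever $\P_\alpha$ forces $\check\Q_\alpha$ to be ccc, and trivial otherwise. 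Since $2^{<\kappa}=\kappa$ there are only $\kappa$ many such posets, so the bookkeeping can be arranged so that every ground-model poset of size less than $\kappa$ is assigned to a set of $\kappa$ many stages. Each iterand is forced to be ccc, so $\P$ is ccc, whence $V[G]$ is a ccc extension of $V$ and the first clause of \grma is witnessed by $W=V$. Counting nice names gives $\c\le\kappa$ (here the uncountable cofinality of $\kappa$ enters, as it yields $\kappa^\omega=\kappa$), and the $\kappa$ many Cohen reals added along the way give $\c\ge\kappa$, so $\c=\kappa$.

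To verify \grma, by lemma~\ref{lemma:grMASmallPosets} it suffices to meet fewer than $\kappa$ dense sets in a ground-model poset $\Q$ of size less than $\kappa$ that remains ccc in $V[G]$. The crucial point, which also makes the argument survive for singular $\kappa$, is that I can avoid the usual reflection to a bounded stage. Let $\mathcal D=\{D_\beta;\beta<\mu\}$ with $\mu<\kappa$ be dense subsets of $\Q$ in $V[G]$, and let $E=\bigcup_\beta\operatorname{supp}(\dot D_\beta)$ be the union of the supports of nice names for the $D_\beta$; by ccc each such support has size at most $|\Q|$, so $|E|<\kappa$. Since $\Q$ is ccc in $V[G]$ it is ccc at every stage, and because $\Q$ is assigned to $\kappa$ many stages I can choose one, say $\alpha$, with $\Q_\alpha=\Q$ and $\alpha\notin E$. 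The product analysis of lemma~\ref{lemma:FSIterationFactorsAsProduct}, applied after commuting the $\alpha$-th coordinate to the front (legitimate since $\P$ forces $\check\Q$ to be ccc), factors $\P$ as $\Q\times\P^{(\alpha)}$ with the stage-$\alpha$ generic $H_\alpha$ mutually generic with $G^{(\alpha)}$. As $\alpha\notin E$ we have $G\rest E\in V[G^{(\alpha)}]$, so every $D_\beta$ lies in $V[G^{(\alpha)}]$ and remains dense in $\Q$ there; hence $H_\alpha$, being $V[G^{(\alpha)}]$-generic, is a filter meeting every $D_\beta$.

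For the failure of \ma I would invoke the product structure directly. The iteration satisfies the hypotheses of lemma~\ref{lemma:FSIterationFactorsAsProduct} with $\Q_0=\Add(\omega,1)$, which stays ccc, so $\P$ is forcing equivalent to $\Q_0\times\overline\P$; thus $V[G]$ is obtained from $V$ by a product one of whose factors adds a Cohen real. Since $\c=\kappa>\omega_1$ the continuum hypothesis fails, and a theorem of Roitman then shows that Martin's axiom cannot hold in such a product extension, the Cohen factor producing an object (for instance a Luzin-type set) incompatible with $\ma+\lnot\mathrm{CH}$. Hence $\lnot\ma$ holds in $V[G]$, completing the three requirements.

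The main obstacle is precisely the density argument for \grma in the singular case: the naive Solovay--Tennenbaum reflection, which captures $\Q$ together with all prescribed dense sets below a single stage $\gamma<\kappa$, breaks down when $\kappa$ is singular, because the dense sets may depend on coordinates cofinal in $\kappa$. Overcoming this is exactly what forces the restriction to ground-model posets: only then is the iteration product-like enough (by lemmas~\ref{lemma:TwoStepIterationDecidesccc} and~\ref{lemma:FSIterationFactorsAsProduct}) to split off the coordinate where $\Q$ is forced as an independent factor, so that its generic is automatically generic over the model carrying the dense sets, with no appeal to boundedness. I would also want to check carefully that the factoring lemma genuinely applies to an arbitrary always-ccc coordinate rather than only to $\Q_0$, since this commutation of coordinates is the one place where I have been slightly cavalier.
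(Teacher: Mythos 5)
Your overall plan is the paper's own: the same ground-model-posets-only Solovay--Tennenbaum iteration, the same nice-name count for \(\c=\kappa\), the same appeal to lemma~\ref{lemma:FSIterationFactorsAsProduct} plus Roitman's theorem for \(\lnot\ma\), and the same idea of factoring a copy of \(\Q\) out of the iteration at a coordinate not mentioned by the names for the dense sets. However, the step you yourself flagged as cavalier is a genuine gap, in two respects. First, your parenthetical justification ``legitimate since \(\P\) forces \(\check{\Q}\) to be ccc'' is unwarranted: that \(\Q\) is ccc in \(V[G]\) is witnessed only by some condition \(p\in G\) forcing it, not by the trivial condition (a different generic may well kill the ccc-ness of \(\Q\)), and lemma~\ref{lemma:FSIterationFactorsAsProduct} requires the forcing one applies it to actually to force \(\check{\Q}\) ccc. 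Second, ``commuting the \(\alpha\)-th coordinate to the front'' is not an operation the lemma supports: in an iteration the later names depend on the earlier stages, so there is no a priori isomorphism of \(\P\) moving coordinate \(\alpha\) to position \(0\), and the lemma as proved only splits off the \emph{first} iterand.

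Both defects are repaired by the paper's actual maneuver, which you should adopt in place of the commutation. Fix a condition \(p\in G\) forcing that \(\Q\) is ccc in \(V[G]\), and choose the stage \(\delta\) so that \(\Q_\delta\cong\Q\), no condition appearing in \(\dot{D}\) has a nontrivial \(\delta\)-th coordinate, \emph{and} \(\delta\) lies beyond the support of \(p\). Then factor \(\P\rest p\cong(\P_\delta\rest p)*\P^\delta\) and pass to \(V[G_\delta]\): since \(p\) is trivial from \(\delta\) on, \(p\rest\delta\) forces that the quotient \(\P^\delta\) forces \(\check{\Q}\) to be ccc, and \(\P^\delta\) is again an iteration of the type covered by lemma~\ref{lemma:FSIterationFactorsAsProduct} whose \emph{first} iterand is (isomorphic to) \(\Q\). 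Applying the lemma there gives \(\P^\delta\equiv\overline{\P}\times\Q\), hence \(\P\rest p\equiv(\P_\delta\rest p*\overline{\P})\times\Q\) back in \(V\). From this point your argument goes through exactly as you intend: \(\dot{D}\) is essentially a \(\P_\delta*\overline{\P}\)-name because its conditions have trivial \(\delta\)-th coordinate, so the dense sets already live in \(V[G_\delta*\overline{G}]\), and the \(\Q\)-factor generic \(H\), being generic over that model, meets them all. So the idea is right, but the hole you flagged is real and is filled precisely by this pass-to-the-tail-iteration argument rather than by any commutation of coordinates.
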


\begin{proof}
Fix a well-order \(\triangleleft\) of \(H_\kappa\) of length \(\kappa\); 
this can be done since
\(2^{<\kappa}=\kappa\). We can assume, without loss of generality, that the least element
of this order is the poset \(\Add(\omega,1)\). 
We define a length \(\kappa\) finite-support iteration
\(\P\) recursively: at stage \(\alpha\) we shall force with the next poset with respect 
to the order \(\triangleleft\) if that is ccc at that stage and with
trivial forcing otherwise. Let \(G\) be \(\P\)-generic.

Notice that any poset \(\Q\in H_\kappa\) occurs, up to isomorphism, unboundedly often in 
the well-order \(\triangleleft\). Specifically, we can first find an isomorphic copy 
whose universe is a set of ordinals bounded
in \(\kappa\) and then simply move this universe higher and higher up. In particular,
isomorphic copies of Cohen forcing \(\mathrm{Add}(\omega,1)\) appear unboundedly
often. Since these are ccc in a highly robust way (being
countable), they will definitely be forced with in the iteration
\(\P\). Therefore we have at least \(\kappa\) many reals in the extension \(V[G]\). 
Since the forcing is ccc of size \(\kappa\) and \(\kappa\) has uncountable 
cofinality, a nice-name argument shows that the continuum equals \(\kappa\) in the 
extension.

To see that \ma fails in \(V[G]\), notice that \(\P\) is exactly the type of iteration
considered in lemma~\ref{lemma:FSIterationFactorsAsProduct}. The lemma then implies
that \(\P\) is equivalent to \(\overline{\P}\times\Add(\omega,1)\) for some 
\(\overline{\P}\). Therefore the extension
\(V[G]\) is obtained by adding a Cohen real to some intermediate model.
But, as CH fails in the final extension, Roitman has shown 
in~\cite{Roitman1979:AddingRandomOrCohenRealEffectMA} that Martin's axiom
must also fail there.

Finally, we show that the grounded Martin's axiom holds in \(V[G]\) with
\(V\) as the ground model. Before we consider the general case let us look at
the easier situation when \(\kappa\) is regular. 
Thus, let \(\Q\in V\) be a poset of size less than
\(\kappa\) which is ccc in \(V[G]\) and \(\mathcal{D}\in V[G]\) a family of fewer
than \(\kappa\) many dense subsets of \(\Q\). We can assume without loss of 
generality that \(\Q\in H_\kappa\). Code the elements of \(\mathcal{D}\) into a
single set \(D\subseteq \kappa\) of size \(\lambda<\kappa\). Since \(\P\) is ccc, 
the set \(D\) has a nice \(\P\)-name \(\dot{D}\) of size \(\lambda\). Since the iteration 
\(\P\) has finite support, the set \(D\) appears before the end of the iteration.
As we have argued 
before, up to isomorphism, the poset \(\Q\) appears \(\kappa\) many times in the
well-order \(\triangleleft\).
Since \(\Q\) is ccc in \(V[G]\) and \(\P\) is ccc, posets isomorphic to \(\Q\) will
be forced with unboundedly often in the iteration \(\P\) and, therefore, eventually
a \(\mathcal{D}\)-generic will be added for \(\Q\).

If we now allow \(\kappa\) to be singular we run into the problem that the dense sets
in \(\mathcal{D}\) might not appear at any initial stage of the iteration \(\P\).
We solve this issue by using lemma~\ref{lemma:FSIterationFactorsAsProduct} to
factor a suitable copy of \(\Q\) out of the iteration \(\P\) and see it as coming
after the forcing that added \(\mathcal{D}\).

Let \(\Q,\mathcal{D},D,\lambda\) and \(\dot{D}\) be as before. As mentioned, it may
no longer be true that \(\mathcal{D}\) appears at some initial stage of the iteration.
Instead, note that, since \(\dot{D}\) has size \(\lambda\), there are at most
\(\lambda\) many indices \(\alpha\) such that some condition 
appearing in \(\dot{D}\) has a nontrivial \(\alpha\)-th coordinate.
It follows that there is a \(\delta<\kappa\) such that no 
condition appearing in \(\dot{D}\) has a nontrivial \(\delta\)-th coordinate
and the poset considered at stage \(\delta\) is isomorphic to \(\Q\).
Additionally, if we fix a condition \(p\in G\) forcing that
\(\Q\) is ccc in \(V[G]\), we can find such a \(\delta\) beyond the support of \(p\).
Now argue for a moment in \(V[G_\delta]\). 
In this intermediate extension the quotient iteration
\(\P^\delta=\P\rest [\delta,\kappa)\) is of the type considered in 
lemma~\ref{lemma:FSIterationFactorsAsProduct} and, since we chose \(\delta\) beyond
the support of \(p\), we also get that \(\P^\delta\) forces that \(\check{\Q}\) is ccc.
The lemma now implies that \(\P^\delta\) is equivalent to \(\overline{\P}\times\Q\)
for some \(\overline{\P}\). Moving back to \(V\), we can conclude that
\(\P\rest p\) factors as \(\P_\delta\rest p *(\overline{\P}\times\check{\Q})\equiv
(\P_\delta\rest p*\overline{\P})\times\Q\) and obtain the corresponding
generic \((G_\delta*\overline{G})\times H\). Furthermore, the name \(\dot{D}\) is
essentially a \(\P_\delta*\overline{\P}\)-name, since no condition in \(\dot{D}\) has
a nontrivial \(\delta\)-th coordinate. It follows that the set \(D\) appears
already in \(V[G_\delta*\overline{G}]\) and that the final generic
\(H\in V[G_\delta*\overline{G}][H]=V[G]\) is \(\mathcal{D}\)-generic for \(\Q\).
We have thus shown that \((*)\) from lemma~\ref{lemma:grMASmallPosets} holds in
\(V[G]\), which implies that the grounded Martin's axiom also holds.
\end{proof}

We should reflect briefly on the preceding proof.
If we would have been satisfied with obtaining a model with a regular continuum,
the usual techniques would apply. Specifically, if \(\kappa\) were regular then all the 
dense sets in \(\mathcal{D}\) would have appeared by
some stage of the iteration, after which we would have forced with (a poset isomorphic 
to) \(\Q\), yielding the desired \(\mathcal{D}\)-generic. This approach, however, fails
if \(\kappa\) is singular, as small sets might not appear before the end of the 
iteration. 
Lemma~\ref{lemma:FSIterationFactorsAsProduct} was key in resolving this issue, allowing 
us to factor the iteration \(\P\) as a product and seeing the forcing \(\Q\) as happening
at the last stage, after the dense sets had already appeared.
The lemma implies that the iteration factors at any stage where
we considered an absolutely ccc poset (for example, we can factor out any Knaster
poset from the ground model). However, somewhat fortuitously, we can also factor out
any poset to which we might apply the grounded Martin's axiom, at least below
some condition. There is no surrogate for lemma~\ref{lemma:FSIterationFactorsAsProduct}
for the usual Solovay-Tennenbaum iteration and indeed, Martin's axiom implies that
the continuum is regular.

\begin{corollary}
\label{cor:grMAConsistentWithSuslinTrees}
The grounded Martin's axiom is consistent with the existence of a Suslin tree.
\end{corollary}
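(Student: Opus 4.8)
The plan is to reuse the very model produced in Theorem~\ref{thm:CanonicalgrMA}, observing that it is a Cohen extension of an inner model and hence must contain a Suslin tree. Fix $\kappa$ as in that theorem and let $V[G]$ be the resulting ccc extension by the iteration $\P$, so that $V[G]\models\grma+\lnot\ma+\c=\kappa$. The structural fact I need is already recorded in the proof of Theorem~\ref{thm:CanonicalgrMA}: since $\Add(\omega,1)$ was placed as the $\triangleleft$-least poset and is therefore forced with at the first stage, Lemma~\ref{lemma:FSIterationFactorsAsProduct} applies and yields $\P\equiv\overline{\P}\times\Add(\omega,1)$ for some poset $\overline{\P}$. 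Splitting the generic accordingly, I obtain $V[G]=V[\overline{G}][c]$, where $c$ is Cohen-generic over the intermediate model $V[\overline{G}]$.

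Next I would invoke the classical theorem of Shelah that adding a single Cohen real adds a Suslin tree. Applied to the model $V[\overline{G}]$ and the Cohen real $c$, this produces a Suslin tree $T\in V[\overline{G}][c]=V[G]$. The crucial point is that the product factoring lets me regard $c$ as the \emph{last} forcing performed: nothing in the iteration acts on $T$ after it is constructed, so $T$ remains genuinely Suslin in the final model $V[G]$ rather than being specialized or acquiring a cofinal branch.

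Finally, I would remark that this simultaneously reconfirms the failure of Martin's axiom, since the existence of a Suslin tree is well known to refute $\ma$ already at $\aleph_1$ many dense sets; this is fully consistent with the $\lnot\ma$ clause of Theorem~\ref{thm:CanonicalgrMA}. Altogether $V[G]$ witnesses that $\grma$ is consistent with the existence of a Suslin tree.

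The only genuine obstacle is guaranteeing that the Suslin tree added by $c$ is not inadvertently destroyed by the remainder of the iteration. This is precisely what the decomposition $\P\equiv\overline{\P}\times\Add(\omega,1)$ secures: in a product the Cohen real may be added after all of $\overline{\P}$, leaving its associated Suslin tree untouched. No further argument about the interaction between $c$ and $\overline{\P}$ is required beyond the commutativity of the product.
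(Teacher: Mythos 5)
Your proposal is correct and is essentially the paper's own argument: the paper likewise notes that the model of Theorem~\ref{thm:CanonicalgrMA} is, via Lemma~\ref{lemma:FSIterationFactorsAsProduct}, a Cohen-real extension of an intermediate model, and then cites Shelah's theorem that adding a Cohen real adds a Suslin tree. Your extra remarks about the product decomposition protecting the tree from the rest of the iteration make explicit what the paper leaves implicit, but the route is the same.
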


\begin{proof}
We saw that the model of \grma constructed in the above proof was obtained by
adding a Cohen real to an intermediate extension. 
Adding that Cohen real also adds a Suslin tree by a result of
Shelah~\cite{Shelah1984:TakeSolovayInaccessibleAway}.
\end{proof}

A further observation we can make is that the cofinality of \(\kappa\) plays no
role in the proof of theorem~\ref{thm:CanonicalgrMA} beyond the obvious König's 
inequality requirement on the
value of the continuum. This allows us to obtain models of the grounded Martin's
axiom with a singular continuum and violate cardinal arithmetic properties
which must hold in the presence of Martin's axiom.

\begin{corollary}
The grounded Martin's axiom is consistent with \(2^{<\c}>\c\).
\end{corollary}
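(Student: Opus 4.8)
The plan is to deduce the statement purely from the flexibility of theorem~\ref{thm:CanonicalgrMA} together with a classical cardinal-arithmetic fact, with no further forcing analysis required. First I would apply theorem~\ref{thm:CanonicalgrMA} to a ground model in which \(\kappa\) is a \emph{singular} cardinal of uncountable cofinality satisfying \(2^{<\kappa}=\kappa\); for concreteness one may take \(\kappa=\aleph_{\omega_1}\) in a model of GCH, so that \(\cf(\kappa)=\omega_1\) and \(2^{<\kappa}=\kappa\). As the remark preceding the corollary notes, the cofinality of \(\kappa\) never enters the proof of theorem~\ref{thm:CanonicalgrMA} except through the requirement \(\cf(\kappa)>\omega\) forced upon us by König's inequality on the continuum, so the theorem applies verbatim and produces a ccc forcing extension satisfying \(\grma+\c=\kappa\).

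Next I would observe that this extension already has \(\c\) singular. The iteration witnessing theorem~\ref{thm:CanonicalgrMA} is ccc, hence preserves cardinals and cofinalities, so \(\cf(\c)=\cf(\kappa)=\omega_1\) in the extension and \(\c=\kappa\) remains singular there. This is precisely the feature that distinguishes \grma from \ma, which always makes \(\c\) regular.

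The final step is a short computation. Writing \(\delta=\cf(\c)\), we have \(\delta<\c\) since \(\c\) is singular, and \(\delta\geq\omega_1\) (either by the preservation above or directly by König's theorem, which guarantees \(\cf(2^{\aleph_0})>\omega\)). Now
\[
2^{\delta}=\bigl(2^{\aleph_0}\bigr)^{\delta}=\c^{\delta}=\c^{\cf(\c)}>\c,
\]
the last inequality being König's theorem applied to \(\c\). Since \(\delta<\c\) this yields \(2^{<\c}\geq 2^{\delta}>\c\), as desired.

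I expect no real obstacle here: the entire content is that the grounded axiom tolerates a singular continuum, after which the failure of \(2^{<\c}=\c\) is automatic from the cofinality of \(\c\) alone. The only points requiring care are to confirm that the ground-model hypothesis \(2^{<\kappa}=\kappa\) of theorem~\ref{thm:CanonicalgrMA} can in fact be met by a singular \(\kappa\) of uncountable cofinality—which GCH at \(\aleph_{\omega_1}\) supplies—and to note that the detailed cardinal arithmetic below \(\kappa\) in the extension is irrelevant to the conclusion, since the inequality drops out of König's theorem without any analysis of what the forcing does to the power sets of small cardinals.
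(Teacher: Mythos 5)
Your proposal is correct and follows essentially the same route as the paper: run theorem~\ref{thm:CanonicalgrMA} with \(\kappa\) singular of uncountable cofinality (the paper's proof does exactly this), then observe that a singular continuum automatically forces \(2^{<\c}>\c\) via K\H{o}nig's theorem. Your computation \(2^{\cf(\c)}=\c^{\cf(\c)}>\c\) is just a rephrasing of the paper's cofinality-mismatch argument for \(\c=2^\omega\leq 2^{\cf(\c)}\leq 2^{<\c}\), and your extra details (the concrete choice \(\kappa=\aleph_{\omega_1}\) under GCH, and ccc preservation of cofinalities) are fine but implicit in the paper's version.
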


\begin{proof}
Starting from some model, perform the construction of theorem~\ref{thm:CanonicalgrMA}
with \(\kappa\) singular. In the extension the continuum equals \(\kappa\).
But the desired inequality is true in any model where the continuum is singular:
of course \(\c=2^\omega\leq 2^{\cf(\c)}\leq 2^{<\c}\) is true but equalities cannot hold 
since the middle two cardinals have different cofinalities by König's inequality.
\end{proof}

On the other hand, assuming we start with a model satisfying GCH, the model of 
theorem~\ref{thm:CanonicalgrMA} will satisfy the best possible alternative
to \(2^{<\c}=\c\), namely \(2^{<\cf(\c)}=\c\). Whether this always happens remains open.

\begin{question}[open]
\label{q:grMAWeakLuzin}
Does the grounded Martin's axiom imply that \(2^{<\cf(\c)}=\c\)?
\end{question}

%
%

\section{The axiom's relation to other fragments of Martin's axiom}

Let us now compare some of the combinatorial consequences of the grounded Martin's 
axiom with those of the usual Martin's axiom. We first make an easy observation.

\begin{proposition}
\label{prop:grMAImpliesMACountable}
The local grounded Martin's axiom implies \mac.
\end{proposition}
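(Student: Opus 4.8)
The plan is to reduce $\mac$ to a single instance of the local axiom, namely Martin's axiom for Cohen forcing. First I would recall the classical fact that $\mac$ is equivalent to its restriction to the single poset $\Add(\omega,1)$ (equivalently, that $\mac$ just asserts $\operatorname{cov}(\mathcal{M})=\c$): every nontrivial countable poset is forcing equivalent to $\Add(\omega,1)$ via the standard isomorphism of countable atomless separative orders, posets with atoms being handled directly below an atom, and the property of meeting a fixed family of fewer than $\c$ dense sets is invariant under forcing equivalence. Thus it suffices to verify the conclusion of Martin's axiom for the poset $\Add(\omega,1)$ against arbitrary families $\mathcal{D}\in V$ of fewer than $\c$ dense subsets.

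Next I would fix a cardinal $\kappa\geq\c$ and a transitive $\mathrm{ZFC}^-$ model $M\subseteq H_{\kappa^+}$ witnessing the local grounded Martin's axiom, so that $H_{\kappa^+}$ is a ccc forcing extension of $M$ and $V$ satisfies the conclusion of Martin's axiom for posets in $M$ that remain ccc in $V$. The key observation is that $\Add(\omega,1)$ (say, as finite partial functions from $\omega$ to $2$) is absolutely definable from $\omega$; since $M$ is a transitive model of $\mathrm{ZFC}^-$ it contains $\omega$ and computes this poset correctly, whence $\Add(\omega,1)\in M$. Being countable, $\Add(\omega,1)$ is ccc in an absolute fashion and in particular remains ccc in $V$, so it is precisely the sort of poset to which the local axiom applies. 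Applying local $\grma$ to $\Add(\omega,1)$ then yields, for any family $\mathcal{D}\in V$ of fewer than $\c$ dense subsets, a $\mathcal{D}$-generic filter; combined with the reduction of the first paragraph, this gives $\mac$ in full.

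The one point deserving care is the reduction step: that meeting fewer than $\c$ dense sets in an \emph{arbitrary} countable poset follows from the same for $\Add(\omega,1)$. This is the standard equivalence underlying $\mac\iff\operatorname{cov}(\mathcal{M})=\c$, obtained by transporting the dense sets along a dense embedding between the given atomless countable poset and $\Add(\omega,1)$, and I would invoke it rather than reprove it. I do not expect a genuine obstacle from the grounded setting here, since at no stage do we need the arbitrary countable poset itself to lie in $M$; only the canonical copy of Cohen forcing must, and that is guaranteed purely by the transitivity of $M$. The proof is therefore short, with all the real content concentrated in the absoluteness of $\Add(\omega,1)$ and its ccc-ness.
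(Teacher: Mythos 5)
Your proof is correct and takes essentially the same route as the paper's: both observe that \(\Add(\omega,1)\) belongs to the witnessing model \(M\) by absoluteness (its conditions are effectively coded by natural numbers), note that it is ccc in an absolute way, and apply local \grma directly to it. The only difference is one of presentation: you spell out the classical reduction of \mac to Martin's axiom for the single poset \(\Add(\omega,1)\), which the paper invokes implicitly.
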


\begin{proof}
Fix the cardinal \(\kappa\geq\c\) and the \(\mathrm{ZFC}^-\) ground model 
\(M\subseteq H_{\kappa^+}\) witnessing local \grma. 
Observe that the the model \(M\) contains the poset \(\mathrm{Add}(\omega,1)\), since
its elements are effectively coded by the natural numbers.
This poset is therefore always a valid target for local \grma.
\end{proof}

It follows from the above proposition that the (local) grounded Martin's axiom
will have some nontrivial effects on the cardinal characteristics of the continuum.
In particular, we obtain the following.

\begin{corollary}
\label{cor:grMALargeCharacts}
The local grounded Martin's axiom implies that the cardinals on the right side of 
Cichoń's diagram equal the continuum.
In particular, this holds for both the covering number for category 
\(\mathbf{cov}(\mathcal{B})\) and the reaping number \(\mathfrak{r}\).
\end{corollary}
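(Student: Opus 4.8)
The plan is to reduce everything to the single equivalence $\mathbf{cov}(\mathcal{B}) = \c$ and then read off the conclusions from the standard ZFC inequalities of Cichoń's diagram. By Proposition~\ref{prop:grMAImpliesMACountable} the local grounded Martin's axiom implies \mac. Since every countable atomless poset is forcing equivalent to Cohen forcing, \mac\ is just MA(Cohen), and the latter is classically equivalent to the assertion $\mathbf{cov}(\mathcal{B}) = \c$: a family of fewer than $\c$ many dense subsets of Cohen forcing corresponds to fewer than $\c$ many meager sets, and a generic filter to a real avoiding all of them. Thus local \grma\ yields $\mathbf{cov}(\mathcal{B}) = \c$, and the whole corollary becomes a statement about the placement of cardinals relative to $\mathbf{cov}(\mathcal{B})$.

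For the first assertion, recall that the cardinals occupying the right side of Cichoń's diagram, namely $\mathbf{cov}(\mathcal{B})$ itself, $\mathfrak{d}$, $\mathbf{non}(\mathcal{N})$, $\mathbf{cof}(\mathcal{B})$ and $\mathbf{cof}(\mathcal{N})$, are precisely those lying \emph{above} $\mathbf{cov}(\mathcal{B})$ in the provable order. Concretely, the ZFC arrows $\mathbf{cov}(\mathcal{B}) \leq \mathfrak{d} \leq \mathbf{cof}(\mathcal{B}) \leq \mathbf{cof}(\mathcal{N})$ together with $\mathbf{cov}(\mathcal{B}) \leq \mathbf{non}(\mathcal{N}) \leq \mathbf{cof}(\mathcal{N})$ witness this. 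As each of these cardinals is also bounded above by $\c$, the equality $\mathbf{cov}(\mathcal{B}) = \c$ squeezes them all up to $\c$.

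For the reaping number it suffices to establish the ZFC inequality $\mathbf{cov}(\mathcal{B}) \leq \mathfrak{r}$, after which $\mathbf{cov}(\mathcal{B}) = \c$ forces $\mathfrak{r} = \c$ as well. I would argue this directly: given any family $\{R_\alpha; \alpha < \lambda\}$ of infinite subsets of $\omega$ with $\lambda < \mathbf{cov}(\mathcal{B})$, observe that for each fixed $\alpha$ the set of $X \in 2^\omega$ which split $R_\alpha$, that is those $X$ for which both $X \cap R_\alpha$ and $R_\alpha \setminus X$ are infinite, is comeager. Since the union of fewer than $\mathbf{cov}(\mathcal{B})$ many meager sets is not all of $2^\omega$, the intersection of the corresponding comeager splitting sets is nonempty, producing a single $X$ that splits every $R_\alpha$. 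Hence no family of size below $\mathbf{cov}(\mathcal{B})$ can be reaping, giving $\mathbf{cov}(\mathcal{B}) \leq \mathfrak{r}$.

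The argument is essentially bookkeeping of standard facts, so there is no serious obstacle; the only points demanding care are the correct identification of which cardinals constitute the right side (precisely those provably $\geq \mathbf{cov}(\mathcal{B})$) and the verification of the comeagerness of the splitting set in the last step.
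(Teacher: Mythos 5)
Your proof is correct and takes essentially the same route as the paper: both reduce to \mac via Proposition~\ref{prop:grMAImpliesMACountable} and then run the classical Baire-category arguments, the paper deriving \(\mathbf{cov}(\mathcal{B})=\mathfrak{c}\) and \(\mathfrak{r}=\mathfrak{c}\) directly from \mac (a sufficiently generic real avoids any given meager set and splits any given infinite set), which is exactly the content of your two steps \(\mathbf{cov}(\mathcal{B})=\mathfrak{c}\) and \(\mathbf{cov}(\mathcal{B})\leq\mathfrak{r}\), with the remaining right-side cardinals pushed up by the ZFC arrows just as you say. One terminological caution: in this paper \macoh denotes Martin's axiom for \emph{all} posets \(\Add(\omega,\lambda)\), an a priori stronger axiom treated separately in Proposition~\ref{prop:grMAImpliesMACohen}, so your aside that ``\mac is just MA(Cohen)'' should be read as referring only to \(\Add(\omega,1)\) --- though nothing in your argument depends on it.
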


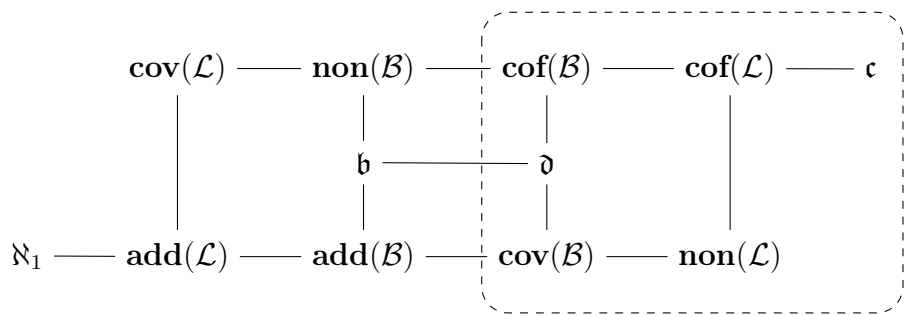
\begin{figure}[ht]
\centering
\begin{tikzpicture}
\matrix (m) [matrix of math nodes,column sep=2em, row sep=1.5em]
{ & \textbf{cov}(\mathcal{L}) & \textbf{non}(\mathcal{B}) & \textbf{cof}(\mathcal{B}) & \textbf{cof}(\mathcal{L}) & \c\\
& & \mathfrak{b} & \mathfrak{d} & & \\
\aleph_1 & \textbf{add}(\mathcal{L}) & \textbf{add}(\mathcal{B}) & \textbf{cov}(\mathcal{B}) & \textbf{non}(\mathcal{L})\\};
\draw (m-1-2)--(m-1-3)--(m-1-4)--(m-1-5)--(m-1-6);
\draw (m-2-3)--(m-2-4);
\draw (m-3-1)--(m-3-2)--(m-3-3)--(m-3-4)--(m-3-5);
\draw (m-1-2)--(m-3-2);
\draw (m-1-3)--(m-2-3)--(m-3-3);
\draw (m-1-4)--(m-2-4)--(m-3-4);
\draw (m-1-5)--(m-3-5);
\draw [rounded corners=10pt, dashed, black] (0.5,2) rectangle (6.1,-2);
\end{tikzpicture}
\caption{Cichoń's diagram; the indicated region is pushed up to \(\c\) under \grma.}
\end{figure}

\begin{proof}
All of the given equalities follow already from \mac; we briefly summarize
the arguments from~\cite{Blass2010:CardinalCharacteristicsHandbook}.

The complement of any nowhere dense subsets of the real line is dense. It follows that, given
fewer than continuum many nowhere dense sets, we can apply \mac to obtain a real number
not contained in any of them. Therefore the real line cannot be covered by fewer
than continuum many nowhere dense sets and, consequently, also not by fewer than
continuum many meagre sets.

To see that the reaping number must be large, observe that, given any infinite
\(x\subseteq\omega\), there are densely many conditions in \(\mathrm{Add}(\omega,1)\)
having arbitrarily large intersection with both \(x\) and \(\omega\setminus x\).
It follows that a Cohen real will split \(x\). Starting with fewer than continuum many
reals and applying \mac, we can therefore find a real splitting all of them, which
means that the original family was not a reaping family.
\end{proof}

But where Martin's axiom strictly prescribes the size of all cardinal characteristics 
of the continuum, the grounded Martin's axiom allows for more leeway in some cases. 
Observe that, since \(\kappa>\omega_1\), the iteration
\(\P\) of theorem~\ref{thm:CanonicalgrMA} contains \(\mathrm{Add}(\omega,\omega_1)\)
as an iterand. 
Thus, by lemma~\ref{lemma:FSIterationFactorsAsProduct}, there is a 
poset \(\overline{\P}\) such that \(\P\) is equivalent 
to \(\overline{\P}\times\mathrm{Add}(\omega,\omega_1)\).

\begin{theorem}
\label{thm:grMASmallCharacts}
It is consistent that the grounded Martin's axiom holds, \textup{CH} fails and the cardinal
characteristics on the left side of Cichoń's diagram, as well as the splitting
number \(\mathfrak{s}\) are equal to \(\aleph_1\).
\end{theorem}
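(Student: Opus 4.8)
The plan is to reuse the iteration of theorem~\ref{thm:CanonicalgrMA} and read off the cardinal characteristics from its product factorization. First I would start in a model of GCH and take $\kappa=\aleph_2$, which is regular and satisfies $2^{<\kappa}=2^{\aleph_1}=\aleph_2=\kappa$, so theorem~\ref{thm:CanonicalgrMA} applies and produces a ccc extension $V[G]$ satisfying $\grma+\c=\aleph_2$; in particular CH fails there. Since $\kappa>\omega_1$, the Knaster poset $\Add(\omega,\omega_1)$ appears as an iterand of $\P$, so by the remark preceding this theorem (an application of lemma~\ref{lemma:FSIterationFactorsAsProduct}) there is a poset $\overline{\P}$ with $\P\equiv\overline{\P}\times\Add(\omega,\omega_1)$. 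I would then view $V[G]=V[\overline{G}][c]$, where $c=\langle c_\xi;\xi<\omega_1\rangle$ enumerates the $\aleph_1$ Cohen reals added by $\Add(\omega,\omega_1)$ and where $\overline{G}$ and $c$ are mutually generic over $V$.

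The heart of the argument, and the step I expect to require the most care, is to show that the family $C=\{c_\xi;\xi<\omega_1\}$ is simultaneously \emph{non-meager} and \emph{splitting} in the full model $V[G]$, notwithstanding the extra forcing $\overline{\P}$. The key observation is that $\overline{\P}\times\Add(\omega,\omega_1)$ is ccc, since $\overline{\P}$ is a factor of the ccc poset $\P$ and $\Add(\omega,\omega_1)$ is Knaster. Hence every real $x\in V[G]$ has a nice name built from countably many conditions, each with finite support in the $\Add(\omega,\omega_1)$ coordinate; the union of these supports is some countable $A\subseteq\omega_1$, and therefore $x\in V[\overline{G}][c\rest A]$. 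By mutual genericity each $c_\xi$ with $\xi\notin A$ is then Cohen over a model containing $x$, and a Cohen real both avoids every meager set coded in its ground model and splits every ground-model infinite subset of $\omega$. As $\omega_1\setminus A$ is uncountable, this gives that no real of $V[G]$ codes a meager set covering $C$ and that every $x\in[\omega]^\omega$ is split by some $c_\xi$, which is exactly what I need.

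Granting the claim, I would finish by reading off the characteristics. Directly, $\mathbf{non}(\mathcal{B})\leq|C|=\aleph_1$ and $\mathfrak{s}\leq|C|=\aleph_1$, so both equal $\aleph_1$. The remaining left-hand characteristics then collapse purely via the ZFC inequalities of Cichoń's diagram: $\mathfrak{b}\leq\mathbf{non}(\mathcal{B})$ and $\mathbf{cov}(\mathcal{L})\leq\mathbf{non}(\mathcal{B})$ force $\mathfrak{b}=\mathbf{cov}(\mathcal{L})=\aleph_1$, and $\mathbf{add}(\mathcal{L})\leq\mathbf{add}(\mathcal{B})\leq\mathfrak{b}$ then yields $\mathbf{add}(\mathcal{L})=\mathbf{add}(\mathcal{B})=\aleph_1$. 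The upshot is a model of $\grma$ with $\c=\aleph_2$, where CH fails, yet the entire left side of Cichoń's diagram together with $\mathfrak{s}$ sits at $\aleph_1$. The conceptual point worth stressing is that this flexibility is precisely what the product factorization of lemma~\ref{lemma:FSIterationFactorsAsProduct} buys us: unlike the Solovay--Tennenbaum iteration, the grounded construction can be seen as adding $\aleph_1$ Cohen reals over an intermediate model, and those Cohen reals pin the left side down.
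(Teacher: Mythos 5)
Your proposal is correct and follows essentially the same route as the paper: both factor the iteration of theorem~\ref{thm:CanonicalgrMA} as \(\overline{\P}\times\Add(\omega,\omega_1)\) via lemma~\ref{lemma:FSIterationFactorsAsProduct} and show that the \(\aleph_1\) Cohen reals added by that factor form a non-meager splitting family, after which the left side of Cichoń's diagram collapses to \(\aleph_1\) by ZFC inequalities. The only difference is one of detail: where the paper cites the standard arguments, you spell out the ccc nice-name computation placing every real of \(V[G]\) inside \(V[\overline{G}][c\rest A]\) for a countable \(A\), which is precisely the justification of the paper's claim that any real appears before all of the Cohen reals have appeared.
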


\begin{proof}
Consider a model \(V[G]\) of \grma satisfying \(\c>\aleph_1\) which was obtained by 
forcing  with the iteration \(\P\) from theorem~\ref{thm:CanonicalgrMA} over a model of 
GCH. We  have argued that this model is obtained by adding \(\aleph_1\) many Cohen reals 
to some intermediate extension. We again briefly summarize the standard arguments
for the smallness of the indicated cardinal characteristics in such an extension
(see~\cite{Blass2010:CardinalCharacteristicsHandbook} for details).

Let \(X\) be the set of \(\omega_1\) many Cohen reals added by the final stage of 
forcing. We claim it is both nonmeager and splitting. Note that any real in
\(V[G]\) appears before all of the Cohen reals in \(X\) have appeared. It follows
that every real in \(V[G]\) is split by some real in \(X\). Furthermore, if \(X\) were
meager, it would be contained in a meager Borel set, whose Borel code also
appears before all of the reals in \(X\) do. But this leads to contradiction, since
any Cohen real will avoid any meager set coded in the ground model.
\end{proof}

To summarize, while the grounded Martin's axiom implies that the right side
of Cichoń's diagram is pushed up to \(\c\), it is consistent with the left side dropping
to \(\aleph_1\) (while CH fails, of course). This is the most extreme
way in which the effect of the grounded Martin's axiom on Cichoń's diagram can differ 
from that of Martin's axiom. The
precise relationships under \grma between the cardinal characteristics on the left
warrant further exploration in the future. 

We can consider further the position of the grounded Martin's axiom within the 
hierarchy of the more well-known fragments of Martin's axiom. As we have already 
mentioned, (local) \grma implies \mac. We can strengthen this slightly. Let \macoh
denote Martin's axiom restricted to posets of the form \(\Add(\omega,\lambda)\) for
some \(\lambda\). It will turn out that local \grma also implies \macoh.

\begin{lemma}
\label{lemma:MACohenSmallPosets}
The axiom \macoh is equivalent to its restriction to posets of the form
\(\Add(\omega,\lambda)\) for \(\lambda<\c\).
\end{lemma}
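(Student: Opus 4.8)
The plan is to prove the non-trivial direction---that the restricted principle implies full \macoh---by a reflection argument through an elementary submodel, entirely analogous to the standard proof that Martin's axiom follows from its restriction to small posets (and in fact simpler than lemma~\ref{lemma:grMASmallPosets}, since no ground model is involved). The reverse implication is immediate, so I would focus on showing that if \macoh holds for every \(\Add(\omega,\mu)\) with \(\mu<\c\), then it holds for \(\Add(\omega,\lambda)\) for arbitrary \(\lambda\).

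Fix \(\lambda\) (possibly \(\geq\c\)) and a family \(\mathcal{D}=\langle D_\alpha;\alpha<\kappa\rangle\) of \(\kappa<\c\) many dense subsets of \(\Add(\omega,\lambda)\); one may assume \(\kappa\) is infinite, the finite case being handled by finitely many direct extensions. Choosing \(\theta\) large enough that \(\Add(\omega,\lambda),\mathcal{D}\in H_\theta\), I would take an elementary submodel \(X\prec H_\theta\) of size \(\kappa\) with \(\kappa\cup\{\lambda,\Add(\omega,\lambda),\mathcal{D}\}\subseteq X\), so that in particular each \(D_\alpha\in X\) by elementarity. Setting \(A=X\cap\lambda\), we have \(\aleph_0\leq|A|\leq\kappa<\c\), and the conditions of \(\Add(\omega,\lambda)\) whose coordinates all lie in \(A\) form a subposet, which I denote \(\Add(\omega,A)\), isomorphic to \(\Add(\omega,|A|)\); this is a legitimate target for the restricted principle.

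The crux is to verify that each \(D_\alpha\cap\Add(\omega,A)\) is dense in \(\Add(\omega,A)\). Given \(p\in\Add(\omega,A)\), since \(p\) is a finite partial function all of whose coordinates lie in \(X\), elementarity yields \(p\in X\). As \(D_\alpha\in X\) is dense in \(\Add(\omega,\lambda)\), the statement \(\exists q\leq p\,(q\in D_\alpha)\) holds in \(H_\theta\) and hence, by elementarity, is witnessed by some \(q\in X\); every coordinate of \(q\) then lies in \(X\cap\lambda=A\), so \(q\in D_\alpha\cap\Add(\omega,A)\) with \(q\leq p\), as required.

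It remains to assemble the conclusion: apply the restricted axiom to \(\Add(\omega,A)\cong\Add(\omega,|A|)\) and the family \(\{D_\alpha\cap\Add(\omega,A);\alpha<\kappa\}\) of fewer than \(\c\) dense sets, obtaining a filter \(H\) on \(\Add(\omega,A)\) meeting each of them. The upward closure of \(H\) in \(\Add(\omega,\lambda)\) is then a filter meeting every \(D_\alpha\), witnessing \macoh for \(\Add(\omega,\lambda)\). The step requiring the most care is the elementarity verification of density---specifically the two observations that finite conditions supported in \(A\) already belong to \(X\), and that witnesses to density reflect back into \(X\); everything else is routine bookkeeping.
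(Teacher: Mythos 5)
Your proof is correct, but it takes a genuinely different route from the paper's. You take an elementary submodel \(X\prec H_\theta\) and pass to the support-restricted poset \(\Add(\omega,A)\) with \(A=X\cap\lambda\); the only work is then verifying, by reflecting witnesses into \(X\), that each \(D_\alpha\cap\Add(\omega,A)\) remains dense (and your two "careful" steps -- that finite conditions supported in \(A\) already lie in \(X\), and that density witnesses reflect -- are exactly right, since finite subsets of an elementary submodel are elements of it). The paper instead takes an elementary substructure \(\Q\) of the expanded structure \(\langle\P,D\rangle_{D\in\mathcal{D}}\), so that density of the traces \(D\cap\Q\) is automatic, and puts all the work into a structural claim: any elementary subposet of \(\Add(\omega,\kappa)\) is itself isomorphic to some \(\Add(\omega,\mu)\), proved by showing that the coatoms of \(\Q\) are precisely the single-bit conditions in \(\Q\), that \(\Q\) is closed under bit-flipping and finite meets, and that \(\Q\) is generated by its coatoms. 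Your approach buys simplicity -- it is the standard reflection argument for fragments of Martin's axiom (indeed a ground-model-free version of lemma~\ref{lemma:grMASmallPosets}), and it sidesteps any order-theoretic analysis because the trace \(X\cap\P\) is literally a Cohen poset on the coordinate set \(A\). The paper's approach buys a stronger by-product: an intrinsic characterization showing that elementary subposets of Cohen forcing are again Cohen forcing, which is of independent interest but is not needed if one is willing, as you are, to choose the subposet to have the right form from the start.
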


\begin{proof}[Proof\;\footnotemark]
\footnotetext{The proof of the key claim was suggested by Noah Schweber.}
Let \(\P=\Add(\omega,\kappa)\) and fix a collection \(\mathcal{D}\) of \(\lambda<\c\)
many dense subsets of \(\P\). As usual, let \(\Q\) be an elementary substructure of
\(\langle\P,D\rangle_{D\in\mathcal{D}}\) of size \(\lambda\). We shall show that
\(\Q\) is isomorphic to \(\Add(\omega,\lambda)\). The lemma then follows easily.

To demonstrate the desired isomorphism we shall show that \(\Q\) is determined
by the single-bit conditions it contains. More precisely, \(\Q\) contains precisely those
conditions which are meets of finitely many single-bit conditions in \(\Q\).

First note that being a single-bit conditions is definable in \(\P\): 
these are precisely
the coatoms of the order. Furthermore, given a coatom \(p\), its complementary 
coatom \(\bar{p}\) with the single bit flipped is definable from \(p\) as the unique
coatom such that any condition is compatible with either \(p\) or \(\bar{p}\). It follows
by elementarity that the coatoms of \(\Q\) are precisely the single-bit conditions 
contained in \(\Q\) and that \(\Q\) is closed under the operation \(p\mapsto\bar{p}\).
In \(\P\) any finite collection of pairwise compatible coatoms has a meet, therefore
the same holds in \(\Q\) and the meets agree. Conversely, any given condition in \(\P\)
uniquely determines the finitely many coatoms it strengthens and therefore all the
coatoms determined by conditions in \(\Q\) are also in \(\Q\). Taken together, 
this proves the claim.

It follows immediately from the claim that \(\Q\) is isomorphic to \(\Add(\omega,|X|)\)
where \(X\) is the set of coatoms of \(\Q\), and also that \(|X|=|\Q|\).
\end{proof}

\begin{proposition}
\label{prop:grMAImpliesMACohen}
The local grounded Martin's axiom implies \macoh.
\end{proposition}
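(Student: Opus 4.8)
The plan is to reduce immediately to small posets using Lemma~\ref{lemma:MACohenSmallPosets} and then mimic the proof of Proposition~\ref{prop:grMAImpliesMACountable}. By that lemma, it suffices to verify the conclusion of Martin's axiom for the posets \(\Add(\omega,\lambda)\) with \(\lambda<\c\), so I would fix such a \(\lambda\) together with the cardinal \(\kappa\geq\c\) and the \(\mathrm{ZFC}^-\) ground model \(M\subseteq H_{\kappa^+}\) witnessing local \grma.

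The key point, exactly as in Proposition~\ref{prop:grMAImpliesMACountable}, is that \(\Add(\omega,\lambda)\) already lives inside \(M\) and is computed there correctly. First I would observe that \(M\) and \(H_{\kappa^+}\) share the same ordinals, since \(H_{\kappa^+}\) is a set-forcing extension of \(M\); hence \(\mathrm{Ord}\cap M=\kappa^+\), and in particular \(\lambda<\c\leq\kappa\) lies in \(M\). A condition in \(\Add(\omega,\lambda)\) is a finite set of triples \((\alpha,n,i)\) with \(\alpha<\lambda\), \(n<\omega\), \(i<2\); each such triple is an element of \(M\), and, \(M\) being a \(\mathrm{ZFC}^-\) model closed under the formation of finite sets, so is every such condition. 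Since finiteness is absolute, the poset \(\Add(\omega,\lambda)^M\) computed inside \(M\) coincides with the genuine \(\Add(\omega,\lambda)\), so this poset is an element of \(M\).

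It then remains to note that \(\Add(\omega,\lambda)\) is a valid target for local \grma, i.e.\ that it is ccc in \(V\). But Cohen forcing is ccc in every model of ZFC (by the usual \(\Delta\)-system argument), so this is automatic. Local \grma therefore supplies, for every family of fewer than \(\c\) many dense subsets of \(\Add(\omega,\lambda)\), a filter meeting all of them, which is precisely the instance of \macoh we needed.

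There is essentially no obstacle here beyond the bookkeeping of the previous paragraph; the only point requiring care is confirming that \(\lambda\), and hence the full poset \(\Add(\omega,\lambda)\), is genuinely an element of \(M\) rather than merely of \(H_{\kappa^+}\). This is where the reduction to \(\lambda<\c\leq\kappa\) afforded by Lemma~\ref{lemma:MACohenSmallPosets} does the real work: for arbitrary \(\lambda\) the poset need not sit inside the ground model \(M\), whereas the restriction keeps \(\lambda\) safely below the ordinal height of \(M\).
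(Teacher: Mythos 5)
Your proof is correct and follows essentially the same route as the paper's: both arguments rest on the observation that \(M\), having ordinal height \(\kappa^+\), contains the posets \(\Add(\omega,\lambda)\) for \(\lambda<\c\) (since the poset is absolutely computed from \(\lambda\in M\)), apply local \grma to these, and then invoke lemma~\ref{lemma:MACohenSmallPosets} to recover full \macoh. Your additional detail verifying \(\Add(\omega,\lambda)\in M\) is exactly what the paper leaves implicit.
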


\begin{proof}
Suppose the local grounded Martin's axiom holds, witnessed by \(\kappa\geq\c\) and a
\(\mathrm{ZFC}^-\) model \(M\subseteq H_{\kappa^+}\). 
In particular, the height of \(M\) is \(\kappa^+\) and
\(M\) contains all of the posets \(\Add(\omega,\lambda)\) for \(\lambda<\kappa^+\).
But this means that Martin's axiom holds for all the posets \(\Add(\omega,\lambda)\)
where \(\lambda<\c\) and lemma~\ref{lemma:MACohenSmallPosets} now implies that
\macoh holds.
\end{proof}

As we have seen, the local grounded Martin's axiom implies some of the weakest
fragments of Martin's axiom. The following corollary of theorem~\ref{thm:grMASmallCharacts} 
tells us, however, that this behaviour stops quite quickly.

\begin{corollary}
\label{cor:grMANotImpliesMAsigma}
The grounded Martin's axiom does not imply \mas.
\end{corollary}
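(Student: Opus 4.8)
The plan is to read the separation directly off the model produced in theorem~\ref{thm:grMASmallCharacts}. That theorem supplies a forcing extension in which \grma holds, CH fails (so \(\c>\aleph_1\)), and the splitting number satisfies \(\mathfrak{s}=\aleph_1\). It therefore suffices to observe that \mas drives the splitting number all the way up to the continuum; once that is in hand, the inequality \(\mathfrak{s}=\aleph_1<\c\) in that model exhibits a universe where \grma holds but \mas fails.

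First I would recall the standard cardinal-characteristic consequence of \mas. By Bell's theorem \mas is equivalent to \(\mathfrak{p}=\c\), and the provable inequalities \(\mathfrak{p}\le\mathfrak{t}\le\mathfrak{h}\le\mathfrak{s}\le\c\) then force \(\mathfrak{s}=\c\) whenever \mas holds (see~\cite{Blass2010:CardinalCharacteristicsHandbook}). Alternatively, one argues directly: given a family of fewer than \(\c\) many subsets of \(\omega\), the natural \(\sigma\)-centered poset of finite approximations to an infinite set \(a\), together with dense sets demanding that \(a\) grow and that \(a\) eventually commit to one side of each given set, yields under \mas an infinite \(a\) split by no member of the family; hence no family of size \(<\c\) can be splitting. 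Either route establishes that \mas implies \(\mathfrak{s}=\c\).

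With this, the conclusion is immediate: in the model of theorem~\ref{thm:grMASmallCharacts} we have \(\aleph_1=\mathfrak{s}<\c\), so \mas cannot hold there, whereas \grma does. Thus \grma does not imply \mas. The argument carries essentially no obstacle of its own, since all the combinatorial work resides in theorem~\ref{thm:grMASmallCharacts}, which we take as given. The only point requiring a deliberate choice is which characteristic to play against \mas: the splitting number is convenient precisely because it is explicitly collapsed to \(\aleph_1\) in the target model while simultaneously being pushed up to \(\c\) by \mas. One could equally use \(\mathfrak{b}\), which is \(\aleph_1\) in the Cohen-real extension underlying theorem~\ref{thm:grMASmallCharacts} but equals \(\c\) under \mas via \(\mathfrak{p}\le\mathfrak{b}\); the splitting number is simply the cleanest fit for how that model was constructed.
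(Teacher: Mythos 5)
Your proof is correct and follows essentially the same strategy as the paper: both read the separation off the model of theorem~\ref{thm:grMASmallCharacts} by exhibiting a cardinal characteristic that \mas{} pushes up to \(\c\) but that equals \(\aleph_1\) in that model. The only difference is the choice of characteristic: the paper plays the bounding number \(\mathfrak{b}\) against \mas{} (applying the axiom to Hechler forcing to obtain a real dominating any family of fewer than \(\c\) many reals), whereas you use the splitting number \(\mathfrak{s}\) via Bell's theorem and the ZFC inequalities \(\mathfrak{p}\le\mathfrak{t}\le\mathfrak{h}\le\mathfrak{s}\). Both characteristics are explicitly set to \(\aleph_1\) in theorem~\ref{thm:grMASmallCharacts}, so either works, and you note the \(\mathfrak{b}\) route yourself.

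One caveat: your ``direct'' alternative argument for \(\mas\Rightarrow\mathfrak{s}=\c\) does not work as stated. In the poset of finite approximations to \(a\) together with finite side-commitments, the dense sets demanding that \(a\) grow are not actually dense: below a condition whose committed sides have finite or empty intersection --- for instance after committing to \(X\) and to \(\omega\setminus Y\) where \(X\subseteq^* Y\) --- no further growth of \(a\) is possible, so genericity cannot be met. A correct direct argument must first fix a coherent choice of sides, e.g.\ letting \(\epsilon(\alpha)\) record whether \(X_\alpha\) belongs to a fixed nonprincipal ultrafilter, so that the chosen sides \(X_\alpha^{\epsilon(\alpha)}\) generate a filter of infinite sets, and then apply \mas{} to the \(\sigma\)-centred Mathias--Prikry poset relative to that filter, where ``committing'' means shrinking the pure part into \(X_\alpha^{\epsilon(\alpha)}\). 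Since your primary route through Bell's theorem is complete, this does not affect the correctness of your proof, but the claim that ``either route'' establishes the implication should be amended.
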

\begin{proof}
By theorem~\ref{thm:grMASmallCharacts} there is a model of the grounded Martin's axiom
where the bounding number is strictly smaller than the continuum. But this is impossible
under \mas, since applying the axiom to Hechler forcing yields, for any family of
fewer than continuum many reals, a real dominating them all.
\end{proof}

As mentioned earlier, Reitz has shown that we can perform class forcing over
any model in such a way that the resulting extension has the same \(H_\c\)
and is also not a set-forcing extension of any ground model. Performing this
construction over a model of \mas (or really any of the standard fragments of
Martin's axiom) shows that \mas does not imply \grma, for the
disappointing reason that the
final model is not a ccc forcing extension of anything.
However, it turns out that already local \grma is independent of \mas, and even
of \mak. 
This places the grounded Martin's axiom, as well as its local version, outside the usual 
hierarchy of fragments of Martin's axiom.

\begin{theorem}
\label{thm:grMASeparateFragment}
Assume \(V=L\) and let \(\kappa>\omega_1\) be a regular cardinal.
Then there is a ccc forcing extension which satisfies
\(\mak+\mathfrak{c}=\kappa\) and in which the local grounded Martin's axiom fails.
\end{theorem}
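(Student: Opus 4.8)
The plan is to force $\mak + \c = \kappa$ over $L$ in the usual Solovay--Tennenbaum manner, but using \emph{only Knaster posets}, and then to exhibit a ground-model Suslin tree that both survives the forcing and obstructs every possible witness to local \grma. Concretely, since $V=L$ satisfies GCH, $\kappa$ is regular with $2^{<\kappa}=\kappa$, so I would take $\P$ to be a length-$\kappa$ finite-support iteration which, guided by a bookkeeping function, forces at each stage with a Knaster poset of size less than $\kappa$ appearing in the current extension, arranging that every such poset is eventually handled. As in the reduction of \ma to small posets, this yields $\mak$ in $V[G]$, and the usual nice-name count (using $\kappa$ regular and $2^{<\kappa}=\kappa$) gives $\c=\kappa$. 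The one feature I need beyond the classical argument is that $\P$ is not merely ccc but Knaster, which is standard since finite-support iterations of Knaster posets are again Knaster.

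Fix in $L$ a Suslin tree $T$ (available from $\diamondsuit$). The first key step is that $T$ remains Suslin in $V[G]$. Since $\P$ is Knaster and $T$ is ccc, the product $\P\times T$ is ccc, so $T$ stays ccc in $V[G]$, and it therefore suffices to rule out a new cofinal branch. For this I would run the standard argument: if $\dot b$ named a cofinal branch, I would choose for each $\alpha<\omega_1$ a condition $p_\alpha$ and a node $t_\alpha$ at level $\alpha$ with $p_\alpha \forces \dot b(\alpha)=\check t_\alpha$; by the Knaster property there is an uncountable $A$ with $\{p_\alpha : \alpha\in A\}$ pairwise compatible, and any common extension of $p_\alpha,p_\beta$ forces $t_\alpha <_T t_\beta$, so $\{t_\alpha : \alpha\in A\}$ is an uncountable chain of $T$ already in $L$, contradicting Suslinity. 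Hence $T$ is Suslin, and in particular ccc, in $V[G]$.

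Now suppose toward a contradiction that local \grma holds in $V[G]$, witnessed by a cardinal $\lambda\geq\c$ and a transitive $\mathrm{ZFC}^-$ model $M\subseteq H_{\lambda^+}$ with $H_{\lambda^+}=M[H]$ for some $H$ generic over $M$ for a poset that is ccc in $V[G]$. The second key step is that $T\in M$. Because ccc forcing adds no ordinals, $M$ and $M[H]=H_{\lambda^+}^{V[G]}$ have the same ordinals, namely $\lambda^+$; thus the constructible universe computed inside $M$ is exactly $L_{\lambda^+}$, and $L^M\subseteq M$. Since $\P$ is ccc it preserves cardinals, so for $x\in L$ the cardinalities $|\operatorname{tc}(x)|^L$ and $|\operatorname{tc}(x)|^{V[G]}$ agree, giving $L\cap H_{\lambda^+}^{V[G]}=H_{\lambda^+}^{L}=L_{\lambda^+}$. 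Therefore $L_{\lambda^+}=L^M\subseteq M$, and since $\lambda\geq\c=\kappa>\omega_1$ we get $T\in L_{\omega_2}\subseteq L_{\lambda^+}\subseteq M$.

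Finally I would derive the contradiction. The tree $T$ lies in $M$ and is ccc in $V[G]$, so it is a legitimate target for the Martin's-axiom conclusion guaranteed by local \grma. Applying that conclusion to $T$ together with the family of $\omega_1<\c$ dense sets $D_\alpha=\{t\in T : \operatorname{ht}(t)\geq\alpha\}$ for $\alpha<\omega_1$ produces a filter on $T$ meeting every $D_\alpha$, i.e.\ a cofinal branch of $T$ in $V[G]$, contradicting the Suslinity established above. Hence local \grma fails in $V[G]$, while $\mak+\c=\kappa$ holds, as desired. The main obstacle is the preservation half of the argument: ensuring the iteration that forces $\mak$ can be taken to be genuinely Knaster, so that the Suslin tree survives, rather than merely ccc; the ground-containment step, by contrast, is a soft absoluteness computation once cardinal preservation is in hand.
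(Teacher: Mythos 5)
Your proof is correct and takes essentially the same approach as the paper: the same finite-support iteration of Knaster posets over \(L\), the same use of the iteration's Knaster property (products with ccc posets remain ccc) to preserve the Suslin trees of \(L\), and the same absoluteness argument that any transitive \(\mathrm{ZFC}^-\) model witnessing local \grma has height \(\lambda^+\), computes its constructible hierarchy correctly, and hence contains an \(L\)-Suslin tree to which the axiom cannot consistently apply. The details you add beyond the paper's write-up (the explicit branch-preservation argument and the dense sets \(D_\alpha\)) are exactly the routine verifications it leaves implicit.
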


\begin{proof}
Let \(\P\) be the usual finite-support iteration forcing 
\(\mak+\mathfrak{c}=\kappa\). More precisely, we consider
the names for posets in \(H_\kappa\) using appropriate bookkeeping and append them
to the iteration if they, at that stage, name a Knaster poset.
Let \(G\subseteq \P\) be generic. We claim that the local grounded Martin's axiom 
fails in the extension \(L[G]\).

Notice first that \(\P\), being a finite-support iteration of Knaster
posets, is Knaster. It follows that the product of \(\P\) with any ccc poset is
still ccc. In particular, forcing with \(\P\) preserves the Suslin trees of \(L\).

Now fix a \(\lambda\geq\kappa\) and let \(M\in L[G]\) be a transitive \(\mathrm{ZFC}^-\)
model of height \(\lambda^+\). It is straightforward to see that \(M\) builds
its constructible hierarchy correctly so that, in particular, \(L_{\omega_2}\subseteq
M\). This implies that \(M\) has all of the Suslin trees of \(L\).
Since these trees are still Suslin in \(L[G]\), partially generic filters do not
exist for them and the model \(M\) does not witness
local \grma in \(L[G]\). As \(\lambda\) and \(M\) were completely arbitrary,
local \grma must fail in \(L[G]\).
%
\end{proof}

We summarize the relationships between \grma and the other fragments of Martin's axiom
in the following diagram.

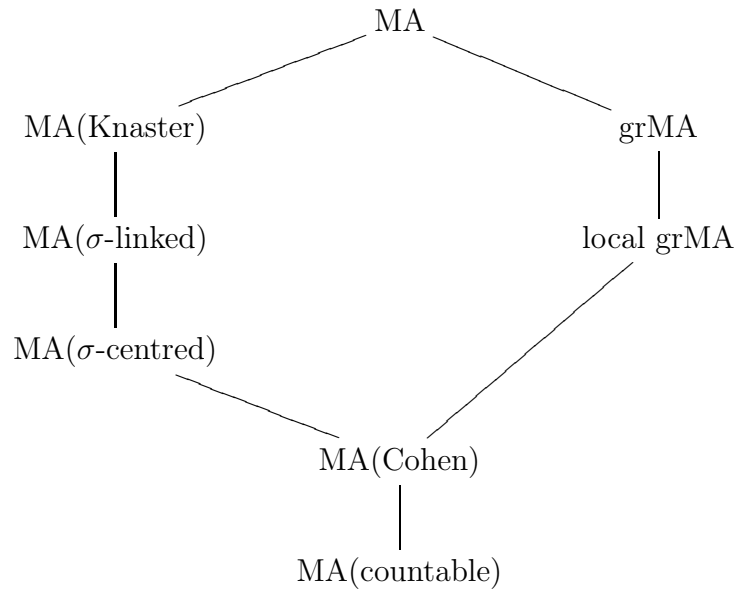
\begin{figure}[ht]
\[
\xymatrix{
&\ma\ar@{-}[dl]\ar@{-}[dr]&\\
\mak\ar@{-}[d]&&\grma\ar@{-}[d]\\
\textup{MA(\(\sigma\)-linked)}\ar@{-}[d]&&\text{local \grma}\ar@{-}[ddl]\\
\mas\ar@{-}[dr]\\
&\mathrm{MA(Cohen)}\ar@{-}[d]\\
&\mathrm{MA(countable)}}
\]
\caption{Diagram of implications between fragments of Martin's axiom}
\end{figure}

Let us mention that it is quite
easy to perform ccc forcing over any model and have \grma fail in the extension.

\begin{corollary}
\label{cor:grMAFailsAfterHechler}
Given any model \(V\) there is a ccc forcing extension \(V[G]\) in which
the local grounded Martin's axiom fails.
\end{corollary}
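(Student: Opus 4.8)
The plan is to beat the axiom over the head with a cardinal-characteristic obstruction. By proposition~\ref{prop:grMAImpliesMACountable} the local grounded Martin's axiom implies \mac, and hence (as recorded in corollary~\ref{cor:grMALargeCharacts}) that $\mathbf{cov}(\mathcal{B})=\c$. Since $\mathbf{cov}(\mathcal{B})\le\mathfrak{d}\le\c$ holds outright in \textup{ZFC}, it therefore suffices to produce, over an arbitrary ground model $V$, a single ccc forcing extension in which $\mathfrak{d}<\c$: in any such extension $\mathbf{cov}(\mathcal{B})\le\mathfrak{d}<\c$, so \mac fails and local \grma cannot hold.

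The forcing I would use is the two-step iteration $\P=\Add(\omega,\omega_2)*\dot{\mathbb{D}}_{\omega_1}$, where the first factor adds $\omega_2$ Cohen reals and $\dot{\mathbb{D}}_{\omega_1}$ is a finite-support iteration of Hechler forcing of length $\omega_1$ carried out over the Cohen extension. The sole purpose of the Cohen factor is to guarantee $\c\ge\aleph_2$, which may fail in $V$ (for instance if $V\models\mathrm{CH}$); over a $V$ that already has $\c\ge\aleph_2$ it is harmless. Both factors are ccc — Hechler forcing is even $\sigma$-centered — so $\P$ is ccc and preserves all cardinals, and the length $\omega_1$ tail, adding only $\aleph_1$ new reals, leaves $\c\ge\aleph_2$ intact.

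The crucial step is the computation $\mathfrak{d}=\aleph_1$ in $V[\P]$. Because the Hechler iteration has finite support and length $\omega_1$, every real of the final model already appears in the extension by some proper initial segment $\dot{\mathbb{D}}_\beta$ with $\beta<\omega_1$; the Hechler generic added at stage $\beta$ then dominates it. Hence the $\aleph_1$ Hechler generics form a dominating family, so $\mathfrak{d}\le\aleph_1$ and thus $\mathfrak{d}=\aleph_1<\aleph_2\le\c$. Consequently $\mathbf{cov}(\mathcal{B})\le\mathfrak{d}<\c$, the axiom \mac fails, and by proposition~\ref{prop:grMAImpliesMACountable} the local grounded Martin's axiom fails in $V[\P]$, as desired.

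The main obstacle is really only the bookkeeping needed to make the argument genuinely uniform over an arbitrary $V$: one must check that the preliminary Cohen forcing forces $\c\ge\aleph_2$ no matter the value of the continuum in $V$, and that the subsequent short Hechler iteration simultaneously keeps $\c$ above $\aleph_1$ and drives $\mathfrak{d}$ down to $\aleph_1$. The domination argument itself is standard for finite-support Hechler iterations; the only point that deserves a word of care is that \emph{all} reals of the final model — including the $\omega_2$ Cohen reals introduced first, which sit in the ground model of the tail iteration — are captured at bounded stages of that iteration and are therefore dominated.
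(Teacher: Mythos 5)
Your proof is correct and is essentially the paper's own argument: the paper likewise forces with a length-$\omega_1$ finite-support Hechler iteration to make the generics a dominating family, so that $\mathfrak{d}=\aleph_1<\c$ contradicts corollary~\ref{cor:grMALargeCharacts} (whose proof runs through \mac, exactly as your chain $\mathbf{cov}(\mathcal{B})\le\mathfrak{d}<\c$ does). The only cosmetic difference is that the paper compresses your preliminary $\Add(\omega,\omega_2)$ step into the phrase ``we may assume that CH fails in $V$.''
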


\begin{proof}
We may assume that CH fails in \(V\). If \(\P\) is the length \(\omega_1\) finite-support
iteration of Hechler forcing and \(G\subseteq \P\) is generic then it is easily seen
that \(G\) is a dominating family in \(V[G]\) and therefore 
the dominating number of \(V[G]\) equals \(\aleph_1\).
It now follows from 
corollary~\ref{cor:grMALargeCharacts} that the local \grma fails in \(V[G]\).
\end{proof}

In the following two sections we shall explore the other side of the coin:
\grma is preserved by certain kinds of ccc forcing.

\section{Adding a Cohen real to a model of the grounded Martin's axiom}

An interesting question when studying fragments of Martin's axiom is what effect 
adding various kinds of generic reals has on it. It was shown by 
Roitman~\cite{Roitman1979:AddingRandomOrCohenRealEffectMA} that
\(\ma_{\aleph_1}\) is destroyed after adding a Cohen or a random real. At the same 
time, it was shown that adding a Cohen real preserves a certain fragment, \mas. 
In this section we follow the spirit of Roitman's arguments to show that
the grounded Martin's axiom is preserved, even with respect to the same ground model, 
after adding a Cohen real.

It is well known that \(\ma+\lnot\mathrm{CH}\) implies that any ccc poset is Knaster 
(recall that a poset \(\P\) is Knaster if any uncountable subset of \(\P\) has in turn an 
uncountable subset of pairwise compatible elements). We start this section by transposing 
this fact to the \grma setting.

\begin{lemma}
\label{lemma:grMACccKnaster}
Let \(V\) satisfy the local grounded Martin's axiom over the ground model \(M\subseteq
H_{\kappa^+}\) and suppose CH fails in \(V\).
Then any poset \(\P\in M\) which is ccc in \(V\) is Knaster in \(V\).
\end{lemma}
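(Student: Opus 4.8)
The plan is to mimic the classical proof that $\ma_{\aleph_1}$ implies every ccc poset is Knaster, but to carry it out using the grounded axiom so that the auxiliary poset we force with lies in the ground model $M$. First I would fix a poset $\P\in M$ that is ccc in $V$ and an uncountable subset $A\subseteq\P$, say $A=\{p_\xi;\xi<\omega_1\}$; the goal is to find an uncountable $B\subseteq\omega_1$ such that $\{p_\xi;\xi\in B\}$ consists of pairwise compatible conditions. The standard device is to consider the finite-support finite product structure, or more precisely to force with a poset whose generic picks out an uncountable pairwise-compatible set, and to argue that this auxiliary forcing is itself ccc and comes from $M$. The natural candidate is the poset $\Q$ of finite pairwise-compatible subsets of $A$ (ordered by reverse inclusion); a generic filter for $\Q$ produces an uncountable pairwise-compatible subfamily, provided $\Q$ is ccc and we can meet the $\aleph_1$ many dense sets $D_\xi=\{q\in\Q;q\text{ contains some }p_\eta\text{ with }\eta>\xi\}$.

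The main obstacle is the requirement that $\Q\in M$, since $A$ is an arbitrary uncountable subset of $\P$ living in $V$, not in $M$. To handle this I would first reduce, via lemma~\ref{lemma:grMASmallPosets} and the usual elementary-substructure trick, to the case $|\P|<\c$, and then absorb the indexing of $A$ into a ground-model object. Concretely, because $\c>\aleph_1$ one can code the sequence $\langle p_\xi;\xi<\omega_1\rangle$ by fewer than $\c$ many reals and reflect it into an elementary substructure as in the proof of lemma~\ref{lemma:grMASmallPosets}; the resulting subfamily sits inside a structure that the ground model $M$ can see, so the finite-condition poset $\Q$ built over it becomes (isomorphic to) a poset in $M$. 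Alternatively, and more cleanly, I would phrase the auxiliary forcing abstractly: $\Q$ depends only on the compatibility relation of $\P$, which is a ground-model relation, together with a choice of $\aleph_1$ conditions; since $\aleph_1<\c$ this choice counts as a small amount of data that the grounded axiom is allowed to range over.

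Granting that $\Q\in M$ and that $\Q$ is ccc in $V$, I would apply the local grounded Martin's axiom to $\Q$ and the family $\{D_\xi;\xi<\omega_1\}$ of dense sets, which is legitimate precisely because $\aleph_1<\c$. A generic filter meeting all the $D_\xi$ yields an uncountable pairwise-compatible subfamily of $A$, establishing the Knaster property. The remaining genuine work is the verification that $\Q$ is ccc in $V$: this is the heart of the matter and is where one uses that $\P$ is ccc together with a $\Delta$-system argument. Given an uncountable antichain in $\Q$, one thins it by the $\Delta$-system lemma to get conditions with a common root, and then incompatibility of the $\Q$-conditions forces incompatibility in $\P$ among the non-root parts, contradicting the ccc-ness of $\P$. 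The delicate point is that this $\Delta$-system argument needs $\aleph_1$ to be regular and the conditions to be finite, both of which hold, so the argument goes through; the only subtlety is making sure the contradiction produces an actual uncountable antichain in $\P$ rather than merely pairwise incompatibility along a single coordinate, which is handled by the standard observation that two finite pairwise-compatible sets with a common root are incompatible in $\Q$ exactly when some element of one is incompatible in $\P$ with some element of the other.
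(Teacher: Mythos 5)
Your proposal has two genuine gaps, and either one sinks the argument. The fundamental one is that your auxiliary poset \(\Q\) of finite pairwise-compatible subsets of \(A\) does not lie in the ground model \(M\), and none of your proposed repairs puts it there. The grounded axiom is asymmetric: the \emph{poset} must come from \(M\), while the \emph{dense sets} may come from \(V\). An arbitrary uncountable \(A\subseteq\P\) in \(V\) may well have been added by the forcing, and the isomorphism type of \(\Q\) is determined by the compatibility graph on \(A\), which then likewise need not exist in \(M\); saying that since \(\aleph_1<\c\) the choice of \(A\) is ``a small amount of data that the grounded axiom is allowed to range over'' misreads the axiom, because smallness is relevant only to the family of dense sets, never to the poset. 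Nor can the elementary-substructure argument of lemma~\ref{lemma:grMASmallPosets} be borrowed: that argument starts from a poset already in the ground model and uses genericity of the ccc extension over that model to conclude \(\Q\cap X[G]=\Q\cap X\in W\); there is no analogous move for an object built from \(V\)-data. The second gap is your ccc verification for \(\Q\). The \(\Delta\)-system step is a non sequitur: incompatibility in \(\Q\) of two conditions with common root \(R\) yields, for each pair \(\alpha\neq\beta\), only \emph{some} \(x\in q_\alpha\setminus R\) and \(y\in q_\beta\setminus R\) with \(x\perp_{\P}y\). To contradict the ccc-ness of \(\P\) you would have to select one element from each \(q_\alpha\) so that the selections are \emph{pairwise} incompatible, and no such selection need exist. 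Concretely, if \(T\) is a Suslin tree and \(u_\alpha\neq v_\alpha\) are immediate successors of a node at level \(\alpha\), the pairs \(\{u_\alpha,v_\alpha\}\) are pairwise cross-incompatible in exactly this sense, yet \(T\) is ccc; this is the same obstruction that makes ``ccc implies Knaster'' and the productivity of ccc fail in ZFC, so the step cannot be repaired. (A third, smaller issue: your sets \(D_\xi\) need not even be dense in \(\Q\) without first restricting to a suitable cone of \(\P\).)

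All of this machinery is unnecessary, because the hypothesis already hands you a poset in \(M\): namely \(\P\) itself. The paper's proof is the classical Kunen-style argument applied directly to \(\P\). First find \(p^*\in\P\) such that every \(q\leq p^*\) is compatible with uncountably many elements of \(A\); if no such \(p^*\) existed, one could choose \(q_\alpha\leq p_\alpha\) each compatible with only countably many \(p_\beta\) and, by iterating a suitable index function, extract an uncountable antichain in \(\P\), contradicting ccc. Then, working below \(p^*\), the sets \(D_\alpha=\bigcup_{\alpha\leq\beta}\P\rest p_\beta\) are dense, and since \(\P\in M\) is ccc in \(V\), the local grounded Martin's axiom together with \(\lnot\mathrm{CH}\) yields in \(V\) a filter \(H\subseteq\P\) meeting all \(\omega_1\) many of them. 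Since filters are directed and upward closed, \(H\cap A\) is an uncountable pairwise-compatible subset of \(A\). The point you missed is precisely what makes the lemma true: the dense sets manufactured from the \(V\)-object \(A\) are legitimate inputs to the grounded axiom, so no auxiliary poset is needed at all—only the poset itself must be grounded.
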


\begin{proof}
Let \(\P\) be as in the statement of the lemma and let 
\(A=\set{p_\alpha}{\alpha<\omega_1}\in V\)
be an uncountable subset of \(\P\). We first claim that there is a \(p^*\in \P\) such that
any \(q\leq p^*\) is compatible with uncountably many elements of \(A\). For suppose not.
Then there would be for any \(\alpha<\omega_1\) some \(q_\alpha\leq p_\alpha\) which was
compatible with only countably many elements of \(A\). We could thus choose 
\(\beta(\alpha)<\omega_1\) in such a way that \(q_\alpha\) would be incompatible with any
\(p_\beta\) for \(\beta(\alpha)\leq\beta\). Setting \(\beta_\alpha=\beta^\alpha(0)\)
(meaning the \(\alpha\)-th iterate of \(\beta\)), 
this would mean that \(\set{q_{\beta_\alpha}}{\alpha<\omega_1}\) is an uncountable antichain
in \(\P\), contradicting the fact that \(\P\) was ccc in \(V\).

By replacing \(\P\) with the cone below \(p^*\) and modifying \(A\) appropriately, we may 
assume that in fact every element of \(\P\) is compatible with uncountably many elements of
\(A\). We now let
\(D_\alpha=\bigcup_{\beta\leq\alpha}\P\rest p_\beta\)
for \(\alpha<\omega_1\). The sets \(D_\alpha\) are dense in \(\P\) and by 
\(\grma+\lnot\mathrm{CH}\), we can find, in \(V\), a filter \(H\subseteq \P\) which 
intersects every \(D_\alpha\). But then \(H\cap A\) is an uncountable set of pairwise 
compatible elements.
\end{proof}

We now introduce the main technical device we will use in showing that the grounded
Martin's axiom is preserved when adding a Cohen real. In the proof we will be 
dealing with a two step extension \(W\subseteq W[G]\subseteq
W[G][c]\) where the first step is some ccc extension, the second adds a Cohen
real and \(W[G]\) satisfies the grounded Martin's axiom over \(W\).
To utilize the forcing axiom in \(W[G]\) in verifying it in \(W[G][c]\), we need
to find a way of dealing with (names for) dense sets from \(W[G][c]\) in \(W[G]\).
The termspace forcing construction (due to Laver and possibly independently also 
Abraham, Baumgartner, Woodin and others) comes to mind (for more information
on this construction we point the reader to~\cite{Foreman1983:SaturatedIdeals}), 
however the
posets arising from this construction are usually quite far from being ccc and
are thus unsuitable for our context. We attempt to rectify the situation by
radically thinning out the full termspace poset and keeping only the simplest
conditions.

\begin{definition}
Given a poset \(\P\), a \(\P\)-name \(\tau\) will be called a 
\emph{finite \(\P\)-mixture} if there exists a finite
maximal antichain \(A\subseteq \P\) such that for every \(p\in A\) there is some 
\(x\) satisfying \(p\forces_\P \tau=\check{x}\). The antichain \(A\) is called a 
\emph{resolving antichain} for \(\tau\) and we denote the value \(x\) of \(\tau\)
at \(p\) by \(\tau^p\).
\end{definition}


\begin{definition}
Let \(\P\) and \(\Q\) be posets. The
\emph{finite-mixture term\-space poset} for \(\Q\) over \(\P\) is
\[\Termfin(\P,\Q)=\set{\tau}{\textup{\(\tau\) is a finite \(\P\)-mixture and
\(1\forces_\P\tau\in\check{\Q}\)}}\]
ordered by letting \(\tau\leq\sigma\) iff \(1\forces_\P\tau\leq_{\Q}\sigma\).
\end{definition}


As a side remark, let us point out that in all interesting cases the finite-mixture 
termspace poset is not a regular suborder of the full termspace poset and we can expect 
genuinely different properties. In fact, this occurs as soon as \(\P\) and \(\Q\) are 
nontrivial. To see this, suppose \(\set{p_n}{n<\omega}\) and \(\set{q_n}{n<\omega}\) are 
infinite antichain in \(\P\) and \(\Q\), respectively. By mixing we can find a
\(\tau\in\Term(\P,\Q)\) such that \(p_n\forces\tau=q_n\); we claim that this \(\tau\)
does not have a reduction to \(\Termfin(\P,\Q)\). 
Suppose \(\sigma\in\Termfin(\P,\Q)\) were such a reduction. Then there is 
a condition \(p\) in its resolving antichain that
is compatible with at least two conditions \(p_i\), say \(p_0\) and \(p_1\).
Since \(\sigma\) is a reduction of \(\tau\), it and all stronger conditions are
compatible with \(\tau\), and this means that \(\sigma^p\) is compatible with
\(q_0\) and \(q_1\). Let \(q'\leq\sigma^p,q_1\) and define
a strengthening \(\sigma'\leq\sigma\) by setting \(\sigma'^p=q'\) and keeping the rest of
the mixture the same as in \(\sigma\). But now \(\sigma'\) and \(\tau\) are clearly
incompatible.

In what follows, let us write \(\C=\funcs{<\omega}{2}\).
We should mention two key issues with the finite-mixture termspace poset 
construction. 
Firstly, the construction is very sensitive to
the concrete posets being used. For example, the forthcoming 
lemma~\ref{lemma:CohenFiniteMixturesKnaster} will show that \(\Termfin(\C,\C)\) is
Knaster, but it is not difficult to see that \(\Termfin(\mathrm{ro}(\C),\C)\) already
has antichains of size continuum. Therefore we cannot freely substitute forcing
equivalent posets in the construction. In fact, if \(\B\) is a complete Boolean
algebra then one easily sees that \(\Termfin(\B,\Q)\) consists of exactly those names
that have only finitely many interpretations and, if \(\Q\) is nontrivial
and \(\B\) has no atoms, this poset will have antichains of size continuum.
The second issue is that
it is quite rare for a poset to have a large variety of finite maximal antichains.
Some, such as \(\funcs{<\omega}{\omega}\) or various collapsing posets, have none
at all except the trivial one-element maximal antichain, while others, such as 
\(\funcs{<\omega_1}{2}\), have a few, but they do not capture the structure of
the poset very well. In all of these cases we do not expect the finite-mixture
termspace poset to be of much help. Nevertheless, in the case of Cohen forcing
\(\C\) it turns out to be a useful tool.

\begin{lemma}
\label{lemma:CohenFiniteMixturesKnaster}
If \(\Q\) is a Knaster poset then \(\Termfin(\C,\Q)\) is also Knaster.
\end{lemma}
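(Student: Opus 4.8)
The plan is to reduce the Knaster property of $\Termfin(\C,\Q)$ to that of $\Q$ itself, exploiting that $\C=\funcs{<\omega}{2}$ is countable and therefore has only countably many finite maximal antichains. Given an uncountable family $\set{\tau_\xi}{\xi<\omega_1}\subseteq\Termfin(\C,\Q)$, I would first fix for each $\tau_\xi$ a resolving antichain (one exists since each $\tau_\xi$ is a finite $\C$-mixture). As there are only countably many finite subsets of the countable set $\C$, the pigeonhole principle yields an uncountable $S\subseteq\omega_1$ and a single finite maximal antichain $A=\{p_1,\dots,p_n\}$ resolving every $\tau_\xi$ with $\xi\in S$. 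For the purposes of compatibility, each such $\tau_\xi$ is then captured by the tuple $(\tau_\xi^{p_1},\dots,\tau_\xi^{p_n})\in\Q^n$ of its values.

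The key step is a characterization of compatibility: two conditions $\tau,\sigma$ sharing the resolving antichain $A$ are compatible in $\Termfin(\C,\Q)$ if and only if $\tau^{p_i}$ and $\sigma^{p_i}$ are compatible in $\Q$ for every $i$. For the easy direction, given witnesses $q_i\leq_\Q\tau^{p_i},\sigma^{p_i}$, I would mix them into the finite $\C$-mixture $\rho$ with resolving antichain $A$ and values $\rho^{p_i}=q_i$; since $A$ is maximal and $p_i\forces\rho=\check{q}_i\leq\tau$ (and likewise for $\sigma$), one concludes $1\forces\rho\leq\tau$ and $1\forces\rho\leq\sigma$, so $\rho$ witnesses compatibility. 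Conversely, any common extension $\rho$, with some resolving antichain $B$, can be refined below a fixed $p_i$ to a condition forcing $\rho$, $\tau$, and $\sigma$ to equal fixed ground-model values; by absoluteness of $\leq_\Q$ the resulting value $\rho^{p'}$ satisfies $\rho^{p'}\leq_\Q\tau^{p_i},\sigma^{p_i}$, so those are compatible in $\Q$.

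Finally I would feed the coordinate tuples into the Knaster property of $\Q$. I would use the elementary observation that for any $f\colon\omega_1\to\Q$ there is an uncountable set on which the values of $f$ are pairwise compatible: either some fibre $f^{-1}(q)$ is already uncountable (and $q$ is compatible with itself), or all fibres are countable, whence the image is uncountable and Knasterness of $\Q$ applies, after which one pulls back along $f$. Applying this successively to the $n$ coordinate functions $\xi\mapsto\tau_\xi^{p_i}$ produces a decreasing chain $S\supseteq S_1\supseteq\dots\supseteq S_n$ of uncountable sets such that $\set{\tau_\xi^{p_i}}{\xi\in S_n}$ is pairwise compatible for every $i$ simultaneously (passing to subsets preserves pairwise compatibility in the earlier coordinates). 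By the characterization above, $\set{\tau_\xi}{\xi\in S_n}$ is then an uncountable pairwise compatible subfamily, establishing that $\Termfin(\C,\Q)$ is Knaster; this is really just the standard fact that finite products of Knaster posets are Knaster, applied to $\Q^n$.

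The only genuinely delicate point is the compatibility characterization, and in particular its converse, where one must use that the common extension $\rho$ is itself a finite mixture, so that its interpretation below $p_i$ is a single ground-model element of $\Q$ rather than some name introduced by the forcing. The countability of $\C$, which bounds the number of finite maximal antichains and drives the opening pigeonhole reduction, is precisely the feature that fails for atomless completions such as $\mathrm{ro}(\C)$, which accounts for the sensitivity of the construction remarked upon before the lemma.
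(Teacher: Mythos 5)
Your proof is correct and follows essentially the same route as the paper's: use a pigeonhole argument (countably many finite maximal antichains in the countable poset \(\C\)) to fix a single resolving antichain for an uncountable subfamily, apply the Knaster property of \(\Q\) coordinate by coordinate to shrink to an uncountable set where every coordinate is pairwise compatible, and then mix coordinatewise lower bounds over the antichain to witness compatibility in \(\Termfin(\C,\Q)\). The only differences are cosmetic: the paper first refines the resolving antichains to levels of the tree \(\C\) before thinning, and your converse of the compatibility characterization (as well as the careful treatment of non-injective coordinate maps) is extra detail that the argument does not actually need, since only the direction ``coordinatewise compatible implies compatible'' is used.
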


\begin{proof}
Let \(T=\set{\tau_\alpha}{\alpha<\omega_1}\) be an uncountable subset of 
\(\Termfin(\C,\Q)\) and
choose resolving antichains \(A_\alpha\) for \(\tau_\alpha\). By refining the 
\(A_\alpha\)
we may assume that each of them is a level of the tree \(\C\) and, by thinning out \(T\)
if necessary, that they are all in fact the same level \(A\). Let us enumerate 
\(A=\{s_0,\dotsc,s_k\}\) and write \(\tau_\alpha^i\) instead of \(\tau_\alpha^{s_i}\). 

Since \(\Q\) is Knaster, an uncountable subset \(Z_0\) of \(\omega_1\) such that
the set \(\set{\tau_\alpha^0}{\alpha\in Z_0}\subseteq \Q\) 
consists of pairwise compatible elements. Proceeding
recursively, we can find an uncountable \(Z\subseteq\omega_1\) such that for every
\(i\leq k\) the set \(\set{\tau_\alpha^i}{\alpha\in Z}\subseteq \Q\) 
consists of pairwise compatible elements. 
We can mix the lower bounds of \(\tau_\alpha^i\) and \(\tau_\beta^i\) over the antichain
\(A\) to produce a name \(\sigma_{\alpha\beta}\in \Termfin(\C,\Q)\) such that
\(\sigma_{\alpha\beta}\) is a lower bound for \(\tau_\alpha\) and \(\tau_\beta\).
Thus \(\set{\tau_\alpha}{\alpha\in Z}\) is an uncountable subset of \(T\) 
consisting of pairwise compatible elements, which proves that \(\Termfin(\C,\Q)\) is 
Knaster. 
\end{proof}

The following lemma is somewhat awkward, but it serves to give us a way of
transforming a name for a dense subset of \(\Q\) into a closely related 
actual dense subset of \(\Termfin(\C,\Q)\). With the usual termspace forcing
construction simply taking \(E=\set{\tau}{\forces\tau\in\dot{D}}\) would have sufficed,
but this set is not dense in \(\Termfin(\C,\Q)\), so modifications are necessary.

\begin{lemma}
\label{lemma:CohenFiniteMixturesDenseTranslation}
Let \(\Q\) be poset and \(\dot{D}\) a \(\C\)-name for a
dense subset of \(\Q\). Then for any \(n<\omega\) the set
\begin{align*}
E_n=\{\,\tau\in \Termfin(\C,\Q)\,;\,& \exists A \textup{ a resolving antichain for \(\tau\)}\,
\forall s\in A\colon \\&
n\leq |s| \land \exists s'\leq s\colon s'\forces\tau\in\dot{D}\,\}
\end{align*}
is a dense subset of \(\Termfin(\C,\Q)\).
\end{lemma}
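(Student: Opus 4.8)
The plan is to prove density directly: given an arbitrary $\tau\in\Termfin(\C,\Q)$ I will build a strengthening $\tau^*\leq\tau$ that lies in $E_n$. The guiding observation is that membership in $E_n$ is weak in a crucial way — it asks only that, on each piece of a sufficiently high resolving antichain, the mixture be forced into $\dot D$ by \emph{some single} extension of that piece, not by a dense-below-it family of extensions. First I would fix a resolving antichain $A_0$ for $\tau$ and set $m=\max\bigl(n,\max\{|s|;s\in A_0\}\bigr)$. The level $L_m=\funcs{m}{2}$ then refines $A_0$: any node $t$ of length $m$ is compatible with some $s\in A_0$, and since $|s|\leq m$ this forces $s$ to be an initial segment of $t$. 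Hence $L_m$ is again a resolving antichain for $\tau$, every node of which has length $m\geq n$, and I may write $\tau^s$ for the (check) value of $\tau$ at $s\in L_m$.

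The key step is a density computation below each node. For $s\in L_m$ I claim the set
\[
D_s=\set{t\leq s}{\exists d\in\Q\;\bigl(d\leq_{\Q}\tau^s \text{ and } t\forces_{\C}\check d\in\dot D\bigr)}
\]
is dense below $s$ in $\C$. Indeed, given any $t_0\leq s$, we have $t_0\forces\tau=\check{\tau^s}\in\Q$ and $t_0\forces\text{``}\dot D\text{ is dense''}$, so $t_0$ forces that some element of $\dot D$ lies below $\tau^s$; strengthening $t_0$ to decide such a witness yields $t\leq t_0$ and an honest $d\in\Q$ with $d\leq_{\Q}\tau^s$ and $t\forces_{\C}\check d\in\dot D$, whence $t\in D_s$. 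For each $s\in L_m$ I then select a single $t_s\in D_s$ together with its witness $d_s\leq_{\Q}\tau^s$ satisfying $t_s\forces\check{d_s}\in\dot D$.

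Finally I would assemble $\tau^*$ as the finite $\C$-mixture with resolving antichain $L_m$ and values $\tau^{*s}=d_s$; this is a genuine element of $\Termfin(\C,\Q)$ since $L_m$ is a finite maximal antichain and each $d_s\in\Q$. Because $d_s\leq_{\Q}\tau^s$ holds on each piece of the maximal antichain $L_m$, we obtain $1\forces\tau^*\leq_{\Q}\tau$, i.e.\ $\tau^*\leq\tau$. To verify $\tau^*\in E_n$, note that $L_m$ is a resolving antichain for $\tau^*$ all of whose nodes have length $m\geq n$, and that for each $s\in L_m$ the extension $t_s\leq s$ satisfies $t_s\forces\tau^*=\check{d_s}\in\dot D$, providing exactly the required witness $s'\leq s$. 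I expect the only delicate point to be the density argument for $D_s$ together with the realization that $E_n$ demands merely \emph{one} witnessing extension per node; this is precisely what keeps $\tau^*$ a \emph{finite} mixture and spares us any infinite bookkeeping that the full termspace construction would force upon us.
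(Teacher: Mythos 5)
Your proof is correct and follows essentially the same route as the paper's: refine the resolving antichain so every node has length at least $n$, then below each node pick an extension deciding a ground-model element of $\Q$ lying below the old value and forced into $\dot{D}$, and mix these values over the antichain to obtain the strengthening in $E_n$. The only cosmetic differences are that you pass to a full level of $\funcs{<\omega}{2}$ rather than an arbitrary finite refinement, and that you replace the paper's appeal to fullness (a single name $\rho$ with $1\forces_{\C}(\rho\leq\sigma\land\rho\in\dot{D})$, whose value is then decided below each node) by a per-node density argument, which amounts to the same thing.
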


One can think of the set \(E_n\) as the set of those \(\tau\) that have a sufficiently
deep resolving antichain, none of whose elements force \(\tau\) to not be in
\(\dot{D}\).

\begin{proof}
Let \(\sigma\in \Termfin(\C,\Q)\) and let \(A\) be a resolving antichain for it. Any finite refinement
of a resolving antichain is, of course, another resolving antichain, so we may assume that
we already have \(n\leq |s|\) for all \(s\in A\). By fullness
we can find a name \(\rho\) for an element of \(\Q\) such that
\(1\forces_{\C}(\rho\leq \sigma\land \rho\in\dot{D})\). For each \(s\in A\) we can find
an \(s'\leq s\) such that \(s'\forces_{\C} \rho=\check{q}_s\)
for some \(q_s\in\Q\). By mixing the \(q_s\) over the antichain 
\(A\), we get a name \(\tau\in E_n\) such that \(\tau\leq\sigma\), 
which shows that \(E\) is dense in \(\Termfin(\C,\Q)\).
\end{proof}


\begin{theorem}
\label{thm:CohenPreservesgrMA}
Assume the local grounded Martin's axiom holds in \(V\) over the ground model 
\(M\subseteq H_{\kappa^+}\) and let 
\(V[c]\) be obtained by adding a Cohen real to \(V\). Then \(V[c]\) also satisfies 
the local grounded Martin's axiom over the ground model \(M\subseteq H_{\kappa^+}^{V[c]}=
H_{\kappa^+}[c]\).
\end{theorem}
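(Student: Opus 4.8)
The plan is to verify the local grounded Martin's axiom in $V[c]$ using the \emph{same} inner model $M$, checking the two clauses of the definition separately. The ground-model clause is immediate: writing $H_{\kappa^+}^V=M[G]$ for an $M$-generic $G$ on some ccc poset in $M$, and noting that $\C=\funcs{<\omega}{2}\in M$, we get $H_{\kappa^+}^{V[c]}=H_{\kappa^+}^V[c]=M[G][c]=M[G\times c]$, which is a ccc extension of $M$ since the product of a ccc poset with the countable poset $\C$ is ccc. As adding a single Cohen real preserves the continuum (there are only $\c^V$ nice $\C$-names for reals), we still have $\kappa\geq\c^{V[c]}$ and $M\subseteq H_{\kappa^+}^{V[c]}$. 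The case $\c=\aleph_1$ is trivial, since the Martin's-axiom clause then concerns only countably many dense sets and holds by Rasiowa--Sikorski; so I assume $\lnot\mathrm{CH}$ henceforth.

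For the forcing-axiom clause, fix $\Q\in M$ that is ccc in $V[c]$ together with a family $\mathcal{D}\in V[c]$ of $\lambda<\c^{V[c]}=\c^V$ dense subsets of $\Q$. Being ccc is downward absolute, so $\Q$ is ccc in $V$, and hence, by lemma~\ref{lemma:grMACccKnaster} (this is where $\lnot\mathrm{CH}$ is used), $\Q$ is in fact Knaster in $V$. The central idea is to transfer the problem across the Cohen extension by passing to the thinned termspace poset $\P^*=\Termfin(\C,\Q)$, which lies in $M$ (being defined from $\C,\Q\in M$) and, by lemma~\ref{lemma:CohenFiniteMixturesKnaster}, is Knaster — hence ccc — in $V$. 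Thus $\P^*$ is a legitimate target for the grounded Martin's axiom in the ground model $V$.

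Next I would pull the dense sets down into $V$. Choosing a single $\C$-name $\dot{\mathcal{D}}\in V$ for the sequence $\langle D_\xi;\xi<\lambda\rangle$ (shrinking below a condition of $c$, and so below an isomorphic copy of $\C$, if necessary to force that it lists dense sets), I define uniformly in $V$ the names $\dot{D}_\xi=\set{(\check q,s)}{s\forces_\C\check q\in\dot{\mathcal{D}}(\check\xi)}$, so that $\langle\dot{D}_\xi;\xi<\lambda\rangle\in V$ and each $\dot{D}_\xi$ is forced to name a dense subset of $\Q$. Applying lemma~\ref{lemma:CohenFiniteMixturesDenseTranslation} to each $\dot{D}_\xi$ produces, for every $n<\omega$, a dense set $E_n^\xi\subseteq\P^*$, and the whole family $\set{E_n^\xi}{\xi<\lambda,\ n<\omega}$ lies in $V$ and has size $\lambda<\c^V$. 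Invoking the local grounded Martin's axiom in $V$ for the poset $\P^*\in M$ and this family then yields a filter $H\in V$ on $\P^*$ meeting every $E_n^\xi$.

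Finally I would translate $H$ back through $c$. Interpreting each finite mixture by the Cohen generic, set $H^c=\set{\tau^c}{\tau\in H}$; compatibility is preserved, since $\rho\le\tau$ in $\P^*$ means $1\forces_\C\rho\le_\Q\tau$ and hence $\rho^c\le_\Q\tau^c$ in $V[c]$, so $H^c$ generates a filter on $\Q$ there. To see it meets each $D_\xi$, consider $O_\xi=\set{s\in\C}{\exists\tau\in H\ \ s\forces_\C\tau\in\dot{D}_\xi}\in V$. The crux is that $O_\xi$ is dense: given any $s_*$, picking $n>|s_*|$ and some $\tau\in H\cap E_n^\xi$, the resolving antichain of $\tau$ is a finite maximal antichain of $\C$ whose nodes all sit at levels $\geq n$, so one of them extends $s_*$ and is in turn extended by a condition forcing $\tau\in\dot{D}_\xi$. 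As $c$ is $V$-generic it meets each of the $\lambda$ many dense sets $O_\xi$, producing $\tau\in H$ and $s\in c$ with $s\forces\tau\in\dot{D}_\xi$, whence $\tau^c\in H^c\cap D_\xi$; thus $H^c$ is the required $\mathcal{D}$-generic filter. The main obstacle is exactly this translation step: one must thin the termspace poset enough that lemma~\ref{lemma:CohenFiniteMixturesKnaster} applies (the full termspace poset is nowhere near ccc), yet retain — through the level-$\geq n$ clause defining the $E_n^\xi$ — resolving antichains arbitrarily deep in $\C$, so that the sets $O_\xi$ come out dense and are therefore met by the single Cohen real.
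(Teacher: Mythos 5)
Your proof is correct and follows essentially the same route as the paper's: reduce to the \(\lnot\mathrm{CH}\) case, use lemma~\ref{lemma:grMACccKnaster} to get \(\Q\) Knaster in \(V\), apply the grounded axiom in \(V\) to \(\Termfin(\C,\Q)\in M\) (Knaster by lemma~\ref{lemma:CohenFiniteMixturesKnaster}) with the dense sets from lemma~\ref{lemma:CohenFiniteMixturesDenseTranslation}, and then push the resulting filter through \(c\) via a density argument in \(\C\) — your sets \(O_\xi\) play exactly the role of the paper's \(B_\alpha\). The only addition is that you explicitly verify the ground-model clause (\(H_{\kappa^+}^{V[c]}=M[G\times c]\) is a ccc extension of \(M\)), which the paper leaves implicit; this is a harmless and correct supplement.
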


\begin{proof}
By assumption there is a ccc poset \(\P\in M\) such that \(H_{\kappa^+}^V=M[G]\) for 
an \(M\)-generic \(G\subseteq \P\).
We may assume that \(\mathrm{CH}\) fails in \(V\), for otherwise it would also hold in
the final extension \(V[c]\), which would then
satisfy the full Martin's axiom. Consider a poset \(\Q\in M\) which is ccc in \(V[c]\). 
Since \(\C\) is ccc, \(\Q\) must also
be ccc in \(V\) and by lemma~\ref{lemma:grMACccKnaster} is in fact Knaster in \(V\).

Let \(\lambda<\c^{V[c]}\) be a cardinal and let 
\(\mathcal{D}=\set{D_\alpha}{\alpha<\lambda}\in V[c]\) be a collection of dense 
subsets of \(\Q\). Pick names \(\dot{D}_\alpha\in V\) for these such that 
\(1\forces_{\C} \text{``\(\dot{D}_\alpha\subseteq \check{\Q}\) is dense''}\).

Consider \(\R=\Termfin(\C,\Q)\in M\). Note that \(\R\) is computed the same in
\(M\) and in \(V\). It now follows from lemma~\ref{lemma:CohenFiniteMixturesKnaster} 
that \(\R\) is Knaster in \(V\) (although not necessarily in \(M\)).


Let \(E_{\alpha,n}\subseteq \R\) be the dense sets associated to the
\(\dot{D}_\alpha\) as in lemma~\ref{lemma:CohenFiniteMixturesDenseTranslation}.
Write \(\mathcal{E}=\set{E_{\alpha,n}}{\alpha<\lambda,n<\omega}\). Applying
the grounded Martin's axiom in \(V\), 
we get an \(\mathcal{E}\)-generic filter \(H\subseteq \R\).
We will show that the filter generated by the set
\(H^c=\set{\tau^c}{\tau\in H}\) is \(\mathcal{D}\)-generic. Pick
a \(\dot{D}_\alpha\) and consider the set
\begin{align*}
B_\alpha=\{\,s'\in \C\,;\,&\exists \tau\in H\exists A \text{ a resolving antichain for \(\tau\)}
\,\exists s\in A \colon\\& s'\leq s\land s'\forces_{\C}\tau\in \dot{D}_\alpha\,\}
\end{align*}
We will show that \(B_\alpha\) is dense in \(\C\). To that end, pick a \(t\in \C\). Since
\(H\) is \(\mathcal{E}\)-generic, there is some \(\tau\in H\cap E_{\alpha,|t|}\). Let
\(A\) be a resolving antichain for \(\tau\). Since \(A\) is maximal, \(t\) must be compatible
with some \(s\in A\), and, since \(|t|\leq |s|\), we must in fact have \(s\leq t\). But then,
by the definition of \(E_{\alpha,|t|}\), there exists a \(s'\leq s\) such that
\(s'\forces_{\C}\tau\in \dot{D}\). This exactly says that \(s'\in B_\alpha\)
and also \(s'\leq s\leq t\). Thus \(B_\alpha\) really is dense in \(\C\).

By genericity we can find an \(s'\in B_\alpha\cap c\). If \(\tau\in H\) is the corresponding
name, the definition of \(B_\alpha\) now implies that \(\tau^c\in D_\alpha\cap H^c\). Thus
\(H^c\) really does generate a \(\mathcal{D}\)-generic filter.
\end{proof}

The proof is easily adapted to show that, starting from the full grounded Martin's axiom 
in \(V\) over a ground model \(W\), we obtain the full grounded Martin's axiom in
\(V[c]\) over the same ground model \(W\).

\section{Adding a random real to a model of the grounded Martin's axiom}

Our next goal is to prove a preservation theorem for adding a random real. The machinery
of the proof in the Cohen case will be slightly modified to take advantage of
the measure theoretic structure in this context. 

Recall that a measure algebra is a pair \((\B,m)\) where \(\B\) is a complete Boolean
algebra and \(m\colon\B\to[0,1]\) is a countably additive map such that
\(m(b)=1\) iff \(b=1\).

\begin{definition}
Let \((\B,m)\) be a measure algebra and \(0<\varepsilon<1\). A \(\B\)-name \(\tau\) will be
called an \emph{\(\varepsilon\)-deficient finite \(\B\)-mixture} if there is a finite antichain
\(A\subseteq \B\) such that \(m(\sup A)>1-\varepsilon\) and for every \(w\in A\)
there exists some \(x\) such that \(w\forces_\B\tau=\check{x}\).
The antichain \(A\) is called a \emph{resolving antichain} and we denote the value
\(x\) of \(\tau\) at \(w\) by \(\tau^w\).
\end{definition}

\begin{definition}
Let \((\B,m)\) be a measure algebra, \(\Q\) a poset and \(0<\varepsilon<1\).
The \emph{\(\varepsilon\)-deficient
finite mixture termspace poset} for \(\Q\) over \((\B,m)\) is
\[\Termfin^\varepsilon(\B,\Q)=\set{\tau}{\textup{\(\tau\) is an
\(\varepsilon\)-deficient finite \(\B\)-mixture and \(1\forces_\B\tau\in\check{\Q}\)}}\]
ordered by letting \(\tau\leq\sigma\) iff there are resolving antichains \(A_\tau\)
and \(A_\sigma\) such that \(A_\tau\) refines \(A_\sigma\) and 
\(\sup A_\tau\forces_\B\tau\leq \sigma\).
\end{definition}

The following lemma is the analogue of 
lemma~\ref{lemma:CohenFiniteMixturesDenseTranslation} for \(\varepsilon\)-deficient
finite mixtures.

\begin{lemma}
\label{lemma:DeficientFiniteMixDenseTranslation}
Let \((\B,m)\) be a measure algebra, \(\Q\) a poset and \(0<\varepsilon<1\).
If \(\dot{D}\) is a \(\B\)-name for a dense subset of \(\Q\) then 
\[E=\set{\tau\in\Termfin^\varepsilon(\B,\Q)}{\exists A \textup{ a resolving antichain for \(\tau\)}
\colon\sup A\forces_\B\tau\in\dot{D}}\]
is dense in \(\Termfin^\varepsilon(\B,\Q)\).
\end{lemma}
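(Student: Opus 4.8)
The plan is to follow the template of the proof of Lemma~\ref{lemma:CohenFiniteMixturesDenseTranslation}, but to replace the combinatorial truncation of resolving antichains (which in the Cohen case was automatic, since resolving antichains could be taken to be finite levels of the tree $\C$) by a measure-theoretic truncation using countable additivity of $m$. So I would fix $\sigma \in \Termfin^\varepsilon(\B,\Q)$ with a resolving antichain $A_\sigma$, so that $m(\sup A_\sigma) > 1 - \varepsilon$ and $w \forces_\B \sigma = \check{\sigma^w}$ for each $w \in A_\sigma$, and aim to produce some $\tau \leq \sigma$ lying in $E$.

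First I would work below each $w \in A_\sigma$ separately, so as to guarantee that the antichain I build refines $A_\sigma$. Since $\sigma$ has the fixed value $\sigma^w$ below $w$ and $\dot{D}$ is forced to name a dense subset of $\Q$, fullness (the maximum principle in $\B$) yields a name $\rho_w$ with $w \forces_\B (\rho_w \leq \check{\sigma^w} \land \rho_w \in \dot{D})$. I then resolve $\rho_w$ below $w$ into a (possibly infinite) maximal antichain $\{w_k : k \in K_w\}$ of elements below $w$, with $w_k \forces_\B \rho_w = \check{q}_k$ for suitable $q_k \in \Q$. Because the $w_k$ are pairwise disjoint with supremum $w$, countable additivity of $m$ gives $\sum_{k \in K_w} m(w_k) = m(w)$.

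The key step is the truncation. Summing over the finite antichain $A_\sigma$, the total is $\sum_{w \in A_\sigma} \sum_{k \in K_w} m(w_k) = m(\sup A_\sigma) > 1 - \varepsilon$. As this is a convergent sum of nonnegative reals, I can choose a \emph{finite} index set $F \subseteq \{(w,k) : w \in A_\sigma,\ k \in K_w\}$ with $\sum_{(w,k) \in F} m(w_k) > 1 - \varepsilon$. Setting $A_\tau = \{w_k : (w,k) \in F\}$ gives a finite antichain refining $A_\sigma$ with $m(\sup A_\tau) > 1 - \varepsilon$, and I define $\tau$ by mixing the values $q_k$ over $A_\tau$ (assigning an arbitrary default value off $\sup A_\tau$), so that $\tau \in \Termfin^\varepsilon(\B,\Q)$ with resolving antichain $A_\tau$.

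Finally I would verify the two required properties, both using $A_\tau$. For $\tau \leq \sigma$: the antichain $A_\tau$ refines $A_\sigma$, and for each $(w,k) \in F$ we have $w_k \leq w$ together with $w_k \forces_\B \tau = \check{q}_k \leq \check{\sigma^w} = \sigma$, whence $\sup A_\tau \forces_\B \tau \leq \sigma$ as demanded by the ordering on $\Termfin^\varepsilon(\B,\Q)$. For $\tau \in E$: taking $A_\tau$ itself as the witnessing resolving antichain, each $w_k$ forces $\tau = \check{q}_k = \rho_w \in \dot{D}$, so $\sup A_\tau \forces_\B \tau \in \dot{D}$, exactly the membership condition defining $E$. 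The only genuine subtlety — and the one point where this argument diverges from the Cohen case — is the truncation step, where one trades exact resolution of $\rho_w$ for the deficiency budget $\varepsilon$; countable additivity is precisely what guarantees that a finite portion of the resolving antichain already captures measure exceeding $1 - \varepsilon$.
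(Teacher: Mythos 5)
Your proof is correct and follows essentially the same route as the paper's: fullness gives names $\rho_w$ forced below each $w$ in the resolving antichain to lie in $\dot{D}$ and below $\sigma$, these are resolved by maximal (countable, by ccc-ness) antichains, and countable additivity lets you truncate to a finite refinement still of measure exceeding $1-\varepsilon$, over which you mix to obtain $\tau$. The only cosmetic difference is that the paper distributes the deficiency budget $\delta = m(\sup A_\sigma)-(1-\varepsilon)$ evenly among the pieces below each $w$, whereas you pick a single finite subfamily of the combined antichain; these are interchangeable.
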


\begin{proof}
Let \(\sigma\in\Termfin^\varepsilon(\B,\Q)\) and pick a resolving antichain 
\(A=\{w_0,\dotsc,w_n\}\) for it. Let \(\delta=m(\sup A)-(1-\varepsilon)\). 
By fullness there are \(\B\)-names \(\rho_i\) for elements of \(\Q\) such that
\(w_i\forces \rho_i\leq \sigma \land \rho\in \dot{D}\). There are
maximal antichains \(A_i\) below \(w_i\) such that each element of
\(A_i\) decides the value of \(\rho_i\). We now choose finite subsets
\(A_i'\subseteq A_i\) such that \(m(\sup A_i)-m(\sup A_i')<\frac{\delta}{n}\).
Write \(A'=\bigcup_i A_i'\). We then have \(m(\sup A')>1-\varepsilon\).
By mixing we can find a \(\B\)-name \(\tau\) for an element of \(\Q\) which is forced 
by each element of \(A'\) to be equal to the appropriate \(\rho_i\). Thus \(A'\)
is a resolving antichain for \(\tau\) and we have
ensured that \(\tau\) is in \(E\) and \(\sigma\leq \tau\).
\end{proof}

In what follows we let \(\Bnull\) be the random Boolean algebra with the induced
Lebesgue measure \(\mu\). The next lemma is the analogue of
lemma~\ref{lemma:CohenFiniteMixturesKnaster}. 

\begin{lemma}
\label{lemma:RandomDeficientMixKnaster}
Let \(\Q\) be a Knaster poset and \(0<\varepsilon<1\). Then 
\(\Termfin^\varepsilon(\Bnull,\Q)\) is Knaster as well.
\end{lemma}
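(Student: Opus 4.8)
The plan is to follow the structure of the proof of Lemma~\ref{lemma:CohenFiniteMixturesKnaster}, thinning an uncountable set of names until any two of its members admit a common resolving antichain on whose pieces their values are pairwise compatible in $\Q$. The obstacle is that the Cohen argument exploited the tree structure of $\C$: resolving antichains could be refined to a common finite level, so that comparing two names only ever matched equal coordinates. Since $\Bnull$ carries no such tree structure, I cannot make resolving antichains literally equal. The key enabling observation is that deficiency can be driven down at will: if $\tau\in\Termfin^\varepsilon(\Bnull,\Q)$ then, since $1\forces\tau\in\check\Q$, the Boolean values of the statements $\tau=\check x$ for $x\in\Q$ are pairwise disjoint with supremum $1$; as $\Bnull$ is ccc only countably many are nonzero, and countable additivity of $\mu$ lets me select a finite subantichain whose supremum has measure exceeding $1-\varepsilon'$ for any prescribed $\varepsilon'>0$.

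Given an uncountable $T=\set{\tau_\alpha}{\alpha<\omega_1}\subseteq\Termfin^\varepsilon(\Bnull,\Q)$, I would first choose for each $\alpha$ a resolving antichain $A_\alpha$ of deficiency below $\varepsilon/4$, then thin $T$ so that all the $A_\alpha$ share a common size $n+1$, writing $A_\alpha=\set{w^\alpha_i}{i\le n}$ and $q^\alpha_i=\tau_\alpha^{w^\alpha_i}$. Using that the measure algebra $\Bnull$ is separable in the metric $d(a,b)=\mu(a\triangle b)$, I fix a countable dense set and approximate each $w^\alpha_i$ by an element $v_i$ of it to within $\eta$, where $\eta$ is chosen so small that $(n+1)\cdot 2\eta<\varepsilon/2$. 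Since the approximating tuples $(v_0,\dots,v_n)$ range over a countable set, an uncountable subfamily shares a single tuple; restricting to it, any two indices $\alpha,\beta$ satisfy $d(w^\alpha_i,w^\beta_i)<2\eta$ for every $i$.

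On this uncountable set I would thin $n+1$ further times using that $\Q$ is Knaster, exactly as in the Cohen case, producing an uncountable $Z$ for which $\set{q^\alpha_i}{\alpha\in Z}$ is pairwise compatible for each fixed $i\le n$. For $\alpha,\beta\in Z$ I build a common lower bound $\rho$. The pieces $w^\alpha_i\wedge w^\beta_j$ are pairwise disjoint; an off-diagonal one ($i\ne j$) satisfies $w^\alpha_i\wedge w^\beta_j\le w^\alpha_i\setminus w^\beta_i$ because $A_\beta$ is an antichain, so the off-diagonal mass is below $(n+1)\cdot 2\eta<\varepsilon/2$, whereas $\mu(\sup A_\alpha\wedge\sup A_\beta)>1-\varepsilon/2$. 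Hence the diagonal part $\bigvee_i(w^\alpha_i\wedge w^\beta_i)$ has measure exceeding $1-\varepsilon$. On each nonzero diagonal piece $\tau_\alpha,\tau_\beta$ are forced to equal $q^\alpha_i,q^\beta_i$, which are compatible by the choice of $Z$, so I fix $r_i\le q^\alpha_i,q^\beta_i$ and let $\rho$ take value $r_i$ there (and, say, $r_0$ off the diagonal). Then the diagonal pieces form a resolving antichain for $\rho$ of measure $>1-\varepsilon$, so $\rho\in\Termfin^\varepsilon(\Bnull,\Q)$, and since this antichain refines both $A_\alpha$ and $A_\beta$ and forces $\rho\le\tau_\alpha$ and $\rho\le\tau_\beta$, we conclude $\rho\le\tau_\alpha,\tau_\beta$.

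The main obstacle is precisely the measure bookkeeping of the third paragraph: unlike the Cohen case, where matching antichains let the diagonal exhaust the whole comparison, here I must guarantee that after aligning coordinates through the separable approximation the matched pieces still carry measure above $1-\varepsilon$. This is what dictates the two-layered budgeting — resolving with deficiency $<\varepsilon/4$ to leave slack, and approximating within $\eta$ subject to $(n+1)\cdot 2\eta<\varepsilon/2$ to absorb the discarded off-diagonal mass — and it is exactly here that the small-deficiency observation becomes indispensable.
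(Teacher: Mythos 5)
Your argument is correct and is essentially the paper's own proof: both uniformize the antichain size, align the resolving antichains coordinatewise by a countable-family pigeonhole (you via separability of the metric \(d(a,b)=\mu(a\triangle b)\), the paper via inner/outer regularity of the measure and finite unions of basic open sets), apply the Knaster property of \(\Q\) coordinate by coordinate, and mix compatible values over the diagonal pieces \(w^\alpha_i\wedge w^\beta_i\), whose total measure exceeds \(1-\varepsilon\). Your preliminary observation that resolving antichains of arbitrarily small deficiency exist (which is correct, by the ccc of \(\Bnull\) together with countable additivity) replaces the paper's simpler device of pigeonholing a single \(\delta\) with \(1-\varepsilon<\delta<\mu(\sup A_\alpha)\) for all \(\alpha\); both serve only to create the slack that absorbs the off-diagonal error.
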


\begin{proof}
Let \(\set{\tau_\alpha}{\alpha<\omega_1}\) be an uncountable subset of 
\(\Termfin^\varepsilon(\Bnull,\Q)\).
Choose resolving antichains \(A_\alpha\) for the \(\tau_\alpha\).
We may assume that there is a fixed \(\delta\) such that
\(1-\varepsilon<\delta<\mu(\sup A_\alpha)\) for all \(\alpha\).
We may also assume that all of the \(A_\alpha\) have the same size \(n\) and enumerate 
them as \(A_\alpha=\{w_\alpha^0,\dotsc,w_\alpha^{n-1}\}\); we shall write
\(\tau_\alpha^i\) instead of \(\tau_\alpha^{w_\alpha^i}\). 
By inner regularity of the measure
we may assume further that the elements of each \(A_\alpha\) are compact.
Using this and the outer regularity of the measure we can find open neighbourhoods
\(w_\alpha^i\subseteq U_\alpha^i\) such that \(U_\alpha^i\) and \(U_\alpha^j\) are
disjoint for all \(\alpha\) and distinct \(i\) and \(j\) and additionally satisfy
\[\mu(U_\alpha^i\setminus w_\alpha^i)<\frac{\delta-(1-\varepsilon)}{n}\]

Fix a countable basis for the topology. Since the \(w^i_\alpha\) are compact,
we may take the \(U^i_\alpha\) to be finite unions of basic opens. Since there
are only countably many such finite unions, we can assume that there are fixed
\(U^i\) such that \(U^i_\alpha=U^i\) for all \(\alpha\). 

We now obtain 
\begin{align*}
\mu(w^i_\alpha\cap w^i_\beta)&=
\mu(w^i_\alpha)-\mu(w_\alpha^i\cap(U^i\setminus w^i_\beta))\\
&\geq \mu(w^i_\alpha)-\mu(U^i\setminus w^i_\beta)
>\mu(w^i_\alpha) - \frac{\delta-(1-\varepsilon)}{n}
\end{align*}
In particular, this gives that \(\sum_i\mu(w^i_\alpha\cap w^i_\beta)>1-\varepsilon\).

Since \(\Q\) is Knaster we may assume that the elements of
\(\set{\tau_\alpha^i}{\alpha<\omega_1}\) are pairwise compatible and
that this holds for any \(i\). Pick lower bounds \(q^i_{\alpha\beta}\)
for the \(\tau_\alpha^i\) and \(\tau_\beta^i\). By mixing we can
construct \(\Bnull\)-names \(\sigma_{\alpha\beta}\) for elements of \(\Q\) such that  
\(w_\alpha^i\cap w_\beta^i\forces \sigma_{\alpha\beta}=q^i_{\alpha\beta}\)
for all \(i\). By construction the \(\sigma_{\alpha\beta}\) are 
\(\varepsilon\)-deficient finite \(\Bnull\)-mixtures and are lower bounds for
\(\tau_\alpha\) and \(\tau_\beta\).
\end{proof}

While the concept of \(\varepsilon\)-deficient finite mixtures makes sense for any measure
algebra, finding a good analogue of the preceding proposition for algebras of
uncountable weight has proven difficult.

\begin{lemma}
\label{lemma:MeasureOfSupOfDirectedInfs}
Let \((\B,m)\) be a measure algebra and \(0<\varepsilon<1\).
Suppose \(\mathcal{A}\) is a family of finite antichains in \(\B\), downward
directed under refinement, such that \(m(\sup A)>1-\varepsilon\) for any 
\(A\in\mathcal{A}\). If we let \(d_{\mathcal{A}}=\inf\set{\sup A}{A\in\mathcal{A}}\)
then \(m(d_{\mathcal{A}})\geq 1-\varepsilon\).
\end{lemma}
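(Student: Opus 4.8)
The plan is to dualize the statement by passing to complements and then to reduce the possibly uncountable infimum to a countable one. Setting $u_A=\neg(\sup A)$ for each $A\in\mathcal A$, the complement rule $m(b)+m(\neg b)=1$ (immediate from finite additivity and $m(1)=1$) gives $m(u_A)=1-m(\sup A)<\varepsilon$. Since a common refinement $A_3$ of $A_1,A_2$ satisfies $\sup A_3\le\sup A_1$ and $\sup A_3\le\sup A_2$, the family $\{\sup A\}$ is downward directed, so $\{u_A\}$ is upward directed, with supremum $u=\neg d_{\mathcal A}$. Thus it suffices to prove the following statement about measure algebras: if $\{u_i\}$ is an upward directed family in $\B$, then $m(\sup_i u_i)=\sup_i m(u_i)$. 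Applying this here yields $m(u)=\sup_A m(u_A)\le\varepsilon$, whence $m(d_{\mathcal A})=1-m(u)\ge 1-\varepsilon$, as wanted.

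To prove the isolated claim, put $\gamma=\sup_i m(u_i)$ and first extract, using directedness, an \emph{increasing} sequence $u_{i_0}\le u_{i_1}\le\cdots$ from the family with $m(u_{i_n})\to\gamma$ (at each step pass to an index dominating the previous one together with the next term of a fixed measure-maximizing sequence). Let $v=\sup_n u_{i_n}$; continuity of $m$ from below along this increasing sequence gives $m(v)=\lim_n m(u_{i_n})=\gamma$. The heart of the argument is to show that $v$ already equals the full supremum $u=\sup_i u_i$, for which it is enough to check $u_j\le v$ for an arbitrary index $j$. Here I would again invoke directedness: choose an increasing sequence $w_n$ in the family with $w_n\ge u_{i_n}$ and $w_n\ge u_j$; then $m(u_{i_n})\le m(w_n)\le\gamma$ forces $m(w_n)\to\gamma$, so $w:=\sup_n w_n$ satisfies $m(w)=\gamma$ and $v\le w$. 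Since $v\le w$ with $m(v)=m(w)=\gamma$, finite additivity gives $m(w\wedge\neg v)=m(w)-m(v)=0$; and in a measure algebra $m(x)=0$ forces $x=0$, because $m(\neg x)=1$ means $\neg x=1$. Hence $w=v$, and therefore $u_j\le w=v$. As $j$ was arbitrary, $u\le v$, so $u=v$ and $m(u)=\gamma$.

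The only genuine obstacle is that $\mathcal A$, and hence the directed family $\{u_i\}$, may be uncountable, so countable additivity cannot be applied to it directly; the two uses of directedness above exist precisely to manufacture a single countable, increasing, cofinal-in-measure subsequence whose supremum both carries the right measure and dominates every member of the family. Once this reduction is in place the remaining steps are the routine finite-additivity identity $m(w)=m(v)+m(w\wedge\neg v)$ together with the triviality of null elements, both of which follow at once from the defining properties of a measure algebra.
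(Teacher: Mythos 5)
Your proof is correct, and it diverges from the paper's argument at the key step. Both proofs begin identically, by passing to complements and reducing the lemma to a statement about an upward directed family $\{u_A\}$ of elements of measure less than $\varepsilon$. The paper then disposes of that statement in two lines: using completeness of $\B$ it refines the directed family to an antichain $Z$ with $\sup Z = \sup_A u_A$, observes that $Z$ is countable because measure algebras are ccc (a consequence of strict positivity of $m$), and bounds $m(\sup Z)$ by $\varepsilon$ via countable additivity together with directedness (finite subsuprema of $Z$ sit below single elements of the family, so all partial sums of measures stay below $\varepsilon$). You instead prove the sharper continuity fact that $m$ commutes with directed suprema, $m(\sup_i u_i)=\sup_i m(u_i)$: you extract a countable increasing sequence cofinal in measure, and then use the "null implies zero" property of measure algebras to show that its supremum $v$ must dominate every member of the family, since any $w \geq v$ in the relevant comparison has $m(w \wedge \neg v)=0$. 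Your route is somewhat longer but more self-contained --- it avoids invoking the ccc property of measure algebras and the refinement-to-antichain construction, both of which the paper uses without proof --- and it establishes a cleaner general statement that implies the lemma; the paper's route buys brevity at the cost of leaning on those two standard facts.
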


\begin{proof}
By passing to complements it suffices to prove the following statement: if \(I\)
is an upward directed subset of \(\B\) all of whose elements have measure less than
\(\varepsilon\) then \(\sup I\) has measure at most \(\varepsilon\).

Using the fact that \(\B\) is complete, we can refine \(I\) to an antichain \(Z\) that
satisfies \(\sup I=\sup Z\). Since \(\B\) is ccc, \(Z\) must be countable. Applying the
upward directedness of \(I\) and the countable additivity of the measure, we can conclude
that \(m(\sup Z)\leq \varepsilon\).\qedhere
\end{proof}

We are finally ready to state and prove the preservation theorem we have been building
towards.

\begin{theorem}
\label{thm:RandomGivesgrMA}
Assume Martin's axiom holds in \(V\) and let \(V[r]\) be obtained by adding a random 
real to \(V\). Then \(V[r]\) satisfies the grounded Martin's axiom over the ground 
model \(V\).
\end{theorem}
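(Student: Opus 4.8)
The plan is to mirror the proof of Theorem~\ref{thm:CohenPreservesgrMA} for the Cohen case, replacing the finite-mixture termspace poset by its measure-theoretic analogue $\Termfin^\varepsilon(\Bnull,\Q)$ and exploiting the machinery of $\varepsilon$-deficient mixtures developed above. Writing $V[r]=V[G]$ for a generic $G\subseteq\Bnull$, note that $\Bnull$ is ccc, so $V[r]$ is a ccc forcing extension of $V$ with $\c^{V[r]}=\c^V$; thus $V$ already serves as the required ground model and only the forcing-axiom part needs verification. As in the Cohen argument we may assume $\mathrm{CH}$ fails in $V$, for otherwise $\mathrm{CH}$ holds in $V[r]$ and the full Martin's axiom (hence \grma) is automatic there. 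Now fix a poset $\Q\in V$ that is ccc in $V[r]$. Since $\Bnull$ is ccc, $\Q$ is ccc already in $V$, and because $V\models\ma+\lnot\mathrm{CH}$ it is in fact Knaster in $V$ (this is the role played by Lemma~\ref{lemma:grMACccKnaster} in the grounded setting, but here it follows from full \ma).

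Next comes the construction. Given a family $\mathcal{D}=\{D_\alpha;\alpha<\lambda\}\in V[r]$ of $\lambda<\c^{V[r]}=\c^V$ dense subsets of $\Q$, fix $\Bnull$-names $\dot{D}_\alpha\in V$ with $1\forces_{\Bnull}\text{``}\dot{D}_\alpha\subseteq\check{\Q}\text{ is dense''}$. For each $n<\omega$ consider $\R_n=\Termfin^{1/n}(\Bnull,\Q)$, which is Knaster in $V$ by Lemma~\ref{lemma:RandomDeficientMixKnaster} (using that $\Q$ is Knaster in $V$), together with the dense sets $E^n_\alpha\subseteq\R_n$ attached to the $\dot{D}_\alpha$ by Lemma~\ref{lemma:DeficientFiniteMixDenseTranslation}. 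Since $\lambda<\c^V$ and $V\models\ma$, an application of Martin's axiom in $V$ yields a filter $H_n\subseteq\R_n$ meeting every $E^n_\alpha$.

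The measure-theoretic heart of the argument is then to activate one such $H_n$ by the random real. For $\tau\in H_n$ let $\sup A_\tau$ be the support of a resolving antichain; because $H_n$ is a filter and the order on $\R_n$ refines resolving antichains, these supports form a downward directed family, so Lemma~\ref{lemma:MeasureOfSupOfDirectedInfs} provides $d_n=\inf_{\tau\in H_n}\sup A_\tau$ with $\mu(d_n)\ge 1-\tfrac1n$. Consequently the element $\bigvee_n d_n$ has measure one in $V$, and since $r$ is random over $V$ it lies in $d_n$ for some $n$; fix that $n$. Every $\tau\in H_n$ is then decided at $r$ (as $r\in d_n\le\sup A_\tau$), so $\tau^r\in\Q$ is defined, and if $\rho\le\tau$ in $\R_n$ then $\sup A_\rho\forces\rho\le\tau$ with $r\in\sup A_\rho$, whence $\rho^r\le\tau^r$. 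Thus $\{\tau^r;\tau\in H_n\}$ is directed and generates a filter on $\Q$. For each $\alpha$, choosing $\tau\in H_n\cap E^n_\alpha$, its witnessing antichain forces $\tau\in\dot{D}_\alpha$ on a support above $d_n\ni r$, so $\tau^r\in D_\alpha$; hence the generated filter is $\mathcal{D}$-generic, establishing \grma in $V[r]$ over $V$.

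The main obstacle is exactly the partiality of the $\varepsilon$-deficient mixtures. In contrast to the Cohen case, where resolving antichains can be taken to be full levels of $\C$ and $\tau^c$ is always defined, a condition $\tau\in\Termfin^\varepsilon(\Bnull,\Q)$ is interpreted by $r$ only when $r$ falls under its resolving antichain, an event of measure merely $>1-\varepsilon$. The delicate point is therefore to guarantee that a \emph{single} random real simultaneously lies in the domains of all the conditions of one generic filter and of all the relevant dense-set witnesses; this is what dictates the combined use of Lemma~\ref{lemma:MeasureOfSupOfDirectedInfs} with the $\varepsilon=1/n$ exhaustion of measure one. Two subsidiary points require care and should be handled explicitly: first, that the supports $\sup A_\tau$ are genuinely downward directed so that Lemma~\ref{lemma:MeasureOfSupOfDirectedInfs} applies (this rests on a coherent choice of resolving antichains along the filter $H_n$); and second, that the witnessing antichains coming from Lemma~\ref{lemma:DeficientFiniteMixDenseTranslation} also have support above $d_n$, which is arranged by folding those antichains into the directed family defining $d_n$.
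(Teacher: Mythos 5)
Your proposal is correct, and while it uses exactly the same machinery as the paper --- the Knaster transfer of lemma~\ref{lemma:RandomDeficientMixKnaster}, the dense-set translation of lemma~\ref{lemma:DeficientFiniteMixDenseTranslation}, and the directed-infimum lemma~\ref{lemma:MeasureOfSupOfDirectedInfs} --- it assembles them into a genuinely different endgame. The paper argues by contradiction: it extracts a condition \(b_0\in\Bnull\) forcing the failure of \grma for a specific \(\Q\) and \(\mathcal{D}\), calibrates a \emph{single} deficiency \(\varepsilon<\mu(b_0)\), applies \ma in \(V\) once to \(\Termfin^\varepsilon(\Bnull,\Q)\), and then needs only the measure-overlap observation \(\mu(d_{\mathcal{A}})+\mu(b_0)>1\) to reach a contradiction; it never tracks where the random real actually lands. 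You argue directly: running the construction for every \(\varepsilon=1/n\) gives conditions \(d_n\) with \(\mu(d_n)\ge 1-1/n\), so \(\bigvee_n d_n=1\) in \(\Bnull\), the countable family \(\{d_n;n<\omega\}\in V\) is predense, the generic ultrafilter determined by \(r\) contains some \(d_n\), and evaluating \(H_n\) at \(r\) exhibits the \(\mathcal{D}\)-generic filter explicitly. The paper's route is more economical (one termspace poset, one application of \ma, no exhaustion or predensity step); yours is constructive and shows slightly more, namely that a single countable family of filters prepared in \(V\) works uniformly for every real random over \(V\).

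The delicate point you flag at the end --- that the resolving antichains must be chosen coherently along \(H_n\), both so that lemma~\ref{lemma:MeasureOfSupOfDirectedInfs} applies and so that the antichains witnessing the order relations and membership in the \(E^n_\alpha\) all have supports above \(d_n\) --- is genuine, but it is precisely the point the paper's own proof elides when it asserts that an arbitrary choice \(\{A_\tau;\tau\in H\}\) ``satisfies the hypotheses'' of that lemma. The uniform repair works for both arguments: pass to the poset of pairs \((\tau,A)\), where \(A\) is a resolving antichain for \(\tau\), ordered by requiring the antichain of the stronger condition to refine that of the weaker one together with \(\sup A_\tau\forces\tau\le\sigma\); the proofs of lemmas~\ref{lemma:RandomDeficientMixKnaster} and~\ref{lemma:DeficientFiniteMixDenseTranslation} already produce refining witnesses, so they hold verbatim for the pair poset, and a filter there carries its own downward-directed family of antichains. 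So this is a presentational matter shared with the paper, not a gap in your argument.
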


\begin{proof}
If \(\mathrm{CH}\) holds in \(V\) then it holds in \(V[r]\) as well, implying that
\(V[r]\) satisfies the full Martin's axiom. We may therefore assume without loss of 
generality that \(V\) satisfies \(\ma+\lnot\mathrm{CH}\).

Assume toward a contradiction that \(V[r]\) does not satisfy the grounded Martin's
axiom over \(V\). Then there exist
a poset \(\Q\in V\) which is ccc in \(V[r]\), a cardinal \(\kappa<\c\) and a collection
\(\mathcal{D}=\set{D_\alpha}{\alpha<\kappa}\in V[r]\) of dense subsets of \(\Q\) such that
\(V[r]\) has no \(\mathcal{D}\)-generic filters on \(\Q\). There must be a condition
\(b_0\in\Bnull\) forcing this. Let \(\varepsilon<\mu(b_0)\).

Since \(\Bnull\) is ccc, \(\Q\) must be ccc in \(V\) and, since \(\ma+\lnot\mathrm{CH}\) 
holds there, is also Knaster there. 
Thus \(\Termfin^\varepsilon(\Bnull,\Q)\in V\) is Knaster by 
lemma~\ref{lemma:RandomDeficientMixKnaster}. We now choose names 
\(\dot{D}_\alpha\)
for the dense sets \(D_\alpha\) such that \(\Bnull\) forces that the \(\dot{D}_\alpha\) are dense
subsets of \(\Q\). Then, by lemma~\ref{lemma:DeficientFiniteMixDenseTranslation}, the
\(E_\alpha\) are dense in \(\Termfin^\varepsilon(\Bnull,\Q)\), where \(E_\alpha\) is defined
from \(\dot{D}_\alpha\) as in that lemma. We can thus obtain, using Martin's axiom 
in \(V\), a filter
\(H\) on \(\Termfin^\varepsilon(\Bnull,\Q)\) which meets all of the \(E_\alpha\).

Pick a resolving antichain \(A_\tau\) for each \(\tau\in H\) and consider 
\(\mathcal{A}=\set{A_\tau}{\tau\in H}\). This family satisfies the hypotheses of
lemma~\ref{lemma:MeasureOfSupOfDirectedInfs}, whence we can conclude that
\(\mu(d_{\mathcal{A}})\geq 1-\varepsilon\), where \(d_{\mathcal{A}}\) is defined as in
that lemma.
Interpreting \(H\) as a \(\Bnull\)-name for a subset of \(\Q\), we now observe that 
\[d_{\mathcal{A}}\forces_{\Bnull}
\text{``\(H\) generates a \(\dot{\mathcal{D}}\)-generic filter on \(\Q\)''}\]
Now, crucially, since we have chosen \(\varepsilon<\mu(b_0)\), the conditions \(b_0\) and
\(d_H\) must be compatible in \(\Bnull\). But this is a contradiction, since they
force opposing statements. Therefore \(V[r]\) really does satisfy
the grounded Martin's axiom over \(V\).
\end{proof}

\begin{corollary}
\label{cor:grMAConsistentWithNoSuslinTrees}
The grounded Martin's axiom is consistent with there being no Suslin trees.
\end{corollary}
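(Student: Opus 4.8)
The plan is to combine theorem~\ref{thm:RandomGivesgrMA} with the fact that, in sharp contrast to Cohen forcing, adding a single random real does not create a Suslin tree. First I would fix a model $V$ in which $\ma + \lnot\mathrm{CH}$ holds; such a model is produced by the usual Solovay--Tennenbaum iteration over any model of GCH, forcing $\c$ to be some cardinal above $\omega_1$. In this model $\ma_{\aleph_1}$ holds, so by the classical fact that Martin's axiom implies Suslin's hypothesis there are no Suslin trees in $V$.

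Next I would pass to the extension $V[r]$ obtained by adding one random real. By theorem~\ref{thm:RandomGivesgrMA} the model $V[r]$ satisfies \grma\ over the ground model $V$, and since $\c^{V[r]}=\c^V>\aleph_1$ the hypothesis $\lnot\mathrm{CH}$ persists, so we are not in the trivial case where the full axiom follows. It therefore remains only to verify that $V[r]$ contains no Suslin trees. As $V$ has none, this reduces to the assertion that the random algebra $\Bnull$ does not add a Suslin tree. This is exactly where the random case departs from the Cohen case: whereas adding a Cohen real \emph{produces} a Suslin tree by the result of Shelah~\cite{Shelah1984:TakeSolovayInaccessibleAway} exploited in corollary~\ref{cor:grMAConsistentWithSuslinTrees}, the measure-theoretic structure of $\Bnull$, and in particular its $\omega^\omega$-bounding property, blocks the emergence of new Suslin trees.

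The main obstacle is thus establishing that $\Bnull$ adds no Suslin tree. I would handle this by a reflection argument: given a $\Bnull$-name $\dot T$ for a tree on $\omega_1$, the countable chain condition of $\Bnull$ lets one capture each level and each potential maximal antichain of $\dot T$ by a countable subalgebra, so that a purported Suslin tree of $V[r]$ reflects enough of its structure into $V$; one then uses $\ma_{\aleph_1}$ in $V$ to locate an uncountable antichain (or branch), contradicting Suslinness. Alternatively, and more economically, one may simply invoke the known preservation of Suslin's hypothesis under random real forcing. Either way, assembling the three ingredients, namely $\ma_{\aleph_1}$ in $V$, \grma\ in $V[r]$ from theorem~\ref{thm:RandomGivesgrMA}, and the non-creation of Suslin trees by $\Bnull$, yields a model of \grma\ in which no Suslin trees exist, which is precisely the statement of the corollary.
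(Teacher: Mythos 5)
Your overall route is the same as the paper's: start with a model \(V\) of \(\ma+\lnot\mathrm{CH}\), add one random real, and combine theorem~\ref{thm:RandomGivesgrMA} with the fact that this last step yields a model with no Suslin trees. The paper's proof is exactly this, citing Laver~\cite{Laver1987:RandomRealsSuslinTrees} for the final ingredient.

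The problem is with how you justify that final ingredient. It is not a theorem of ZFC that the random algebra adds no Suslin trees, and in particular this does not follow from its measure-theoretic structure or its \(\omega^\omega\)-bounding property: by a result of Hirschorn, under CH adding a single random real \emph{does} add a Suslin tree (and since CH is consistent with Suslin's hypothesis, Suslin's hypothesis is not preserved by random forcing in general). So both your primary justification (``the bounding property blocks the emergence of new Suslin trees'') and your fallback (``the known preservation of Suslin's hypothesis under random real forcing'') are false as unconditional statements about \(\Bnull\). What is true, and what the corollary needs, is Laver's theorem: if \(\ma_{\aleph_1}\) holds in the \emph{ground model}, then the random real extension has no Suslin trees. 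The hypothesis \(\ma_{\aleph_1}\) in \(V\) is thus doing essential work in this step --- not merely ensuring that \(V\) itself has no Suslin trees --- which your reflection sketch half-acknowledges by invoking \(\ma_{\aleph_1}\) in \(V\). That sketch points at the right ingredients (a \(\Bnull\)-name \(\dot{T}\), the ccc, and an application of \(\ma_{\aleph_1}\) in \(V\) to a poset manufactured from the name), and Laver's actual argument does have this shape, but as written it is a gesture rather than a proof. Replacing this portion of your argument by a citation of Laver's theorem makes the proof correct and identical to the paper's.
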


\begin{proof}
If Martin's axiom holds in \(V\) and \(r\) is random over \(V\) then \(V[r]\) satisfies
the grounded Martin's axiom by the above theorem and also has no Suslin trees by a 
theorem of Laver~\cite{Laver1987:RandomRealsSuslinTrees}.
\end{proof}

Unfortunately, the employed techniques do not seem to yield the full preservation result 
as in theorem~\ref{thm:CohenPreservesgrMA}. If \(V\) satisfied merely the grounded
Martin's axiom over a ground model \(W\) we would have to argue that the
poset \(\Termfin^\varepsilon(\Bnull,\Q)\) as computed in \(V\) was actually an element of \(W\), so
that we could apply \grma to it. But we cannot expect this to be true if passing
from \(W\) to \(V\) added reals; not only will the termspace posets be computed
differently in \(W\) and in \(V\), even the random Boolean algebras of these two
models will be different. Still, these considerations lead us to the following
improvement to the theorem above.

\begin{theorem}
\label{thm:RandomPreservesDistributivegrMA}
Assume the grounded Martin's axiom holds in \(V\) over the ground model \(W\)
via a forcing which is countably distributive (or, equivalently, does not add reals),
and let \(V[r]\) be obtained by adding a random real to \(V\). Then \(V[r]\) also
satisfies the grounded Martin's axiom over the ground model \(W\).
\end{theorem}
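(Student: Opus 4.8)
The plan is to run the argument of theorem~\ref{thm:RandomGivesgrMA} almost verbatim, with two modifications made possible by the hypothesis that the forcing from \(W\) to \(V\) adds no reals. As there, if \(\mathrm{CH}\) holds in \(V\) it persists to \(V[r]\) and the full Martin's axiom holds, so I may assume \(\neg\mathrm{CH}\). I first check that \(V[r]\) is a ccc extension of \(W\). Writing \(V=W[G]\) for a \(W\)-generic \(G\) on a ccc poset \(\P\in W\), the random real \(r\) is added by \(\Bnull\) as computed in \(V\); but since \(\P\) adds no reals we have \(\Bnull^V=\Bnull^W\), so this forcing is a ground model object \(\Bnull\in W\) and \(V[r]=W[G*r]\) arises from the two-step iteration \(\P*\check{\Bnull}\cong\P\times\Bnull\). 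A measure algebra is ccc in any model containing it and its measure, so \(\P\) forces \(\check{\Bnull}\) to be ccc and the iteration is ccc; thus \(V[r]\) is a ccc extension of \(W\), as required by the definition of \grma.

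For the forcing-axiom part, fix a poset \(\Q\in W\) that is ccc in \(V[r]\). Since \(\Bnull\) is ccc over \(V\), the poset \(\Q\) is already ccc in \(V\), and lemma~\ref{lemma:grMACccKnaster} (using \(\neg\mathrm{CH}\)) shows that \(\Q\) is in fact Knaster in \(V\). The crucial point, and the one flagged in the discussion before the theorem, is that because \(\P\) adds no reals the random algebra and the derived termspace poset are absolute between \(W\) and \(V\): for the \(\varepsilon\) chosen below, the poset \(\R=\Termfin^\varepsilon(\Bnull,\Q)\) is computed identically in \(W\) and in \(V\), so in particular \(\R\in W\). By lemma~\ref{lemma:RandomDeficientMixKnaster} this \(\R\) is Knaster, hence ccc, in \(V\). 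This is exactly what is needed to feed \(\R\) into the grounded Martin's axiom available in \(V\).

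Now I argue by contradiction, following theorem~\ref{thm:RandomGivesgrMA}. Suppose \(\mathcal{D}=\{D_\alpha;\alpha<\kappa\}\in V[r]\) is a family of \(\kappa<\c\) many dense subsets of \(\Q\) with no \(\mathcal{D}\)-generic filter, and let \(b_0\in\Bnull\) force this, choosing \(\varepsilon<\mu(b_0)\). Pick \(\Bnull\)-names \(\dot{D}_\alpha\in V\) for the \(D_\alpha\) and let \(E_\alpha\subseteq\R\) be the associated dense sets from lemma~\ref{lemma:DeficientFiniteMixDenseTranslation}; these lie in \(V\) and number fewer than \(\c^V=\c^{V[r]}\). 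Applying \grma in \(V\) to the poset \(\R\in W\), which is ccc in \(V\), and to the family \(\{E_\alpha;\alpha<\kappa\}\), I obtain a filter \(H\subseteq\R\) in \(V\) meeting every \(E_\alpha\). Choosing resolving antichains \(A_\tau\) for \(\tau\in H\) and applying lemma~\ref{lemma:MeasureOfSupOfDirectedInfs} to \(\mathcal{A}=\{A_\tau;\tau\in H\}\) gives \(\mu(d_{\mathcal{A}})\geq 1-\varepsilon\), and \(d_{\mathcal{A}}\) forces that \(H\) generates a \(\dot{\mathcal{D}}\)-generic filter on \(\Q\). Since \(\varepsilon<\mu(b_0)\) the conditions \(b_0\) and \(d_{\mathcal{A}}\) are compatible in \(\Bnull\), yet they force contradictory statements, the desired contradiction.

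The main obstacle is conceptual rather than computational: it is the absoluteness observation in the second paragraph. Everything else is a transcription of the proof of theorem~\ref{thm:RandomGivesgrMA}, with lemma~\ref{lemma:grMACccKnaster} standing in for the full Martin's axiom and with \grma in \(V\) invoked in place of \ma. The hypothesis that \(W\subseteq V\) adds no reals is used precisely to guarantee that \(\Bnull\), and therefore \(\R\), genuinely belongs to \(W\), so that the grounded axiom in \(V\) may legitimately be applied to it.
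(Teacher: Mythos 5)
Your proposal is correct and takes essentially the same route as the paper's own proof: reduce everything to the argument of theorem~\ref{thm:RandomGivesgrMA}, with lemma~\ref{lemma:grMACccKnaster} standing in for the full Martin's axiom and with the membership \(\Termfin^\varepsilon(\Bnull,\Q)\in W\) doing the real work when \grma{} in \(V\) is invoked. The one point you assert but the paper actually verifies is that absoluteness claim: no new reals gives the same Borel sets and, by inner regularity of the measure, the same null ideal, hence \(\Bnull^W=\Bnull^V\); and since \(\Bnull\) is ccc, every name for an element of \(\Q\) is equivalent to a countable nice name, which the countably distributive forcing cannot have added, so \(\Termfin^\varepsilon(\Bnull,\Q)\) is computed identically in \(W\) and in \(V\).
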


\begin{proof}
By assumption there is a ccc countably distributive poset \(\P\in W\) such that
\(V=W[G]\) for some \(W\)-generic \(G\subseteq \P\). Since \(W\) and \(V\) thus have
the same reals, they must also have the same Borel sets. Furthermore, since the
measure is inner regular, a Borel set having positive measure is witnessed by a
positive measure compact (i.e.\ closed) subset, which means that \(W\) and \(V\)
agree on which Borel sets are null. It follows that the random Boolean algebras
as computed in \(W\) and in \(V\) are the same.

Now let \(0<\varepsilon<1\) and let \(\Q\in W\) be a poset which is ccc in \(V\).
We claim that \(V\) and \(W\) compute the poset \(\Termfin^\varepsilon(\Bnull,\Q)\)
the same. Clearly any \(\varepsilon\)-deficient finite mixture in \(W\)
is also such in \(V\), so we really only need to see that \(V\) has no new such
elements. But \(\Bnull\) is ccc, which means that elements of \(\Q\) have countable
nice names and these could not have been added by \(G\). So \(V\) and \(W\) in fact
agree on the whole termspace poset \(\Term(\Bnull,\Q)\), and therefore also
on the \(\varepsilon\)-deficient finite mixtures.

The rest of the proof proceeds as in theorem~\ref{thm:RandomGivesgrMA}. The key
step there, where we apply Martin's axiom to the poset 
\(\Termfin^\varepsilon(\Bnull,\Q)\), goes through, since we have shown that
this poset is in \(W\) and we may therefore apply \grma to it.
\end{proof}
\noindent Just as in theorem~\ref{thm:CohenPreservesgrMA} we may replace the grounded Martin's
axiom in the above theorem with its local version.

It is not immediately obvious that the hypothesis of the above theorem is
ever satisfied in a nontrivial way, that is, whether \grma can ever hold via a
\emph{nontrivial} countably distributive extension. The following theorem,
due to Larson, shows
that this does happen and gives yet another construction of a model of \grma.
For the purposes of this theorem we shall call a Suslin tree \(T\) \emph{homogeneous}
if for any two nodes \(p,q\in T\) of the same height, the cones below them are
isomorphic. Note that homogeneous Suslin trees may be constructed from \(\diamondsuit\).

\begin{theorem}[Larson~\cite{Larson1999:SmaxVariationForOneSuslinTree}]
\label{thm:grMAAfterSuslinTree}
Let \(\kappa>\omega_1\) be a regular cardinal satisfying \(\kappa^{<\kappa}=\kappa\)
and let \(T\) be a homogeneous Suslin tree. Then there is a ccc poset \(\P\) such that, given a
\(V\)-generic \(G\subseteq\P\), the tree \(T\) remains Suslin in \(V[G]\) and, if
\(b\) is a generic branch through \(T\), the extension \(V[G][b]\) satisfies
\(\c=\kappa\) and the grounded Martin's axiom over the ground model \(V[G]\).
\end{theorem}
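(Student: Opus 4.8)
The plan is to build \(\P\) as a finite-support ccc iteration over \(V\) of length \(\kappa\) which forces a version of Martin's axiom restricted to exactly those posets that do not interfere with the Suslinity of \(T\), while arranging that \(T\times\P\) remains ccc so that \(T\) survives as a Suslin tree. Concretely, using \(\kappa^{<\kappa}=\kappa\) I would fix a bookkeeping function enumerating (names for) all posets in \(H_\kappa\) cofinally often, and at stage \(\alpha\) force with the bookkept name \(\dot\Q_\alpha\) provided \(V[G_\alpha]\models\text{``}T\times\dot\Q_\alpha\text{ is ccc''}\), and with trivial forcing otherwise. By lemma~\ref{lemma:grMASmallPosets} it suffices to treat posets of size less than \(\c\), so restricting attention to \(H_\kappa\) loses nothing. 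Since \(\Add(\omega,1)\) is countable, \(T\times\Add(\omega,1)\) is ccc, so Cohen forcing is always admitted and appears cofinally; together with the usual nice-name count this gives \(\c=\kappa\) in \(V[G]\).

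The first thing to verify is that this iteration preserves the Suslinity of \(T\). I would argue by induction that \(T\times\P_\alpha\) is ccc. Reinterpreting the iteration in \(V[b]\), each admitted iterand \(\Q_\alpha\) is ccc in \(V[G_\alpha][b]=V[G_\alpha]^T\) (this is precisely the condition that \(T\times\Q_\alpha\) is ccc), and a finite-support iteration of ccc posets is ccc; hence \(\P\) is ccc in \(V[b]\), that is, \(T\Vdash\text{``}\check\P\text{ is ccc''}\), equivalently \(T\times\P\) is ccc. By the same principle used in theorem~\ref{thm:grMASeparateFragment}, namely that a poset whose product with \(T\) is ccc preserves \(T\)'s Suslinity, \(T\) is Suslin in \(V[G]\) and \(\P\) is ccc there. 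Now, since \(T\) is Suslin it is \(\omega\)-distributive as a forcing: given a name for a real and a condition \(p\), the maximal antichains of minimal nodes above \(p\) deciding each finite part of the real are countable by Suslinity, hence bounded below some level \(\alpha<\omega_1\), and any node above \(p\) at level \(\alpha\) decides the whole real. Thus \(T\) adds no reals, \(\c=\kappa\) is preserved into \(V[G][b]\), and \(V[G][b]\) is a ccc---indeed countably distributive---extension of \(V[G]\).

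It remains to verify the grounded Martin's axiom in \(V[G][b]\) over the ground model \(V[G]\); by lemma~\ref{lemma:grMASmallPosets} I only need the restriction \((\ast)\) to posets of size \({<}\,\c\). So let \(\Q\in V[G]\) with \(|\Q|<\kappa\) be ccc in \(V[G][b]\) and let \(\mathcal D=\{D_\xi;\xi<\lambda\}\in V[G][b]\) be a family of \(\lambda<\kappa\) dense subsets of \(\Q\). From \(\Q\) ccc in \(V[G][b]=V[G]^T\) and \(T\) ccc I get that \(T\times\Q\) is ccc in \(V[G]\), and in particular \(\Q\) is ccc in \(V[G]\). The key reduction is to replace each \(D_\xi\) by a maximal antichain \(A_\xi\subseteq D_\xi\): since \(\Q\) is ccc in \(V[G][b]\) each \(A_\xi\) is countable, and since \(T\) adds no reals each \(A_\xi\) already lies in \(V[G]\); moreover maximality of \(A_\xi\) in \(\Q\) is absolute, so \(A_\xi\) is a maximal antichain of \(\Q\) in every intermediate model containing it. As the \(A_\xi\) are countable subsets of \(\Q\in H_\kappa\) and \(\kappa\) is regular with \(\lambda<\kappa\), all of them appear by some stage \(\delta<\kappa\). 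Using the cofinal bookkeeping I pick a stage \(\gamma\ge\delta\) at which the iteration forces with a copy of \(\Q\); this is legitimate because \(T\times\Q\) ccc in \(V[G]\) descends to \(V[G_\gamma]\), so \(\Q\) is admitted there. The stage-\(\gamma\) generic filter \(H\) then meets every maximal antichain of \(\Q\) lying in \(V[G_\gamma]\), in particular every \(A_\xi\), hence every \(D_\xi\); and \(H\in V[G]\subseteq V[G][b]\) is the required \(\mathcal D\)-generic filter.

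The main obstacle is the interlocking induction of the second paragraph: one must show that admitting only the ``\(T\)-friendly'' iterands genuinely keeps \(T\times\P_\alpha\) ccc through all limit stages while still capturing, by a late enough stage, every small poset relevant to the grounded axiom together with all the (ground-model, countable) antichains to which its \(V[G][b]\)-dense sets reduce. The delicate point is that the sets in \(\mathcal D\) live in \(V[G][b]\) and depend on the branch \(b\), yet the reduction to countable maximal antichains---valid precisely because \(T\) is \(\omega\)-distributive---pulls each of them back into \(V[G]\), where the cofinal bookkeeping can handle them. Homogeneity of \(T\) is what guarantees that such a Suslin tree exists in the ground model (it is built from \(\diamondsuit\)) and that the branch forcing is weakly homogeneous, so that no particular choice of \(b\) is special.
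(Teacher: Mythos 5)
Your construction and most of your verification track the paper's proof closely: the same length-\(\kappa\) finite-support iteration admitting only iterands whose product with \(T\) stays ccc, the same induction showing that \(T\times\P_\alpha\) remains ccc (equivalently, that \(\P\) is ccc in \(V[b]\)), so that \(T\) stays Suslin, and the same use of the countable distributivity of \(T\) to replace each dense set \(D_\xi\in V[G][b]\) by a countable maximal antichain \(A_\xi\subseteq D_\xi\) lying already in \(V[G]\). Your endgame differs only in packaging: you re-enter the iteration at a late stage \(\gamma\), using the regularity of \(\kappa\) in \(V[G][b]\) to bound the stages at which the \(A_\xi\) appear, whereas the paper first isolates the statement ``\(V[G]\) satisfies Martin's axiom for posets in \(H_\kappa^{V[G]}\) whose product with \(T\) is ccc'' and then covers the family coming from \(V[G][b]\) by a same-sized family in \(V[G]\). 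Both versions of that step are fine.

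There is, however, a genuine gap at the one point where the homogeneity hypothesis must do its work. You assert: ``From \(\Q\) ccc in \(V[G][b]\) and \(T\) ccc I get that \(T\times\Q\) is ccc in \(V[G]\).'' This implication is false in general. By the product lemma, \(T\times\Q\) is ccc in \(V[G]\) if and only if \(T\) is ccc and \emph{every} condition of \(T\) forces \(\check{\Q}\) to be ccc; but ``\(\Q\) is ccc in \(V[G][b]\)'' only tells you that \emph{some} condition on the generic branch \(b\) forces \(\check{\Q}\) to be ccc. For a non-homogeneous tree these can genuinely differ: if some \(q\in T\) incompatible with \(b\) forces \(\check{\Q}\) to have an uncountable antichain, then \(T\rest q\times\Q\), and hence \(T\times\Q\), is not ccc, even though \(\Q\) is ccc in \(V[G][b]\). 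The paper closes exactly this gap using homogeneity: if one node forces \(\check{\Q}\) to be ccc then, extending any other node to the same level and using the isomorphism of cones, every node forces it, so \(T\forces\text{``}\check{\Q}\text{ is ccc''}\) and \(T\times\Q\) is ccc in \(V[G]\). Your closing remark that homogeneity ensures ``no particular choice of \(b\) is special'' gestures at this, but you misattribute homogeneity's role (it is not about the existence of the tree, which is a hypothesis), and in the argument itself the inference is presented as automatic. Without this step your verification of \grma collapses: you can no longer conclude that \(T\times\Q\) is ccc in \(V[G_\gamma]\), so you cannot guarantee that the bookkeeping ever actually admits \(\Q\) into the iteration, and no stage produces the required filter.
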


\begin{proof}
The idea is to attempt to force \(\ma +\c=\kappa\), but only using posets that
preserve the Suslin tree \(T\). More precisely, fix a well-order \(\triangleleft\)
of \(H_\kappa\) of length \(\kappa\) and define \(\P\) as the length \(\kappa\) finite
support iteration which forces at stage \(\alpha\) with the next \(\P_\alpha\)-name
for a poset \(\dot{\Q}_\alpha\) such that \(\P_\alpha\) forces that 
\(T\times\dot{\Q}_\alpha\) is ccc.

Let \(G\subseteq\P\) be \(V\)-generic. It is easy to see by induction
that \(\P_\alpha\times T\) is ccc for all \(\alpha\leq\kappa\); 
the successor case is clear from the definition of the iteration \(\P\)
and the limit case follows by a \(\Delta\)-system argument. We can thus conclude that
\(T\) remains a Suslin tree in \(V[G]\). Furthermore, standard arguments show that
there are exactly \(\kappa\) many reals in \(V[G]\) and that this extension satisfies
Martin's axiom for small posets which preserve \(T\), i.e.\ those \(\Q\) such that
\(\Q\in H_\kappa^{V[G]}\) and \(\Q\times T\) is ccc.

Finally, let us see that adding a branch
\(b\) through \(T\) over \(V[G]\) yields a model of the grounded Martin's axiom over
\(V[G]\). Thus let \(\Q\in V[G]\) be a poset which is ccc in \(V[G][b]\) and has size
less than \(\kappa\) there. There is a condition in \(T\) forcing that \(\Q\) is ccc,
so by our homogeneity assumption \(T\) forces this, meaning that \(\Q\times T\) is ccc
in \(V[G]\).
The key point now is that, since \(T\) is countably
distributive, all of the maximal antichains (and open dense subsets) of \(\Q\) in 
\(V[G][b]\) are already in \(V[G]\). Furthermore, any collection \(\mathcal{D}\) 
of less than \(\kappa\)
many of these in \(V[G][b]\) can be covered by some \(\widetilde{\mathcal{D}}\)
in \(V[G]\) of the same size. Our observation from the previous paragraph then
yields a \(\widetilde{\mathcal{D}}\)-generic filter for \(\Q\) in \(V[G]\)
and therefore \(V[G][b]\) satisfies \grma over \(V[G]\) by lemma~\ref{lemma:grMASmallPosets}.
\end{proof}

Starting from a Suslin tree with a stronger homogeneity property, Larson
also shows that there are no Suslin trees in the extension \(V[G][b]\) above.
This gives an alternative proof of corollary~\ref{cor:grMAConsistentWithNoSuslinTrees}.

From the argument of theorem~\ref{thm:grMAAfterSuslinTree} we can actually extract 
another preservation result for \grma.

\begin{theorem}
\label{thm:SuslinPreservesgrMA}
Assume the grounded Martin's axiom holds in \(V\) over the ground model \(W\)
and let \(T\in V\) be a Suslin tree. If \(b\subseteq T\) is a generic branch
then \(V[b]\) also satisfies the grounded Martin's axiom over the ground model \(W\).
\end{theorem}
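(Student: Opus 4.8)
The plan is to verify the two clauses in the definition of \grma directly for \(V[b]\) over \(W\): that \(V[b]\) is a ccc forcing extension of \(W\), and that it satisfies the conclusion of Martin's axiom for every \(\Q\in W\) that remains ccc in \(V[b]\). Write \(V=W[G]\) for a \(W\)-generic \(G\subseteq\P\) with \(\P\in W\) ccc. Since \(T\in V\) is a Suslin tree it is both ccc and countably distributive in \(V\), so with a \(\P\)-name \(\dot T\) for \(T\) the two-step iteration \(\P*\dot T\) is ccc in \(W\) and \(V[b]=W[G*b]\) is a ccc extension of \(W\); this settles the grounding clause. Countable distributivity of \(T\) also means \(V\) and \(V[b]\) have the same reals and hence the same continuum, and we may assume \(\neg\mathrm{CH}\), since under CH the axiom is vacuous (the Rasiowa--Sikorski lemma meets countably many dense sets in any poset).

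Now fix \(\Q\in W\) that is ccc in \(V[b]\) together with a family \(\mathcal D=\langle D_\alpha;\alpha<\lambda\rangle\in V[b]\) of \(\lambda<\c\) dense subsets of \(\Q\). An uncountable antichain of \(\Q\) in \(V\) would survive into \(V[b]\), so \(\Q\) is already ccc in \(V\). Working in \(V[b]\), choose for each \(\alpha\) a maximal antichain \(A_\alpha\subseteq D_\alpha\); as \(\Q\) is ccc in \(V[b]\), each \(A_\alpha\) is countable, and countable distributivity of \(T\) forces the \(\omega\)-sequence enumerating \(A_\alpha\) (hence \(A_\alpha\) itself) to lie already in \(V\). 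Since no new elements of \(\Q\) are added, each \(A_\alpha\) is a maximal antichain of \(\Q\) in \(V\) as well, and a filter meeting \(A_\alpha\) automatically meets \(D_\alpha\); it therefore suffices to produce, in \(V[b]\), a filter meeting every \(A_\alpha\).

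The crux is to reflect the family \(\langle A_\alpha;\alpha<\lambda\rangle\), which lives in \(V[b]\) and need not belong to \(V\), down to a \emph{small} family in \(V\). Fix a \(T\)-name \(\langle\dot A_\alpha;\alpha<\lambda\rangle\in V\) for it. The decisive observation is that each \(A_\alpha\) is decided outright by a single node of \(T\): the set of \(t\in T\) forcing \(\dot A_\alpha=\check X\) for some \(X\) is dense, being contained in the intersection of the countably many dense open sets that decide the successive values of an enumeration of \(\dot A_\alpha\) (countable distributivity again), so by genericity some \(t_\alpha\in b\) decides \(\dot A_\alpha\). Hence, setting \(\widetilde{\mathcal A}=\{X:\exists\alpha<\lambda\;\exists t\in T\;(t\forces_T\dot A_\alpha=\check X)\}\), we obtain a family \(\widetilde{\mathcal A}\in V\) with every \(A_\alpha\in\widetilde{\mathcal A}\) and each member a maximal antichain of \(\Q\). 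Because each name is decided by at most one value per condition and \(|T|=\aleph_1\), we have \(|\widetilde{\mathcal A}|\le\lambda\cdot\aleph_1<\c\) using \(\neg\mathrm{CH}\). Replacing each \(X\in\widetilde{\mathcal A}\) by its downward closure gives a family \(\widetilde{\mathcal D}\in V\) of fewer than \(\c^V=\c^{V[b]}\) dense subsets of \(\Q\). Applying \grma in \(V\) to the poset \(\Q\in W\) and the family \(\widetilde{\mathcal D}\) yields a filter \(H\in V\subseteq V[b]\) meeting every member of \(\widetilde{\mathcal D}\), hence, by upward closure, every \(A_\alpha\), and therefore every \(D_\alpha\). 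Thus \(H\) is the required \(\mathcal D\)-generic filter and \grma holds in \(V[b]\) over \(W\). The main obstacle is precisely this covering step: the essential trick is that countable distributivity allows a single Suslin-tree node to decide each countable antichain, which caps the number of possible realizations by \(|T|=\aleph_1<\c\) and keeps the reflected family small enough to feed into \grma in \(V\).
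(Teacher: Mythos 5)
Your proof is correct and follows essentially the same route as the paper, which argues (citing its Larson-style theorem) that countable distributivity of \(T\) puts every maximal antichain of \(\Q\) back in \(V\), that the family of such antichains in \(V[b]\) can be covered by a small family in \(V\) (your name-counting bound \(\lambda\cdot\aleph_1<\c\) is exactly the mechanism behind that covering claim), and that \grma in \(V\) then supplies the filter. The only quibble is a harmless wording slip: the set of nodes deciding \(\dot A_\alpha\) \emph{contains} (rather than ``is contained in'') the intersection of the countably many dense open sets deciding the values of an enumeration.
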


\begin{proof}
The point is that, just as in the proof of theorem~\ref{thm:grMAAfterSuslinTree},
forcing with \(T\) does not add any new maximal antichains to posets from \(W\)
that remain ccc in \(V[b]\) and any collection of these antichains in \(V[b]\) can 
be covered by a collection of the same size in \(V\).
\end{proof}

If \grma holds over a ground model that reals have been added to, it seems harder
to say anything about preservation after adding a further random real. Nevertheless,
we fully expect the answers to the following question to be positive.

\begin{question}[open]
Does adding a random real to a model of \(\grma\) preserve \(\grma\)? Does it preserve
it with the same witnessing ground model?
\end{question}

Generalizations of theorems~\ref{thm:CohenPreservesgrMA} and~\ref{thm:RandomGivesgrMA}
to larger numbers of reals added seem the natural next step in the exploration of
the preservation phenomena of the grounded Martin's axiom. 
Such preservation results would also help in determining the compatibility
of \grma with various configurations of the cardinal characteristics on the left
side of Cichoń's diagram.
The constructions \(\Termfin\) and \(\Termfin^\varepsilon\)
seem promising, but obtaining a good chain condition in any case at all, except those shown,
has proven difficult.

\section{The grounded proper forcing axiom}

We can, of course, also consider grounded versions of other forcing axioms. We
define one and note that similar definitions can be made for
\(\mathrm{grSPFA},\mathrm{grMM}\) and so on.

\begin{definition}
The \emph{grounded proper forcing axiom} (\grpfa) asserts that \(V\) is a forcing
extension of some ground model \(W\) by a proper poset and \(V\) satisfies the
conclusion of the proper forcing axiom for posets \(\Q\in W\) which are still
proper in \(V\).
\end{definition}

\begin{theorem}
\label{thm:CanonicalgrPFA}
Let \(\kappa\) be supercompact. Then there is a proper forcing extension that
satisfies \(\grpfa+\c=\kappa=\omega_2\) and in which \pfa, and even \ma, fails.
\end{theorem}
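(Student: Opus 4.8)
The plan is to adapt the construction of Theorem~\ref{thm:CanonicalgrMA} to the proper-forcing setting, replacing the Solovay--Tennenbaum iteration with a Baumgartner-style countable-support iteration that only ever forces with ground model posets. Since $\kappa$ is supercompact, fix a Laver function $\ell$ for $\kappa$ (these exist by Laver's theorem) and, by mild preparatory forcing, assume $2^{<\kappa}=\kappa$. I would then define a countable-support iteration $\P=\langle\P_\alpha,\dot\Q_\alpha;\alpha<\kappa\rangle$ by forcing at stage $\alpha$ with $\dot\Q_\alpha=\ell(\alpha)$ whenever $\ell(\alpha)\in V$ is a poset and $\P_\alpha\forces\text{``}\ell(\alpha)\text{ is proper''}$, and with trivial forcing otherwise; to mirror the role of Cohen forcing in Theorem~\ref{thm:CanonicalgrMA}, I would also insist that $\dot\Q_0=\Add(\omega,1)$. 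By Shelah's preservation theorem $\P$ is proper, and the usual supercompactness book-keeping shows that $\P$ is $\kappa$-cc of size $\kappa$, so that $\c=\kappa=\omega_2$ in the extension $V[G]$; in particular CH fails there.

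That \grpfa{} holds over $V$ is the routine half of the argument, a direct transcription of Baumgartner's proof. Given a poset $\Q\in V$ that is proper in $V[G]$ and a family $\mathcal D\in V[G]$ of fewer than $\c$ many dense subsets of $\Q$, I would choose a $\theta$-supercompactness embedding $j\colon V\to M$ with critical point $\kappa$, for $\theta$ large enough to capture $\Q$ and (names for) $\mathcal D$, arranged so that $j(\ell)(\kappa)=\Q$; this is legitimate precisely because $\Q$, being a ground model poset of size at most $\kappa$, already lies in $H_{\kappa^+}^V$ and is therefore a genuine target for $\ell$. Since $\P\forces\text{``}\Q\text{ is proper''}$, the poset $\Q$ is used at stage $\kappa$ of $j(\P)$, and lifting $j$ through the tail of $j(\P)$ in the standard way produces, from the generic for $\Q$ appearing in $j(G)$, a single filter meeting every member of $\mathcal D$. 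The restriction to ground model posets in fact simplifies this step, since $\Q$ itself needs no name.

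The substantive half is the failure of \ma, whence also of \pfa{} since $\pfa\Rightarrow\ma$, and here I would again imitate Theorem~\ref{thm:CanonicalgrMA}: show that $\P$ factors as $\overline\P\times\Add(\omega,1)$, so that $V[G]$ is a Cohen extension of an intermediate model in which CH fails, and then invoke Roitman's theorem~\cite{Roitman1979:AddingRandomOrCohenRealEffectMA}. The point is that, because every iterand is a ground model poset, the only way the iteration depends on the stage-$0$ Cohen generic is through the clauses asserting $\P_\alpha\forces\text{``}\ell(\alpha)\text{ is proper''}$; if these clauses turn out to be independent of the first coordinate, then $\P$ collapses to a countable-support product out of which the initial $\Add(\omega,1)$ splits off trivially. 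I therefore expect the main obstacle to be proving the proper analogues of Lemma~\ref{lemma:TwoStepIterationDecidesccc} and Lemma~\ref{lemma:FSIterationFactorsAsProduct}, namely that whether a ground model poset is proper over $\P_\alpha$ is decided independently of the Cohen coordinate, and that the countable-support iteration consequently factors as a product. This is genuinely harder than in the ccc case, since properness, unlike the countable chain condition, is neither obviously preserved under products nor read off cleanly from two-step iterations; one likely has to argue through preservation of stationary subsets of $[\lambda]^{\aleph_0}$ and exploit that Cohen forcing, being ccc, preserves the properness of the remaining iterands in order to push the factorisation through.
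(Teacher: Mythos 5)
Your verification of \grpfa{} (the Laver-function lifting argument) is essentially the paper's, but your plan for the failure of \ma{} is where the proposal breaks down. You place \(\Add(\omega,1)\) at stage \(0\) of a countable-support iteration and propose to extract it afterwards by proving proper analogues of lemma~\ref{lemma:TwoStepIterationDecidesccc} and lemma~\ref{lemma:FSIterationFactorsAsProduct}. Both ingredients fail, and the paper says so explicitly ("since the iteration uses countable support, a product analysis like the one given in lemma~\ref{lemma:FSIterationFactorsAsProduct} is not possible"). First, the proof of lemma~\ref{lemma:TwoStepIterationDecidesccc} rests on the symmetric product characterization of the countable chain condition, which properness simply does not have; worse, your closing hope that one can "exploit that Cohen forcing, being ccc, preserves the properness of the remaining iterands" is precisely false: theorem~\ref{thm:NotProperAfterAddingReal} (Shelah) shows that adding a Cohen real destroys the properness of \emph{every} countably distributive poset collapsing \(\omega_2\), in particular of every \(\Coll(\omega_1,\lambda)\) from \(V\). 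Second, even granting some decision-independence, a countable-support iteration of ground-model posets is not in general equivalent to the corresponding countable-support product: unlike the finite-support case, the conditions all of whose coordinates are check names are not dense (deciding all countably many coordinates of a condition is exactly the kind of fusion that changes the forcing), so the Cohen factor cannot be split off. The paper's fix is much simpler than what you attempt: it forces with the product \(\R=\Add(\omega,1)\times\P\) \emph{by fiat}, where \(\P\) is the countable-support iteration of Laver-guessed ground-model posets. Then \(V[G][H]\) is, by construction, a Cohen extension of the intermediate model \(V[H]\), CH fails there, and Roitman's theorem applies with no factorization lemma at all; properness of \(\R\) is cheap because it sits densely inside the proper iteration \(\P*\check{\Add}(\omega,1)\).

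A secondary gap: your claim that "the usual supercompactness book-keeping" yields \(\kappa=\omega_2\) also needs repair. The usual argument gets \(\kappa=\omega_2\) because the Laver function hands the iteration collapse posets \(\Coll(\omega_1,\lambda)\), which are then forced with; but here, by theorem~\ref{thm:NotProperAfterAddingReal}, those posets are never proper at any stage after reals have been added (and reals are added cofinally often, by the many \(\Add(\omega,1)\) iterands and by your stage-\(0\) Cohen real), so they are never used. The paper substitutes a different argument: a nontrivial countable-support iteration adds Cohen subsets of \(\omega_1\) at stages of cofinality \(\omega_1\) and hence collapses the continuum of the intermediate model at those stages; combined with \(\kappa\)-cc and the fact that \(\P\) adds \(\kappa\) many reals unboundedly, this collapses every \(\lambda\) with \(\omega_1<\lambda<\kappa\), giving \(\kappa=\omega_2\) in the extension.
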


The idea of the proof is similar to the one used in the proof of 
theorem~\ref{thm:CanonicalgrMA}. Specifically, we modify the Baumgartner
PFA iteration, starting from a supercompact cardinal, to only use posets coming
from the ground model. Since the iteration uses countable support, a product analysis
like the one given in lemma~\ref{lemma:FSIterationFactorsAsProduct} is not possible,
but this turns out not to matter for this particular result.

\begin{proof}
Start with a Laver function \(\ell\) for \(\kappa\) and build a countable-support 
forcing iteration \(\P\) of length \(\kappa\) which forces at stage \(\alpha\)
with \(\dot{\Q}\), some full name for the poset \(\Q=\ell(\alpha)\) if it is proper at 
that stage and with trivial forcing otherwise. Note that \(\P\) is proper. 
Now let \(V[G][H]\) be a forcing extension by \(\R=\Add(\omega,1)\times\P\).
We claim that \(V[G][H]\) is the required model.

Since the Laver function \(\ell\) will quite often output the poset
\(\Add(\omega,1)\) and this will always be proper, the iteration \(\P\) will 
add reals unboundedly often. 
Furthermore, since \(\R\) is \(\kappa\)-cc, we will obtain \(\c=\kappa\) in \(V[G][H]\).

Next we wish to see that \(\kappa=\omega_2^{V[G][H]}\). For this it suffices to see
that any \(\omega_1<\lambda<\kappa\) is collapsed at some point during the iteration.
Recall the well-known fact that any countable-support iteration of nontrivial posets 
adds a Cohen subset of \(\omega_1\) at stages of cofinality \(\omega_1\) and therefore
collapses the continuum to \(\omega_1\) at those stages. Now fix some \(\omega_1<\lambda
<\kappa\). Since \(\P\) ultimately adds \(\kappa\) many reals and is \(\kappa\)-cc,
there is some stage \(\alpha\) of the iteration such that \(\P_\alpha\) has already
added \(\lambda\) many reals and therefore \(\c^{V[H_\alpha]}\geq\lambda\).
Since \(\P_\alpha\) is proper, the rest of the iteration \(\P\rest[\alpha,\kappa)\)
is a countable-support iteration in \(V[H_\alpha]\) and the fact mentioned above
implies that \(\lambda\) is collapsed to \(\omega_1\) by this tail of the iteration.

Note that \(\R\) is proper, since \(\P*\Add(\omega,1)\) is proper and has a dense subset
isomorphic to \(\R\). 
To verify that \grpfa holds in \(V[G][H]\) let \(\Q\in V\) be a poset that is
proper in \(V[G][H]\) and let \(\mathcal{D}=\set{D_\alpha}{\alpha<\omega_1}\in V[G][H]\)
be a family of dense subsets of \(\Q\). In \(V\) we can fix (for some large enough 
\(\theta\)) a \(\theta\)-supercompactness embedding \(j\colon V\to M\) such that
\(j(\ell)(\kappa)=\Q\). Since the Cohen real forcing is small, the embedding \(j\)
lifts to a \(\theta\)-supercompactness embedding \(j\colon V[G]\to M[G]\).
We can factor \(j(\P)\) in \(M[G]\) as \(j(\P)=\P*\Q*\Ptail\). Let
\(h*\Htail\subseteq \Q*\Ptail\) be \(V[G][H]\)-generic. As usual, we can now lift
the embedding \(j\) in \(V[G][H*h*\Htail]\) to 
\(j\colon V[G][H]\to \overline{M}=M[G][H*h*\Htail]\).
Note that the closure of this embedding implies that \(j[h]\in \overline{M}\).
But \(j[h]\) is a \(j(\mathcal{D})\)-generic filter on \(j(\Q)\) in
\(\overline{M}\) and so, by elementarity, there is a \(\mathcal{D}\)-generic filter
on \(\Q\) in \(V[G][H]\).

Finally, since we can see \(V[G][H]\) as obtained by adding a Cohen real to an 
intermediate extension and since CH fails there, PFA and even Martin's axiom must fail 
there by Roitman's~\cite{Roitman1979:AddingRandomOrCohenRealEffectMA}.
\end{proof}

With regard to the above proof, we should mention that one usually argues that
\(\kappa\) becomes \(\omega_2\) after an iteration similar to ours because
at many stages the poset forced with was explicitly a collapse poset
\(\Coll(\omega_1,\lambda)\).
In our case, however, the situation is different. It turns out that
a significant number of proper posets from \(V\) (the collapse posets among them)
cease to be proper as soon as we add the initial Cohen real. 
Therefore the possibility of choosing
\(\Coll(\omega_1,\lambda)\) never arises in the construction of the iteration
\(\P\) and a different argument is needed. We recount a proof of this
fact below. The argument is essentially due to Shelah, as communicated
by Goldstern in~\cite{MO:Goldstern2015:PreservationOfProperness}.

\begin{theorem}[Shelah]
\label{thm:NotProperAfterAddingReal}
Let \(\P\) be a ccc poset and let \(\Q\) be a countably distributive poset which
collapses \(\omega_2\). Let \(G\subseteq\P\) be \(V\)-generic. If \(V[G]\)
has a new real then \(\Q\) is not proper in \(V[G]\).
\end{theorem}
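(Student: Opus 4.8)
The plan is to argue by contradiction: assuming $\Q$ is proper in $V[G]$, I will exhibit a subset of $[\omega_2^V]^{\aleph_0}$ that is stationary in $V[G]$ but which $\Q$ makes nonstationary, contradicting the fact that proper forcing preserves the stationarity of subsets of $[\lambda]^{\aleph_0}$ for every uncountable $\lambda$. The crucial structural observation is that, since $\Q\in V$, the two-step extension is a product: if $h$ is $\Q$-generic over $V[G]$ then $V[G][h]=V[h][G]$, with $h$ being $\Q$-generic over $V$ and $G$ being $\P$-generic over $V[h]$. Properness of $\Q$ gives $\omega_1^{V[G][h]}=\omega_1^{V[G]}=\omega_1^V$, the last equality because $\P$ is ccc. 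I will apply the preservation theorem with $\lambda=\omega_2^V=\omega_2^{V[G]}$ and the set
\[
S=\bigl([\omega_2^V]^{\aleph_0}\bigr)^{V[G]}\setminus V,
\]
the countable subsets of $\omega_2^V$ lying in $V[G]$ but not in $V$.

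First I would check that $S$ is nonstationary in $V[G][h]$. Since $\Q$ is countably distributive and collapses $\omega_2^V$ over $V$ while preserving $\omega_1^V$, there is in $V[h]$ a surjection $f\colon\omega_1^V\to\omega_2^V$. Working in $V[G][h]=V[h][G]$, the sets $f[\gamma]$ for $\gamma<\omega_1^V$ form an increasing continuous chain which, using $\omega_1^{V[G][h]}=\omega_1^V$, is cofinal in $\bigl([\omega_2^V]^{\aleph_0}\bigr)^{V[G][h]}$ and hence a club $C$. The key point is that each $f\restriction\gamma$ is (coded by) a sequence of length a $V$-countable ordinal lying in $V[h]$, so by countable distributivity of $\Q$ over $V$ it already belongs to $V$; thus every member $f[\gamma]$ of $C$ lies in $V$, so $C\cap S=\emptyset$ and $S$ is nonstationary in $V[G][h]$.

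The main work, and the only place the hypothesis that $\P$ adds a new real is used, is showing that $S$ is stationary in $V[G]$. Given a club in $\bigl([\omega_2^V]^{\aleph_0}\bigr)^{V[G]}$, I would represent it by a finitary closure operator and, using that $\P$ is ccc, dominate the latter by a closure operator $\mathrm{cl}_{F^*}$ given by a function $F^*\in V$ on finite subsets of $\omega_2^V$ with countable values, so that every $F^*$-closed set is in the given club. Fix a new real $r\in 2^\omega$ in $V[G]$. Because $\omega_2^V$ is uncountable and $\mathrm{cl}_{F^*}$ has countable closures of countable sets, a standard free-set (set-mapping) argument produces in $V$ a free family $\{\gamma^i_n : n<\omega,\ i<2\}\subseteq\omega_2^V$, no element of which lies in the $F^*$-closure of the others. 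Setting $x=\mathrm{cl}_{F^*}(\{\gamma^{r(n)}_n : n<\omega\})$, freeness yields $\gamma^i_n\in x$ if and only if $i=r(n)$, so $r$ is computable from $x$ together with the parameters $F^*$ and $\langle\gamma^i_n\rangle$, all of which lie in $V$. Hence $x\notin V$ (otherwise $r\in V$), while $x$ is $F^*$-closed and therefore lies in the given club. This produces a member of $S$ in every club, so $S$ is stationary.

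Combining the two claims, $S$ is stationary in $V[G]$ yet nonstationary in $V[G][h]$, contradicting the assumed properness of $\Q$ in $V[G]$. The hardest step is the stationarity of $S$ in $V[G]$: the product analysis and the countable-distributivity computation for Claim one are soft, but converting a single new real into new countable subsets of $\omega_2^V$ that meet an \emph{arbitrary} club requires the robust coding supplied by the free-set construction, and getting that coding to survive against every ground-model club is where the real care is needed.
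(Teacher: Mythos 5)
Your overall strategy is genuinely different from the paper's and is sound in outline: instead of producing a countable model with no generic condition, you reduce to the theorem that proper forcing preserves stationary subsets of $[\lambda]^{\aleph_0}$. The product analysis, Claim 1 (the collapse function restricted to $V$-countable ordinals lies in $V$ by countable distributivity over $V$, giving a club disjoint from $S$), and the ccc-covering reduction of Claim 2 to a single $F^*\in V$ are all correct. What you need at that point is exactly (the ccc case of) a theorem of Gitik: if $V[G]$ contains a real not in $V$, then $([\lambda]^{\aleph_0})^V$ is nonstationary in $V[G]$ for every $\lambda\geq\omega_2$; in particular $S$ is stationary. Had you cited that, the proof would be complete and genuinely different from the paper's, which instead builds a binary tree of countable elementary submodels (Shelah's Claim XV.2.12) and shows the branch model determined by the new real admits no $\Q$-generic condition.

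However, the justification you actually give for Claim 2 has a genuine gap: the ``standard free-set (set-mapping) argument'' is false. For closure operators on $\omega_2$ with countable closures there need not exist even a free \emph{triple}, let alone the infinite free family you require. Counterexample, in ZFC: for $\beta\in[\omega_1,\omega_2)$ fix surjections $e_\beta\colon\omega_1\to\beta$ and for $\xi\in[\omega,\omega_1)$ fix surjections $u_\xi\colon\omega\to\xi$, and let $F^*$ be any set mapping with countable values whose closed sets are closed under $(\alpha,\beta)\mapsto e_\beta^{-1}(\alpha)$, under $(\xi,\beta)\mapsto e_\beta(\xi)$, and under $\xi\mapsto\{u_\xi(n):n<\omega\}=\xi$ for $\xi<\omega_1$. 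Given $\omega_1\leq\gamma_0<\gamma_1<\gamma_2<\omega_2$, put $\xi_i=e_{\gamma_2}^{-1}(\gamma_i)$ for $i=0,1$. If $\xi_0\leq\xi_1$, then $\mathrm{cl}_{F^*}(\{\gamma_1,\gamma_2\})$ contains $\xi_1$, hence all of its elements and $\xi_1$ itself, hence $\xi_0$, hence $e_{\gamma_2}(\xi_0)=\gamma_0$; if $\xi_1<\xi_0$, then symmetrically $\gamma_1\in\mathrm{cl}_{F^*}(\{\gamma_0,\gamma_2\})$. (Triples meeting $\omega_1$ are even easier.) So no triple is free, and an adversary choosing the club of $F^*$-closed sets defeats your coding outright. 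Hajnal's set-mapping theorem, which is presumably what you had in mind, concerns mappings defined on \emph{singletons} and the much weaker freeness condition $x\notin f(y)$; for mappings on finite sets it fails badly, as above. Nor is this repairable within your framework: the correct proof of Claim 2 (Gitik's) replaces raw freeness of ordinals by a tree of countable elementary submodels all having the same intersection with $\omega_1$, where the branch is decoded using elementarity (the bijections $\omega_1\to\beta_s$ inside the models) rather than freeness --- that is, essentially the same Claim XV.2.12 machinery on which the paper's own proof rests. The hard combinatorial core cannot be bypassed the way you propose.
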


\begin{proof}
Fix at the beginning a \(\Q\)-name \(\dot{f}\), forced to be a bijection between 
\(\omega_1\) and \(\omega_2^V\). Let \(\theta\) be a sufficiently large regular cardinal.
By claim XV.2.12 of~\cite{Shelah1998:ProperImproperForcing} we can label the nodes
\(s\in\funcs{<\omega}{2}\) with countable models \(M_s\prec H_\theta\) such that:
\begin{itemize}
\item the \(M_s\) are increasing along each branch of the tree \(\funcs{<\omega}{2}\);
\item \(\P,\Q,\dot{f}\in M_\emptyset\);
\item there is an ordinal \(\delta\) such that \(M_s\cap\omega_1=\delta\) for all \(s\);
\item for any \(s\) there are ordinals \(\alpha_s<\beta_s<\omega_2\) such that
\(\alpha_s\in M_{s\concat 0}\) and \(\alpha_s\notin M_{s\concat 1}\) and 
\(\beta_s \in M_{s\concat 1}\).
\end{itemize}

Now consider, in \(V[G]\), the tree of models \(M_s[G]\). By the argument given in
the proof of lemma~\ref{lemma:grMASmallPosets}, the models \(M_s[G]\) are elementary
in \(H_\theta^{V[G]}\) and, since \(\P\) is ccc, we still have
\(M_s[G]\cap\omega_1=\delta\). Let \(M=M_r[G]\) be the branch model determined by
the new real \(r\in V[G]\). We shall show that there are no \(M\)-generic
conditions in \(\Q\).

Suppose that \(q\) were such a generic condition. We claim that \(q\) forces that
\(\dot{f}\rest\delta\) maps onto \(M\cap\omega_2^V\) (note that \(\dot{f}\) still names
a bijection \(\omega_1\to\omega_2^V\) over \(V[G]\)). First, suppose that \(q\) does not
force that \(\dot{f}[\delta]\subseteq M\). Then we can find \(q'\leq q\) and an
\(\alpha<\delta\) such that \(q'\forces\dot{f}(\alpha)\notin M\).
But if \(q'\in H\subseteq\Q\) is generic then \(M[H]\) is an elementary substructure of
\(H_\theta^{V[G][H]}\) and, of course, \(f,\alpha\in M[H]\), leading to a contradiction.

Conversely, suppose that \(q\) does not force that
\(M\cap\omega_2^V\subseteq \dot{f}[\delta]\). We can again find \(q'\leq q\) and an
\(\alpha\in M\cap\omega_2\) such that \(q'\forces\alpha\notin\dot{f}[\delta]\).
Let \(q'\in H\subseteq\Q\) be generic. As before, \(M[H]\) is an elementary substructure
of \(H_\theta^{V[G][H]}\) and \(f,\alpha\in M[H]\). Since 
\(f\colon\omega_1\to\omega_2^V\) is a bijection, we must have \(f^{-1}(\alpha)\in M[H]\).
But by construction \(f^{-1}(\alpha)\) is an ordinal greater than \(\delta\)
while simultaneously \(M[H]\cap\omega_1=\delta\) by the \(M\)-genericity of \(q\),
giving a contradiction.

Fixing our putative generic condition \(q\), we can use the countable distributivity of 
\(\Q\) in \(V\) to see that \(\dot{f}\rest\delta\), and consequently \(M\cap\omega_2\),
exist already in \(V\). But we can extract \(r\) from \(M\cap\omega_2\).

Notice that, given a model \(M_{s\concat 1}\) in our original tree, no elementary
extension \(M_{s\concat 1}\prec X\) satisfying \(X\cap\omega_1=\delta\)
can contain \(\alpha_s\). This is because \(M_{s\concat 1}\) contains a bijection
\(g\colon \omega_1\to\beta_s\) and, by elementarity, \(g\) must restrict to a bijection
between \(\delta\) and \(M_{s\concat 1}\cap \beta_s\). 
But seen from the viewpoint of \(X\), that same function \(g\) must restrict to a
bijection between \(\delta\) and \(X\cap \beta_s\) and so \(X\cap \beta_s=
M_{s\concat 1}\cap\beta_s\).

Using this fact, we can now extract \(r\) from \(M\cap\omega_2\) in \(V\). Specifically,
we can decide at each stage whether the branch determined by \(r\) went left or
right depending on whether \(\alpha_s\in M\cap\omega_2\) or not. We conclude that
\(r\) appears already in \(V\), contradicting our original assumption. Therefore
there is no generic condition \(q\) as above and \(\Q\) is not proper in \(V[G]\).
\end{proof}


Ultimately, one hopes that by grounding the forcing axiom we lower its consistency 
strength while still being able to carry out some of the usual arguments and 
obtain some of the standard consequences. However, 
theorem~\ref{thm:NotProperAfterAddingReal} severely limits the kind of arguments we
can carry out under \grpfa. Many arguments involving \pfa use, among other things,
collapsing posets such as \(\Coll(\omega_1,2^\omega)\).
In contrast, if the poset witnessing \grpfa in a model is
any kind of iteration that at some stage added, say, a Cohen real, the theorem
prevents us from applying the forcing axiom to any of these collapsing posets.
It is thus unclear exactly how much strength of \pfa can be recovered from \grpfa.
In particular, while \grpfa implies that CH fails, the following key question 
remains open:

\begin{question}[open]
\label{q:grPFAContinuum}
Does \grpfa imply that the continuum equals \(\omega_2\)?
\end{question}

Regarding the relation of \grpfa to other forcing axioms, a lot remains unknown.
Theorem~\ref{thm:CanonicalgrPFA} shows that \grpfa does not imply \ma. Beyond this
a few more things can be said.

\begin{proposition}
Martin's axiom does not imply \grpfa.
\end{proposition}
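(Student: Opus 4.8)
The plan is to refute the implication by exhibiting a single model in which Martin's axiom holds but \grpfa fails; the cleanest candidate is any model of CH, for instance \(L\). The first step is to recall that Martin's axiom holds trivially under CH. The principle \(\mathrm{MA}_\kappa\) is only asserted for \(\kappa<\c\), so under CH, where \(\c=\aleph_1\), it reduces to \(\mathrm{MA}_\kappa\) for \(\kappa<\aleph_1\), that is, to the existence of filters meeting countably many prescribed dense sets. This is precisely the Rasiowa--Sikorski lemma, a theorem of ZFC, so \ma holds in every model of CH.

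The second step is to invoke the fact, recorded in the discussion preceding question~\ref{q:grPFAContinuum}, that \grpfa implies the failure of CH. Consequently \grpfa cannot hold in any model satisfying CH. Combining the two observations, any model of CH---say \(L\)---satisfies \ma but not \grpfa, which is exactly what is needed to conclude that \ma does not imply \grpfa.

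Since the whole argument collapses to the two ingredients above, I do not expect any genuine obstacle; the only thing being relied upon beyond the trivial CH computation is the already-granted fact that \grpfa entails \(\lnot\)CH. If one preferred not to lean on that fact, an independent route is available: \(L\) satisfies the ground axiom and hence is not a nontrivial set-forcing extension of any inner model, so the very first clause in the definition of \grpfa---that \(V\) be a proper forcing extension of some ground model \(W\)---already fails in \(L\), giving a second reason that \grpfa fails there while \ma continues to hold.
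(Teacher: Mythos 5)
Your main argument is correct, but it takes a genuinely different---and much more degenerate---route than the paper. You observe that \ma{} is vacuously true under CH (since the axiom is only asserted for families of fewer than \(\c\) dense sets, under CH it reduces to the Rasiowa--Sikorski lemma), while \grpfa{} implies that CH fails, a fact the paper records just before question~\ref{q:grPFAContinuum}; hence any model of CH, such as \(L\), separates the two. The paper instead forces \(\ma+\c=\omega_3\) with the Solovay--Tennenbaum iteration and then performs Reitz's ground axiom forcing above \(\omega_3\): in the resulting model the ground axiom holds, so the only admissible ground \(W\) for \grpfa{} is the universe itself, whence \grpfa{} would degenerate to full \pfa{}, which is impossible since \pfa{} forces the continuum to be \(\omega_2\). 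What the paper's construction buys is a separation that is immune to the usual convention that Martin's axiom is only of interest together with \(\lnot\)CH: it shows that even \ma{} with large continuum fails to imply \grpfa{}, whereas your model witnesses the non-implication only through the CH triviality.

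One caveat about your backup argument: under the paper's conventions, the ground axiom does \emph{not} falsify the first clause of \grpfa{}. Trivial forcing is proper, so \(W=V\) is always an admissible ground---the paper relies on exactly this reading both when noting that \ma{} implies \grma{} and in its own proof of this proposition. In \(L\) the ground axiom therefore forces \grpfa{} to coincide with full \pfa{}, and \grpfa{} fails there because \pfa{} implies \(\lnot\)CH, not because no ground model exists. Your conclusion is unaffected, but the reason given in that alternative route is not the right one.
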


\begin{proof}
Starting over some model, force with the Solovay-Tennenbaum iteration to produce
a model \(V[G]\) satisfying \ma and \(\c=\omega_3\). Now perform Reitz's ground
axiom forcing (cf.~\cite{Reitz2007:GroundAxiom}) above \(\omega_3\) to produce a model
\(V[G][H]\) still satisfying \ma and \(\c=\omega_3\) but which is not a
set-forcing extension of any model (note that \(H\) is added by class-sized forcing).
Therefore the only way \(V[G][H]\) could satisfy \grpfa is if it actually
satisfied \pfa in full. But that cannot be the case since \pfa implies that
the continuum equals \(\omega_2\).
\end{proof}

\begin{proposition}
The grounded proper forcing axiom does not imply \mas (and not even \(\mathrm{MA}_{\aleph_1}(\textup{\(\sigma\)-centred})\)).
\end{proposition}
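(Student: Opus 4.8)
The plan is to reprise the construction of theorem~\ref{thm:CanonicalgrPFA}, modified so that the witnessing model has bounding number \(\mathfrak{b}=\aleph_1\). This suffices, because \(\mathrm{MA}_{\aleph_1}(\textup{\(\sigma\)-centred})\) implies \(\mathfrak{b}>\aleph_1\): applying the axiom to Hechler forcing, which is \(\sigma\)-centred, yields for any family of \(\aleph_1\) reals a single real dominating all of them, exactly as in the proof of corollary~\ref{cor:grMANotImpliesMAsigma}. Since \(\mathrm{MA}_{\aleph_1}(\textup{\(\sigma\)-centred})\) is a consequence of \mas, a model of \grpfa in which it fails witnesses both non-implications at once.

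The only change I would make to the proof of theorem~\ref{thm:CanonicalgrPFA} is to replace the single Cohen real by \(\aleph_1\) of them. Starting from a supercompact \(\kappa\), build the same countable-support iteration \(\P\) of length \(\kappa\) driven by a Laver function \(\ell\) (forcing at stage \(\alpha\) with \(\ell(\alpha)\) when it is proper there, and trivially otherwise), but now force with \(\R=\Add(\omega,\omega_1)\times\P\) in place of \(\Add(\omega,1)\times\P\). The verification that \grpfa holds in \(V[G][H]\) goes through verbatim: the only property of the Cohen factor used there is that it is small relative to \(\kappa\), and \(\Add(\omega,\omega_1)\) has size \(\aleph_1<\kappa\), so a given \(\theta\)-supercompactness embedding \(j\colon V\to M\) with \(j(\ell)(\kappa)=\Q\) still lifts to \(V[G]\) with \(j(G)=G\), after which the factoring \(j(\P)=\P*\Q*\Ptail\) and the elementarity argument produce the required \(\mathcal{D}\)-generic filter. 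Moreover \(\R\) is still proper, being the product of a ccc poset with a proper one, and still \(\kappa\)-cc (an antichain of size \(\kappa\) has \(\kappa\) members sharing a first coordinate, whose \(\P\)-parts would then form a \(\kappa\)-sized antichain in \(\P\)), so \(\c=\kappa=\omega_2\) in the extension exactly as before; the collapsing argument concerned \(\P\) alone and is unaffected.

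It remains to check that \(\mathfrak{b}=\aleph_1\) in \(V[G][H]=V[H][G]\), where \(G\) now denotes the \(\Add(\omega,\omega_1)\)-generic. Write \(G=\langle c_\xi;\xi<\omega_1\rangle\) for the sequence of Cohen reals it adds, coded as elements of \(\omega^\omega\), and set \(X=\{c_\xi;\xi<\omega_1\}\). I claim \(X\) is unbounded. Indeed, suppose some \(g\in V[H][G]\) dominated every \(c_\xi\). Since \(\Add(\omega,\omega_1)\) is ccc, the real \(g\) has a nice name using only countably many coordinates, so \(g\in V[H][G\rest a]\) for some countable \(a\subseteq\omega_1\). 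Picking any \(\xi\notin a\), the real \(c_\xi\) is Cohen-generic over \(V[H][G\rest a]\) and hence unbounded over that model, so \(c_\xi\not\leq^*g\), a contradiction. Thus \(X\) is an unbounded family of size \(\aleph_1\), whence \(\mathfrak{b}=\aleph_1\). Consequently \(\mathrm{MA}_{\aleph_1}(\textup{\(\sigma\)-centred})\) fails in \(V[G][H]\) while \grpfa holds, proving the proposition.

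I do not expect a serious obstacle here; the only genuine computation is the unboundedness of \(X\), which rests solely on the ccc-ness of \(\Add(\omega,\omega_1)\) together with the fact that a Cohen real is unbounded over any model it is generic over. The conceptual heart is rather the observation that, unlike full \pfa, the \emph{grounded} axiom does not force \(\mathfrak{b}\) to be large: the iterands of \(\P\) are all ground-model posets, so no stage is tailored to dominate a family of reals that first appears only at an intermediate stage, and it is precisely this that leaves room for the explicitly added family \(X\) of Cohen reals to remain unbounded in the final model.
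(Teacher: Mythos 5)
Your proposal is correct and is essentially the paper's own proof: the paper likewise reruns theorem~\ref{thm:CanonicalgrPFA} with \(\Add(\omega,\omega_1)\times\P\) in place of \(\Add(\omega,1)\times\P\), gets \(\mathfrak{b}=\aleph_1\) from the \(\omega_1\) Cohen reals as in theorem~\ref{thm:grMASmallCharacts}, and refutes \(\mathrm{MA}_{\aleph_1}(\textup{\(\sigma\)-centred})\) by applying it to Hechler forcing as in corollary~\ref{cor:grMANotImpliesMAsigma}; your nice-name argument for the unboundedness of the family of Cohen reals is an equally good substitute for the paper's nonmeagerness argument. One justification, however, needs repair. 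You claim that \(\R=\Add(\omega,\omega_1)\times\P\) is proper ``being the product of a ccc poset with a proper one,'' but that general principle is false: if \(T\) is a Suslin tree and \(\Q\) is the ccc poset of finite partial specializations of \(T\), then \(T\times\Q\) is forcing equivalent to \(\Q*\check{T}\), which collapses \(\omega_1\) (once \(T\) is special, a cofinal branch injects \(\omega_1^V\) into \(\omega\)), so even a product of two ccc posets can fail to be proper. The point is not pedantic in this context: the paper's theorem~\ref{thm:NotProperAfterAddingReal} shows that adding a Cohen real tends to destroy the properness of ground model posets, so one genuinely cannot establish properness of \(\R\) via the factorization \(\Add(\omega,\omega_1)*\check{\P}\). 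The correct argument is the one already given in theorem~\ref{thm:CanonicalgrPFA}, and it adapts verbatim to \(\omega_1\) Cohen reals: \(\R\) has a dense copy inside the iteration \(\P*\Add(\omega,\omega_1)\), and this iteration is proper because \(\Add(\omega,\omega_1)\) --- unlike a general ccc poset --- remains ccc, hence proper, in any forcing extension, in particular after forcing with \(\P\).
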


\begin{proof}
We could have replaced the forcing \(\Add(\omega,1)\times\P\)
in the proof of theorem~\ref{thm:CanonicalgrPFA} with \(\Add(\omega,\omega_1)\times\P\)
without issue. 
As in the proof of theorem~\ref{thm:grMASmallCharacts} we get a model
whose bounding number equals \(\aleph_1\), but this contradicts
\(\mathrm{MA}_{\aleph_1}(\text{\(\sigma\)-centred})\) as in the proof of
corollary~\ref{cor:grMANotImpliesMAsigma}.
\end{proof}


\begin{figure}[h!t]
\[
\xymatrix{
&\pfa\ar@{-}[d]\ar@{-}[dr]\\
&\ma\ar@{-}[dl]\ar@{-}[dr]& \grpfa\ar@{.}[d]^{?}\\
\mak\ar@{-}[d]&&\grma\ar@{-}[d]\\
\textup{MA(\(\sigma\)-linked)}\ar@{-}[d]&&\text{local \grma}\ar@{-}[ddl]\\
\mas\ar@{-}[dr]\\
&\mathrm{MA(Cohen)}\ar@{-}[d]\\
&\mathrm{MA(countable)}}
\]
\caption{Expanded diagram of forcing axioms}
\end{figure}
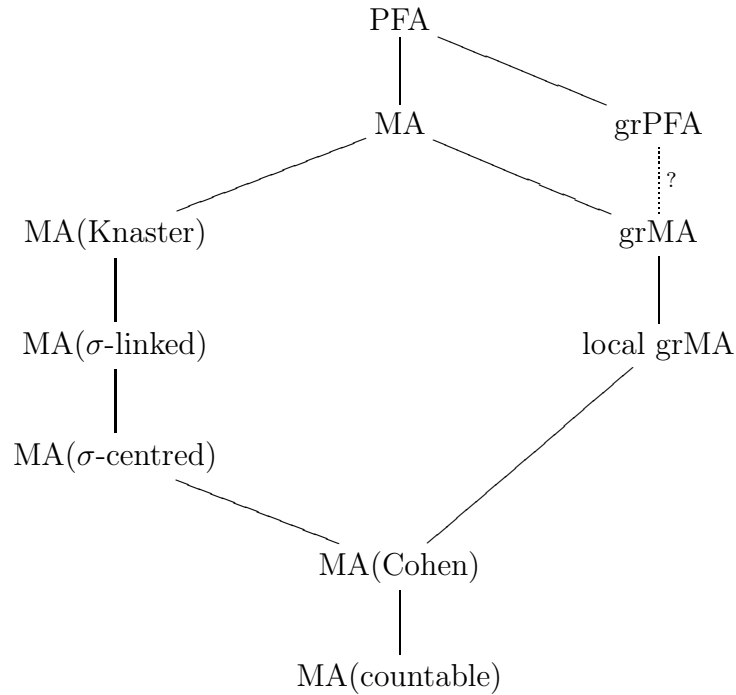

While it is easy to see that \grpfa implies \(\mathrm{MA}_{\aleph_1}(\mathrm{Cohen})\),
whether or not it even implies \mac is unclear (a large part of the problem being that
we do not have an answer to question~\ref{q:grPFAContinuum}). But an even more
pressing question concerns the relationship between \grpfa and \grma:

\begin{question}[open]
Does \grpfa imply \grma?
\end{question}

Even if the answer to question~\ref{q:grPFAContinuum} turns out to be positive, we
conjecture that the answer to this last question is negative. We expect that
it is possible to use the methods of \cite{Reitz2007:GroundAxiom} or, more
generally, \cite{FuchsHamkinsReitz2015:SetTheoreticGeology} in combination
with the forcing construction of theorem~\ref{thm:CanonicalgrPFA} to produce
a model of \grpfa which has no ccc ground models and in which \ma (and consequently
also \grma) fails.

\singlespacing
\bibliographystyle{amsplain}
\bibliography{bibbase}

\providecommand{\bysame}{\leavevmode\hbox to3em{\hrulefill}\thinspace}
\providecommand{\MR}{\relax\ifhmode\unskip\space\fi MR }
\providecommand{\MRhref}[2]{%
  \href{http://www.ams.org/mathscinet-getitem?mr=#1}{#2}
}
\providecommand{\href}[2]{#2}
\begin{thebibliography}{10}

\bibitem{ApterCummingsHamkins2007:LargeCardinalsFewMeasures}
Arthur~W. Apter, James Cummings, and Joel~David Hamkins, \emph{Large cardinals
  with few measures}, Proc. Amer. Math. Soc. \textbf{135} (2007), no.~7,
  2291--2300 (electronic). \MR{2299507}

\bibitem{ApterShelah1997:MenasResultBestPossible}
Arthur~W. Apter and Saharon Shelah, \emph{Menas' result is best possible},
  Trans. Amer. Math. Soc. \textbf{349} (1997), no.~5, 2007--2034. \MR{1370634}

\bibitem{BenNeriaGitik:UniqueMeasureWithoutGCH}
Omer {Ben Neria} and Moti Gitik, \emph{A model with a unique normal measure on
  \(\kappa\) and \(2^\kappa=\kappa^{++}\) from optimal assumptions}, preprint.

\bibitem{Blass2010:CardinalCharacteristicsHandbook}
Andreas Blass, \emph{Combinatorial cardinal characteristics of the continuum},
  Handbook of Set Theory (Matthew Foreman and Akihiro Kanamori, eds.),
  Springer, Dordrecht, 2010, pp.~395--489. \MR{2768685}

\bibitem{Carmody2015:Thesis}
Erin Carmody, \emph{Force to change large cardinal strength}, Phd dissertation,
  The Graduate Center, City University of New York, 2015, ProQuest/UMI
  Publication No. 3704311.

\bibitem{Corazza1989:LaverSequencesExtendible}
Paul Corazza, \emph{Laver sequences for extendible and super-almost-huge
  cardinals}, J. Symbolic Logic \textbf{64} (1999), no.~3, 963--983.
  \MR{1779746}

\bibitem{Foreman1983:SaturatedIdeals}
Matthew Foreman, \emph{More saturated ideals}, Cabal seminar 79--81, Lecture
  Notes in Math., vol. 1019, Springer, Berlin, 1983, pp.~1--27. \MR{730584}

\bibitem{FriedmanMagidor2009:NumberNormalMeasures}
Sy-David Friedman and Menachem Magidor, \emph{The number of normal measures},
  J. Symbolic Logic \textbf{74} (2009), no.~3, 1069--1080. \MR{2548481}

\bibitem{FuchsHamkinsReitz2015:SetTheoreticGeology}
Gunter Fuchs, Joel~David Hamkins, and Jonas Reitz, \emph{Set-theoretic
  geology}, Ann. Pure Appl. Logic \textbf{166} (2015), no.~4, 464--501.
  \MR{3304634}

\bibitem{GitikShelah1989:IndestructibilityOfStrong}
Moti Gitik and Saharon Shelah, \emph{On certain indestructibility of strong
  cardinals and a question of {H}ajnal}, Arch. Math. Logic \textbf{28} (1989),
  no.~1, 35--42. \MR{987765}

\bibitem{GitmanSchindler:VirtualLargeCardinals}
Victoria Gitman and Ralf Schindler, \emph{Virtual large cardinals}, under
  review.

\bibitem{MO:Goldstern2015:PreservationOfProperness}
Martin Goldstern, \emph{Preservation of properness}, MathOverflow answer, 2015,
  URL: http://mathoverflow.net/q/199287 (version: 2015-03-07).

\bibitem{Hamkins2002:StrongDiamondPrinciples}
Joel~David Hamkins, \emph{A class of strong diamond principles}, preprint,
  arXiv:math/0211419 [math.LO], 2002.

\bibitem{Hamkins2003:ApproximationAndCoveringNoNewLargeCardinals}
\bysame, \emph{Extensions with the approximation and cover properties have no
  new large cardinals}, Fund. Math. \textbf{180} (2003), no.~3, 257--277.
  \MR{2063629}

\bibitem{Larson1999:SmaxVariationForOneSuslinTree}
Paul Larson, \emph{An \(\mathbb{S}_{\mathrm{max}}\) variation for one {S}ouslin
  tree}, J. Symbolic Logic \textbf{64} (1999), no.~1, 81--98. \MR{1683897}

\bibitem{Laver1978:MakingSupercompactnessIndestructible}
Richard Laver, \emph{Making the supercompactness of {$\kappa $} indestructible
  under {$\kappa $}-directed closed forcing}, Israel J. Math. \textbf{29}
  (1978), no.~4, 385--388. \MR{0472529}

\bibitem{Laver1987:RandomRealsSuslinTrees}
\bysame, \emph{Random reals and {S}ouslin trees}, Proc. Amer. Math. Soc.
  \textbf{100} (1987), no.~3, 531--534. \MR{891159}

\bibitem{Magidor1976:IdentityCrises}
Menachem Magidor, \emph{How large is the first strongly compact cardinal? or
  {A} study on identity crises}, Ann. Math. Logic \textbf{10} (1976), no.~1,
  33--57. \MR{0429566}

\bibitem{Magidor1979:NonregularUltrafiltersCardinalityOfUltrapowers}
Menachem Magidor, \emph{On the existence of nonregular ultrafilters and the
  cardinality of ultrapowers}, Trans. Amer. Math. Soc. \textbf{249} (1979),
  no.~1, 97--111. \MR{526312}

\bibitem{Reitz2007:GroundAxiom}
Jonas Reitz, \emph{The ground axiom}, J. Symbolic Logic \textbf{72} (2007),
  no.~4, 1299--1317. \MR{2371206}

\bibitem{Roitman1979:AddingRandomOrCohenRealEffectMA}
Judith Roitman, \emph{Adding a random or a {C}ohen real: topological
  consequences and the effect on {M}artin's axiom}, Fund. Math. \textbf{103}
  (1979), no.~1, 47--60. \MR{535835}

\bibitem{Shelah1984:TakeSolovayInaccessibleAway}
Saharon Shelah, \emph{Can you take {S}olovay's inaccessible away?}, Israel J.
  Math. \textbf{48} (1984), no.~1, 1--47. \MR{768264}

\bibitem{Shelah1998:ProperImproperForcing}
\bysame, \emph{Proper and improper forcing}, second ed., Perspectives in
  Mathematical Logic, Springer-Verlag, Berlin, 1998. \MR{1623206}

\bibitem{Shelah2010:Diamonds}
\bysame, \emph{Diamonds}, Proc. Amer. Math. Soc. \textbf{138} (2010), no.~6,
  2151--2161. \MR{2596054}

\bibitem{Zeman2002:InnerModels}
Martin Zeman, \emph{Inner models and large cardinals}, De Gruyter series in
  logic and its applications, Walter de Gruyter, 2002.

\end{thebibliography}

\end{document}